\definecolor{dkgray}{rgb}{.4,.4,.4}
\definecolor{dkblue}{rgb}{0,0,.4}
\definecolor{medblue}{rgb}{0,0,.75}
\definecolor{rust}{rgb}{0.6,0.15,0.15}
\def\algspacing{\alg@unmargin}
\newtheorem{thm}{Theorem}
\newtheorem{cor}[thm]{Corollary}
\newtheorem{lemma}[thm]{Lemma}
\newtheorem{prop}[thm]{Proposition}
\newtheorem{defn}[thm]{Definition}
\newtheorem{example}[thm]{Example}
\newtheorem{fact}[thm]{Fact}
\theoremstyle{remark}
\DeclareMathAlphabet{\mathsfsl}{OT1}{cmss}{m}{sl}
\newcommand{\lang}{\textit}
\newcommand{\term}[1]{\emph{#1}\index{#1}}
\renewcommand{\phi}{\varphi}
\newcommand{\eps}{\varepsilon}
\newcommand{\half}{\tfrac{1}{2}}
\newcommand{\econst}{\mathrm{e}}
\newcommand{\iunit}{\mathrm{i}}
\newcommand{\Id}{\mathbf{I}}
\newcommand{\coll}[1]{\mathscr{#1}}
\newcommand{\R}{\mathbb{R}}
\newcommand{\C}{\mathbb{C}}
\newcommand{\abs}[1]{\left\vert {#1} \right\vert}
\newcommand{\abssq}[1]{{\abs{#1}}^2}
\newcommand{\sgn}[1]{\operatorname{sgn}{#1}}
\newcommand{\real}{\operatorname{Re}}
\newcommand{\diff}[1]{\mathrm{d}{#1}}
\newcommand{\idiff}[1]{\, \diff{#1}}
\newcommand{\grad}{\nabla}
\newcommand{\Prob}[1]{\mathbb{P}\left\{ {#1} \right\}}
\newcommand{\Expect}{\operatorname{\mathbb{E}}}
\newcommand{\uniform}{\textsc{uniform}}
\newcommand{\normal}{\textsc{normal}}
\newcommand{\Var}{\operatorname{Var}}
\newcommand{\mVar}{\operatorname{\mathbf{Var}}}
\newcommand{\vct}[1]{\bm{#1}}
\newcommand{\mtx}[1]{\bm{#1}}
\newcommand{\adj}{*}
\newcommand{\rank}{\operatorname{rank}}
\newcommand{\diag}{\operatorname{diag}}
\newcommand{\trace}{\operatorname{tr}}
\newcommand{\psdle}{\preccurlyeq}
\newcommand{\psdge}{\succcurlyeq}
\newcommand{\psdlt}{\prec}
\newcommand{\psdgt}{\succ}
\newcommand{\ip}[2]{\left\langle {#1},\ {#2} \right\rangle}
\newcommand{\abssqip}[2]{\abssq{\ip{#1}{#2}}}
\newcommand{\norm}[1]{\left\Vert {#1} \right\Vert}
\newcommand{\normsq}[1]{\norm{#1}^2}
\newcommand{\enorm}[1]{\norm{#1}_2}
\newcommand{\enormsq}[1]{\enorm{#1}^2}
\newcommand{\fnorm}[1]{\norm{#1}_{\mathrm{F}}}
\newcommand{\fnormsq}[1]{\fnorm{#1}^2}
\newcommand{\pnorm}[2]{\norm{#2}_{#1}}
\newcommand{\conv}{\operatorname{conv}}
\newcommand{\bigO}{\operatorname{O}}
\newcommand{\strank}{\operatorname{srank}}
\newcommand{\intdim}{\operatorname{intdim}}
\setheadfoot{\onelineskip}{2\onelineskip}
\newcommand{\asudedication}[1]{%
{\clearpage\mbox{}\vfill\centering #1 \par\vfill\clearpage}}
\begin{document}

\pagestyle{headings}

\headstyles{default}

\chapterstyle{veelo}

\numberwithin{equation}{section}
\numberwithin{thm}{section}
\numberwithin{exer}{section}

\frontmatter

\title{An Introduction to \\ Matrix Concentration Inequalities}
\author{Joel A.~Tropp}
\date{24 December 2014 \\ FnTML Draft, Revised}

\maketitle
I%

\asudedication{For Margot and Benjamin}

\setsecnumdepth{subsection}
\maxsecnumdepth{subsection}

\newpage

\tableofcontents

\makeatletter{}%

\chapter{Preface}

In recent years, random matrices have come to play a major role in computational mathematics, but most of the classical areas of random matrix theory remain the province of experts.  Over the last decade, with the advent of matrix concentration inequalities, research has advanced to the point where we can conquer many (formerly) challenging problems with a page or two of arithmetic.

My aim is to describe the most successful methods from this area along with some interesting examples that these techniques can illuminate.  I hope that the results in these pages will inspire future work on applications of random matrices as well as refinements of the matrix concentration inequalities discussed herein.

I have chosen to present a coherent body of results based on a generalization of the Laplace transform method for establishing scalar concentration inequalities.  In the last two years, Lester Mackey and I, together with our coauthors, have developed an alternative approach to matrix concentration using exchangeable pairs and Markov chain couplings.  With some regret, I have chosen to omit this theory because the ideas seem less accessible to a broad audience of researchers.  The interested reader will find pointers to these articles in the annotated bibliography.

The work described in these notes reflects the influence of many researchers.  These include Rudolf Ahlswede, Rajendra Bhatia, Eric Carlen, Sourav Chatterjee, Edward Effros, Elliott Lieb, Roberto Imbuzeiro Oliveira, D{\' e}nes Petz, Gilles Pisier, Mark Rudelson, Roman Vershynin, and Andreas Winter.  I have also learned a great deal from other colleagues and friends along the way.

I would like to thank some people who have helped me improve this work.
Several readers informed me about errors in the initial version of this manuscript;
these include Serg Bogdanov, Peter Forrester, Nikos Karampatziakis, and Guido Lagos.
The anonymous reviewers tendered many useful suggestions,
and they pointed out a number of errors.
Sid Barman gave me feedback on the final revisions to the monograph.
Last, I want to thank L{\'e}on Nijensohn for his continuing encouragement.

I gratefully acknowledge financial support from the Office of Naval Research under awards N00014-08-1-0883 and N00014-11-1002, the Air Force Office of Strategic Research under award FA9550-09-1-0643, and an Alfred P.~Sloan Fellowship.  Some of this research was completed at the Institute of Pure and Applied Mathematics at UCLA.  I would also like to thank the California Institute of Technology and the Moore Foundation.

\vspace{1pc}

{\flushright

Joel A.~Tropp \\
Pasadena, CA \\
December 2012 \\
Revised, March 2014 and December 2014 \\}

\mainmatter

\makeatletter{}%

\chapter{Introduction}

Random matrix theory has grown into a vital area of probability, and it has found applications in many other fields.  To motivate the results in this monograph, we begin with an overview of the connections between random matrix theory and computational mathematics.  We introduce the basic ideas underlying our approach, and we state one of our main results on the behavior of random matrices.  As an application, we examine the properties of the sample covariance estimator, a random matrix that arises in statistics.  Afterward, we summarize the other types of results that appear in these notes, and we assess the novelties in this presentation.

\section{Historical Origins}

Random matrix theory sprang from several different sources in the first half of the 20th century.

\begin{description}
\item	[Geometry of Numbers.]  Peter Forrester~\cite[p.~v]{For10:Log-Gases} traces the field of random matrix theory to work of Hurwitz, who defined the invariant integral over a Lie group.  Specializing this analysis to the orthogonal group, we can reinterpret this integral as the expectation of a function of a uniformly random orthogonal matrix.

\item	[Multivariate Statistics.]  Another early example of a random matrix appeared in the work of John Wishart~\cite{Wis28:Generalised-Product}.  Wishart was studying the behavior of the sample covariance estimator for the covariance matrix of a multivariate normal random vector.  He showed that the estimator, which is a random matrix, has the distribution that now bears his name.  Statisticians have often used random matrices as models for multivariate data~\cite{MKB79:Multivariate-Analysis,Mui82:Aspects-Multivariate}.

\item	[Numerical Linear Algebra.]  In their remarkable work~\cite{NG47:Numerical-Inverting,NG51:Numerical-Inverting-II} on computational methods for solving systems of linear equations, von Neumann and Goldstine considered a random matrix model for the floating-point errors that arise from an \textsf{LU} decomposition.%
\footnote{von Neumann and Goldstine invented and analyzed this algorithm before they had any digital computer on which to implement it!  See~\cite{Grc11:John-von-Neumanns} for a historical account.}
They obtained a high-probability bound for the norm of the random matrix, which they took as an estimate for the error the procedure might typically incur.  Curiously, in subsequent years, numerical linear algebraists became very suspicious of probabilistic techniques, and only in recent years have randomized algorithms reappeared in this field.
See the surveys~\cite{Mah11:Randomized-Algorithms,HMT11:Finding-Structure,Woo14:Sketching-Tool} for more details and references.

\item	[Nuclear Physics.]  In the early 1950s, physicists had reached the limits of deterministic analytical techniques for studying the energy spectra of heavy atoms undergoing slow nuclear reactions.  Eugene Wigner was the first researcher to surmise that a random matrix with appropriate symmetries might serve as a suitable model for the Hamiltonian of the quantum mechanical system that describes the reaction.  The eigenvalues of this random matrix model the possible energy levels of the system.  See Mehta's book~\cite[\S1.1]{Meh04:Random-Matrices} for an account of all this.
\end{description}

\noindent
In each area, the motivation was quite different and led to distinct sets of questions.
Later, random matrices began to percolate into other fields such as graph theory (the Erd{\H os}--R{\'e}nyi model~\cite{ER60:Evolution-Random} for a random graph) and number theory (as a model for the spacing of zeros of the Riemann zeta function~\cite{Mon73:Pair-Correlation}).

\section{The Modern Random Matrix}

By now, random matrices are ubiquitous.  They arise throughout modern mathematics and statistics, as well as in many branches of science and engineering.  Random matrices have several different purposes that we may wish to distinguish.  They can be used within randomized computer algorithms; they serve as models for data and for physical phenomena; and they are subjects of mathematical inquiry.  This section offers a taste of these applications.  Note that the ideas and references here reflect the author's interests, and they are far from comprehensive!

\subsection{Algorithmic Applications}

The striking mathematical properties of random matrices can be harnessed to develop algorithms for solving many different problems.

\begin{description}
\item	[Computing Matrix Approximations.]  Random matrices can be used to develop fast algorithms for computing a truncated singular-value decomposition.  In this application, we multiply a large input matrix by a smaller random matrix to extract information about the dominant singular vectors of the input matrix.  The seed of this idea appears in~\cite{FKV98:Fast-Monte-Carlo,DFK+99:Clustering-Large}.  The survey~\cite{HMT11:Finding-Structure} explains how to implement this method in practice, while the two monographs~\cite{Mah11:Randomized-Algorithms, Woo14:Sketching-Tool} cover more theoretical aspects.

\item	[Sparsification.]  One way to accelerate spectral computations on large matrices is to replace the original matrix by a sparse proxy that has similar spectral properties.  An elegant way to produce the sparse proxy is to zero out entries of the original matrix at random while rescaling the entries that remain.  This approach was proposed in~\cite{AM01:Fast-Computation,AM07:Fast-Computation}, and the papers~\cite{AKL13:Near-Optimal-Entrywise,KD14:Note-Randomized} contain recent innovations.  Related ideas play an important role in Spielman and Teng's work~\cite{ST04:Nearly-Linear-Time} on fast algorithms for solving linear systems.

\item	[Subsampling of Data.]  In large-scale machine learning, one may need to subsample data randomly to reduce the computational costs of fitting a model.  For instance, we can combine random sampling with the Nystr{\"o}m decomposition to obtain a randomized approximation of a kernel matrix.  This method was introduced by Williams \& Seeger~\cite{WS01:Nystrom-Method}.  The paper~\cite{DM05:Nystrom-Method} provides the first theoretical analysis, and the survey~\cite{GM13:Revisiting-Nystrom} contains more complete results.

\item	[Dimension Reduction.]  A basic template in the theory of algorithms invokes randomized projection to reduce the dimension of a computational problem.  Many types of dimension reduction are based on properties of random matrices.  The two papers~\cite{JL84:Extensions-Lipschitz,Bou85:Lipschitz-Embedding} established the mathematical foundations of this approach.  The earliest applications in computer science appear in the work~\cite{LLR95:Geometry-Graphs}.  Many contemporary variants depend on ideas from~\cite{AC09:Fast-Johnson-Lindenstrauss} and~\cite{CW13:Low-Rank-Approximation}.

\item	[Combinatorial Optimization.]  One approach to solving a computationally difficult optimization problem is to relax (i.e., enlarge) the constraint set so the problem becomes tractable, to solve the relaxed problem, and then to use a randomized procedure to map the solution back to the original constraint set~\cite[\S4.3]{BN01:Lectures-Modern}.  This technique is called \term{relaxation and rounding}.  For hard optimization problems involving a matrix variable, the analysis of the rounding procedure often involves ideas from random matrix theory~\cite{So09:Moment-Inequalities, NRV13:Efficient-Rounding}.

\item	[Compressed Sensing.]  When acquiring data about an object with relatively few degrees of freedom as compared with the ambient dimension, we may be able to sieve out the important information from the object by taking a small number of random measurements, where the number of measurements is comparable to the number of degrees of freedom~\cite{GGIMS02:Near-Optimal-Sparse,CRT06:Robust-Uncertainty,Don06:Compressed-Sensing}.  This observation is now referred to as \term{compressed sensing}.   Random matrices play a central role in the design and analysis of measurement procedures.  For example, see \cite{RF13:Mathematical-Introduction,CRPW12:Convex-Geometry,ALMT14:Living-Edge,Tro14:Convex-Recovery}.

\end{description}

\subsection{Modeling}

Random matrices also appear as models for multivariate data or multivariate phenomena.  By studying the properties of these models, we may hope to understand the typical behavior of a data-analysis algorithm or a physical system.

\begin{description}
\item	[Sparse Approximation for Random Signals.]  Sparse approximation has become an important problem in statistics, signal processing, machine learning and other areas.  One model for a ``typical'' sparse signal poses the assumption that the nonzero coefficients that generate the signal are chosen at random.  When analyzing methods for identifying the sparse set of coefficients, we must study the behavior of a random column submatrix drawn from the model matrix~\cite{Tro08:Conditioning-Random,Tro08:Norms-Random}.

\item	[Demixing of Structured Signals.]	In data analysis, it is common to encounter a mixture of two structured signals, and the goal is to extract the two signals using prior information about the structures.  A common model for this problem assumes that the signals are randomly oriented with respect to each other, which means that it is usually possible to discriminate the underlying structures.  Random orthogonal matrices arise in the analysis of estimation techniques for this problem~\cite{MT12:Sharp-Recovery,ALMT14:Living-Edge,MT13:Achievable-Performance}.

\item	[Stochastic Block Model.]  One probabilistic framework for describing community structure in a network assumes that each pair of individuals in the same community has a relationship with high probability, while each pair of individuals drawn from different communities has a relationship with lower probability.  This is referred to as the \term{stochastic block model}~\cite{HLL83:Stochastic-Block}.  It is quite common to analyze algorithms for extracting community structure from data by positing that this model holds.  See~\cite{ABH14:Exact-Recovery} for a recent contribution, as well as a summary of the extensive literature.

\item	[High-Dimensional Data Analysis.]  More generally, random models are pervasive in the analysis of statistical estimation procedures for high-dimensional data.  Random matrix theory plays a key role in this field~\cite{MKB79:Multivariate-Analysis,Mui82:Aspects-Multivariate,Kol11:Oracle-Inequalities,BG11:Statistics-High-Dimensional}.

\item	[Wireless Communication.]  Random matrices are commonly used as models for wireless channels.  See the book of Tulino and Verd{\'u} for more information~\cite{TV04:Random-Matrix}.
\end{description}

\noindent
In these examples, it is important to recognize that random models may not coincide very well with reality, but they allow us to get a sense of what might be possible in some generic cases.

\subsection{Theoretical Aspects}

Random matrices are frequently studied for their intrinsic mathematical interest.
In some fields, they provide examples of striking phenomena.  In other areas,
they furnish counterexamples to ``intuitive'' conjectures.  
Here are a few disparate problems where random matrices play a role.

\begin{description}
\item	[Combinatorics.]  An expander graph has the property that every small set of vertices has edges linking it to a large proportion of the vertices.  The expansion property is closely related to the spectral behavior of the adjacency matrix of the graph.  The easiest construction of an expander involves a random matrix argument~\cite[\S9.2]{AS00:Probabilistic-Method}.

\item	[Numerical Analysis.]  For worst-case examples, the Gaussian elimination method for solving a linear system is not numerically stable.  In practice, however, stability problems rarely arise.  One explanation for this phenomenon is that, with high probability, a small random perturbation of any fixed matrix is well conditioned.  As a consequence, it can be shown that Gaussian elimination is stable for most matrices~\cite{SST06:Smoothed-Analysis}.

\item	[High-Dimensional Geometry.]  Dvoretzky's Theorem states that, when $N$ is large, the unit ball of each $N$-dimensional Banach space has a slice of dimension $n \approx \log N$ that is close to a Euclidean ball with dimension $n$.  It turns out that a \emph{random} slice of dimension $n$ realizes this property~\cite{Mil71:New-Proof}.  This result can be framed as a statement about spectral properties of a random matrix~\cite{Gor85:Some-Inequalities}.

\item	[Quantum Information Theory.]  Random matrices appear as counterexamples for a number of conjectures in quantum information theory.  Here is one instance.  In classical information theory, the total amount of information that we can transmit through a pair of channels equals the sum of the information we can send through each channel separately.  It was conjectured that the same property holds for quantum channels.  In fact, a pair of quantum channels can have strictly larger capacity than a single channel.  This result depends on a random matrix construction~\cite{Has09:Superadditivity-Communication}.
See~\cite{HW08:Counterexamples-Maximal} for related work.
\end{description}

\section{Random Matrices for the People}

Historically, random matrix theory has been regarded as a very challenging field.  Even now, many well-established methods are only comprehensible to researchers with significant experience, and it may take months of intensive effort to prove new results.
There are a small number of classes of random matrices that have been studied so completely that we know almost everything about them.  Yet, moving beyond this \lang{terra firma}, one quickly encounters examples where classical methods are brittle.

We hope to democratize random matrix theory.  These notes describe tools that deliver useful information about a wide range of random matrices.  In many cases, a modest amount of straightforward arithmetic leads to strong results.  The methods here should be accessible to computational scientists working in a variety of fields.
Indeed, the techniques in this work have already found an extensive number of applications.

\section{Basic Questions in Random Matrix Theory}

Random matrices merit special attention because they have spectral properties
that are quite different from familiar deterministic matrices.  Here are
some of the questions we might want to investigate.

\begin{itemize}
\item	What is the expectation of the maximum eigenvalue of a random Hermitian matrix?  What about the minimum eigenvalue?

\item	How is the maximum eigenvalue of a random Hermitian matrix distributed?  What is the probability that it takes values substantially different from its mean? What about the minimum eigenvalue?

\item	What is the expected spectral norm of a random matrix?  What is the probability that the norm takes a value substantially different from its mean?

\item	What about the other eigenvalues or singular values?  Can we say something about the ``typical'' spectrum of a random matrix?

\item	Can we say anything about the eigenvectors or singular vectors?  For instance, is each one distributed almost uniformly on the sphere? %

\item	We can also ask questions about the operator norm of a random matrix acting as a map between two normed linear spaces.  In this case, the geometry of the domain and codomain play a role.
\end{itemize}

\noindent
In this work, we focus on the first three questions above.  We study the expectation of the extreme eigenvalues of a random Hermitian matrix, and we attempt to provide bounds on the probability that they take an unusual value.  As an application of these results, we can control the expected spectral norm of a general matrix and bound the probability of a large deviation.  These are the most relevant problems in many (but not all!) applications.
The remaining questions are also important, but we will not touch on them here.  We recommend the book~\cite{Tao12:Topics-Random} for a friendly introduction to other branches of random matrix theory.

\section{Random Matrices as Independent Sums}

Our approach to random matrices depends on a fundamental principle:

\begin{quotation}
\noindent
\textbf{In applications, it is common that a random matrix can be expressed as a sum of independent random matrices.}
\end{quotation}

\noindent
The examples that appear in these notes should provide ample evidence for this claim.  For now, let us describe a specific problem that will serve as an illustration throughout the Introduction.  We hope this example is complicated enough to be interesting but simple enough to elucidate the main points.

\subsection{Example: The Sample Covariance Estimator}

Let $\vct{x} = (X_1, \dots, X_p)$ be a complex random vector with zero mean: $\Expect \vct{x} = \vct{0}$.
The \term{covariance matrix} $\mtx{A}$ of the random vector $\vct{x}$ is the positive-semidefinite matrix
\begin{equation} \label{eqn:covariance-mtx}
\mtx{A} = \Expect( \vct{xx}^\adj ) = \sum_{j,k=1}^p \Expect\big( X_j X^\adj_k \big) \, \mathbf{E}_{jk}
\end{equation}
The star ${}^\adj$ refers to the conjugate transpose operation, and
the standard basis matrix $\mathbf{E}_{jk}$ has a one in the $(j, k)$ position and zeros elsewhere.
In other words, the $(j,k)$ entry of the sample covariance matrix $\mtx{A}$ records the covariance between
the $j$th and $k$th entry of the vector $\vct{x}$.

One basic problem in statistical practice is to estimate the covariance matrix from data.  Imagine that we have access to $n$ independent samples $\vct{x}_1, \dots, \vct{x}_n$, each distributed the same way as $\vct{x}$.  The \term{sample covariance estimator} $\mtx{Y}$ is the random matrix
\begin{equation} \label{eqn:sample-covariance-mtx}
\mtx{Y} = \frac{1}{n} \sum_{k=1}^n \vct{x}_k \vct{x}_k^\adj.
\end{equation}
The random matrix $\mtx{Y}$ is an unbiased estimator%
\footnote{The formula~\eqref{eqn:sample-covariance-mtx} supposes that the random vector $\vct{x}$ is known to have zero mean.  Otherwise, we have to make an adjustment to incorporate an estimate for the sample mean.}
for the sample covariance matrix: $\Expect \mtx{Y} = \mtx{A}$.
Observe that the sample covariance estimator $\mtx{Y}$ fits neatly into our paradigm:

\begin{quotation}
\noindent
\textbf{The sample covariance estimator can be expressed as a sum of independent random matrices.}
\end{quotation}

\noindent
This is precisely the type of decomposition that allows us to apply the tools in these notes.

\section{Exponential Concentration Inequalities for Matrices}

An important challenge in probability theory is to study the
probability that a real random variable $Z$ takes a value substantially
different from its mean.  That is, we seek 
a bound of the form
\begin{equation} \label{eqn:scalar-concentration}
\Prob{ \abs{ Z - \Expect Z } \geq t } \leq \underline{\hspace{1pc} ??? \hspace{1pc}}
\end{equation}
for a positive parameter $t$.
When $Z$ is expressed as a sum of independent random variables,
the literature contains many tools for addressing this problem.
See~\cite{BLM13:Concentration-Inequalities} for an overview.

For a random matrix $\mtx{Z}$, a variant of~\eqref{eqn:scalar-concentration}
is the question of whether $\mtx{Z}$ deviates substantially from
its mean value.  We might frame this question as
\begin{equation} \label{eqn:matrix-concentration}
\Prob{ \norm{ \mtx{Z} - \Expect \mtx{Z} } \geq t }
\leq \underline{\hspace{1pc} ??? \hspace{1pc}}.
\end{equation}
Here and elsewhere, $\norm{\cdot}$ denotes the spectral norm %
of a matrix.
As noted, it is frequently possible to decompose $\mtx{Z}$
as a sum of independent random matrices.  We might even dream
that established methods for studying the scalar concentration
problem~\eqref{eqn:scalar-concentration} extend
to~\eqref{eqn:matrix-concentration}.  %

\subsection{The Bernstein Inequality}

To explain what kind of results we have in mind, let us return to the scalar problem~\eqref{eqn:scalar-concentration}.
First, to simplify formulas, we assume that the real random variable $Z$ has zero mean: $\Expect Z = 0$.  If not, we can simply center the random variable by subtracting its mean.  Second, and more restrictively, we suppose that $Z$ can be expressed as a sum of independent, real random variables.

To control $Z$, we rely on two types of information: global properties of the sum (such as its mean and variance) and local properties of the summands (such as their maximum fluctuation).  These pieces of data are usually easy to obtain.  Together, they determine how well $Z$ concentrates around zero, its mean value.

\begin{thm}[Bernstein Inequality] \label{thm:scalar-bernstein}
Let $S_1, \dots, S_n$ be independent, centered, real random variables,
and assume that each one is uniformly bounded:
$$
\Expect S_k = 0
\quad\text{and}\quad
\abs{ S_k } \leq L
\quad\text{for each $k = 1, \dots, n$.}
$$
Introduce the sum $Z = \sum_{k=1}^n S_k$,
and let $v(Z)$ denote the variance of the sum:
$$
v(Z) = \Expect{} Z^2
	= \sum_{k=1}^n \Expect{} S_k^2.
$$
Then
$$
\Prob{ \abs{ Z } \geq t }
	\leq 2\, \exp\left( \frac{-t^2/2}{v(Z) + Lt/3} \right)
	\quad\text{for all $t \geq 0$.}
$$
\end{thm}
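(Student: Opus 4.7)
The plan is to use the Laplace transform (Chernoff) method, which reduces the tail bound to a problem about moment generating functions (MGFs) and then exploits independence to factor the MGF of the sum. This is the standard route, and everything in the theorem statement—the variance $v(Z)$ and the uniform bound $L$—is designed so that the estimates come out cleanly.

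First I would focus on the one-sided tail $\Prob{Z \geq t}$. For any $\theta > 0$, Markov's inequality applied to $\econst^{\theta Z}$ yields
\[
\Prob{Z \geq t} \leq \econst^{-\theta t} \, \Expect \econst^{\theta Z}
= \econst^{-\theta t} \prod_{k=1}^n \Expect \econst^{\theta S_k},
\]
where the factorization uses independence of the $S_k$. The main work is then to bound each factor $\Expect \econst^{\theta S_k}$. Here I would Taylor-expand: $\econst^{\theta S_k} = 1 + \theta S_k + \sum_{q \geq 2} \theta^q S_k^q / q!$, take expectations (killing the linear term since $\Expect S_k = 0$), and use $|S_k| \leq L$ together with $S_k^q \leq L^{q-2} S_k^2$ for $q \geq 2$ to get
\[
\Expect \econst^{\theta S_k}
\leq 1 + \Expect S_k^2 \sum_{q \geq 2} \frac{\theta^q L^{q-2}}{q!}
\leq 1 + \frac{\theta^2 \Expect S_k^2 / 2}{1 - \theta L / 3},
\]
valid for $0 < \theta < 3/L$, where the second step uses the crude bound $q! \geq 2 \cdot 3^{q-2}$ to sum the geometric series. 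Then $1 + x \leq \econst^x$ upgrades this to the exponential form $\Expect \econst^{\theta S_k} \leq \exp\bigl(\tfrac{\theta^2 \Expect S_k^2 / 2}{1 - \theta L/3}\bigr)$.

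Multiplying these bounds assembles the variances into $v(Z)$, so that
\[
\Prob{Z \geq t} \leq \exp\left( -\theta t + \frac{\theta^2 v(Z) / 2}{1 - \theta L / 3} \right)
\quad\text{for } 0 < \theta < 3/L.
\]
I would then optimize the exponent in $\theta$; the good choice is $\theta = t / (v(Z) + Lt/3)$, which satisfies $\theta L < 3$ and, after a direct substitution, gives the one-sided bound $\exp\bigl(\tfrac{-t^2/2}{v(Z) + Lt/3}\bigr)$. Finally, applying the same argument to the random variables $-S_k$ (which have identical variance and bound) yields the matching lower tail bound on $\Prob{-Z \geq t}$, and a union bound produces the factor of $2$ in the stated inequality.

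The one genuinely delicate step is the MGF estimate: everything else is bookkeeping. In particular, producing the precise constant $1/3$ in the denominator requires the geometric-series bookkeeping above (rather than, say, a Hoeffding-style $\cosh$ bound), and the feasibility of the optimal $\theta$ depends on the $1/3$ matching between the MGF bound and the optimizer. Once that estimate is in hand, optimization and symmetrization are routine.
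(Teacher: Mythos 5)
Your proof is correct, and it is essentially the argument the paper relies on: the paper defers the scalar statement to Boucheron--Lugosi--Massart, but the mgf estimate you give (Taylor expansion, $\lvert S_k \rvert \leq L$ to pull out $\Expect S_k^2$, the bound $q! \geq 2\cdot 3^{q-2}$ to sum the series, then $1+x\leq \econst^x$) and the choice $\theta = t/(v(Z)+Lt/3)$ are exactly the scalar versions of Lemma~\ref{lem:matrix-bernstein-mgf} and the optimization step in the proof of Theorem~\ref{thm:matrix-bernstein-herm}. In other words, you have reconstructed the proof that the paper lifts to the matrix setting.
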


\noindent
See~\cite[\S2.8]{BLM13:Concentration-Inequalities} for a proof of this result.
We refer to Theorem~\ref{thm:scalar-bernstein} as an \term{exponential concentration inequality}
because it yields exponentially decaying bounds on the probability that $Z$ deviates substantially
from its mean.

\subsection{The Matrix Bernstein Inequality}

What is truly astonishing is that the scalar Bernstein inequality, Theorem~\ref{thm:scalar-bernstein}, lifts directly to matrices.  Let us emphasize this remarkable fact:

\begin{quotation}
\noindent
\textbf{There are exponential concentration inequalities for the spectral norm of a sum of independent random matrices.}
\end{quotation}

\noindent
As a consequence, once we decompose a random matrix as an independent sum, we can harness global properties (such as the mean and the variance) and local properties (such as a uniform bound on the summands) to obtain detailed information about the norm of the sum.  As in the scalar case, it is usually easy to acquire the input data for the inequality.  But the output of the inequality is highly nontrivial.

To illustrate these claims, we will state one of the major results from this monograph.
This theorem is a matrix extension of Bernstein's inequality that was developed independently
in the two papers~\cite{Oli10:Concentration-Adjacency,Tro11:User-Friendly-FOCM}.
After presenting the result, we give some more details about its interpretation.
In the next section, we apply this result to study the covariance estimation problem.

\begin{thm}[Matrix Bernstein] \label{thm:matrix-bernstein-intro}
Let $\mtx{S}_1, \dots, \mtx{S}_n$ be independent, centered random matrices with common dimension $d_1 \times d_2$,
and assume that each one is uniformly bounded
$$
\Expect \mtx{S}_k = \mtx{0}
\quad\text{and}\quad
\norm{ \mtx{S}_k } \leq L
\quad\text{for each $k = 1, \dots, n$.}
$$
Introduce the sum
\begin{equation} \label{eqn:matrix-bernstein-Z-intro}
\mtx{Z} = \sum_{k=1}^n \mtx{S}_k,
\end{equation}
and let $v(\mtx{Z})$ denote the matrix variance statistic of the sum:
\begin{equation} \label{eqn:matrix-variance-intro}
\begin{aligned}
v(\mtx{Z}) &= \max\big\{ \norm{ \smash{\Expect ( \mtx{ZZ}^\adj )} }, \ 
	\norm{ \smash{\Expect ( \mtx{Z}^\adj \mtx{Z}) } } \big\} \\
	&= \max\left\{ \norm{ \sum_{k=1}^n \Expect \big( \mtx{S}_k \mtx{S}_k^\adj \big) }, \
	\norm{ \sum_{k=1}^n \Expect \big( \mtx{S}_k^\adj \mtx{S}_k \big) } \right\}.
\end{aligned}
\end{equation}
Then
\begin{equation} \label{eqn:bernstein-tail-intro}
\Prob{ \norm{ \mtx{Z} } \geq t }
	\leq (d_1 + d_2) \cdot \exp\left( \frac{-t^2/2}{v(\mtx{Z}) + Lt/3} \right)
	\quad\text{for all $t \geq 0$.}
\end{equation}
Furthermore,
\begin{equation} \label{eqn:bernstein-expect-intro}
\Expect \norm{ \mtx{Z} }
	\leq \sqrt{ 2 v(\mtx{Z}) \log(d_1 + d_2) } + \frac{1}{3} L \,\log(d_1 + d_2).
\end{equation}
\end{thm}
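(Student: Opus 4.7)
The plan is to adapt the Laplace transform (Cramér--Chernoff) method from the scalar setting to matrices, using trace-exponential functions as a surrogate for the maximum eigenvalue. My proof would proceed in four stages: reduction to the Hermitian case, a matrix Laplace transform bound, a cumulant-generating-function (cgf) estimate, and an optimization.

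First, I would reduce to a Hermitian problem via the Hermitian dilation
\begin{equation*}
\mathcal{H}(\mtx{S}) = \begin{bmatrix} \zeromtx & \mtx{S} \\ \mtx{S}^\adj & \zeromtx \end{bmatrix}.
\end{equation*}
Since $\norm{\mtx{S}} = \lambda_{\max}(\mathcal{H}(\mtx{S}))$ and $\mathcal{H}(\mtx{S})^2 = \diag(\mtx{S}\mtx{S}^\adj, \mtx{S}^\adj \mtx{S})$, studying $\lambda_{\max}$ of $\sum_k \mathcal{H}(\mtx{S}_k)$ on the $(d_1+d_2)$-dimensional space recovers both the spectral norm of $\mtx{Z}$ and the two variance terms inside $v(\mtx{Z})$. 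Thus it suffices to prove an inequality of the form $\Prob{\lambda_{\max}(\mtx{Y}) \geq t} \leq d \cdot \exp(-t^2/2/(v + Lt/3))$ for a sum $\mtx{Y} = \sum_k \mtx{T}_k$ of independent, centered, uniformly bounded Hermitian summands on $\C^d$.

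For this Hermitian reduction I would invoke the matrix Laplace transform bound $\Prob{\lambda_{\max}(\mtx{Y}) \geq t} \leq \inf_{\theta > 0} e^{-\theta t} \Expect \trace e^{\theta \mtx{Y}}$, together with the Lieb-type subadditivity of matrix cgfs,
\begin{equation*}
\Expect \trace \exp\Bigl( \theta \sum_k \mtx{T}_k \Bigr) \leq \trace \exp\Bigl( \sum_k \log \Expect e^{\theta \mtx{T}_k} \Bigr).
\end{equation*}
The individual cgfs are controlled by a matrix analog of the scalar Bernstein moment estimate: for a centered Hermitian $\mtx{T}$ with $\norm{\mtx{T}} \leq L$, one has $\Expect \mtx{T}^q \psdle L^{q-2} \Expect \mtx{T}^2$ for each integer $q \geq 2$ (since $|\lambda|^{q-2} \leq L^{q-2}$ on the spectrum of $\mtx{T}$), and summing the Taylor series together with $\log(\Id + \mtx{A}) \psdle \mtx{A}$ yields
\begin{equation*}
\log \Expect e^{\theta \mtx{T}} \psdle \frac{\theta^2/2}{1 - L\theta/3} \, \Expect \mtx{T}^2 \quad\text{for } 0 < \theta < 3/L.
\end{equation*}

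Combining these ingredients with the monotonicity of the trace exponential and the bound $\trace \exp(\mtx{A}) \leq d \cdot e^{\lambda_{\max}(\mtx{A})}$ gives
\begin{equation*}
\Prob{\lambda_{\max}(\mtx{Y}) \geq t} \leq d \cdot \exp\Bigl( -\theta t + \frac{\theta^2 v/2}{1 - L\theta/3} \Bigr),
\end{equation*}
where $v = \norm{\sum_k \Expect \mtx{T}_k^2}$. Optimizing over $\theta$ (the choice $\theta = t/(v + Lt/3)$ works) yields the tail bound \eqref{eqn:bernstein-tail-intro} with $d = d_1 + d_2$. For \eqref{eqn:bernstein-expect-intro} I would integrate the tail, or more cleanly apply the Laplace bound directly to $\Expect \lambda_{\max}$ via Jensen: $\Expect \lambda_{\max}(\mtx{Y}) \leq \theta^{-1} \log \Expect \trace e^{\theta \mtx{Y}}$, then plug in the cgf estimate and optimize $\theta$, which produces the two-term $\sqrt{2 v \log(d_1+d_2)} + \tfrac{1}{3} L \log(d_1+d_2)$ form.

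The main obstacle is the matrix cgf subadditivity step. Independence alone is not enough: for general noncommuting matrices the naive product $\Expect e^{\theta \sum_k \mtx{T}_k} \neq \prod_k \Expect e^{\theta \mtx{T}_k}$, so one cannot simply multiply moment generating functions as in the scalar proof. The standard remedy is Lieb's concavity theorem (that $\mtx{A} \mapsto \trace \exp(\mtx{H} + \log \mtx{A})$ is concave on the positive-definite cone), combined with iterated conditioning; establishing this concavity is itself nontrivial. Once it is in hand, the rest of the argument is a faithful scalar-to-matrix translation of Bennett--Bernstein.
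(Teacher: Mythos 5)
Your proposal matches the paper's proof exactly: the Hermitian dilation reduction, the matrix Laplace transform bound, the cgf subadditivity via Lieb's theorem, the Bernstein-type cgf estimate with the same rational-function bound $\theta^2/2/(1-\theta L/3)$, and the same optimizing choice $\theta = t/(v + Lt/3)$. The only cosmetic difference is in deriving the individual cgf bound: you sum the matrix Taylor series using $\mtx{T}^q \psdle L^{q-2}\mtx{T}^2$, while the paper isolates $e^{\theta x} = 1 + \theta x + x^2 f(x)$ and bounds $f(\mtx{T}) \psdle f(L)\Id$ by monotonicity of $f$; both routes give the identical numerical estimate, though the paper's variant extends to one-sided eigenvalue bounds which the Hermitian version of the theorem uses.
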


\noindent
The proof of this result appears in Chapter~\ref{chap:matrix-bernstein}.

To appreciate what Theorem~\ref{thm:matrix-bernstein-intro} means, it is valuable
to make a direct comparison with the scalar version, Theorem~\ref{thm:scalar-bernstein}.
In both cases, we express the object of interest as an independent sum, and we instate
a uniform bound on the summands.  There are three salient changes:

\begin{itemize}
\item	The variance $v(\mtx{Z})$ in the result for matrices can be interpreted
as the magnitude of the expected squared deviation of $\mtx{Z}$ from its mean.
The formula reflects the fact that a general matrix $\mtx{B}$ has \emph{two}
different squares $\mtx{BB}^\adj$ and $\mtx{B}^\adj \mtx{B}$.  For an Hermitian
matrix, the two squares coincide.

\item	The tail bound has a dimensional factor $d_1 + d_2$ that depends on the size of
the matrix.  This factor reduces to two in the scalar setting.  In the matrix case, it
limits the range of $t$ where the tail bound is informative.

\item	We have included a bound for $\Expect{} \norm{ \mtx{Z} }$.  This estimate is not particularly interesting in the scalar setting, but it is usually quite challenging to prove results of this type for matrices.  In fact, the expectation bound is often more useful than the tail bound.
\end{itemize}

\noindent
The latter point deserves amplification:

\begin{quotation} \noindent
\textbf{The expectation bound~\eqref{eqn:bernstein-expect-intro} is the most important aspect of the
matrix Bernstein inequality.}
\end{quotation}

\noindent
For further discussion of this result, turn to Chapter~\ref{chap:matrix-bernstein}.  Chapters~\ref{chap:matrix-series} and~\ref{chap:intrinsic} contain related results and interpretations.

\subsection{Example: The Sample Covariance Estimator} \label{sec:sample-covar}

We will apply the matrix Bernstein inequality, Theorem~\ref{thm:matrix-bernstein-intro},
to measure how well the sample covariance estimator approximates the true covariance matrix.
As before, let $\vct{x}$ be a zero-mean random vector with dimension $p$.
Introduce the $p \times p$ covariance matrix $\mtx{A} = \Expect( \vct{xx}^\adj )$.  
Suppose we have $n$ independent samples
$\vct{x}_1, \dots, \vct{x}_n$ with the same distribution as $\vct{x}$.
Form the $p \times p$ sample covariance estimator
$$
\mtx{Y} = \frac{1}{n} \sum_{k=1}^n \vct{x}_k \vct{x}_k^\adj.
$$
Our goal is to study how the spectral-norm distance $\norm{ \mtx{Y} - \mtx{A} }$ between the
sample covariance and the true covariance depends on the number $n$ of samples.

For simplicity, we will perform the analysis under the extra assumption that the
$\ell_2$ norm of the random vector is bounded: $\normsq{\vct{x}} \leq B$.
This hypothesis can be relaxed if we apply a variant of the matrix Bernstein
inequality that reflects the typical magnitude of a summand $\mtx{S}_k$.
One such variant appears in the formula~\eqref{eqn:matrix-moment-ineq}.

We are in a situation where it is quite easy to see how the matrix Bernstein inequality applies.
Define the random deviation $\mtx{Z}$ of the estimator $\mtx{Y}$ from the true covariance matrix $\mtx{A}$:
$$
\mtx{Z} = \mtx{Y} - \mtx{A} = \sum_{k=1}^n \mtx{S}_k
\quad\text{where}\quad
\mtx{S}_k = \frac{1}{n} \big(\vct{x}_k \vct{x}_k^\adj - \mtx{A} \big)
\quad\text{for each index $k$.}
$$
The random matrices $\mtx{S}_k$ are independent, identically distributed, and centered.
To apply Theorem~\ref{thm:matrix-bernstein-intro},
we need to find a uniform bound $L$ for the summands,
and we need to control the matrix variance statistic $v(\mtx{Z})$.

First, let us develop a uniform bound on the spectral norm of each summand.
We may calculate that
$$
\norm{ \mtx{S}_k }
	= \frac{1}{n} \norm{ \smash{\vct{x}_k \vct{x}_k^\adj - \mtx{A}} }
	\leq \frac{1}{n} \big( \norm{ \smash{\vct{x}_k\vct{x}_k^\adj} } + \norm{\mtx{A}} \big)
	\leq \frac{2B}{n}.	
$$
The first relation is the triangle inequality.  The second follows from the assumption that
$\vct{x}$ is bounded and the observation that
\begin{equation*} \label{eqn:intro-scov-normA}
\norm{\mtx{A}} = \norm{ \smash{\Expect( \vct{xx}^\adj )} } \leq \Expect \norm{ \smash{\vct{xx}^\adj} }
	= \Expect{} \normsq{\vct{x}} \leq B.
\end{equation*}
This expression depends on Jensen's inequality and the hypothesis that $\vct{x}$ is bounded.

Second, we need to bound the matrix variance statistic $v(\mtx{Z})$
defined in~\eqref{eqn:matrix-variance-intro}.
The matrix $\mtx{Z}$ is Hermitian, so the two squares in this formula coincide with each other:
\begin{equation*} %
v(\mtx{Z}) = \norm{ \Expect{} \mtx{Z}^2 }
	= \norm{ \sum_{k=1}^n \Expect{} \mtx{S}_k^2 }.
\end{equation*}
We need to determine the variance of each summand.  By direct calculation,
\begin{align*} %
\Expect{} \mtx{S}_k^2
	= \frac{1}{n^2} \Expect{} \big(\vct{x}_k \vct{x}_k^\adj - \mtx{A} \big)^2
	&= \frac{1}{n^2} \Expect \big[ \normsq{\vct{x}_k} \cdot \vct{x}_k \vct{x}_k^\adj
	- \big(\vct{x}_k \vct{x}_k^\adj \big) \, \mtx{A} - \mtx{A} \, \big(\vct{x}_k \vct{x}_k^\adj \big)
	+ \mtx{A}^2 \big] \notag \\
	&\psdle \frac{1}{n^2} \big[ B \cdot \Expect \big( \vct{x}_k \vct{x}_k^\adj \big) - \mtx{A}^2 - \mtx{A}^2 + \mtx{A}^2 \big] \notag \\
	&\psdle \frac{B}{n^2} \cdot \mtx{A}
\end{align*}
The expression $\mtx{H} \psdle \mtx{T}$ means that $\mtx{T} - \mtx{H}$ is positive semidefinite.
We used the norm bound for the random vector $\vct{x}$ and the fact that expectation preserves
the semidefinite order.  In the last step, we dropped the negative-semidefinite term $-\mtx{A}^2$.
Summing this relation over $k$, we reach
$$
\mtx{0} \psdle \sum_{k=1}^n \Expect{} \mtx{S}_k^2
	\psdle \frac{B}{n} \cdot \mtx{A}.
$$ 
The matrix is positive-semidefinite because it is a sum of squares of Hermitian matrices.
Extract the spectral norm to arrive at
$$
v(\mtx{Z}) = \norm{\sum_{k=1}^n \Expect{} \mtx{S}_k^2 }
	\leq \frac{B \norm{\mtx{A}}}{n}.
$$
We have now collected the information we need to analyze the sample covariance estimator.

We can invoke the estimate~\eqref{eqn:bernstein-expect-intro} %
from the matrix Bernstein inequality, Theorem~\ref{thm:matrix-bernstein-intro},
with the uniform bound $L = 2B / n$ and the variance bound $v(\mtx{Z}) \leq B \norm{\mtx{A}} / n$.
We attain
$$
\Expect \norm{ \mtx{Y} - \mtx{A} }
	= \Expect \norm{ \mtx{Z} }
	\leq \sqrt{\frac{2B \norm{\mtx{A}} \log(2p)}{n}} + \frac{2B \log(2p)}{3n}.
$$
In other words, the error in approximating the sample covariance matrix is not too large when we have a sufficient number of samples.  If we wish to obtain a relative error on the order of $\eps$, we may take
$$
n \geq \frac{2B \log(2p)}{\eps^2 \norm{\mtx{A}}}.
$$
This selection yields
$$
\Expect \norm{ \mtx{Y} - \mtx{A} }
	\leq \big(\eps + \eps^2 \big) \cdot \norm{\mtx{A}}.
$$
It is often the case that $B = \textrm{Const} \cdot p$, so we discover that $n = \mathrm{Const} \cdot \eps^{-2} p \log p$ samples are sufficient for the sample covariance estimator to provide a relatively accurate estimate of the true covariance matrix $\mtx{A}$.  This bound is qualitatively sharp for worst-case distributions.

The analysis in this section applies to many other examples.  We encapsulate the argument
in Corollary~\ref{cor:matrix-approx-sampling}, which we use to study several more problems.

\subsection{History of this Example}

Covariance estimation may be the earliest application of matrix concentration tools in random matrix theory.
Rudelson~\cite{Rud99:Random-Vectors}, building on a suggestion of Pisier, showed how to use the noncommutative Khintchine inequality~\cite{LP86:Inegalites-Khintchine,LPP91:Noncommutative-Khintchine,Buc01:Operator-Khintchine,Buc05:Optimal-Constants} to obtain essentially optimal bounds on the sample covariance estimator of a bounded random vector.  The tutorial~\cite{Ver12:Introduction-Nonasymptotic} of Roman Vershynin offers an overview of this problem as well as many results and references.
The analysis of the sample covariance matrix here is adapted from the technical~\cite{GT11:Tail-Bounds}.  It leads to a result similar with the one Rudelson obtained in~\cite{Rud99:Random-Vectors}.

\subsection{Optimality of the Matrix Bernstein Inequality}

Theorem~\ref{thm:matrix-bernstein-intro} can be sharpened very little because it
applies to every random matrix $\mtx{Z}$ of the form~\eqref{eqn:matrix-bernstein-Z-intro}.
Let us say a few words about optimality now, postponing the details to~\S\ref{sec:bernstein-optimality}.

Suppose that $\mtx{Z}$ is a random matrix of the form~\eqref{eqn:matrix-bernstein-Z-intro}.
To make the comparison simpler,
we also insist that each summand $\mtx{S}_k$ is a \term{symmetric} random variable;
that is, $\mtx{S}_k$ and $- \mtx{S}_k$ have the same distribution for each index $k$.
Introduce the quantity
$$
L_{\star}^2 = \Expect{} \max\nolimits_k \normsq{\mtx{S}_k}.
$$
In \S\ref{sec:bernstein-optimality}, we will argue that these assumptions imply
\begin{equation} \label{eqn:norm-lower-upper-intro}
\begin{aligned}
\mathrm{const} \cdot \big[ v(\mtx{Z})\ + \ L_{\star}^2 \big]
	\quad&\leq\quad \Expect \normsq{ \mtx{Z} } \\
	&\leq\quad \mathrm{Const} \cdot \big[ v(\mtx{Z}) \log(d_1 + d_2)\ +\  L_{\star}^2 \log^2(d_1 + d_2) \big].
\end{aligned}
\end{equation}
In other words, the scale of $\Expect \normsq{\mtx{Z}}$ must depend on the matrix variance
statistic $v(\mtx{Z})$ and the average upper bound $L_{\star}^2$ for the summands.  The quantity
$L = \sup \norm{\mtx{S}_k}$ that appears in the matrix Bernstein inequality always exceeds
$L_{\star}$, sometimes by a large margin, but they capture the same type of information.

The significant difference between the lower and upper bound
in~\eqref{eqn:norm-lower-upper-intro} comes from the dimensional factor
$\log(d_1 + d_2)$.
There are random matrices $\mtx{Z}$ for which the lower bound gives a more
accurate reflection of $\Expect \normsq{\mtx{Z}}$, but there are also many
random matrices where the upper bound describes the behavior correctly.
At present, there is no method known for distinguishing between
these two extremes under the model~\eqref{eqn:matrix-bernstein-Z-intro}
for the random matrix. %

The tail bound~\eqref{eqn:bernstein-tail-intro} provides a useful tool
in practice, but it is not necessarily the best way to collect information about
large deviation probabilities.  To obtain more precise results, we recommend
using the expectation bound~\eqref{eqn:bernstein-expect-intro}
to control $\Expect \norm{\mtx{Z}}$ and then applying scalar concentration
inequalities to estimate $\Prob{ \norm{\mtx{Z}} \geq \Expect \norm{\mtx{Z}} + t }$.
The book~\cite{BLM13:Concentration-Inequalities} offers a good treatment of
the methods that are available for establishing scalar concentration.

\section{The Arsenal of Results}

The Bernstein inequality is probably the most familiar exponential tail bound
for a sum of independent random variables, but there are many more.
It turns out that essentially all of these scalar results admit extensions
that hold for random matrices.  In fact, many of the established techniques
for scalar concentration have analogs in the matrix setting.

\subsection{What's Here...}

This monograph focuses on a few key exponential concentration inequalities
for a sum of independent random matrices, and it describes some specific
applications of these results.

\begin{description}
\item	[Matrix Gaussian Series.]  A matrix Gaussian series is a random matrix that can be expressed as a sum of fixed matrices, each weighted by an independent standard normal random variable.  This formulation includes a surprising number of examples.  The most important are undoubtedly Wigner matrices and rectangular Gaussian matrices.  Another interesting case is a Toeplitz matrix with Gaussian entries.  The analysis of matrix Gaussian series appears in Chapter~\ref{chap:matrix-series}.

\item	[Matrix Rademacher Series.]  A matrix Rademacher series is a random matrix that can be written as a sum of fixed matrices, each weighted by an independent Rademacher random variable.%
\footnote{A \term{Rademacher random variable} takes the two values $\pm 1$ with equal probability.}
This construction includes things like random sign matrices, as well as a fixed matrix whose entries are modulated by random signs.  There are also interesting examples that arise in combinatorial optimization.  We treat these problems in Chapter~\ref{chap:matrix-series}.

\item	[Matrix Chernoff Bounds.]  The matrix Chernoff bounds apply to a random matrix that can be decomposed as a sum of independent, random positive-semidefinite matrices whose maximum eigenvalues are subject to a uniform bound.
These results allow us to obtain information about the norm of a random submatrix drawn from a fixed matrix.
They are also appropriate for studying the Laplacian matrix of a random graph.  See Chapter~\ref{chap:matrix-chernoff}.

\item	[Matrix Bernstein Bounds.]  The matrix Bernstein inequality concerns a random matrix that can be expressed as a sum of independent, centered random matrices that admit a uniform spectral-norm bound.  This result has many applications, including the analysis of randomized algorithms for matrix sparsification and matrix multiplication.  It can also be used to study the random features paradigm for approximating a kernel matrix.  Chapter~\ref{chap:matrix-bernstein} contains this material.

\item	[Intrinsic Dimension Bounds.]  Some matrix concentration inequalities can be improved when the random matrix has limited spectral content in most dimensions.  In this situation, we may be able to obtain bounds that do not depend on the ambient dimension.  See Chapter~\ref{chap:intrinsic} for details.
\end{description}

\noindent
We have chosen to present these results because they are illustrative,
and they have already found concrete applications.

\subsection{What's Not Here...}

The program of extending scalar concentration results to the matrix setting
has been quite fruitful, and there are many useful results beyond the ones
that we detail.  Let us mention some of the other tools that are available.
For further information, see the annotated bibliography.

First, there are additional exponential concentration inequalities for
a sum of independent random matrices.  All of the following results can
be established within the framework of this monograph.
\begin{itemize}
\item	\textbf{Matrix Hoeffding.}  This result concerns a sum of independent random matrices
whose squares are subject to semidefinite upper bounds~\cite[\S7]{Tro11:User-Friendly-FOCM}.

\item	\textbf{Matrix Bennett.}  This estimate sharpens the tail bound from the matrix Bernstein
inequality~\cite[\S6]{Tro11:User-Friendly-FOCM}.

\item	\textbf{Matrix Bernstein, Unbounded Case.}  The matrix Bernstein
inequality extends to the case where the moments of the summands grow at a controlled rate.
See~\cite[\S6]{Tro11:User-Friendly-FOCM} or~\cite{Kol11:Oracle-Inequalities}.

\item	\textbf{Matrix Bernstein, Nonnegative Summands.}  The lower tail of the
Bernstein inequality can be improved when the summands are positive semidefinite~\cite{Mau03:Bound-Deviation};
this result extends to the matrix setting.  By a different argument, the dimensional factor can be removed
from this bound for a class of interesting examples~\cite[Thm.~3.1]{Oli13:Lower-Tail}.
\end{itemize}

The approach in this monograph can be adapted to obtain exponential concentration
for matrix-valued martingales.  Here are a few results from this category:

\begin{itemize}
\item	\textbf{Matrix Azuma.}  This is the martingale version of the matrix Hoeffding bound~\cite[\S7]{Tro11:User-Friendly-FOCM}.

\item	\textbf{Matrix Bounded Differences}.  The matrix Azuma inequality gives bounds for the spectral
norm of a matrix-valued function of independent random variables~\cite[\S7]{Tro11:User-Friendly-FOCM}.

\item	\textbf{Matrix Freedman.}  This result can be viewed as the martingale extension of the matrix
Bernstein inequality~\cite{Oli10:Concentration-Adjacency,Tro11:Freedmans-Inequality}.

\end{itemize}

\noindent
The technical report~\cite{Tro11:User-Friendly-Martingale-TR} explains how to extend other bounds for
a sum of independent random matrices to the martingale setting.

\term{Polynomial moment inequalities} provide bounds for the expected trace
of a power of a random matrix.
Moment inequalities for a sum of independent
random matrices can provide useful information when the summands have heavy
tails or else a uniform bound does not reflect the typical size of the summands.

\begin{itemize}
\item	\textbf{Matrix Khintchine.}  The matrix Khintchine inequality is
the polynomial version of the exponential bounds for matrix Gaussian series and
matrix Rademacher series.  This result is presented in~\eqref{eqn:nc-khintchine}.
See the papers~\cite{LP86:Inegalites-Khintchine,Buc01:Operator-Khintchine,Buc05:Optimal-Constants}
or~\cite[Cor.~7.3]{MJCFT12:Matrix-Concentration} for proofs.

\item	\textbf{Matrix Moment Inequalities.}  The matrix Chernoff inequality
admits a polynomial variant; the simplest form appears in~\eqref{eqn:matrix-rosenthal}.
The matrix Bernstein inequality also has a polynomial variant,
stated in~\eqref{eqn:matrix-moment-ineq}.  These
bounds are drawn from~\cite[App.]{CGT12:Masked-Sample}.
\end{itemize}

\noindent
The methods that lead to polynomial moment inequalities differ
substantially from the techniques in this monograph, so we cannot
include the proofs here.
The annotated bibliography includes references to the large
literature on moment inequalities for random matrices.

Recently, Lester Mackey and the author, in collaboration with Daniel Paulin
and several other researchers~\cite{MJCFT12:Matrix-Concentration, PMT14:Efron-Stein},
have developed another framework for establishing matrix concentration.
This approach extends a scalar argument, introduced by
Chatterjee~\cite{Cha08:Concentration-Inequalities,Cha07:Steins-Method},
that depends on exchangeable pairs and Markov chain couplings.
The \term{method of exchangeable pairs} delivers both exponential concentration
inequalities and polynomial moment inequalities for random matrices,
and it can reproduce many of the bounds mentioned above.
It also leads to new results:

\begin{itemize}
\item	\textbf{Polynomial Efron--Stein Inequality for Matrices.}  This bound is a matrix version
of the polynomial Efron--Stein inequality~\cite[Thm.~1]{BBLM05:Moment-Inequalities}.
It controls the polynomial moments of a centered random matrix that is a
function of independent random variables~\cite[Thm.~4.2]{PMT14:Efron-Stein}.

\item	\textbf{Exponential Efron--Stein Inequality for Matrices.}  This bound is the matrix extension
of the exponential Efron--Stein inequality~\cite[Thm.~1]{BLM03:Concentration-Inequalities}.
It leads to exponential concentration inequalities for a centered random matrix constructed
from independent random variables~\cite[Thm.~4.3]{PMT14:Efron-Stein}.
\end{itemize}

\noindent
Another significant advantage is that the method of exchangeable pairs can sometimes handle
random matrices built from dependent random variables.  Although the simplest version of
the exchangeable pairs argument is more elementary than the approach in this monograph,
it takes a lot of effort to establish the more useful inequalities.  With some
regret, we have chosen not to include this material because the method and results
are accessible to a narrower audience.

Finally, we remark that the modified logarithmic Sobolev inequalities
of~\cite{BLM03:Concentration-Inequalities,BBLM05:Moment-Inequalities}
also extend to the matrix setting~\cite{CT14:Subadditivity-Matrix}.
Unfortunately, the matrix variants do not seem to be as useful as the scalar results.

\section{About This Monograph}

This monograph is intended for graduate students and researchers in computational mathematics who want to learn some modern techniques for analyzing random matrices.  The preparation required is minimal.   We assume familiarity with calculus, applied linear algebra, the basic theory of normed spaces, and classical probability theory up through the elementary concentration inequalities (such as Markov and Bernstein).  Beyond the basics, which can be gleaned from any good textbook,
we include all the required background in Chapter~\ref{chap:matrix-functions}.

The material here is based primarily on the paper ``User-Friendly Tail Bounds for Sums of Random Matrices'' by the present author~\cite{Tro11:User-Friendly-FOCM}.  There are several significant revisions to this earlier work:

\begin{description}
\item	[Examples and Applications.]  Many of the papers on matrix concentration give limited information about how the results can be used to solve problems of interest.  A major part of these notes consists of worked examples and applications that indicate how matrix concentration inequalities apply to practical questions.

\item	[Expectation Bounds.]  This work collects bounds for the expected value of the spectral norm of a random matrix and bounds for the expectation of the smallest and largest eigenvalues of a random symmetric matrix.  Some of these useful results have appeared piecemeal in the literature~\cite{CGT12:Masked-Sample,MJCFT12:Matrix-Concentration}, but they have not been included in a unified presentation.

\item	[Optimality.]  We explain why each matrix concentration inequality is (nearly) optimal.  This presentation includes examples to show that each term in each bound is necessary to describe some particular phenomenon.

\item	[Intrinsic Dimension Bounds.]  Over the last few years, there have been some refinements to the basic matrix concentration bounds that improve the dependence on dimension~\cite{HKZ12:Tail-Inequalities,Min11:Some-Extensions}.  We describe a new framework that allows us to prove these results with ease.

\item	[Lieb's Theorem.]  The matrix concentration inequalities in this monograph depend on a deep theorem~\cite[Thm.~6]{Lie73:Convex-Trace} from matrix analysis due to Elliott Lieb.  We provide a complete proof of this result, along with all the background required to understand the argument.  %

\item	[Annotated Bibliography.]  We have included a list of the major works on matrix concentration, including a short summary of the main contributions of these papers.  We hope this catalog will be a valuable guide for further reading.
\end{description}

The organization of the notes is straightforward.  Chapter~\ref{chap:matrix-functions} contains background material that is needed for the analysis.  Chapter~\ref{chap:matrix-lt} describes the framework for developing exponential concentration inequalities for matrices.  Chapter~\ref{chap:matrix-series} presents the first set of results and examples, concerning matrix Gaussian and Rademacher series.  Chapter~\ref{chap:matrix-chernoff} introduces the matrix Chernoff bounds and their applications, and Chapter~\ref{chap:matrix-bernstein} expands on our discussion of the matrix Bernstein inequality.  Chapter~\ref{chap:intrinsic} shows how to sharpen some of the results so that they depend on an intrinsic dimension parameter.
Chapter~\ref{chap:lieb} contains the proof of Lieb's theorem.  We conclude with resources on matrix concentration and a bibliography.

To make the presentation smoother, we have not followed all of the conventions for scholarly articles in journals.
In particular, almost all the citations appear in the notes at the end of each chapter.
Our aim has been to explain the ideas as clearly as possible,
rather than to interrupt the narrative with an elaborate genealogy of results.

\makeatletter{}%

\chapter[Matrix Functions \& Probability with Matrices]{Matrix Functions \& \\ Probability with Matrices} \label{chap:matrix-functions}

We begin the main development with a short overview of the background material that is required to understand the proofs and, to a lesser extent, the statements of matrix concentration inequalities.  We have been careful to provide cross-references to these foundational results, so most readers will be able to proceed directly to the main theoretical development in Chapter~\ref{chap:matrix-lt} or the discussion of specific random matrix inequalities in Chapters~\ref{chap:matrix-series},~\ref{chap:matrix-chernoff}, and~\ref{chap:matrix-bernstein}.

\subsubsection{Overview}

Section~\ref{sec:matrix-background} covers material from matrix theory concerning the behavior of matrix functions.  Section~\ref{sec:probability-background} reviews relevant results from probability, especially the parts involving matrices.

\section{Matrix Theory Background} \label{sec:matrix-background}

Let us begin with the results we require from the field of matrix analysis.

\subsection{Conventions}

We write $\R$ and $\C$ for the real and complex fields.
A \term{matrix} is a finite, two-dimensional array of complex numbers.
Many parts of the discussion do not depend on the size of a matrix, so we specify dimensions only when it really matters.
Readers who wish to think about real-valued matrices will find that none of the results require any essential modification in this setting.

\subsection{Spaces of Vectors}

The symbol $\C^d$ denotes the complex linear space consisting of $d$-dimensional column vectors
with complex entries, equipped with the usual componentwise addition and multiplication by a complex scalar.
We endow this space with the standard $\ell_2$ inner product
$$
\ip{ \vct{x} }{ \smash{ \vct{y} }} = \vct{x}^\adj \vct{y} = \sum_{i=1}^d x_i^\adj y_i
\quad\text{for all $\vct{x}, \vct{y} \in \C^d$.}
$$
The symbol ${}^\adj$ denotes the complex conjugate of a number, as well as the conjugate transpose of a vector or matrix.  The inner product induces the $\ell_2$ norm:
\begin{equation} \label{eqn:l2-norm}
\normsq{ \vct{x} } = \ip{\vct{x}}{\vct{x}} = \sum_{i=1}^d \abssq{x_i}
\quad\text{for all $\vct{x} \in \C^d$.}
\end{equation}

Similarly, the real linear space $\R^d$ consists of $d$-dimensional column vectors with real entries,
equipped with the usual componentwise addition and multiplication by a real scalar.  The inner
product and $\ell_2$ norm on $\R^d$ are defined by the same relations as for $\C^d$.

\subsection{Spaces of Matrices}

We write $\mathbb{M}^{d_1 \times d_2}$ for the complex linear space consisting of $d_1 \times d_2$ matrices with complex entries, equipped with the usual componentwise addition and multiplication by a complex scalar.  It is convenient to identify $\C^d$ with the space $\mathbb{M}^{d \times 1}$.  We write $\mathbb{M}_d$ for the algebra of $d \times d$ square, complex matrices.  The term ``algebra'' just means that we can multiply two matrices in $\mathbb{M}_d$ to obtain another matrix in $\mathbb{M}_d$.

\subsection{Topology \& Convergence}

We can endow the space of matrices with the Frobenius norm:
\begin{equation} \label{eqn:frobenius-norm}
\fnormsq{ \mtx{B} } = \sum_{j=1}^{d_1} \sum_{k=1}^{d_2} \abssq{ \smash{b_{jk}} }
\quad\text{for $\mtx{B} \in \mathbb{M}^{d_1 \times d_2}$.}
\end{equation}
Observe that the Frobenius norm on $\mathbb{M}^{d \times 1}$ coincides with the $\ell_2$
norm~\eqref{eqn:l2-norm} on $\C^d$.

The Frobenius norm induces a norm topology on the space of matrices.  In particular,
given a sequence $\{ \mtx{B}_n : n = 1,2,3,\dots \} \subset \mathbb{M}^{d_1\times d_2}$,
the symbol
$$
\mtx{B}_n \to \mtx{B}
\quad\text{means that}\quad
\fnorm{ \mtx{B}_n - \mtx{B} } \to 0
\quad\text{as $n \to \infty$.}
$$
Open and closed sets are also defined with respect to the Frobenius-norm topology.
Every other norm topology on $\mathbb{M}^{d_1 \times d_2}$ induces the same notions
of convergence and open sets.  We use the same topology for the normed linear spaces
$\mathbb{C}^d$ and $\mathbb{M}_d$.

\subsection{Basic Vectors and Matrices}

We write $\mtx{0}$ for the zero vector or the zero matrix, while $\Id$ denotes the identity matrix.  Occasionally, we add a subscript to specify the dimension.  For instance, $\Id_d$ is the $d \times d$ identity.

The standard basis for the linear space $\C^d$ consists of \term{standard basis vectors}.  
The standard basis vector $\mathbf{e}_k$ is a column vector with a one in position $k$ and zeros elsewhere.
We also write $\mathbf{e}$ for the column vector whose entries all equal one.
There is a related notation for the standard basis of $\mathbb{M}^{d_1 \times d_2}$.  
We write $\mathbf{E}_{jk}$ for the standard basis matrix with a one in position $(j, k)$ and zeros elsewhere.  
The dimension of a standard basis vector and a standard basis matrix is typically determined by the context.

A square matrix $\mtx{Q}$ that satisfies $\mtx{QQ}^\adj = \Id =  \mtx{Q}^\adj \mtx{Q}$ is called a \term{unitary matrix}.
We reserve the letter $\mtx{Q}$ for a unitary matrix.  %
Readers who prefer the real setting may prefer to regard $\mtx{Q}$ as an orthogonal matrix.

\subsection{Hermitian Matrices and Eigenvalues}

An \term{Hermitian matrix} $\mtx{A}$ is a square matrix that satisfies $\mtx{A} = \mtx{A}^\adj$.  A useful intuition from operator theory is that Hermitian matrices are analogous with real numbers, while general square matrices are analogous with complex numbers.

We write $\mathbb{H}_d$ for the collection of $d \times d$ Hermitian matrices.  The set $\mathbb{H}_d$ is a linear space over the real field.  That is, we can add Hermitian matrices and multiply them by real numbers.
The space $\mathbb{H}_d$ inherits the Frobenius-norm topology from $\mathbb{M}_d$.
We adopt Parlett's convention~\cite{Par98:Symmetric-Eigenvalue} that bold Latin and Greek letters that are symmetric around the vertical axis ($\mtx{A}$, $\mtx{H}$, \dots, $\mtx{Y}$; $\mtx{\Delta}$, $\mtx{\Theta}$, \dots, $\mtx{\Omega}$) always represent Hermitian~matrices.

Each Hermitian matrix $\mtx{A} \in \mathbb{H}_d$ has an \term{eigenvalue decomposition}
\begin{equation} \label{eqn:eigenvalue-decomposition}
\mtx{A} = \mtx{Q \Lambda Q}^\adj
\quad\text{where $\mtx{Q} \in \mathbb{M}_d$ is unitary and $\mtx{\Lambda} \in \mathbb{H}_d$ is diagonal}.
\end{equation}
The diagonal entries of $\mtx{\Lambda}$ are real numbers, which are referred to as the \term{eigenvalues} of $\mtx{A}$.  The unitary matrix $\mtx{Q}$ in the eigenvalue decomposition is not determined completely, but the list of eigenvalues is unique modulo permutations.  The eigenvalues of an Hermitian matrix are often referred to as its \term{spectrum}.

We denote the algebraic minimum and maximum eigenvalues of an Hermitian matrix $\mtx{A}$ by $\lambda_{\min}(\mtx{A})$ and $\lambda_{\max}(\mtx{A})$.  The extreme eigenvalue maps are positive homogeneous:
\begin{equation} \label{eqn:eig-pos-homo}
\lambda_{\min}(\alpha \mtx{A}) = \alpha \lambda_{\min}(\mtx{A})
\quad\text{and}\quad
\lambda_{\max}(\alpha \mtx{A}) = \alpha \lambda_{\max}(\mtx{A})
\quad\text{for $\alpha \geq 0$.}
\end{equation}
There is an important relationship between minimum and maximum eigenvalues:
\begin{equation} \label{eqn:min-max-sign-eig}
\lambda_{\min}(-\mtx{A}) = - \lambda_{\max}(\mtx{A}).
\end{equation}
The fact~\eqref{eqn:min-max-sign-eig} warns us that we must be careful
passing scalars through an eigenvalue map.

This work rarely requires any eigenvalues of an Hermitian matrix
aside from the minimum and maximum.
When they do arise, we usually order the other eigenvalues in the weakly decreasing sense:
$$
\lambda_1(\mtx{A}) \geq \lambda_2(\mtx{A}) \geq \dots \geq \lambda_d(\mtx{A})
\quad\text{for $\mtx{A} \in \mathbb{H}_d$.}
$$
On occasion, it is more natural to arrange eigenvalues in the weakly increasing sense:
$$
\lambda_1^\uparrow(\mtx{A}) \leq \lambda_2^\uparrow(\mtx{A}) \leq \dots \leq \lambda_d^\uparrow(\mtx{A})
\quad\text{for $\mtx{A} \in \mathbb{H}_d$.}
$$
To prevent confusion, we will accompany this notation with a reminder.

Readers who prefer the real setting may read ``symmetric'' in place of ``Hermitian.''  In this case, the eigenvalue decomposition involves an orthogonal matrix $\mtx{Q}$.  Note, however, that the term ``symmetric'' has a different meaning when applied to random variables!

\subsection{The Trace of a Square Matrix}

The \term{trace} of a square matrix, denoted by $\trace$, is the sum of its diagonal entries.
\begin{equation} \label{eqn:trace}
\trace \mtx{B} = \sum_{j=1}^d b_{jj}
\quad\text{for $\mtx{B} \in \mathbb{M}_d$.}
\end{equation}
The trace is unitarily invariant:
\begin{equation} \label{eqn:trace-unitary-invar}
\trace \mtx{B} = \trace( \mtx{Q} \mtx{B} \mtx{Q}^\adj )
\quad\text{for each $\mtx{B} \in \mathbb{M}_d$ and each unitary $\mtx{Q} \in \mathbb{M}_d$.}
\end{equation}
In particular, the existence of an eigenvalue decomposition~\eqref{eqn:eigenvalue-decomposition} shows that the trace of an Hermitian~matrix equals the sum of its eigenvalues.%
\footnote{This fact also holds true for a general square matrix.}

Another valuable relation connects the trace with the Frobenius norm:
\begin{equation} \label{eqn:trace-Frobenius}
\fnormsq{\mtx{C}} = \trace( \mtx{CC}^\adj) = \trace( \mtx{C}^\adj \mtx{C} ).
\quad\text{for all $\mtx{C} \in \mathbb{M}^{d_1 \times d_2}$.}
\end{equation}
This expression follows from the definitions~\eqref{eqn:frobenius-norm}
and~\eqref{eqn:trace} and a short calculation.

\subsection{The Semidefinite Partial Order} \label{sec:psd-order}

A matrix $\mtx{A} \in \mathbb{H}_d$ is \term{positive semidefinite} when it satisfies
\begin{equation} \label{eqn:psd-rayleigh}
\vct{u}^\adj \mtx{A} \vct{u} \geq 0
\quad\text{for each vector $\vct{u} \in \C^d$.}
\end{equation}
Equivalently, a matrix $\mtx{A}$ is positive semidefinite when it is Hermitian
and its eigenvalues are all nonnegative.
Similarly, we say that $\mtx{A} \in \mathbb{H}_d$ is \term{positive definite} when
\begin{equation} \label{eqn:pd-rayleigh}
\vct{u}^\adj \mtx{A} \vct{u} > 0
\quad\text{for each nonzero vector $\vct{u} \in \C^d$.}
\end{equation}
Equivalently, $\mtx{A}$ is positive definite when it is Hermitian
and its eigenvalues are all positive.

Positive-semidefinite and positive-definite matrices play a special role in matrix
theory, analogous with the role of nonnegative and positive numbers in real analysis.
In particular, observe that the square of an Hermitian matrix is always positive
semidefinite.  The square of a nonsingular Hermitian matrix is always positive definite.

The family of positive-semidefinite matrices in $\mathbb{H}_d$ forms a closed convex cone.%
\footnote{A \term{convex cone} is a subset $C$ of a linear space that is closed under conic combinations.
That is, $\tau_1 \vct{x}_1 + \tau_2 \vct{x}_2 \in C$ for all $\vct{x}_1, \vct{x}_2 \in C$ and all
$\tau_1, \tau_2 > 0$.  Equivalently, $C$ is a set that is both convex and positively homogeneous.}
This geometric fact follows easily from the definition~\eqref{eqn:psd-rayleigh}.  Indeed,
for each vector $\vct{u} \in \mathbb{C}^d$, the condition
$$
\big\{ \mtx{A} \in \mathbb{H}_d : \vct{u}^\adj \mtx{A} \vct{u} \geq 0 \big\}
$$
describes a closed halfspace in $\mathbb{H}_d$.  As a consequence, the family of positive-semidefinite
matrices in $\mathbb{H}_d$ is an intersection of closed halfspaces.  Therefore, it is a closed convex set.
To see why this convex set is a cone, just note that
$$
\text{$\mtx{A}$ positive semidefinite}
\quad\text{implies}\quad
\text{$\alpha \mtx{A}$ is positive semidefinite for $\alpha \geq 0$.}
$$
Beginning from~\eqref{eqn:pd-rayleigh}, similar considerations
show that the family of positive-definite matrices
in $\mathbb{H}_d$ forms an (open) convex cone.

We may now define the \term{semidefinite partial order} $\psdle$
on the real-linear space $\mathbb{H}_d$ using the rule
\begin{equation} \label{eqn:semidefinite-order}
\mtx{A} \psdle \mtx{H}
\quad\text{if and only if}\quad
\text{$\mtx{H} - \mtx{A}$ is positive semidefinite.}
\end{equation}
In particular, we write $\mtx{A} \psdge \mtx{0}$ to indicate that $\mtx{A}$ is positive semidefinite and $\mtx{A} \psdgt \mtx{0}$ to indicate that $\mtx{A}$ is positive definite.  For a diagonal matrix $\mtx{\Lambda}$, the expression $\mtx{\Lambda} \psdge \mtx{0}$ means that each entry of $\mtx{\Lambda}$ is nonnegative.

The semidefinite order is preserved by conjugation, a simple fact whose importance cannot be overstated.

\begin{prop}[Conjugation Rule]
\label{prop:conjugation-rule}
Let $\mtx{A}$ and $\mtx{H}$ be Hermitian matrices of the same dimension, and let $\mtx{B}$
be a general matrix with compatible dimensions.  Then
\begin{equation} \label{eqn:conjugation-rule}
\mtx{A} \psdle \mtx{H}
\quad\text{implies}\quad
\mtx{B}\mtx{A} \mtx{B}^\adj  \psdle \mtx{B} \mtx{H} \mtx{B}^\adj.
\end{equation}
\end{prop}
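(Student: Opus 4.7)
The plan is to reduce the claim to the Rayleigh-quotient characterization of positive semidefiniteness given in~\eqref{eqn:psd-rayleigh} and then push the matrix $\mtx{B}$ across the inner product via the conjugate transpose.

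First I would rewrite what needs to be proved. By the definition~\eqref{eqn:semidefinite-order} of the semidefinite order, the hypothesis $\mtx{A} \psdle \mtx{H}$ says $\mtx{H} - \mtx{A}$ is positive semidefinite, while the conclusion $\mtx{B}\mtx{A}\mtx{B}^\adj \psdle \mtx{B}\mtx{H}\mtx{B}^\adj$ says $\mtx{B}\mtx{H}\mtx{B}^\adj - \mtx{B}\mtx{A}\mtx{B}^\adj$ is positive semidefinite. Distributing $\mtx{B}$ on the left and $\mtx{B}^\adj$ on the right, this is equivalent to
\begin{equation*}
\mtx{B}(\mtx{H} - \mtx{A})\mtx{B}^\adj \psdge \mtx{0}.
\end{equation*}
So it suffices to show that whenever $\mtx{M} \in \Sym_d$ is positive semidefinite and $\mtx{B}$ has compatible dimensions, $\mtx{B} \mtx{M} \mtx{B}^\adj$ is positive semidefinite.

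Next I would verify the two ingredients of positive semidefiniteness. Hermiticity of $\mtx{B}\mtx{M}\mtx{B}^\adj$ is immediate from $\mtx{M} = \mtx{M}^\adj$, since $(\mtx{B}\mtx{M}\mtx{B}^\adj)^\adj = \mtx{B} \mtx{M}^\adj \mtx{B}^\adj = \mtx{B}\mtx{M}\mtx{B}^\adj$. For the quadratic-form condition~\eqref{eqn:psd-rayleigh}, pick an arbitrary vector $\vct{u}$ in the ambient complex space and compute
\begin{equation*}
\vct{u}^\adj \bigl( \mtx{B} \mtx{M} \mtx{B}^\adj \bigr) \vct{u}
   = (\mtx{B}^\adj \vct{u})^\adj \, \mtx{M} \, (\mtx{B}^\adj \vct{u})
   = \vct{v}^\adj \mtx{M} \vct{v},
\end{equation*}
where I set $\vct{v} = \mtx{B}^\adj \vct{u}$. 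Because $\mtx{M}$ is positive semidefinite, the Rayleigh-quotient criterion~\eqref{eqn:psd-rayleigh} yields $\vct{v}^\adj \mtx{M} \vct{v} \geq 0$. Since $\vct{u}$ was arbitrary, $\mtx{B} \mtx{M} \mtx{B}^\adj$ satisfies~\eqref{eqn:psd-rayleigh} and is therefore positive semidefinite.

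Applying this with $\mtx{M} = \mtx{H} - \mtx{A}$ gives $\mtx{B}(\mtx{H} - \mtx{A})\mtx{B}^\adj \psdge \mtx{0}$, which is exactly the required conclusion. There is no real obstacle here; the whole argument is a one-line substitution, and its significance is simply that it extracts maximal mileage from the quadratic-form definition of the semidefinite order.
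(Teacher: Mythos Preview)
Your proof is correct. The paper states this proposition without proof, treating it as a basic fact about the semidefinite order; your argument via the quadratic-form characterization~\eqref{eqn:psd-rayleigh} is exactly the standard one-line verification and would serve as the natural proof had the paper chosen to include one.
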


Finally, we remark that the trace of a positive-semidefinite matrix is at least as large as its maximum eigenvalue:
\begin{equation}\label{eqn:maxeig-trace}
\lambda_{\max}(\mtx{A}) \leq \trace \mtx{A}
\quad\text{when $\mtx{A}$ is positive semidefinite.}
\end{equation}
This property follows from the definition of a positive-semidefinite matrix and the
fact that the trace of $\mtx{A}$ equals the sum of the eigenvalues.

\subsection{Standard Matrix Functions}

Let us describe the most direct method for extending a function on the real numbers to
a function on Hermitian matrices.  The basic idea is to apply the function to
each eigenvalue of the matrix to construct a new matrix.

\begin{defn}[Standard Matrix Function] \label{def:standard-matrix-fn}
Let $f : I \to \R$ where $I$ is an interval of the real line.  Consider a matrix $\mtx{A} \in \mathbb{H}_d$ whose eigenvalues are contained in $I$.  Define the matrix $f(\mtx{A}) \in \mathbb{H}_d$ using an eigenvalue decomposition
of $\mtx{A}$:
$$
f(\mtx{A}) = \mtx{Q} \begin{bmatrix} f(\lambda_1) \\ & \ddots \\ && f(\lambda_d) \end{bmatrix} \mtx{Q}^\adj
\quad\text{where}\quad
\mtx{A} = \mtx{Q} \begin{bmatrix} \lambda_1 \\ & \ddots \\ && \lambda_d \end{bmatrix} \mtx{Q}^\adj.
$$
In particular, we can apply $f$ to a real diagonal matrix by applying the function to each diagonal entry.

It can be verified that the definition of $f(\mtx{A})$ does not depend on which eigenvalue decomposition $\mtx{A} = \mtx{Q\Lambda Q}^\adj$ that we choose.  Any matrix function that arises in this fashion is called a \term{standard matrix function}.
\end{defn}

To confirm that this definition is sensible, consider the power function $f(t) = t^q$ for a natural number $q$.  When $\mtx{A}$ is Hermitian, the power function $f(\mtx{A}) = \mtx{A}^q$, where $\mtx{A}^q$ is the $q$-fold product of $\mtx{A}$.

For an Hermitian matrix $\mtx{A}$, whenever we write the power function $\mtx{A}^q$ or the exponential $\econst^{\mtx{A}}$ or the logarithm $\log \mtx{A}$, we are always referring to a standard matrix function.  Note that we only define the matrix logarithm for positive-definite matrices, and non-integer powers are only valid for positive-semidefinite matrices.

The following result is an immediate, but important, consequence of the definition of a standard matrix function.

\begin{prop}[Spectral Mapping Theorem]
\label{prop:spectral-mapping}
Let $f : I \to \R$ be a function on an interval $I$ of the real line, and let $\mtx{A}$ be an Hermitian
matrix whose eigenvalues are contained in $I$.  If $\lambda$ is an eigenvalue of $\mtx{A}$, then
$f(\lambda)$ is an eigenvalue of $f(\mtx{A})$.
\end{prop}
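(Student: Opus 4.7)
The plan is essentially to unfold the definition of a standard matrix function and read off the conclusion, since the Spectral Mapping Theorem is built into the construction.

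First, I would fix an eigenvalue decomposition $\mtx{A} = \mtx{Q}\mtx{\Lambda}\mtx{Q}^\adj$ as in Definition~\ref{def:standard-matrix-fn}, where $\mtx{\Lambda}$ is the diagonal matrix with entries $\lambda_1,\ldots,\lambda_d$, all lying in $I$ by hypothesis. The list $\lambda_1,\ldots,\lambda_d$ exhausts the eigenvalues of $\mtx{A}$ (with multiplicity), because similarity by the unitary $\mtx{Q}$ preserves the spectrum and the eigenvalues of a diagonal matrix are its diagonal entries. So if $\lambda$ is any eigenvalue of $\mtx{A}$, then $\lambda = \lambda_i$ for some index $i$.

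Next, by the definition of the standard matrix function,
$$
f(\mtx{A}) = \mtx{Q} \begin{bmatrix} f(\lambda_1) \\ & \ddots \\ && f(\lambda_d) \end{bmatrix} \mtx{Q}^\adj.
$$
Again using unitary invariance of the spectrum, the eigenvalues of $f(\mtx{A})$ are exactly $f(\lambda_1),\ldots,f(\lambda_d)$. In particular, $f(\lambda) = f(\lambda_i)$ appears on the diagonal, so it is an eigenvalue of $f(\mtx{A})$, completing the proof.

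There is really no obstacle here: the only subtlety worth flagging is the well-definedness of $f(\mtx{A})$, i.e.\ independence from the choice of eigenvalue decomposition. That is asserted in Definition~\ref{def:standard-matrix-fn} and is therefore available to us; the present argument does not rely on that independence, since we only need to exhibit \emph{one} decomposition in which $f(\lambda)$ appears on the diagonal. If one wanted a self-contained justification of well-definedness, the standard route is to observe that on each eigenspace of $\mtx{A}$ the operator $f(\mtx{A})$ acts as the scalar $f(\lambda)$, which depends only on the eigenvalue and the eigenspace, not on the particular orthonormal basis chosen for it.
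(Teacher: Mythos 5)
Your proof is correct and takes exactly the route the paper intends: the paper does not give an explicit proof, remarking only that the proposition is ``an immediate, but important, consequence of the definition of a standard matrix function,'' and your argument simply makes that immediacy precise by unfolding Definition~\ref{def:standard-matrix-fn} and reading off $f(\lambda)$ from the diagonal. Your closing remark about well-definedness is a sensible aside and does not introduce any gap.
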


When a real function has a power series expansion, we can also represent the standard matrix function with the same power series expansion.  Indeed, suppose that $f : I \to \R$ is defined on an interval $I$ of the real line, and assume that the eigenvalues of $\mtx{A}$ are contained in $I$.  Then
$$
f(a) = c_0 + \sum\limits_{q=1}^\infty c_q a^q
\quad\text{for $a \in I$}
\quad\text{implies}\quad
f(\mtx{A}) = c_0 \Id + \sum\limits_{q=1}^\infty c_q \mtx{A}^q.
$$
This formula can be verified using an eigenvalue decomposition of $\mtx{A}$ and the definition of a standard matrix function.

\subsection{The Transfer Rule}

In most cases, the ``obvious'' generalization of an inequality for real-valued functions fails to hold in the semidefinite order.  Nevertheless, there is one class of inequalities for real functions that extends to give semidefinite relationships for standard matrix functions.

\begin{prop}[Transfer Rule]
\label{prop:transfer-rule}
Let $f$ and $g$ be real-valued functions defined on an interval $I$ of the real line,
and let $\mtx{A}$ be an Hermitian matrix whose eigenvalues are contained in $I$.
Then
\begin{equation} \label{eqn:transfer-rule}
f(a) \leq g(a)
\quad\text{for each $a \in I$}\quad
\text{implies}\quad
f(\mtx{A}) \psdle g(\mtx{A}).
\end{equation}
\end{prop}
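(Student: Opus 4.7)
The plan is to reduce the semidefinite inequality to a pointwise inequality between real numbers by diagonalizing $\mtx{A}$. Fix an eigenvalue decomposition $\mtx{A} = \mtx{Q\Lambda Q}^\adj$ with $\mtx{\Lambda} = \diag(\lambda_1, \dots, \lambda_d)$; by hypothesis each $\lambda_i$ lies in $I$, so the scalar values $f(\lambda_i)$ and $g(\lambda_i)$ are well-defined. By Definition~2.3, the standard matrix functions satisfy
$$
f(\mtx{A}) = \mtx{Q}\, \diag(f(\lambda_1), \dots, f(\lambda_d))\, \mtx{Q}^\adj
\quad\text{and}\quad
g(\mtx{A}) = \mtx{Q}\, \diag(g(\lambda_1), \dots, g(\lambda_d))\, \mtx{Q}^\adj.
$$

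Next I would invoke the scalar hypothesis $f(a) \leq g(a)$ at each eigenvalue $\lambda_i$, which yields $g(\lambda_i) - f(\lambda_i) \geq 0$ for every $i$. A diagonal Hermitian matrix is positive semidefinite precisely when its diagonal entries are nonnegative, so
$$
\mtx{0} \psdle \diag\big(g(\lambda_1) - f(\lambda_1),\, \dots,\, g(\lambda_d) - f(\lambda_d)\big).
$$

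Finally, I would push this semidefinite inequality back through the unitary change of basis by applying the Conjugation Rule (Proposition~2.1) with $\mtx{B} = \mtx{Q}$. Linearity of conjugation and the two formulas above give
$$
\mtx{0} \psdle \mtx{Q}\, \diag\big(g(\lambda_i) - f(\lambda_i)\big)\, \mtx{Q}^\adj = g(\mtx{A}) - f(\mtx{A}),
$$
which is the desired conclusion $f(\mtx{A}) \psdle g(\mtx{A})$.

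There is essentially no real obstacle here: the proof is a three-line consequence of the definition of a standard matrix function combined with the Conjugation Rule. The only point that merits a sentence of care is the well-definedness of $f(\mtx{A})$ and $g(\mtx{A})$ under a different choice of diagonalizing unitary, but this is already absorbed into Definition~2.3. In fact this argument reveals a stronger statement if one wished: the eigenvalues of $g(\mtx{A}) - f(\mtx{A})$ are exactly the nonnegative numbers $g(\lambda_i) - f(\lambda_i)$, so the transferred inequality inherits any pointwise gap from the scalar inequality.
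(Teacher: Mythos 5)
Your proof is correct and follows essentially the same route as the paper's: decompose $\mtx{A} = \mtx{Q\Lambda Q}^\adj$, observe $f(\mtx{\Lambda}) \psdle g(\mtx{\Lambda})$ entrywise, conjugate by $\mtx{Q}$ via the Conjugation Rule, and invoke the definition of a standard matrix function. Your write-up simply spells out the steps that the paper compresses into a single sentence.
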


\begin{proof}
Decompose $\mtx{A} = \mtx{Q\Lambda Q}^\adj$.  It is immediate that $f(\mtx{\Lambda}) \psdle g(\mtx{\Lambda})$.  The Conjugation Rule~\eqref{eqn:conjugation-rule} allows us to conjugate this relation by $\mtx{Q}$.  Finally, we invoke Definition~\ref{def:standard-matrix-fn}, of a standard matrix function, to complete the argument.
\end{proof}

\subsection{The Matrix Exponential}

For any Hermitian matrix $\mtx{A}$, we can introduce the matrix exponential $\econst^{\mtx{A}}$
using Definition~\ref{def:standard-matrix-fn}.  Equivalently,
we can use a power series expansion:
\begin{equation} \label{eqn:exp-series}
\econst^{\mtx{A}} = \exp(\mtx{A}) = \Id + \sum\limits_{q=1}^\infty \frac{\mtx{A}^q}{q!}.
\end{equation}
The Spectral Mapping Theorem, Proposition~\ref{prop:spectral-mapping}, implies that the
exponential of an Hermitian matrix is always positive definite.

We often work with the trace of the matrix exponential:
$$
\trace \exp : \mtx{A} \longmapsto \trace \econst^{\mtx{A}}.
$$
This function has a monotonicity property that we use extensively.
For Hermitian matrices $\mtx{A}$ and $\mtx{H}$ with the same dimension,
\begin{equation} \label{eqn:exp-trace-monotone}
\mtx{A} \psdle \mtx{H}
\quad\text{implies}\quad
\trace \econst^{\mtx{A}} \leq \trace \econst^{\mtx{H}}.
\end{equation}
We establish this result in \S\ref{sec:monotone-trace}.

\subsection{The Matrix Logarithm}

We can define the matrix logarithm as a standard matrix function.  The matrix logarithm is also the functional inverse of the matrix exponential:
\begin{equation} \label{eqn:log-defn}
\log\bigl( \econst^{\mtx{A}} \bigr) = \mtx{A}
\quad\text{for each Hermitian matrix $\mtx{A}$}.
\end{equation}
A valuable fact about the matrix logarithm is that it preserves the semidefinite order.
For positive-definite matrices $\mtx{A}$ and $\mtx{H}$ with the same dimension,
\begin{equation} \label{eqn:log-monotone}
\mtx{A} \psdle \mtx{H}
\quad\text{implies}\quad
\log \mtx{A}  \psdle \log \mtx{H}.
\end{equation}
We establish this result in \S\ref{sec:log-monotone}.
Let us stress that the matrix exponential \emph{does not} have any operator monotonicity property analogous with~\eqref{eqn:log-monotone}!

\subsection{Singular Values of Rectangular Matrices}

A general matrix does not have an eigenvalue decomposition, but it admits a different representation that is just as useful.  Every $d_1 \times d_2$ matrix $\mtx{B}$ has a \term{singular value decomposition}
\begin{equation} \label{eqn:singular-value-decomposition}
\mtx{B} = \mtx{Q}_1 \mtx{\Sigma} \mtx{Q}_2^\adj
\quad\text{where $\mtx{Q}_1$ and $\mtx{Q}_2$ are unitary and $\mtx{\Sigma}$ is nonnegative diagonal.}
\end{equation}
The unitary matrices $\mtx{Q}_1$ and $\mtx{Q}_2$ have dimensions $d_1 \times d_1$ and $d_2 \times d_2$, respectively.
The inner matrix $\mtx{\Sigma}$ has dimension $d_1 \times d_2$, and we use the term diagonal in the sense that only the diagonal entries $(\mtx{\Sigma})_{jj}$ may be nonzero.

The diagonal entries of $\mtx{\Sigma}$ are called the \term{singular values} of $\mtx{B}$, and they are denoted as $\sigma_j(\mtx{B})$.  The singular values are determined completely modulo permutations, and it is conventional to arrange them in weakly decreasing order: $$
\sigma_1(\mtx{B}) \geq \sigma_2(\mtx{B}) \geq \dots \geq \sigma_{\min\{d_1,\ d_2\}}(\mtx{B}).
$$

There is an important relationship between singular values and eigenvalues.  A general matrix has two squares associated with it, $\mtx{BB}^\adj$ and $\mtx{B}^\adj \mtx{B}$, both of which are positive semidefinite.  We can use a singular value decomposition of $\mtx{B}$ to construct eigenvalue decompositions of the two squares:
\begin{equation} \label{eqn:svd-square}
\mtx{BB}^\adj = \mtx{Q}_1  (\mtx{\Sigma \Sigma}^\adj) \mtx{Q}_1^\adj
\quad\text{and}\quad
\mtx{B}^\adj \mtx{B} = \mtx{Q}_2 (\mtx{\Sigma}^\adj \mtx{\Sigma}) \mtx{Q}_2^\adj
\end{equation}
The two squares of $\mtx{\Sigma}$ are square, diagonal matrices with nonnegative entries.
Conversely, we can always extract a singular value decomposition
from the eigenvalue decompositions of the two squares.

We can write the Frobenius norm of a matrix in terms of the singular values:
\begin{equation} \label{eqn:frobenius-svals}
\fnormsq{\mtx{B}} = \sum_{j=1}^{\min\{d_1, d_2\}} \sigma_j(\mtx{B})^2
\quad\text{for $\mtx{B} \in \mathbb{M}^{d_1 \times d_2}$.}
\end{equation}
This expression follows from the expression~\eqref{eqn:trace-Frobenius}
for the Frobenius norm, the property~\eqref{eqn:svd-square} of the
singular value decomposition, and the unitary invariance~\eqref{eqn:trace-unitary-invar}
of the trace.

\subsection{The Spectral Norm}

The \term{spectral norm} of an Hermitian matrix $\mtx{A}$ is defined by the relation
\begin{equation} \label{eqn:spectral-norm-herm}
\norm{ \mtx{A} } = \max\big\{ \lambda_{\max}(\mtx{A}), \ - \lambda_{\min}(\mtx{A}) \big\}.
\end{equation}
For a general matrix $\mtx{B}$, the spectral norm is defined to be the largest singular value:
\begin{equation} \label{eqn:spectral-norm}
\norm{ \mtx{B} } = \sigma_1(\mtx{B}).
\end{equation}
These two definitions are consistent for Hermitian matrices because of~\eqref{eqn:svd-square}.
When applied to a row vector or a column vector,
the spectral norm coincides with the $\ell_2$ norm~\eqref{eqn:l2-norm}.

We will often need the fact that
\begin{equation} \label{eqn:spectral-norm-square}
\normsq{\mtx{B}} = \norm{\smash{\mtx{BB}^\adj}} = \norm{\smash{\mtx{B}^\adj \mtx{B}}}.
\end{equation}
This identity also follows from~\eqref{eqn:svd-square}.

\subsection{The Stable Rank}
\label{sec:stable-rank}

In several of the applications, we need an analytic measure
of the collinearity of the rows and columns of a matrix
called the \term{stable rank}.  For a general matrix $\mtx{B}$,
the stable rank is defined as
\begin{equation} \label{eqn:stable-rank}
\strank(\mtx{B}) = \frac{\fnormsq{\mtx{B}}}{\normsq{\mtx{B}}}.
\end{equation}
The stable rank is a lower bound for the algebraic rank:
$$
1 \leq \strank(\mtx{B}) \leq \rank(\mtx{B}).
$$
This point follows when we use~\eqref{eqn:frobenius-svals}
and~\eqref{eqn:spectral-norm} to express the two norms
in terms of the singular values of $\mtx{B}$.
In contrast to the algebraic rank,
the stable rank is a continuous function of the matrix,
so it is more suitable for numerical applications.

\subsection{Dilations} \label{sec:dilation}

An extraordinarily fruitful idea from operator theory is to embed matrices within larger block matrices, called \term{dilations}.  Dilations have an almost magical power.
In this work, we will use dilations to extend matrix concentration
inequalities from Hermitian matrices to general matrices.

\begin{defn}[Hermitian Dilation] \label{def:herm-dilation}
The \term{Hermitian dilation}
$$
\coll{H} : \mathbb{M}^{d_1 \times d_2} \longrightarrow \mathbb{H}_{d_1 + d_2}
$$
is the map from a general matrix to an Hermitian matrix defined by
\begin{equation} \label{eqn:herm-dilation}
\coll{H}(\mtx{B}) = \begin{bmatrix} \mtx{0} & \mtx{B} \\ \mtx{B}^\adj & \mtx{0} \end{bmatrix}.
\end{equation}
\end{defn}

It is clear that the Hermitian dilation is a real-linear map.
Furthermore, the dilation retains important spectral information.
To see why, note that the square of the dilation satisfies
\begin{equation} \label{eqn:herm-dilation-square}
\coll{H}( \mtx{B} )^2
	= \begin{bmatrix} \mtx{BB}^\adj & \mtx{0} \\
	\mtx{0} & \mtx{B}^\adj\mtx{B} \end{bmatrix}.
\end{equation}
We discover that the squared eigenvalues of $\coll{H}(\mtx{B})$ coincide with the squared singular values of $\mtx{B}$, along with an appropriate number of zeros.  As a consequence, $\norm{\coll{H}(\mtx{B})} = \norm{\mtx{B}}$.  Moreover,
\begin{equation} \label{eqn:herm-dilation-norm}
\lambda_{\max}(\coll{H}(\mtx{B})) = \norm{ \coll{H}(\mtx{B}) } = \norm{ \mtx{B} }.
\end{equation}
We will invoke the identity~\eqref{eqn:herm-dilation-norm} repeatedly.

One way to justify the first relation in~\eqref{eqn:herm-dilation-norm} is to introduce the first columns
$\vct{u}_1$ and $\vct{u}_2$ of the unitary matrices $\mtx{Q}_1$ and $\mtx{Q}_2$
that appear in the singular value decomposition $\mtx{B} = \mtx{Q}_1 \mtx{\Sigma} \mtx{Q}_2^\adj$.
Then we may calculate that
$$
\norm{\mtx{B}}
	= \real(\vct{u}_1^\adj \mtx{B} \vct{u}_2)
	= \frac{1}{2} \begin{bmatrix} \vct{u}_1 \\ \vct{u}_2 \end{bmatrix}^\adj
	\begin{bmatrix} \mtx{0} & \mtx{B} \\ \mtx{B}^\adj & \mtx{0} \end{bmatrix}
	\begin{bmatrix} \vct{u}_1 \\ \vct{u}_2 \end{bmatrix}
	\leq \lambda_{\max}(\coll{H}(\mtx{B}))
	\leq \norm{\coll{H}(\mtx{B})}
	= \norm{\mtx{B}}.
$$
Indeed, the spectral norm of $\mtx{B}$ equals its largest singular value $\sigma_1(\mtx{B})$,
which coincides with $\vct{u}_1^\adj \mtx{B} \vct{u}_2$
by construction of $\vct{u}_1$ and $\vct{u}_2$.  The second
identity relies on a direct calculation. %
The first inequality follows from the variational representation of the maximum
eigenvalue as a Rayleigh quotient; this fact can also be derived as a
consequence of~\eqref{eqn:eigenvalue-decomposition}.
The second inequality depends on the definition~\eqref{eqn:spectral-norm-herm} of the spectral
norm of an Hermitian matrix.

\subsection{Other Matrix Norms}
\label{sec:other-norms}

There are a number of other matrix norms that arise sporadically in this work.
The \term{Schatten 1-norm} of a matrix can be defined as the sum of
its singular values:
\begin{equation} \label{eqn:schatten-1-norm}
\pnorm{S_1}{ \mtx{B} } = \sum_{j=1}^{\min\{d_1, d_2\}} \sigma_j(\mtx{B})
\quad\text{for $\mtx{B} \in \mathbb{M}^{d_1 \times d_2}$.}
\end{equation}
The entrywise $\ell_1$ norm of a matrix is defined as
\begin{equation} \label{eqn:matrix-l1-norm}
\pnorm{\ell_1}{\mtx{B}} = \sum_{j=1}^{d_1} \sum_{k=1}^{d_2} \abs{ \smash{b_{jk}} }
\quad\text{for $\mtx{B} \in \mathbb{M}^{d_1 \times d_2}$.}
\end{equation}
We always have the relation
\begin{equation} \label{eqn:matrix-l1-l2}
\pnorm{\ell_1}{\mtx{B}} \leq \sqrt{ d_1 d_2 } \fnorm{\mtx{B}}
\quad\text{for $\mtx{B} \in \mathbb{M}^{d_1 \times d_2}$}
\end{equation}
because of the Cauchy--Schwarz inequality.

\section{Probability with Matrices} \label{sec:probability-background}

We continue with some material from probability, focusing on connections with matrices.

\subsection{Conventions}

We prefer to avoid abstraction and unnecessary technical detail, so we frame the standing assumption that all random variables are sufficiently regular that we are justified in computing expectations, interchanging limits, and so forth.
The manipulations we perform are valid if we assume that all random variables are bounded,
but the results hold in broader circumstances if we instate appropriate regularity conditions.
Since the expectation operator is linear, we typically do not use parentheses with it.
We instate the convention that powers and products take precedence over the expectation operator.
In particular,
$$
\Expect X^q = \Expect(X^q).
$$
This position helps us reduce the clutter of parentheses.
We sometimes include extra delimiters when it is helpful for clarity.

\subsection{Some Scalar Random Variables}

We use consistent notation for some of the basic scalar random variables.

\begin{description}
\item	[Standard normal variables.]  We reserve the letter $\gamma$ for a $\normal(0, 1)$ random variable.  That is,
$\gamma$ is a real Gaussian with mean zero and variance one.

\item	[Rademacher random variables.]  We reserve the letter $\varrho$ for a random variable that takes the two values $\pm 1$
with equal probability.

\item	[Bernoulli random variables.]  A $\textsc{bernoulli}(p)$ random variable takes the value one with probability $p$ and the value zero with probability $1 - p$, where $p \in [0, 1]$.  We use the letters $\delta$ and $\xi$ for Bernoulli random variables.
\end{description}

\subsection{Random Matrices}

Let $(\Omega, \coll{F}, \mathbb{P})$ be a probability space.
A \term{random matrix} $\mtx{Z}$ is a measurable map
$$
\mtx{Z} : \Omega \longrightarrow \mathbb{M}^{d_1 \times d_2}.
$$
It is more natural to think of the entries of $\mtx{Z}$ as complex random variables that may or may not be correlated with each other.  We reserve the letters $\mtx{X}$ and $\mtx{Y}$ for random Hermitian matrices, while the letter $\mtx{Z}$ denotes a general random matrix.

A finite sequence $\{ \mtx{Z}_k \}$ of random matrices is \term{independent} when
$$
\Prob{ \mtx{Z}_k \in F_k \ \text{for each $k$} }
	= \prod\nolimits_k \Prob{ \mtx{Z}_k \in F_k }
$$
for every collection $\{F_k\}$ of Borel subsets of $\mathbb{M}^{d_1\times d_2}$.

\subsection{Expectation}

The \term{expectation} of a random matrix $\mtx{Z} = [ Z_{jk} ]$ is simply the matrix formed by taking the componentwise expectation.  That is,
$$
(\Expect{} \mtx{Z})_{jk} = \Expect{}  Z_{jk} .
$$
Under mild assumptions, expectation commutes with linear and real-linear maps.
Indeed, expectation commutes with multiplication by a fixed matrix:
$$
\Expect(\mtx{BZ}) = \mtx{B}\, (\Expect \mtx{Z})
\quad\text{and}\quad
\Expect(\mtx{ZB}) =  (\Expect \mtx{Z}) \,\mtx{B}.
$$
In particular, the product rule for the expectation of independent random variables extends to matrices:
$$
\Expect(\mtx{SZ}) = (\Expect \mtx{S} )(\Expect \mtx{Z})
\quad\text{when $\mtx{S}$ and $\mtx{Z}$ are independent.}
$$
We use these identities liberally, without any further comment.

\subsection{Inequalities for Expectation}

Markov's inequality states that a nonnegative (real) random variable $X$ obeys the probability bound
\begin{equation} \label{eqn:markov}
\Prob{ X \geq t } \leq \frac{\Expect X}{t}
\quad\text{for $t > 0$.}
\end{equation}
The Markov inequality is a central tool for establishing concentration inequalities.

Jensen's inequality describes how averaging interacts with convexity.
Let $\mtx{Z}$ be a random matrix, and let $h$ be a real-valued function on matrices.  Then
\begin{equation} \label{eqn:jensen}
\begin{array}{l}
\Expect h(\mtx{Z}) \leq h(\Expect \mtx{Z}) \quad\text{when $h$ is concave, and} \\
\Expect h(\mtx{Z}) \geq h(\Expect \mtx{Z}) \quad\text{when $h$ is convex.}
\end{array}
\end{equation}

The family of positive-semidefinite matrices in $\mathbb{H}_d$ forms a convex cone,
and the expectation of a random matrix can be viewed as a convex combination.
Therefore, expectation preserves the semidefinite order:
$$
\mtx{X} \psdle \mtx{Y}
\quad\text{implies}\quad
\Expect \mtx{X} \psdle \Expect \mtx{Y}.
$$
We use this result many times without direct reference.

\subsection{The Variance of a Random Hermitian Matrix}

The variance of a real random variable $Y$ is defined as the expected squared deviation
from the mean:
$$
\Var(Y) = \Expect{} (Y - \Expect Y)^2 
$$
There are a number of natural extensions of this concept in the matrix setting that
play a role in our theory.  %

Suppose that $\mtx{Y}$ is a random Hermitian matrix.
We can define a matrix-valued variance:
\begin{equation} \label{eqn:matrix-val-var-herm}
\mVar(\mtx{Y}) = \Expect{} (\mtx{Y} - \Expect \mtx{Y})^2
	= \Expect{} \mtx{Y}^2 - (\Expect \mtx{Y})^2.
\end{equation}
The matrix $\mVar(\mtx{Y})$ is always positive semidefinite.
We can interpret the $(j, k)$ entry of this matrix as the covariance
between the $j$th and $k$th columns of $\mtx{Y}$:
$$
(\mVar(\mtx{Y}))_{jk} = \Expect\big[ (\vct{y}_{:j} - \Expect \vct{y}_{:j} )^{\adj}( \vct{y}_{:k} - \Expect \vct{y}_{:k} ) \big],
$$
where we have written $\vct{y}_{:j}$ for the $j$th column of $\mtx{Y}$.

The matrix-valued variance contains a lot of information about the fluctuations
of the random matrix.  We can summarize $\mVar(\mtx{Y})$ using
a single number $v(\mtx{Y})$, which we call the \term{matrix variance statistic}:
\begin{equation} \label{eqn:matrix-variance-herm}
v(\mtx{Y}) = \norm{\mVar(\mtx{Y})} =
	\norm{ \smash{\Expect{} (\mtx{Y} - \Expect \mtx{Y})^2} }.
\end{equation}
To understand what this quantity means, one may wish to rewrite it as
$$
v(\mtx{Y}) = \sup_{\norm{\vct{u}} = 1} \ \Expect \normsq{ (\mtx{Y}\vct{u}) - \Expect( \mtx{Y} \vct{u}) }.
$$
Roughly speaking, the matrix variance statistic describes the maximum
variance of $\mtx{Y}\vct{u}$ for any unit vector $\vct{u}$.

\subsection{The Variance of a Sum of Independent, Random Hermitian Matrices}

The matrix-valued variance interacts beautifully with a sum of independent random matrices.
Consider a finite sequence $\{\mtx{X}_k \}$ of independent, random Hermitian matrices
with common dimension $d$.  Introduce the sum $\mtx{Y} = \sum_k \mtx{X}_k$.  Then
\begin{align} \label{eqn:matrix-variance-add}
\mVar(\mtx{Y})
	= \mVar \left(\sum\nolimits_k \mtx{X}_k \right)
	&= \Expect{} \left( \sum\nolimits_k (\mtx{X}_k - \Expect \mtx{X}_k) \right)^2 \notag \\
	&= \sum\nolimits_{j,k} \Expect \big[ (\mtx{X}_j - \Expect \mtx{X}_j)(\mtx{X}_k - \Expect \mtx{X}_k) \big] \notag \\
	&= \sum\nolimits_{k} \Expect{} (\mtx{X}_k - \Expect \mtx{X}_k)^2 \notag \\
	&= \sum\nolimits_k \mVar(\mtx{X}_k).
\end{align}
This identity matches the familiar result for the variance
of a sum of independent scalar random variables.
It follows that the matrix variance statistic satisfies
\begin{equation} \label{eqn:indep-sum-herm}
v(\mtx{Y}) = \norm{ \sum\nolimits_k \mVar(\mtx{X}_k) }.
\end{equation}
The fact that the sum remains inside the norm is very important.
Indeed, the best general inequalities between $v(\mtx{Y})$ and the
matrix variance statistics $v(\mtx{X}_k)$ of the summands are 
$$
v(\mtx{Y}) \leq \sum\nolimits_k v(\mtx{X}_k)
	\leq d \cdot v(\mtx{Y}).
$$
These relations can be improved in some special cases.
For example, when the matrices $\mtx{X}_k$ are identically distributed,
the left-hand inequality becomes an identity.

\subsection{The Variance of a Rectangular Random Matrix}

We will often work with non-Hermitian random matrices.  In this case, we need to account
for the fact that a general matrix has \emph{two} different squares.
Suppose that $\mtx{Z}$ is a random matrix with dimension $d_1 \times d_2$.  Define
\begin{equation} \label{eqn:matrix-val-var-rect}
\begin{aligned}
\mVar_1(\mtx{Z}) &= \Expect\big[ (\mtx{Z} - \Expect \mtx{Z})(\mtx{Z} - \Expect \mtx{Z})^\adj \big],
\quad\text{and}\quad \\
\mVar_2(\mtx{Z}) &= \Expect\big[ (\mtx{Z} - \Expect \mtx{Z})^\adj(\mtx{Z} - \Expect \mtx{Z}) \big].
\end{aligned}
\end{equation}
The matrix $\mVar_1(\mtx{Z})$ is a positive-semidefinite matrix with dimension $d_1 \times d_1$,
and it describes the fluctuation of the rows of $\mtx{Z}$.  The matrix $\mVar_2(\mtx{Z})$
is a positive-semidefinite matrix with dimension $d_2 \times d_2$, and it reflects the
fluctuation of the columns of $\mtx{Z}$.  For an Hermitian random matrix $\mtx{Y}$,
$$
\mVar(\mtx{Y}) = \mVar_1(\mtx{Y}) = \mVar_2(\mtx{Y}).
$$
In other words, the two variances coincide in the Hermitian setting.

As before, it is valuable to reduce these matrix-valued variances
to a single scalar parameter.  %
We define the matrix variance statistic of a general random matrix $\mtx{Z}$ as
\begin{equation} \label{eqn:matrix-variance-rect}
v(\mtx{Z}) = \max\big\{ \norm{\mVar_1(\mtx{Z})}, \ \norm{\mVar_2(\mtx{Z})} \big\}.
\end{equation}
When $\mtx{Z}$ is Hermitian, the definition~\eqref{eqn:matrix-variance-rect} coincides with the original
definition~\eqref{eqn:matrix-variance-herm}.

To promote a deeper appreciation for the formula~\eqref{eqn:matrix-variance-rect},
let us explain how it arises from the Hermitian dilation~\eqref{eqn:herm-dilation}.
By direct calculation,
\begin{align} \label{eqn:var-dilation}
\mVar( \coll{H}(\mtx{Z}) )
	&= \Expect \begin{bmatrix} \mtx{0} & (\mtx{Z} - \Expect \mtx{Z}) \\ (\mtx{Z} - \Expect \mtx{Z})^\adj & \mtx{0} \end{bmatrix}^2 \notag \\
	&= \Expect \begin{bmatrix} (\mtx{Z} - \Expect\mtx{Z})(\mtx{Z} - \Expect \mtx{Z})^\adj & \mtx{0}  \notag \\
	\mtx{0} & (\mtx{Z} - \Expect \mtx{Z})^\adj (\mtx{Z} - \Expect \mtx{Z}) \end{bmatrix} \notag \\
	&= \begin{bmatrix} \mVar_1(\mtx{Z}) & \mtx{0} \\ \mtx{0} & \mVar_2(\mtx{Z}) \end{bmatrix}.
\end{align}
The first identity is the definition~\eqref{eqn:matrix-val-var-herm} of the matrix-valued variance.
The second line follows from the formula~\eqref{eqn:herm-dilation-square} for the square of the dilation.
The last identity depends on the definition~\eqref{eqn:matrix-val-var-rect} of the two matrix-valued variances.
Therefore, using the definitions~\eqref{eqn:matrix-variance-herm} and~\eqref{eqn:matrix-variance-rect}
of the matrix variance statistics,
\begin{equation} \label{eqn:var-stat-dilation}
v(\coll{H}(\mtx{Z})) = \norm{ \mVar(\coll{H}(\mtx{Z}) }
	= \max\big\{ \norm{ \mVar_1(\mtx{Z}) }, \ \norm{\mVar_2(\mtx{Z})} \big\}
	= v(\mtx{Z}).
\end{equation}
The second identity holds because the spectral norm of a block-diagonal matrix
is the maximum norm achieved by one of the diagonal blocks.

\subsection{The Variance of a Sum of Independent Random Matrices}

As in the Hermitian case, the matrix-valued variances interact nicely with an independent sum.
Consider a finite sequence $\{ \mtx{S}_k \}$ of independent random matrices with the same dimension.
Form the sum $\mtx{Z} = \sum_k \mtx{S}_k$.  Repeating the calculation leading up to~\eqref{eqn:indep-sum-herm},
we find that
$$
\mVar_1(\mtx{Z})
	= \sum\nolimits_k \mVar_1(\mtx{S}_k)
\quad\text{and}\quad
\mVar_2(\mtx{Z}) %
	= \sum\nolimits_k \mVar_2(\mtx{S}_k).
$$
In summary, the matrix variance statistic of an independent sum satisfies 
\begin{equation} \label{eqn:indep-sum-rect}
v(\mtx{Z}) = \max\left\{ \norm{\sum\nolimits_k \mVar_1(\mtx{S}_k)}, \
	\norm{ \sum\nolimits_k \mVar_2(\mtx{S}_k) } \right\}.
\end{equation}
This formula arises time after time.

\section{Notes}

Everything in this chapter is firmly established.  We have culled the results that are relevant to our discussion.  Let us give some additional references for readers who would like more information.

\subsection{Matrix Analysis}

Our treatment of matrix analysis is drawn from Bhatia's excellent books on matrix analysis~\cite{Bha97:Matrix-Analysis,Bha07:Positive-Definite}.  The two books~\cite{HJ13:Matrix-Analysis,HJ94:Topics-Matrix} of Horn \& Johnson also serve as good general references.  Higham's work~\cite{Hig08:Functions-Matrices} is a generous source of information about matrix functions.  Other valuable resources include Carlen's lecture notes~\cite{Car10:Trace-Inequalities}, the book of Petz~\cite{Pet11:Matrix-Analysis}, and the book of Hiai \& Petz~\cite{HP14:Introduction-Matrix}.

\subsection{Probability with Matrices}

The classic introduction to probability is the two-volume treatise~\cite{Fel68:Introduction-Probability-I,Fel71:Introduction-Probability-II} of Feller.  The book~\cite{GS01:Probability-Random} of Grimmett \& Stirzaker offers a good treatment of probability theory and random processes at an intermediate level.  For a more theoretical presentation, consider the book~\cite{Shi96:Probability} of Shiryaev.

There are too many books on random matrix theory for us to include a comprehensive list; here is a selection that the author finds useful.  Tao's book~\cite{Tao12:Topics-Random} gives a friendly introduction to some of the major aspects of classical and modern random matrix theory.  The lecture notes~\cite{Kem13:Introduction-Random} of Kemp are also extremely readable.  The survey of Vershynin~\cite{Ver12:Introduction-Nonasymptotic} provides a good summary of techniques from asymptotic convex geometry that are relevant to random matrix theory.  The works of Mardia, Kent, \& Bibby~\cite{MKB79:Multivariate-Analysis} and Muirhead~\cite{Mui82:Aspects-Multivariate} present classical results on random matrices that are particularly useful in statistics, while Bai \& Silverstein~\cite{BS10:Spectral-Analysis} contains a comprehensive modern treatment.  Nica and Speicher~\cite{NS06:Lectures-Combinatorics} offer an entr{\'e}e to the beautiful field of free probability.  Mehta's treatise~\cite{Meh04:Random-Matrices} was the first book on random matrix theory available, and it remains solid.  


%
\makeatletter{}%

\chapter[The Matrix Laplace Transform Method]{The Matrix \\ Laplace Transform Method} \label{chap:matrix-lt}

This chapter contains the core part of the analysis that ultimately delivers matrix concentration inequalities.  Readers who are only interested in the concentration inequalities themselves or the example applications may wish to move on to Chapters~\ref{chap:matrix-series},~\ref{chap:matrix-chernoff}, and~\ref{chap:matrix-bernstein}.

In the scalar setting, the Laplace transform method provides a simple but powerful way to develop concentration inequalities for a sum of independent random variables.  This technique is sometimes referred to as the ``Bernstein trick'' or ``Chernoff bounding.''  For a primer, we recommend~\cite[Chap.~2]{BLM13:Concentration-Inequalities}.

In the matrix setting, there is a very satisfactory extension of this argument that allows us to prove concentration inequalities for a sum of independent random matrices.  As in the scalar case, the matrix Laplace transform method
is both easy to use and incredibly useful.  In contrast to the scalar case, the arguments that lead to matrix concentration are no longer elementary.  The purpose of this chapter is to install the framework we need to support these results.  Fortunately, in practical applications, all of the technical difficulty remains invisible.

\subsubsection{Overview}

We first define matrix analogs of the moment generating function and the cumulant generating function, which pack up information about the fluctuations of a random Hermitian matrix.  Section~\ref{sec:matrix-lt} explains how we can use the matrix mgf to obtain probability inequalities for the maximum eigenvalue of a random Hermitian matrix.  The next task is to develop a bound for the mgf of a sum of independent random matrices using information about the summands.  In~\S\ref{sec:mom-fail}, we discuss the challenges that arise;~\S\ref{sec:lieb} presents the ideas we need to overcome these obstacles.  Section~\ref{sec:cum-subadd} establishes that the classical result on additivity of cumulants has a companion in the matrix setting.  This result allows us to develop a collection of abstract probability inequalities in~\S\ref{sec:master-tail} that we can specialize to obtain matrix Chernoff bounds, matrix Bernstein bounds, and so forth. %

\section{Matrix Moments and Cumulants} \label{sec:mom-cum}

At the heart of the Laplace transform method are
the moment generating function (mgf) and the
cumulant generating function (cgf) of a random variable.
We begin by presenting matrix versions of the mgf and cgf.

\begin{defn}[Matrix Mgf and Cgf] \label{def:matrix-mgf-cgf}
Let $\mtx{X}$ be a random Hermitian matrix.
The \term{matrix moment generating function} $\mtx{M}_{\mtx{X}}$
and the \term{matrix cumulant generating function} $\mtx{\Xi}_{\mtx{X}}$
are given by
\begin{equation} \label{eqn:matrix-mgf-cgf}
\mtx{M}_{\mtx{X}}(\theta) = \Expect \econst^{\theta \mtx{X}}
\quad\text{and}\quad
\mtx{\Xi}_{\mtx{X}}(\theta) = \log{} \Expect \econst^{\theta \mtx{X}}
\quad\text{for $\theta \in \mathbb{R}$.}
\end{equation}
Note that the expectations may not exist for all values of $\theta$.
\end{defn}

\noindent
The matrix mgf $\mtx{M}_{\mtx{X}}$ and matrix cgf $\mtx{\Xi}_{\mtx{X}}$ contain information about how much the random matrix $\mtx{X}$ varies.  We aim to exploit the data encoded in these functions to control the eigenvalues.

Let us take a moment to expand on Definition~\ref{def:matrix-mgf-cgf}; this discussion is not important for subsequent developments.  Observe that the matrix mgf and cgf have formal power series expansions:
$$
\mtx{M}_{\mtx{X}}(\theta) = \Id + \sum\limits_{q=1}^\infty
	\frac{\theta^q }{ q! } (\Expect{} \mtx{X}^q)
\quad\text{and}\quad
\mtx{\Xi}_{\mtx{X}}(\theta) = \sum\limits_{q=1}^\infty \frac{\theta^q}{q!} \mtx{\Psi}_q.
$$
We call the coefficients $\Expect{} \mtx{X}^q $ \term{matrix moments}, and we refer to $\mtx{\Psi}_q$ as a \term{matrix cumulant}.
The matrix cumulant $\mtx{\Psi}_q$ has a formal expression as a (noncommutative) polynomial in the matrix moments up to order $q$.  In particular, the first cumulant is the mean and the second cumulant is the variance:
$$
\mtx{\Psi}_1 = \Expect \mtx{X}
\quad\text{and}\quad
\mtx{\Psi}_2 = \Expect{} \mtx{X}^2 - (\Expect \mtx{X})^2 = \mVar(\mtx{X})
$$
The matrix variance was introduced in~\eqref{eqn:matrix-val-var-herm}.  Higher-order cumulants are harder to write down and interpret.

\section{The Matrix Laplace Transform Method} \label{sec:matrix-lt}

In the scalar setting, the Laplace transform method allows us to obtain tail bounds for a random variable in terms of its mgf.  The starting point for our theory is the observation that a similar result holds in the matrix setting.

\begin{prop}[Tail Bounds for Eigenvalues] \label{prop:matrix-lt}
Let $\mtx{Y}$ be a random Hermitian~matrix.  For all $t \in \mathbb{R}$,
\begin{align}
\Prob{ \lambda_{\max}(\mtx{Y}) \geq t }
	&\leq \inf_{\theta > 0} \ \econst^{-\theta t}
	\, \Expect \trace \econst^{\theta \mtx{Y}}, \quad\text{and}\quad
	 \label{eqn:matrix-lt-upper-bound} \\
\Prob{ \lambda_{\min}(\mtx{Y}) \leq t }
	&\leq \inf_{\theta < 0} \ \econst^{-\theta t}
	\, \Expect \trace \econst^{\theta \mtx{Y}}.
	 \label{eqn:matrix-lt-lower-bound}
\end{align}
\end{prop}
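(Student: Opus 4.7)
The plan is to mimic the scalar Chernoff bounding argument, replacing its two ingredients by matrix-friendly substitutes: to pass from an eigenvalue to a scalar random variable on which Markov's inequality can act, and to linearize the resulting expectation so that it can be related to the matrix mgf $\trace \Expect \econst^{\theta \mtx{Y}}$.

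For the upper-tail bound~\eqref{eqn:matrix-lt-upper-bound}, fix $\theta > 0$. I would first use the positive homogeneity~\eqref{eqn:eig-pos-homo} of $\lambda_{\max}$ to rewrite the event $\{\lambda_{\max}(\mtx{Y}) \geq t\}$ as $\{\lambda_{\max}(\theta \mtx{Y}) \geq \theta t\}$. Next, because $a \mapsto \econst^a$ is monotone increasing, the Spectral Mapping Theorem (Proposition~\ref{prop:spectral-mapping}) gives $\lambda_{\max}(\econst^{\theta \mtx{Y}}) = \econst^{\lambda_{\max}(\theta \mtx{Y})}$, so the event becomes $\{\lambda_{\max}(\econst^{\theta \mtx{Y}}) \geq \econst^{\theta t}\}$. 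Since $\econst^{\theta \mtx{Y}}$ is positive semidefinite, the bound~\eqref{eqn:maxeig-trace} gives $\lambda_{\max}(\econst^{\theta \mtx{Y}}) \leq \trace \econst^{\theta \mtx{Y}}$, a nonnegative scalar random variable. Markov's inequality~\eqref{eqn:markov} then yields
\[
\Prob{ \lambda_{\max}(\mtx{Y}) \geq t } \leq \Prob{ \trace \econst^{\theta \mtx{Y}} \geq \econst^{\theta t} } \leq \econst^{-\theta t} \, \Expect \trace \econst^{\theta \mtx{Y}}.
\]
Taking the infimum over $\theta > 0$ delivers~\eqref{eqn:matrix-lt-upper-bound}.

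For the lower-tail bound~\eqref{eqn:matrix-lt-lower-bound}, I would run the same argument with $\theta < 0$. Because multiplication by a negative scalar reverses the order on eigenvalues, $\{\lambda_{\min}(\mtx{Y}) \leq t\}$ coincides with $\{\theta \lambda_{\min}(\mtx{Y}) \geq \theta t\}$, and the Spectral Mapping Theorem now identifies the largest eigenvalue of $\econst^{\theta \mtx{Y}}$ with $\econst^{\theta \lambda_{\min}(\mtx{Y})}$, since $\econst^{\theta \cdot}$ is decreasing when $\theta < 0$. From here the trace bound~\eqref{eqn:maxeig-trace} and Markov's inequality apply verbatim, yielding the stated estimate after taking an infimum over $\theta < 0$.

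There is no serious obstacle here; the only subtlety is the step $\lambda_{\max}(\econst^{\theta \mtx{Y}}) \leq \trace \econst^{\theta \mtx{Y}}$, which relaxes the maximum eigenvalue to a linear functional so that expectation commutes through. This relaxation is what costs the dimensional factor that will eventually appear in matrix concentration inequalities. The real work of the theory lies downstream, in actually controlling $\Expect \trace \econst^{\theta \mtx{Y}}$ when $\mtx{Y}$ is an independent sum, since the failure of the exponential to split multiplicatively over noncommuting summands prevents a direct analog of the scalar cgf additivity.
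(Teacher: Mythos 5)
Your proof is correct and follows essentially the same route as the paper: exponentiate, invoke the Spectral Mapping Theorem, relax $\lambda_{\max}$ to the trace, and apply Markov. The only cosmetic difference is ordering — you apply the trace bound at the event level and then Markov, whereas the paper applies Markov to $\econst^{\theta\lambda_{\max}(\mtx{Y})}$ first and then dominates the expectation by the trace mgf — but the ingredients, the logic, and even the remark about where the dimensional factor enters are the same.
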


In words, we can control the tail probabilities of the extreme eigenvalues of a random matrix by producing a bound for the \emph{trace} of the matrix mgf.  The proof of this fact parallels the
classical argument, but there is a twist.

\begin{proof}
We begin with~\eqref{eqn:matrix-lt-upper-bound}.
Fix a positive number $\theta$, and observe that
$$
\Prob{ \lambda_{\max}(\mtx{Y}) \geq t }
	= \Prob{ \econst^{\theta \lambda_{\max}(\mtx{Y})} \geq \econst^{\theta t} } 
	\leq \econst^{- \theta t} \, \Expect \econst^{\theta \lambda_{\max}(\mtx{Y})}
	= \econst^{- \theta t} \, \Expect \econst^{\lambda_{\max}(\theta \mtx{Y})}
$$
The first identity holds because $a \mapsto \econst^{\theta a}$ is a monotone increasing function,
so the event does not change under the mapping.  The second relation is Markov's inequality~\eqref{eqn:markov}.
The last holds because the maximum eigenvalue is a positive-homogeneous map, as stated in~\eqref{eqn:eig-pos-homo}.   
To control the exponential, note that
\begin{equation} \label{eqn:lt-apply-spectral-map}
\econst^{\lambda_{\max}(\theta \mtx{Y})}
	= \lambda_{\max}\bigl(\econst^{\theta\mtx{Y}}\bigr)
	\leq \trace \econst^{\theta\mtx{Y}}.
\end{equation}
The first identity depends on the Spectral Mapping Theorem, Proposition~\ref{prop:spectral-mapping}, and the fact that the exponential function is increasing.  The inequality follows because the exponential of an Hermitian matrix is positive definite, and~\eqref{eqn:maxeig-trace} shows that the maximum eigenvalue of a positive-definite matrix is dominated by the trace.  Combine the latter two displays to reach
$$
\Prob{ \lambda_{\max}(\mtx{Y}) \geq t }
	\leq \econst^{- \theta t} \, \Expect \trace \econst^{\theta\mtx{Y}}.
$$
This inequality is valid for any positive $\theta$, so we may take an infimum to achieve the tightest possible bound.

To prove~\eqref{eqn:matrix-lt-lower-bound}, we use a similar approach.  Fix a negative number $\theta$,
and calculate that
$$
\Prob{ \lambda_{\min}(\mtx{Y}) \leq t }
	= \Prob{ \econst^{\theta \lambda_{\min}(\mtx{Y})} \geq \econst^{\theta t} }
	\leq \econst^{- \theta t} \, \Expect \econst^{\theta \lambda_{\min}(\mtx{Y})}
	= \econst^{- \theta t} \, \Expect \econst^{\lambda_{\max}(\theta \mtx{Y})}.
$$
The function $a \mapsto \econst^{\theta a}$ reverses the inequality in the event because it is monotone decreasing.
The last identity depends on the relationship~\eqref{eqn:min-max-sign-eig} between minimum and maximum eigenvalues.  Finally, we introduce the inequality~\eqref{eqn:lt-apply-spectral-map} for the trace exponential and minimize over negative values of $\theta$.
\end{proof}

In the proof of Proposition~\ref{prop:matrix-lt}, it may seem crude to bound the maximum eigenvalue by the trace.  In fact, our overall approach leads to matrix concentration inequalities that are sharp for specific examples (see the discussion in~\S\S\ref{sec:matrix-gauss-sharp}, \ref{sec:matrix-chernoff-sharp}, and~\ref{sec:bernstein-optimality}), so we must conclude that the loss in this bound is sometimes inevitable.  At the same time, this maneuver allows us to exploit some amazing convexity properties of the trace exponential.  
  
We can adapt the proof of Proposition~\ref{prop:matrix-lt} to obtain bounds for the expectation of the maximum eigenvalue of a random Hermitian matrix.  This argument does not have a perfect analog in the scalar setting.

\begin{prop}[Expectation Bounds for Eigenvalues] \label{prop:matrix-expect}
Let $\mtx{Y}$ be a random Hermitian matrix.  Then
\begin{align}
\Expect \lambda_{\max}(\mtx{Y}) &\leq \inf_{\theta > 0} \ 
\frac{1}{\theta}\log{} \Expect \trace \econst^{\theta \mtx{Y}}, \quad\text{and}\quad
\label{eqn:matrix-lt-upper-mean} \\
\Expect \lambda_{\min}(\mtx{Y}) &\geq \sup_{\theta < 0} \ 
\frac{1}{\theta}\log{} \Expect \trace \econst^{\theta \mtx{Y}}.
\label{eqn:matrix-lt-lower-mean}
\end{align}
\end{prop}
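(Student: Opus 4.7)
The plan is to mimic the proof of Proposition~\ref{prop:matrix-lt} for the tail bounds, but replace the Markov step with Jensen's inequality applied to the concave logarithm. This lets us pull the expectation outside the logarithm and retain a bound on $\Expect \lambda_{\max}(\mtx{Y})$ rather than a tail probability.

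For the upper bound~\eqref{eqn:matrix-lt-upper-mean}, I would fix $\theta > 0$ and start from the chain
\begin{equation*}
\theta \, \Expect \lambda_{\max}(\mtx{Y})
    = \Expect \lambda_{\max}(\theta \mtx{Y})
    = \Expect \log \econst^{\lambda_{\max}(\theta \mtx{Y})}
    = \Expect \log \lambda_{\max}\bigl( \econst^{\theta \mtx{Y}} \bigr),
\end{equation*}
where the first equality uses positive homogeneity~\eqref{eqn:eig-pos-homo}, the second is the identity $a = \log \econst^a$, and the third invokes the Spectral Mapping Theorem (Proposition~\ref{prop:spectral-mapping}) together with monotonicity of the exponential. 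Next I would use the bound $\lambda_{\max}(\econst^{\theta \mtx{Y}}) \leq \trace \econst^{\theta \mtx{Y}}$ from~\eqref{eqn:maxeig-trace} (valid because $\econst^{\theta \mtx{Y}}$ is positive definite), and then push the expectation through the logarithm using Jensen's inequality~\eqref{eqn:jensen} for the concave function $\log$. This gives
\begin{equation*}
\theta \, \Expect \lambda_{\max}(\mtx{Y})
    \leq \Expect \log \trace \econst^{\theta \mtx{Y}}
    \leq \log{} \Expect \trace \econst^{\theta \mtx{Y}}.
\end{equation*}
Dividing by $\theta > 0$ preserves the inequality, and taking the infimum over $\theta > 0$ yields~\eqref{eqn:matrix-lt-upper-mean}.

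For the lower bound~\eqref{eqn:matrix-lt-lower-mean}, I would run the same argument with $\theta < 0$. The key is the identity $\lambda_{\min}(\mtx{Y}) = -\lambda_{\max}(-\mtx{Y})$ from~\eqref{eqn:min-max-sign-eig}, which combined with positive homogeneity gives $\theta \, \lambda_{\min}(\mtx{Y}) = \lambda_{\max}(\theta \mtx{Y})$ when $\theta < 0$. Proceeding as above (Spectral Mapping Theorem, the trace bound, then Jensen), one obtains
\begin{equation*}
\theta \, \Expect \lambda_{\min}(\mtx{Y}) \leq \log{} \Expect \trace \econst^{\theta \mtx{Y}}.
\end{equation*}
Now dividing by the negative number $\theta$ flips the inequality, and taking a supremum over $\theta < 0$ yields~\eqref{eqn:matrix-lt-lower-mean}.

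I do not anticipate any real obstacle here; the only subtle point is keeping track of the sign when dividing by $\theta$ in the lower-bound case, and remembering that Jensen's inequality for $\log$ pushes in the direction we need because $\log$ is concave. Everything else is a routine recycling of the ingredients from the proof of Proposition~\ref{prop:matrix-lt}: positive homogeneity of extreme eigenvalues, the Spectral Mapping Theorem, and the trace bound on the maximum eigenvalue of a positive semidefinite matrix.
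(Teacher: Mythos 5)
Your proposal is correct and uses exactly the same ingredients as the paper's proof (positive homogeneity of $\lambda_{\max}$, the Spectral Mapping Theorem, the trace bound from~\eqref{eqn:maxeig-trace}, and Jensen's inequality for the concave logarithm). The only cosmetic difference is the order of operations — you push the trace bound inside $\log$ before applying Jensen, whereas the paper applies Jensen first and then the Spectral Mapping Theorem and trace bound — but since $\log$ is increasing and expectation is monotone, the two orderings are interchangeable and yield the same chain of inequalities.
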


\begin{proof}
We establish the bound~\eqref{eqn:matrix-lt-upper-mean}; the proof of~\eqref{eqn:matrix-lt-lower-mean} is quite similar.  Fix a positive number $\theta$, and calculate that
$$
\Expect \lambda_{\max}(\mtx{Y}) = \frac{1}{\theta} \Expect{} \log{} \econst^{ \lambda_{\max}(\theta \mtx{Y}) }
	\leq \frac{1}{\theta} \log{} \Expect \econst^{ \lambda_{\max}(\theta \mtx{Y}) }
	= \frac{1}{\theta} \log{} \Expect \lambda_{\max}\big(\econst^{\theta \mtx{Y}}\big)
	\leq \frac{1}{\theta} \log{} \Expect \trace \econst^{\theta \mtx{Y}}.
$$
The first identity holds because the maximum eigenvalue is a positive-homogeneous map, as stated in~\eqref{eqn:eig-pos-homo}.
The second relation is Jensen's inequality.  The third follows when we use the Spectral Mapping Theorem, Proposition~\ref{prop:spectral-mapping}, to draw the eigenvalue map through the exponential.  The final inequality depends on the fact~\eqref{eqn:maxeig-trace} that the trace of a positive-definite matrix dominates the maximum eigenvalue.
\end{proof}

\section{The Failure of the Matrix Mgf} \label{sec:mom-fail}

We would like the use the Laplace transform bounds from Section~\ref{sec:matrix-lt} to study a sum of independent random matrices.  In the scalar setting, the Laplace transform method is effective for studying an independent sum because the mgf and the cgf decompose.  In the matrix case, the situation is more subtle, and the goal of this section is to indicate where things go awry.

Consider an independent sequence $\{ X_k \}$ of real random variables.  The mgf of the sum satisfies a multiplication rule:
\begin{equation} \label{eqn:mgf-mult}
M_{(\sum_k X_k)}(\theta)
	= \Expect \exp\left( \sum\nolimits_k \theta X_k \right )
	= \Expect \prod\nolimits_k \econst^{\theta X_k}
	= \prod\nolimits_k \Expect \econst^{\theta X_k}
	= \prod\nolimits_k M_{X_k}(\theta).
\end{equation}
The first identity is the definition of an mgf.  The second relation holds because the exponential map converts a sum of real scalars to a product, and the third relation requires the independence of the random variables.  The last identity, again, is the definition.

At first, we might imagine that a similar relationship holds for the matrix mgf.  Consider an independent sequence $\{ \mtx{X}_k \}$ of random Hermitian matrices.  Perhaps,
\begin{equation} \label{eqn:matrix-mgf-mult-fail}
\mtx{M}_{(\sum_k \mtx{X}_k)}(\theta) \quad \overset{?}{=} \quad
\prod\nolimits_k \mtx{M}_{\mtx{X}_k}(\theta).
\end{equation}
Unfortunately, this hope shatters when we subject it to interrogation.

It is not hard to find the reason that~\eqref{eqn:matrix-mgf-mult-fail} fails.  The identity~\eqref{eqn:mgf-mult} depends on the fact that the scalar exponential converts a sum into a product.  In contrast, for Hermitian matrices,
$$
\econst^{\mtx{A} + \mtx{H}} \neq \econst^{\mtx{A}} \econst^{\mtx{H}}
\quad\text{unless $\mtx{A}$ and $\mtx{H}$ commute.}
$$
If we introduce the trace, the situation improves somewhat:
\begin{equation} \label{eqn:golden-thompson}
\trace \econst^{\mtx{A} + \mtx{H}} \leq \trace \econst^{\mtx{A}} \econst^{\mtx{H}}
\quad\text{for all Hermitian $\mtx{A}$ and $\mtx{H}$.}
\end{equation}
The result~\eqref{eqn:golden-thompson} is known as the Golden--Thompson inequality, a famous theorem from statistical physics.  Unfortunately, the analogous bound may fail for three matrices:
$$
\trace \econst^{\mtx{A} + \mtx{H} + \mtx{T}} \not\leq \trace \econst^{\mtx{A}} \econst^{\mtx{H}} \econst^{\mtx{T}}
\quad\text{for certain Hermitian $\mtx{A},\mtx{H}$, and $\mtx{T}$.}
$$
It seems that we have reached an impasse.

What if we consider the cgf instead?  The cgf of a sum of independent real random variables satisfies an addition rule:
\begin{equation} \label{eqn:cgf-add}
\Xi_{(\sum_k X_k)}(\theta)
	= \log{} \Expect \exp\left( \sum\nolimits_k \theta X_k \right)
	= \log{} \prod\nolimits_k \Expect \econst^{\theta X_k}
	= \sum\nolimits_k \Xi_{X_k}(\theta).
\end{equation}
The relation~\eqref{eqn:cgf-add} follows when we extract the logarithm of the multiplication rule~\eqref{eqn:mgf-mult}.  This result looks like a more promising candidate for generalization because a sum of Hermitian~matrices remains Hermitian.  We might hope that
$$
\mtx{\Xi}_{(\sum_k \mtx{X}_k)}(\theta)
\quad \overset{?}{=} \quad
\sum\nolimits_k \mtx{\Xi}_{\mtx{X}_k}(\theta).
$$
As stated, this putative identity also fails.  Nevertheless, the addition rule~\eqref{eqn:cgf-add} admits a very satisfactory extension to matrices.  In contrast with the scalar case, the proof involves much deeper considerations.

\section{A Theorem of Lieb} \label{sec:lieb}

To find the appropriate generalization of the addition rule for cgfs, we turn to the literature on matrix analysis.  Here, we discover a famous result %
of Elliott Lieb on the convexity properties of the trace exponential function.

\begin{thm}[Lieb] \label{thm:lieb}
Fix an Hermitian~matrix $\mtx{H}$ with dimension $d$.  The function
$$
\mtx{A} \longmapsto \trace \exp( \mtx{H} + \log \mtx{A} )
$$
is a concave map on the convex cone of $d \times d$ positive-definite matrices.
\end{thm}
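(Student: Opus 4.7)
The plan is to reduce Lieb's theorem to a midpoint inequality and then to a ``power-type'' joint concavity statement whose proof requires operator-analytic machinery beyond what Chapter~\ref{chap:matrix-functions} provides. Since $f(\mtx{A}) := \trace\exp(\mtx{H} + \log \mtx{A})$ is continuous on the positive-definite cone, standard real-analysis arguments reduce the theorem to verifying the midpoint inequality: for any positive-definite $\mtx{A}, \mtx{B}$,
\begin{equation*}
f\bigl(\tfrac{1}{2}(\mtx{A}+\mtx{B})\bigr) \;\geq\; \tfrac{1}{2} f(\mtx{A}) + \tfrac{1}{2} f(\mtx{B}).
\end{equation*}

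One might first hope that the operator-monotonicity and operator-concavity properties of $\log$, together with the monotonicity of $\trace\exp$ recorded in~\eqref{eqn:exp-trace-monotone}, would close the argument directly. Operator concavity of $\log$ yields $\log\bigl(\tfrac{1}{2}(\mtx{A}+\mtx{B})\bigr) \psdge \tfrac{1}{2}(\log\mtx{A} + \log\mtx{B})$, and the monotonicity of $\trace\exp$ then gives $f\bigl(\tfrac{1}{2}(\mtx{A}+\mtx{B})\bigr) \geq \trace\exp\bigl(\mtx{H} + \tfrac{1}{2}(\log\mtx{A}+\log\mtx{B})\bigr)$. However, this chain cannot finish the proof, because $\trace\exp$ is \emph{convex} in its matrix argument, not concave. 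The genuine content of Lieb's theorem is the way $\mtx{H}$ and $\log\mtx{A}$ interact inside the exponential; it is not simply a consequence of the separate monotonicity/concavity properties of $\log$ and $\trace\exp$.

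To access that interaction, I would invoke the Lie--Trotter product formula
\begin{equation*}
\econst^{\mtx{H} + \log \mtx{A}} \;=\; \lim_{n\to\infty} \bigl(\econst^{\mtx{H}/n} \mtx{A}^{1/n}\bigr)^n,
\end{equation*}
so that $f(\mtx{A}) = \lim_n \trace\bigl(\econst^{\mtx{H}/n} \mtx{A}^{1/n}\bigr)^n$. A pointwise limit of concave functions is concave, so it suffices to show that for each fixed positive integer $n$ and each fixed positive-definite $\mtx{K} = \econst^{\mtx{H}/n}$, the map $\mtx{A} \mapsto \trace(\mtx{K}\mtx{A}^{1/n})^n$ is concave on the positive-definite cone. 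For $n = 1$ this is linear; for general $n$ it is the real work.

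This last claim is a consequence of \emph{Lieb's concavity theorem}: for parameters $s, t \geq 0$ with $s + t \leq 1$ and any fixed matrix $\mtx{C}$, the map $(\mtx{A}, \mtx{B}) \mapsto \trace(\mtx{C}^\adj \mtx{A}^s \mtx{C} \mtx{B}^t)$ is jointly concave in $(\mtx{A},\mtx{B})$ on the positive-definite cone. Granted this, repeated application (peel off one $\mtx{A}^{1/n}$ factor at a time from the cyclically reduced trace, absorbing the intervening $\mtx{K}$'s into the $\mtx{C}$ slot) yields concavity of $\mtx{A} \mapsto \trace(\mtx{K}\mtx{A}^{1/n})^n$ in $\mtx{A}$. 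The main obstacle---the only step genuinely outside the toolkit of Chapter~\ref{chap:matrix-functions}---is Lieb's concavity theorem itself. I would prove it by complex interpolation: extend $z \mapsto \trace(\mtx{C}^\adj \mtx{A}^z \mtx{C}\mtx{B}^{1-z})$ holomorphically to the strip $\{0 \leq \real z \leq 1\}$, apply a Hirschman three-line estimate together with Pick--Herglotz function theory, and transfer analytic boundary behaviour into joint concavity on the real interior. This operator-interpolation step is the crux of the proof; everything else is a reduction.
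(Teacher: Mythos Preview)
Your Lie--Trotter reduction is sound, and you correctly identify that the naive combination of operator concavity of $\log$ with monotonicity of $\trace\exp$ fails. The gap is the passage from the two-variable Lieb concavity theorem to concavity of $\mtx{A} \mapsto \trace(\mtx{K}\mtx{A}^{1/n})^n$. The ``peel off one factor'' argument does not work: for $n \geq 3$, writing $\trace(\mtx{K}\mtx{A}^{1/n})^n$ in the form $\trace(\mtx{C}^\adj \mtx{A}^s \mtx{C} \mtx{B}^t)$ with $\mtx{B}=\mtx{A}$ forces $\mtx{C}$ to depend on $\mtx{A}$; and if you instead try to prove joint concavity of $(\mtx{A}_1,\dots,\mtx{A}_n) \mapsto \trace(\mtx{K}\mtx{A}_1^{1/n}\cdots\mtx{K}\mtx{A}_n^{1/n})$ by freezing all but two variables, the resulting expression is $\trace(\mtx{C}_1\mtx{A}_i^{1/n}\mtx{C}_2\mtx{A}_j^{1/n})$ with $\mtx{C}_1 \neq \mtx{C}_2^\adj$ in general, which the two-variable theorem does not cover. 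What the Lie--Trotter route actually needs is that $\mtx{A} \mapsto \trace\big[(\mtx{K}^{1/2}\mtx{A}^{p}\mtx{K}^{1/2})^{1/p}\big]$ is concave for $0 < p \leq 1$; with $p=1/n$ this equals $\trace(\mtx{K}\mtx{A}^{1/n})^n$ by trace cyclicity. That statement \emph{is} provable by the Herglotz/Pick complex-interpolation machinery you sketch (this is essentially Epstein's argument), so your plan can be salvaged by targeting the right intermediate result rather than Lieb's $(\mtx{A},\mtx{B}) \mapsto \trace(\mtx{C}^\adj \mtx{A}^s \mtx{C}\mtx{B}^t)$ theorem.

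For comparison, the paper (Chapter~\ref{chap:lieb}) avoids Lie--Trotter and complex analysis entirely. It introduces the matrix relative entropy $\mathrm{D}(\mtx{A};\mtx{H}) = \trace[\mtx{A}(\log\mtx{A}-\log\mtx{H})-(\mtx{A}-\mtx{H})]$, uses $\mathrm{D}\geq 0$ to get the variational formula $\trace\mtx{M} = \sup_{\mtx{T}\psdgt\mtx{0}}\trace[\mtx{T}\log\mtx{M}-\mtx{T}\log\mtx{T}+\mtx{T}]$, and hence $\trace\exp(\mtx{H}+\log\mtx{A}) = \sup_{\mtx{T}}\big[\trace(\mtx{T}\mtx{H})+\trace\mtx{A}-\mathrm{D}(\mtx{T};\mtx{A})\big]$. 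Concavity then follows from partial maximization once $\mathrm{D}$ is known to be jointly convex; that joint convexity is the deep step, established via the operator Jensen inequality, a matrix perspective construction, and a Kronecker-product device due to Effros. Your (repaired) route is the classical Epstein approach---shorter in outline but leaning on complex operator theory---whereas the paper's route stays within real matrix analysis and isolates relative-entropy convexity as the load-bearing fact.
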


\noindent
In the scalar case, the analogous function $a \mapsto \exp(h + \log a)$ is linear, so this result describes a new type of phenomenon that emerges when we move to the matrix setting.  We present a complete proof of Theorem~\ref{thm:lieb} in
Chapter~\ref{chap:lieb}.

For now, let us focus on the consequences of this remarkable result.  Lieb's Theorem is valuable to us because the Laplace transform bounds from Section~\ref{sec:matrix-lt} involve the trace exponential function.  To highlight the connection, we rephrase Theorem~\ref{thm:lieb} in probabilistic terms.

\begin{cor} %
\label{cor:cum-ineq}
Let $\mtx{H}$ be a fixed Hermitian matrix, and let $\mtx{X}$ be a random Hermitian matrix of the same dimension.  Then
$$
\Expect \trace \exp( \mtx{H} + \mtx{X} )
	\leq \trace \exp\left( \mtx{H} + \log{} \Expect \econst^{\mtx{X}}  \right).
$$
\end{cor}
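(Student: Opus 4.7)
The plan is to obtain Corollary~\ref{cor:cum-ineq} as a direct probabilistic consequence of Lieb's theorem combined with Jensen's inequality~\eqref{eqn:jensen}. The key observation is that the concavity statement in Theorem~\ref{thm:lieb} is tailor-made to be averaged against a probability distribution on its argument.

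First I would change variables by setting $\mtx{A} = \econst^{\mtx{X}}$. Since $\mtx{X}$ is Hermitian, the Spectral Mapping Theorem (Proposition~\ref{prop:spectral-mapping}) guarantees that $\mtx{A}$ is a (random) positive-definite matrix, so it lies in the domain of the concave map from Theorem~\ref{thm:lieb}. Moreover, the identity~\eqref{eqn:log-defn} gives $\log \mtx{A} = \mtx{X}$, so the left-hand side of the desired inequality can be rewritten as
\begin{equation*}
\Expect \trace \exp(\mtx{H} + \mtx{X}) = \Expect\bigl[ f(\mtx{A}) \bigr],
\quad\text{where}\quad
f(\mtx{A}) \defby \trace \exp(\mtx{H} + \log \mtx{A}).
\end{equation*}

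Next I would invoke Lieb's theorem, which states that $f$ is a concave function on the positive-definite cone. The matrix Jensen inequality~\eqref{eqn:jensen}, applied to the random positive-definite matrix $\mtx{A}$, then yields
\begin{equation*}
\Expect f(\mtx{A}) \;\leq\; f\bigl( \Expect \mtx{A} \bigr)
	\;=\; \trace \exp\bigl( \mtx{H} + \log \Expect \econst^{\mtx{X}} \bigr),
\end{equation*}
which is exactly the claim. To close the argument cleanly, I would remark that $\Expect \mtx{A} = \Expect \econst^{\mtx{X}}$ is positive definite (it is an average of positive-definite matrices, hence lies in the positive-definite cone under our standing regularity assumptions on the random matrix), so the matrix logarithm on the right-hand side is well defined.

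There is essentially no obstacle once Theorem~\ref{thm:lieb} is in hand; the only point requiring any care is confirming that the hypotheses of Jensen's inequality are satisfied, namely that $\mtx{A}$ takes values in the open convex cone where $f$ is defined and concave, and that the expectation $\Expect \mtx{A}$ remains in this cone. Both points follow immediately from the fact that $\econst^{\mtx{X}}$ is positive definite for every realization of $\mtx{X}$. The real work is deferred to Chapter~\ref{chap:lieb}, where Lieb's theorem itself must be proved.
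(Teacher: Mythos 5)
Your proof is correct and takes exactly the same route as the paper: substitute $\mtx{A} = \econst^{\mtx{X}}$, invoke Lieb's theorem for concavity, and apply Jensen's inequality to pull the expectation inside. The only difference is cosmetic (the paper names the random matrix $\mtx{Y}$ and is a little terser about the positive-definiteness check you spell out).
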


\begin{proof}
Introduce the random matrix $\mtx{Y} = \econst^{\mtx{X}}$.  Then
$$
\begin{aligned}
\Expect \trace \exp(\mtx{H} + \mtx{X})
	&= \Expect \trace \exp( \mtx{H} + \log{} \mtx{Y}  ) \\
	&\leq \trace \exp( \mtx{H} + \log{}\Expect \mtx{Y} )
	= \trace \exp\left( \mtx{H} + \log{}\Expect \econst^{\mtx{X}} \right).
\end{aligned}
$$
The first identity follows from the interpretation~\eqref{eqn:log-defn} of the matrix logarithm as the functional inverse of the matrix exponential.  Theorem~\ref{thm:lieb} shows that the trace function is concave in $\mtx{Y}$, so Jensen's inequality~\eqref{eqn:jensen} allows us to draw the expectation inside the function.
\end{proof}

\section{Subadditivity of the Matrix Cgf} \label{sec:cum-subadd}

We are now prepared to generalize the addition rule~\eqref{eqn:cgf-add} for scalar cgfs to the matrix setting.  The following result is fundamental to our approach to random matrices.

\begin{lemma}[Subadditivity of Matrix Cgfs] \label{lem:cgf-indep}
Consider a finite sequence $\{ \mtx{X}_k \}$ of independent, random, Hermitian matrices of the same dimension.  Then
\begin{equation} \label{eqn:cgf-subadd}
\Expect \trace \exp\left( \sum\nolimits_k \theta \mtx{X}_k \right)
	\leq \trace \exp\left( \sum\nolimits_k \log{} \Expect \econst^{\theta \mtx{X}_k} \right)
	\quad\text{for $\theta \in \mathbb{R}$.}
\end{equation}
Equivalently,
\begin{equation} \label{eqn:cgf-subadd-rule}
\trace \exp\left( \mtx{\Xi}_{(\sum_k \mtx{X}_k)}(\theta) \right)
	\leq \trace \exp\left( \sum\nolimits_k \mtx{\Xi}_{\mtx{X}_k}(\theta) \right)
	\quad\text{for $\theta \in \mathbb{R}$.}
\end{equation}
\end{lemma}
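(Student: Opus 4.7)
The plan is to reduce the lemma to the one-matrix statement provided by Corollary~\ref{cor:cum-ineq} by peeling off the summands one at a time using the tower property of conditional expectation. First, absorb $\theta$ into each $\mtx{X}_k$ (replacing $\mtx{X}_k$ by $\theta \mtx{X}_k$) so that we need only establish
\[
\Expect \trace \exp\left( \sum\nolimits_{k=1}^n \mtx{X}_k \right)
\ \leq\ \trace \exp\left( \sum\nolimits_{k=1}^n \log \Expect \econst^{\mtx{X}_k} \right).
\]
For each $k$, set $\mtx{T}_k = \log \Expect \econst^{\mtx{X}_k}$, which is a \emph{deterministic} Hermitian matrix (finite under our standing regularity assumption).

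Next, let $\Expect_k$ denote the conditional expectation given $\mtx{X}_1,\dots,\mtx{X}_{k-1}$. By independence of the summands and the tower rule, $\Expect = \Expect_1 \Expect_2 \cdots \Expect_n$. I would begin with the innermost expectation: condition on $\mtx{X}_1,\dots,\mtx{X}_{n-1}$, so that the matrix $\mtx{H}^{(n)} \defby \sum_{k=1}^{n-1} \mtx{X}_k$ is a fixed Hermitian matrix, independent of $\mtx{X}_n$. Applying Corollary~\ref{cor:cum-ineq} with this choice of fixed matrix and with random summand $\mtx{X}_n$ yields
\[
\Expect_n \trace \exp\!\left( \mtx{H}^{(n)} + \mtx{X}_n \right)
\ \leq\ \trace \exp\!\left( \mtx{H}^{(n)} + \mtx{T}_n \right).
\]
Taking $\Expect_{n-1}$ on both sides and regarding $\mtx{H}^{(n-1)} \defby \sum_{k=1}^{n-2}\mtx{X}_k + \mtx{T}_n$ as the fixed matrix (it depends only on $\mtx{X}_1,\dots,\mtx{X}_{n-2}$ and the deterministic $\mtx{T}_n$, so it is independent of $\mtx{X}_{n-1}$), a second application of Corollary~\ref{cor:cum-ineq} replaces $\mtx{X}_{n-1}$ by $\mtx{T}_{n-1}$. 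Iterating this peeling argument $n$ times converts every $\mtx{X}_k$ into the corresponding $\mtx{T}_k$ and delivers~\eqref{eqn:cgf-subadd}.

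The key subtlety, and the only place where care is required, is ensuring at each stage that the ``fixed matrix'' playing the role of $\mtx{H}$ in Corollary~\ref{cor:cum-ineq} is independent of the random summand being integrated out. This is exactly what independence of $\{\mtx{X}_k\}$ guarantees: after $j$ peeling steps the accumulated matrix has the form $\sum_{k=1}^{n-j-1} \mtx{X}_k + \sum_{k=n-j+1}^{n} \mtx{T}_k$, which is measurable with respect to $\mtx{X}_1,\dots,\mtx{X}_{n-j-1}$ (the $\mtx{T}_k$ being deterministic) and hence independent of $\mtx{X}_{n-j}$.

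Finally, the equivalence of~\eqref{eqn:cgf-subadd} and~\eqref{eqn:cgf-subadd-rule} is just a restatement using the definition $\mtx{\Xi}_{\mtx{X}_k}(\theta) = \log \Expect \econst^{\theta \mtx{X}_k}$ on the right and the identity $\mtx{\Xi}_{(\sum_k \mtx{X}_k)}(\theta) = \log \Expect \exp(\theta \sum_k \mtx{X}_k)$ combined with $\exp\circ\log = \mathrm{id}$ on positive-definite matrices (relation~\eqref{eqn:log-defn}) on the left; no further work is needed. I expect no technical obstacle beyond bookkeeping once Corollary~\ref{cor:cum-ineq} is in hand---the real depth of the argument is already absorbed into Lieb's theorem.
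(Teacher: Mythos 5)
Your argument is correct and follows essentially the same route as the paper: absorb $\theta$, then peel off summands one at a time by applying Corollary~\ref{cor:cum-ineq} under conditional expectation, with the fixed matrix at step $m$ equal to $\sum_{k<m}\mtx{X}_k + \sum_{k>m}\log\Expect\econst^{\mtx{X}_k}$, which is independent of $\mtx{X}_m$ by the independence of the summands. The only differences from the paper's proof are notational (your $\mtx{T}_k$ is the paper's $\mtx{\Xi}_k$, and your $\Expect_k$ is described as conditioning on $\mtx{X}_1,\dots,\mtx{X}_{k-1}$ rather than as integration over $\mtx{X}_k$ alone, which amount to the same iterated-expectation chain).
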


The parallel between the additivity rule~\eqref{eqn:cgf-add} and the subadditivity rule~\eqref{eqn:cgf-subadd-rule} is striking.
With our level of preparation, it is easy to prove this result.
We just apply the bound from Corollary~\ref{cor:cum-ineq} repeatedly.

\begin{proof}
Without loss of generality, we assume that $\theta = 1$ by absorbing the parameter into the random matrices.
Let $\Expect_k$ denote the expectation with respect to $\mtx{X}_k$, the remaining random matrices held fixed.  Abbreviate
\begin{equation*}\label{eqn:xi-k}
\mtx{\Xi}_k = \log{} \Expect_{k} \econst^{\mtx{X}_k}
	= \log{} \Expect \econst^{\mtx{X}_k}.
\end{equation*}
We may calculate that
\begin{align*}
\Expect \trace \exp\left( \sum\nolimits_{k=1}^n \mtx{X}_k \right)
	&= \Expect \Expect_n
		\trace \exp\left( \sum\nolimits_{k=1}^{n-1} \mtx{X}_k
		+ \mtx{X}_n \right)  \\
	&\leq \Expect
		\trace \exp\left( \sum\nolimits_{k=1}^{n-1} \mtx{X}_k
		+ \log{} \Expect_{n} \econst^{ \mtx{X}_n} \right) \\
	&= \Expect \Expect_{n-1}
		\trace \exp\left( \sum\nolimits_{k=1}^{n-2} \mtx{X}_k
		+ \mtx{X}_{n-1} + \mtx{\Xi}_n \right) \\
	&\leq \Expect \Expect_{n-2}
		\trace \exp\left( \sum\nolimits_{k=1}^{n-2} \mtx{X}_k
		+ \mtx{\Xi}_{n-1} + \mtx{\Xi}_{n} \right) \\
\cdots\quad
	&\leq \trace \exp\biggl( \sum\nolimits_{k=1}^{n} \mtx{\Xi}_k \biggr).
\end{align*}
We can introduce iterated expectations because of the tower property of conditional expectation.
To bound the expectation $\Expect_m$ for an index $m = 1, 2, 3, \dots, n$,
we invoke Corollary~\ref{cor:cum-ineq} with the fixed matrix $\mtx{H}$ equal to
$$
\mtx{H}_m = \sum\limits_{k=1}^{m-1} \mtx{X}_k +
	\sum\limits_{k=m+1}^n \mtx{\Xi}_k.
$$
This argument is legitimate because $\mtx{H}_m$ is independent from $\mtx{X}_m$.

The formulation~\eqref{eqn:cgf-subadd-rule} follows from~\eqref{eqn:cgf-subadd} when we substitute the expression~\eqref{eqn:matrix-mgf-cgf} for the matrix cgf and make some algebraic simplifications.
\end{proof}

\section{Master Bounds for Sums of Independent Random Matrices} \label{sec:master-tail}

Finally, we can present some general results on the behavior of a sum of independent random matrices.  At this stage, we simply combine the Laplace transform bounds with the subadditivity of the matrix cgf to obtain abstract inequalities.  Later, we will harness properties of the summands to develop more concrete estimates that apply to specific examples of interest.

\begin{thm}[Master Bounds for a Sum of Independent Random Matrices] \label{thm:master-ineq}
Consider a finite sequence $\{ \mtx{X}_k \}$ of independent, random, Hermitian matrices
of the same size.  Then
\begin{align}
\Expect \lambda_{\max}\left( \sum\nolimits_k \mtx{X}_k \right)
	&\leq \inf_{\theta > 0} \ \frac{1}{\theta} \log{}
	\trace \exp\left( \sum\nolimits_k
	\log{} \Expect \econst^{\theta \mtx{X}_k} \right),
	\quad\text{and}
	\label{eqn:master-upper-expect} \\
\Expect \lambda_{\min}\left( \sum\nolimits_k \mtx{X}_k \right)
	&\geq \sup_{\theta < 0} \ \frac{1}{\theta} \log{}
	\trace \exp\left( \sum\nolimits_k
	\log{} \Expect \econst^{\theta \mtx{X}_k} \right).
	\label{eqn:master-lower-expect}
\end{align}
Furthermore, for all $t \in \mathbb{R}$,
\begin{align}
\Prob{ \lambda_{\max}\left( \sum\nolimits_k \mtx{X}_k \right) \geq t }
	&\leq \inf_{\theta > 0} \ \econst^{-\theta t}
	\,\trace \exp\left( \sum\nolimits_k
	\log{} \Expect \econst^{\theta \mtx{X}_k} \right),
	\quad\text{and}
	\label{eqn:master-upper-tail}\\
\Prob{ \lambda_{\min}\left( \sum\nolimits_k \mtx{X}_k \right) \leq t }
	&\leq \inf_{\theta < 0} \ \econst^{-\theta t}
	\, \trace \exp\left( \sum\nolimits_k
	 \log{} \Expect \econst^{\theta \mtx{X}_k} \right).
	\label{eqn:master-lower-tail}
\end{align}
\end{thm}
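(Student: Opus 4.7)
The plan is to derive all four inequalities by composing the two ingredients that have already been assembled: the single-matrix Laplace transform bounds of Propositions~\ref{prop:matrix-lt} and~\ref{prop:matrix-expect}, applied to the random Hermitian matrix $\mtx{Y} = \sum_k \mtx{X}_k$, and the subadditivity of matrix cgfs (Lemma~\ref{lem:cgf-indep}), which will let us replace an expectation of the trace exponential of a sum by a trace exponential of a sum of cgfs. Since $\mtx{Y}$ is again a random Hermitian matrix of the same dimension as each $\mtx{X}_k$, the hypotheses of both propositions are automatic.

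First I would handle the two tail bounds. Fix $\theta > 0$. By~\eqref{eqn:matrix-lt-upper-bound} applied to $\mtx{Y}$,
\begin{equation*}
\Prob{\lambda_{\max}(\mtx{Y}) \geq t} \;\leq\; \econst^{-\theta t}\, \Expect \trace \exp\!\left(\sum\nolimits_k \theta \mtx{X}_k\right).
\end{equation*}
Now invoke~\eqref{eqn:cgf-subadd} to pass the expectation through the independent sum, obtaining the bound $\econst^{-\theta t}\trace \exp\bigl(\sum_k \log \Expect \econst^{\theta \mtx{X}_k}\bigr)$. Since this holds for every $\theta > 0$, taking the infimum yields~\eqref{eqn:master-upper-tail}. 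The lower tail~\eqref{eqn:master-lower-tail} follows by exactly the same maneuver, starting from~\eqref{eqn:matrix-lt-lower-bound} and keeping $\theta < 0$; the only point to check is that Lemma~\ref{lem:cgf-indep} is stated for all real $\theta$, which it is, so nothing breaks when $\theta$ is negative.

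Next I would treat the expectation bounds. For~\eqref{eqn:master-upper-expect}, fix $\theta > 0$ and start from~\eqref{eqn:matrix-lt-upper-mean} applied to $\mtx{Y}$:
\begin{equation*}
\Expect \lambda_{\max}(\mtx{Y}) \;\leq\; \frac{1}{\theta}\log{}\Expect \trace \exp\!\left(\sum\nolimits_k \theta \mtx{X}_k\right).
\end{equation*}
Because the logarithm is monotone increasing on $(0,\infty)$ and the trace exponential is always strictly positive, the subadditivity inequality~\eqref{eqn:cgf-subadd} preserves its direction under $\log$, giving the upper bound $\theta^{-1}\log \trace \exp\bigl(\sum_k \log \Expect \econst^{\theta \mtx{X}_k}\bigr)$. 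Taking the infimum over $\theta > 0$ produces~\eqref{eqn:master-upper-expect}. For~\eqref{eqn:master-lower-expect}, start from~\eqref{eqn:matrix-lt-lower-mean} and apply the same step with $\theta < 0$; here one must note that dividing by a negative $\theta$ reverses the inequality in the right way so that the supremum (not infimum) over $\theta < 0$ appears, matching the direction of Proposition~\ref{prop:matrix-expect}.

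There is no real obstacle in the proof beyond the work already done in Lemma~\ref{lem:cgf-indep} and Propositions~\ref{prop:matrix-lt}--\ref{prop:matrix-expect}; the only subtle points are bookkeeping sign conventions for $\theta$ in the lower-tail and lower-expectation cases, and observing that monotonicity of the logarithm legitimately lets the subadditivity bound pass from inside the $\log$ to outside. Everything else is a mechanical combination of the two building blocks.
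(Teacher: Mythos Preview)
Your proposal is correct and matches the paper's own proof exactly: the paper simply says to substitute the subadditivity rule (Lemma~\ref{lem:cgf-indep}) into the Laplace transform results (Propositions~\ref{prop:matrix-lt} and~\ref{prop:matrix-expect}), which is precisely what you do. Your bookkeeping of the sign of $\theta$ and the monotonicity of the logarithm is accurate and fills in the details the paper leaves implicit.
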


\begin{proof}
Substitute the subadditivity rule for matrix cgfs, Lemma~\ref{lem:cgf-indep}, into the two matrix Laplace transform results, Proposition~\ref{prop:matrix-lt} and Proposition~\ref{prop:matrix-expect}.
\end{proof}

In this chapter, we have focused on probability inequalities for the extreme eigenvalues of a sum of independent random matrices.  Nevertheless, these results also give information about the spectral norm of a sum of independent, random, rectangular matrices because we can apply them to the Hermitian dilation~\eqref{eqn:herm-dilation} of the sum.  Instead of presenting a general theorem, we find it more natural to extend individual results to the non-Hermitian case.

\section{Notes}

This section includes some historical discussion about the results we have described in this chapter, along with citations for the results that we have established.

\subsection{The Matrix Laplace Transform Method}

The idea of lifting the ``Bernstein trick'' to the matrix setting is due to two researchers in quantum information theory, Rudolf Ahlswede and Andreas Winter, who were working on a problem concerning transmission of information through a quantum channel~\cite{AW02:Strong-Converse}.  Their paper contains a version of the matrix Laplace transform result, Proposition~\ref{prop:matrix-lt}, along with a substantial number of related foundational ideas.  Their work is one of the major inspirations for the tools that are described in these notes.

The statement of Proposition~\ref{prop:matrix-lt} and the proof that we present appear in the paper~\cite{Oli10:Sums-Random} of Roberto Oliveira.  The subsequent result on expectations, Proposition~\ref{prop:matrix-expect}, first appeared in the paper~\cite{CGT12:Masked-Sample}.

\subsection{Subadditivity of Cumulants}

The major impediment to applying the matrix Laplace transform method is the need to produce a bound for the trace of the matrix moment generating function (the trace mgf).  This is where all the technical difficulty in the argument resides.

Ahlswede \& Winter~\cite[App.]{AW02:Strong-Converse} proposed an approach for bounding the trace mgf of an independent sum, based on a repeated application of the Golden--Thompson inequality~\eqref{eqn:golden-thompson}.  Their argument leads to a cumulant bound of the form
\begin{equation} \label{eqn:aw-cumulant}
\Expect \trace \exp\left( \sum\nolimits_k \mtx{X}_k \right)
	\leq d \cdot \exp\left( \sum\nolimits_k \lambda_{\max}\bigl(\log{} \Expect \econst^{\mtx{X}_k} \bigr) \right)
\end{equation}
when the random Hermitian matrices $\mtx{X}_k$ have dimension $d$.
In other words, Ahlswede \& Winter bound the cumulant of a sum in terms of the sum of the \emph{maximum eigenvalues} of the cumulants.  There are cases where the bound~\eqref{eqn:aw-cumulant} is equivalent with Lemma~\ref{lem:cgf-indep}.
For example, the estimates coincide when each matrix $\mtx{X}_k$ is identically distributed.
In general, however, the estimate~\eqref{eqn:aw-cumulant} leads to fundamentally weaker results
than our bound from Lemma~\ref{lem:cgf-indep}.  In the worst case, the approach of Ahlswede \& Winter
may produce an unnecessary factor of the dimension $d$ in the \emph{exponent}.
See~\cite[\S\S3.7, 4.8]{Tro11:User-Friendly-FOCM} for details.

The first major technical advance beyond the original argument of Ahlswede \& Winter appeared in a paper~\cite{Oli10:Concentration-Adjacency} of Oliveira.  He developed a more effective way to deploy the Golden--Thompson inequality, and he used this technique to establish a matrix version of Freedman's inequality~\cite{Fre75:Tail-Probabilities}.  In the scalar setting, Freedman's inequality extends the Bernstein concentration inequality to martingales; Oliveira obtained the analogous extension of Bernstein's inequality for matrix-valued martingales.  When specialized to independent sums, his result is quite similar with the matrix Bernstein inequality, Theorem~\ref{thm:matrix-bernstein-intro}, apart from the precise values of the constants.  Oliveira's method, however, does not seem to deliver the full spectrum of matrix concentration inequalities that we discuss in these notes.

The approach here, based on Lieb's Theorem, was introduced in the article~\cite{Tro11:User-Friendly-FOCM} by the author of these notes.  This paper was apparently the first to recognize that Lieb's Theorem has probabilistic content, as stated in Corollary~\ref{cor:cum-ineq}.  This idea leads to Lemma~\ref{lem:cgf-indep}, on the subadditivity of cumulants, along with the master tail bounds from Theorem~\ref{thm:master-ineq}.  Note that the two articles~\cite{Oli10:Concentration-Adjacency,Tro11:User-Friendly-FOCM} are independent works.

For a detailed discussion of the benefits of Lieb's Theorem over the Golden--Thompson inequality, see~\cite[\S4]{Tro11:User-Friendly-FOCM}.  In summary, to get the sharpest concentration results for random matrices, Lieb's Theorem appears
to be indispensible.  The approach of Ahlswede \& Winter seems intrinsically weaker.  Oliveira's argument has certain advantages, however, in that it extends from matrices to the fully noncommutative setting~\cite{JZ12:Noncommutative-Martingale}.

Subsequent research on the underpinnings of the matrix Laplace transform method has led to a martingale version of the subadditivity of cumulants~\cite{Tro11:Freedmans-Inequality, Tro11:User-Friendly-Martingale-TR};
these works also depend on Lieb's Theorem.
The technical report~\cite{GT11:Tail-Bounds} shows how to use a related result,
called the Lieb--Seiringer Theorem~\cite{LS05:Stronger-Subadditivity},
to obtain upper and lower tail bounds for all eigenvalues of a sum of
independent random Hermitian matrices.

\subsection{Noncommutative Moment Inequalities}

There is a closely related, and much older, line of research on noncommutative moment inequalities.  These results provide information about the expected trace of a power of a sum of independent random matrices.  The matrix Laplace transform method, as encapsulated in Theorem~\ref{thm:master-ineq}, gives analogous bounds for the exponential moments.

Research on noncommutative moment inequalities dates to an
important paper~\cite{LP86:Inegalites-Khintchine} of Fran{\c c}oise Lust-Piquard,
which contains an operator extension of the Khintchine inequality.  Her result, now called
the \term{noncommutative Khintchine inequality}, controls the trace moments of a sum
of fixed matrices, each modulated by an independent Rademacher random variable;
see Section~\ref{sec:nc-khintchine} for more details.

In recent years, researchers have generalized many other moment inequalities for a sum of scalar random variables to matrices (and beyond).  For instance, the Rosenthal--Pinelis inequality for a sum of independent zero-mean random variables admits a matrix version~\cite{JZ13:Noncommutative-Bennett,MJCFT12:Matrix-Concentration,CGT12:Masked-Sample}.  We present a variant of the latter result below in~\eqref{eqn:matrix-moment-ineq}.  See the paper~\cite{JX05:Best-Constants} for a good overview of some other noncommutative moment inequalities.

Finally, and tangentially, we mention that a different notion of matrix moments and cumulants plays a central role in the theory of free probability~\cite{NS06:Lectures-Combinatorics}.

\subsection{Quantum Statistical Mechanics}

A curious feature of the theory of matrix concentration inequalities is that the most powerful tools come from the mathematical theory of quantum statistical mechanics.  This field studies the bulk statistical properties of interacting quantum systems, and it would seem quite distant from the field of random matrix theory.  The connection between these two areas has emerged because of research on quantum information theory, which studies how information can be encoded, operated upon, and transmitted via quantum mechanical systems.

The Golden--Thompson inequality is a major result from quantum statistical mechanics.
Bhatia's book~\cite[Sec.~IX.3]{Bha97:Matrix-Analysis} contains a detailed treatment of this result from the perspective of matrix theory.  For an account with more physical content, see the book of Thirring~\cite{Thi02:Quantum-Mathematical}.
The fact that the Golden--Thompson inequality fails for three matrices can be obtained from simple examples, such as combinations of Pauli spin matrices~\cite[Exer.~IX.8.4]{Bha97:Matrix-Analysis}.

Lieb's Theorem~\cite[Thm.~6]{Lie73:Convex-Trace} was first established in an important paper of Elliott Lieb on the convexity of trace functions.  His main goal was to establish concavity properties for a function that measures the amount of information in a quantum system.  See the notes in Chapter~\ref{chap:lieb} for a more detailed discussion.

\makeatletter{}%

\chapter[Matrix Gaussian \& Rademacher Series]{Matrix Gaussian Series \& \\ Matrix Rademacher Series} \label{chap:matrix-series}

In this chapter, we present our first set of matrix concentration inequalities.  These results provide spectral information about a sum of fixed matrices, each modulated by an independent scalar random variable.  This type of formulation is surprisingly versatile, and it captures a range of interesting examples.  Our main goal, however, is to introduce
matrix concentration in the simplest setting possible.

To be more precise about our scope, let us introduce the concept of a matrix Gaussian series.
Consider a finite sequence $\{ \mtx{B}_k \}$ of fixed matrices with the same dimension,
along with a finite sequence $\{ \gamma_k \}$ of independent standard normal random variables.
We will study the spectral norm of the random matrix
$$
\mtx{Z} = \sum\nolimits_k \gamma_k \mtx{B}_k.
$$
This expression looks abstract, but it has concrete modeling power.
For example, we can express a Gaussian Wigner matrix, one of the
classical random matrices, in this fashion.  But the real value of this
approach is that we can use matrix Gaussian series to represent
many kinds of random matrices built from Gaussian random variables.
This technique allows us to attack problems that classical methods
do not handle gracefully.  For instance, we can easily study a
Toeplitz matrix with Gaussian entries.

Similar ideas allow us to treat a \term{matrix Rademacher series}, a sum of fixed matrices modulated by random signs.
(Recall that a \term{Rademacher random variable} takes the values $\pm 1$ with equal probability.)
The results in this case are almost identical with the results for matrix Gaussian series,
but they allow us to consider new problems.
As an example, we can study the expected spectral norm of a fixed real matrix after flipping
the signs of the entries at random.

\subsubsection{Overview}

In \S\ref{sec:matrix-gauss-rect}, we begin
with an overview of our results for matrix Gaussian
series; very similar results also hold for matrix Rademacher series.
Afterward, we discuss the accuracy of the theoretical bounds.
The subsequent sections, \S\S\ref{sec:gauss-matrices}--\ref{sec:toeplitz},
describe what the matrix concentration inequalities
tell us about some classical and not-so-classical examples of random matrices.
Section~\ref{sec:maxqp} includes an overview of a more substantial
application in combinatorial optimization.  The final part \S\ref{sec:matrix-gauss-proof}
contains detailed proofs of the bounds.
We conclude with bibliographical notes.

\section{A Norm Bound for Random Series with Matrix Coefficients} \label{sec:matrix-gauss-rect}

Consider a finite sequence $\{ b_k \}$ of real numbers and a finite sequence $\{ \gamma_k \}$ of independent standard normal random variables.  Form the random series $Z = \sum_k \gamma_k b_k$.
A routine invocation of the scalar Laplace transform method demonstrates that
\begin{equation} \label{eqn:real-gauss}
\Prob{ \abs{Z} \geq t }
	\leq 2\, \exp\left( \frac{-t^2}{2v} \right)
\quad\text{where $v = \Var(Z) = \sum\nolimits_k b_k^2$.}
\end{equation}
It turns out that the inequality~\eqref{eqn:real-gauss} extends directly to the matrix setting.

\begin{thm}[Matrix Gaussian \& Rademacher Series] \label{thm:matrix-gauss-rect}
Consider a finite sequence $\{ \mtx{B}_k \}$ of fixed complex matrices with dimension $d_1 \times d_2$,
and let $\{\gamma_k\}$ be a finite sequence of independent standard normal variables.
Introduce the matrix Gaussian series
\begin{equation} \label{eqn:matrix-gauss-series}
\mtx{Z} = \sum\nolimits_k \gamma_k \mtx{B}_k.
\end{equation}
Let $v(\mtx{Z})$ be the matrix variance statistic of the sum:
\begin{align}
v(\mtx{Z}) &= \max\left\{ \norm{ \smash{\Expect ( \mtx{ZZ}^\adj )} }, \
	\norm{ \smash{\Expect ( \mtx{Z}^\adj \mtx{Z} )} } \right\}  \label{eqn:matrix-gauss-sigma2-rect} \\
	&= \max\left\{ \norm{ \sum\nolimits_k \mtx{B}_k \mtx{B}_k^\adj }, \
	\norm{ \sum\nolimits_k \mtx{B}_k^\adj \mtx{B}_k } \right\}. \label{eqn:matrix-gauss-rect-var-calc}
\end{align}
Then  %
\begin{equation} \label{eqn:matrix-gauss-expect-rect}
\Expect \norm{ \mtx{Z} }
	\leq \sqrt{2 v(\mtx{Z}) \log (d_1 + d_2)}.
\end{equation}
Furthermore, for all $t \geq 0$,
\begin{equation} \label{eqn:matrix-gauss-tail-rect}
\Prob{ \norm{ \mtx{Z} } \geq t}
	\leq (d_1 + d_2) \, \exp\left( \frac{- t^2}{2v(\mtx{Z})} \right).
\end{equation}
The same bounds hold when we replace $\{\gamma_k\}$ by a finite sequence $\{\varrho_k\}$ of independent Rademacher random variables.
\end{thm}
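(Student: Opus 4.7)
The plan is to reduce the rectangular case to the Hermitian case via the dilation, compute (or bound) the matrix cgf of each summand explicitly, and then feed the result into the master bounds from Theorem~\ref{thm:master-ineq}. Let $\mtx{A}_k = \coll{H}(\mtx{B}_k)$, so that $\mtx{Y} = \coll{H}(\mtx{Z}) = \sum_k \gamma_k \mtx{A}_k$ is a sum of independent, random, Hermitian matrices of dimension $d_1+d_2$. The identity~\eqref{eqn:herm-dilation-norm} gives $\norm{\mtx{Z}} = \lambda_{\max}(\mtx{Y})$, and the calculation~\eqref{eqn:var-stat-dilation} together with~\eqref{eqn:herm-dilation-square} shows that $\sum_k \mtx{A}_k^2$ is the block-diagonal matrix with blocks $\sum_k \mtx{B}_k \mtx{B}_k^\adj$ and $\sum_k \mtx{B}_k^\adj \mtx{B}_k$. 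Hence $\norm{\sum_k \mtx{A}_k^2} = v(\mtx{Z})$.

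Next I compute the matrix cgf of $\gamma_k \mtx{A}_k$. Since $\gamma_k$ is standard normal, expanding the exponential in a power series and using $\Expect \gamma_k^{2q+1} = 0$ and $\Expect \gamma_k^{2q} = (2q)!/(2^q q!)$ yields
\begin{equation*}
\Expect \econst^{\theta \gamma_k \mtx{A}_k}
  = \sum_{q=0}^{\infty} \frac{(\theta^2 \mtx{A}_k^2 / 2)^q}{q!}
  = \econst^{\theta^2 \mtx{A}_k^2 / 2},
\end{equation*}
so $\log \Expect \econst^{\theta \gamma_k \mtx{A}_k} = \tfrac{1}{2}\theta^2 \mtx{A}_k^2$ by~\eqref{eqn:log-defn}. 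For the Rademacher case the same bound holds as a semidefinite inequality: $\Expect \econst^{\theta \varrho_k \mtx{A}_k} = \cosh(\theta \mtx{A}_k) \psdle \econst^{\theta^2 \mtx{A}_k^2 / 2}$, which follows from the Transfer Rule (Proposition~\ref{prop:transfer-rule}) applied to the scalar inequality $\cosh x \leq \econst^{x^2/2}$, and then taking logarithms using~\eqref{eqn:log-monotone}.

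Now I invoke the master tail bound~\eqref{eqn:master-upper-tail} and expectation bound~\eqref{eqn:master-upper-expect}. Summing the cgf bounds across $k$ gives
\begin{equation*}
\sum_k \log \Expect \econst^{\theta \gamma_k \mtx{A}_k} \psdle \tfrac{1}{2} \theta^2 \sum_k \mtx{A}_k^2 \psdle \tfrac{1}{2} \theta^2 v(\mtx{Z}) \, \Id.
\end{equation*}
Applying the monotonicity~\eqref{eqn:exp-trace-monotone} of the trace exponential and bounding the trace of a scalar multiple of the identity on $\mathbb{H}_{d_1+d_2}$ yields
\begin{equation*}
\trace \exp\!\left( \sum_k \log \Expect \econst^{\theta \gamma_k \mtx{A}_k} \right)
  \leq (d_1+d_2) \exp\!\left( \tfrac{1}{2} \theta^2 v(\mtx{Z}) \right).
\end{equation*}
Plugging into~\eqref{eqn:master-upper-tail} and optimizing over $\theta > 0$ with the choice $\theta = t / v(\mtx{Z})$ delivers~\eqref{eqn:matrix-gauss-tail-rect}; plugging into~\eqref{eqn:master-upper-expect} and optimizing with $\theta = \sqrt{2 \log(d_1+d_2) / v(\mtx{Z})}$ delivers~\eqref{eqn:matrix-gauss-expect-rect}.

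The main obstacle is really just the Rademacher cgf bound, since one cannot compute $\Expect \econst^{\theta \varrho_k \mtx{A}_k}$ in closed form and must settle for a semidefinite upper bound; but this is a clean application of the Transfer Rule and the operator monotonicity of the logarithm, both of which were recorded in Chapter~\ref{chap:matrix-functions}. Everything else is bookkeeping: pushing the variance through the dilation, invoking the master bounds, and optimizing the parameter $\theta$.
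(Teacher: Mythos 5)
Your proposal is correct and follows essentially the same route as the paper: exact Gaussian cgf (semidefinite bound for Rademacher via $\cosh x \leq \econst^{x^2/2}$), the master bounds of Theorem~\ref{thm:master-ineq}, and reduction of the rectangular case to the Hermitian case through the dilation. The only cosmetic differences are that you dilate at the outset rather than stating the Hermitian result as a separate theorem, and that for the Rademacher cgf you pass through operator monotonicity of $\log$ (via~\eqref{eqn:log-monotone}) whereas the paper applies the Transfer Rule directly to $\log\cosh(a) \leq a^2/2$; both are sound.
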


\noindent
The proof of Theorem~\ref{thm:matrix-gauss-rect} appears below
in~\S\ref{sec:matrix-gauss-proof}.

\subsection{Discussion}

Let us take a moment to discuss the content of Theorem~\ref{thm:matrix-gauss-rect}.
The main message is that the expectation of $\norm{\mtx{Z}}$
is controlled by the matrix variance statistic $v(\mtx{Z})$.  Furthermore,
$\norm{\mtx{Z}}$ has a subgaussian tail whose decay rate depends on $v(\mtx{Z})$.

The matrix variance statistic $v(\mtx{Z})$ defined in~\eqref{eqn:matrix-gauss-sigma2-rect}
specializes the general formulation~\eqref{eqn:matrix-variance-rect}.  The second
expression~\eqref{eqn:matrix-gauss-rect-var-calc} follows from the additivity
property~\eqref{eqn:indep-sum-rect} for the variance of an independent sum.
When the summands are Hermitian, observe that the two terms in the maximum coincide.
The formulas~\eqref{eqn:matrix-gauss-sigma2-rect} and~\eqref{eqn:matrix-gauss-rect-var-calc}
are a direct extension of the variance that arises in the scalar bound~\eqref{eqn:real-gauss}.  

As compared with~\eqref{eqn:real-gauss},
a new feature of the bound~\eqref{eqn:matrix-gauss-tail-rect} is the
dimensional factor $d_1 + d_2$.  When $d_1 = d_2 = 1$,
the matrix bound reduces to the scalar result~\eqref{eqn:real-gauss}.
In this case, at least, we have lost nothing by lifting the Laplace transform
method to matrices.
The behavior of the matrix tail bound~\eqref{eqn:matrix-gauss-tail-rect}
is more subtle than the behavior of the scalar tail bound~\eqref{eqn:real-gauss}.
See Figure~\ref{fig:gauss-tail-schema} for an illustration.

\begin{figure}
\begin{center}
\includegraphics[width=0.9\textwidth]{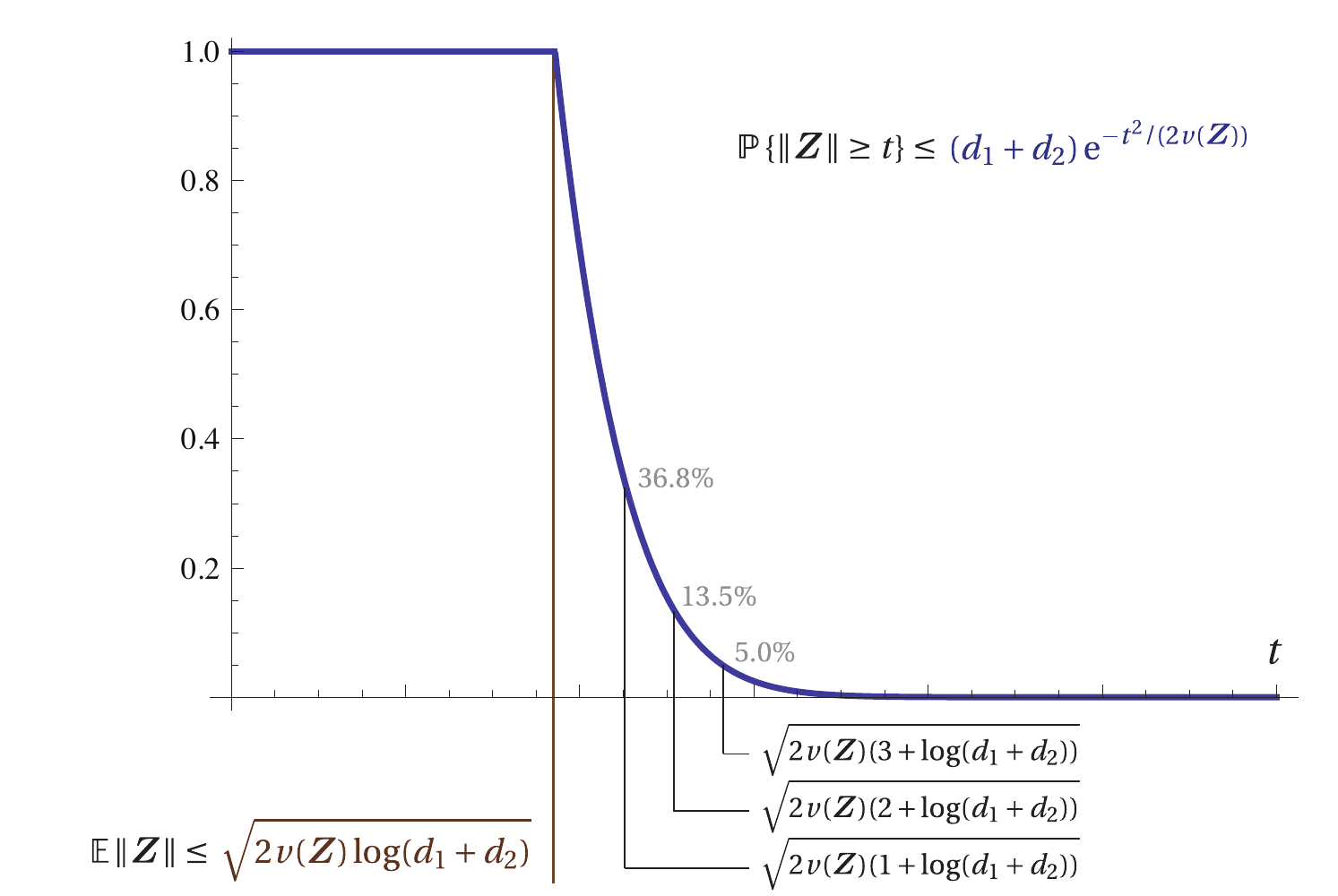}
\begin{caption}{
\textbf{Schematic of tail bound for matrix Gaussian series.}
Consider a matrix Gaussian series $\mtx{Z}$ with dimension $d_1 \times d_2$.
The tail probability $\Prob{ \norm{\mtx{Z}} \geq t }$ admits the upper bound
$(d_1 + d_2) \, \exp(-t^2/(2v(\mtx{Z})))$, marked as a dark blue curve.
This estimate provides no information below the level $t = \sqrt{2 v(\mtx{Z}) \log(d_1 + d_2)}$.
This value, the dark red vertical line,
coincides with the upper bound~\eqref{eqn:matrix-gauss-expect-rect} for $\Expect \norm{\mtx{Z}}$.
As $t$ increases beyond this point, the tail probability decreases at a subgaussian
rate with variance on the order of $v(\mtx{Z})$.}
\label{fig:gauss-tail-schema}
\end{caption}
\end{center}
\end{figure}

\subsection{Optimality of the Bounds for Matrix Gaussian Series} \label{sec:matrix-gauss-sharp}

One may wonder whether Theorem~\ref{thm:matrix-gauss-rect} provides accurate information about the behavior of a matrix Gaussian series.  The answer turns out to be complicated.  Here is the executive summary: the expectation bound~\eqref{eqn:matrix-gauss-expect-rect} is always quite good, but the tail bound~\eqref{eqn:matrix-gauss-tail-rect}
is sometimes quite bad.  The rest of this section expands on these claims.

\subsubsection{The Expectation Bound}

Let $\mtx{Z}$ be a matrix Gaussian series of the form~\eqref{eqn:matrix-gauss-series}.  We will argue that
\begin{equation} \label{eqn:matrix-gauss-expect-two-sided}
v(\mtx{Z})
	\quad\leq\quad
	\Expect \normsq{\mtx{Z}}
	\quad\leq\quad
	2 v(\mtx{Z}) (1 + \log(d_1 + d_2)).
\end{equation}
In other words, the matrix variance $v(\mtx{Z})$ is roughly the correct scale for $\normsq{\mtx{Z}}$.
This pair of estimates is a significant achievement because it is quite challenging to compute the
norm of a matrix Gaussian series in general.  Indeed, the literature contains very few examples
where explicit estimates are available, especially if one desires reasonable constants.

We begin with the lower bound in~\eqref{eqn:matrix-gauss-expect-two-sided},
which is elementary.  Indeed, since the spectral norm is convex,
Jensen's inequality ensures that
$$
\Expect \normsq{\mtx{Z}}
	= \Expect{} \max\big\{ \norm{\smash{\mtx{ZZ}^\adj}}, \ \norm{\smash{\mtx{Z}^\adj\mtx{Z}}} \big\}
	\geq \max\big\{ \norm{\smash{\Expect(\mtx{ZZ}^\adj)}}, \ \norm{\smash{\Expect(\mtx{Z}^\adj\mtx{Z})}} \big\}
	= v(\mtx{Z}).
$$
The first identity follows from~\eqref{eqn:spectral-norm-square}, and the last
is the definition~\eqref{eqn:matrix-variance-rect} of the matrix variance.

The upper bound in~\eqref{eqn:matrix-gauss-expect-two-sided} is
a consequence of the tail bound~\eqref{eqn:matrix-gauss-tail-rect}:
$$
\begin{aligned}
\Expect{} \normsq{\mtx{Z}}
	&= \int_0^\infty 2t \, \Prob{ \norm{\mtx{Z}} \geq t } \idiff{t} \\
	&\leq \int_0^E 2t \idiff{t} + 2 (d_1 + d_2) \int_E^\infty t \, \econst^{-t^2/(2v(\mtx{Z}))} \idiff{t}
	= E^2 + 2v(\mtx{Z}) \, (d_1 + d_2) \, \econst^{-E^2/(2v(\mtx{Z}))}. \phantom{\int_0^E}
\end{aligned}
$$
In the first step, rewrite the expectation using integration by parts, and then split the integral
at a positive number $E$.  In the first term, we bound the probability by one, while the second term
results from the tail bound~\eqref{eqn:matrix-gauss-tail-rect}.  Afterward, we compute the integrals
explicitly.  Finally, select $E^2 = 2v(\mtx{Z}) \log(d_1+d_2)$ to complete the proof
of~\eqref{eqn:matrix-gauss-expect-two-sided}.

\subsubsection{About the Dimensional Factor}

At this point, one may ask whether it is possible to improve either side of the inequality~\eqref{eqn:matrix-gauss-expect-two-sided}.  The answer is negative unless we have additional information about the Gaussian series
beyond the matrix variance statistic $v(\mtx{Z})$.

Indeed, for arbitrarily large dimensions $d_1$ and $d_2$, we can exhibit a matrix Gaussian series where
the left-hand inequality in~\eqref{eqn:matrix-gauss-expect-two-sided} is correct.
That is, $\Expect \normsq{\mtx{Z}} \approx v(\mtx{Z})$ with no additional dependence on the dimensions $d_1$ or $d_2$.
One such example appears below in~\S\ref{sec:marcenko-pastur}.

At the same time, for arbitrarily large dimensions $d_1$ and $d_2$,
we can construct a matrix Gaussian series where the right-hand
inequality in~\eqref{eqn:matrix-gauss-expect-two-sided} is correct.
That is, $\Expect \normsq{\mtx{Z}} \approx v(\mtx{Z}) \log(d_1 + d_2)$.
See~\S\ref{sec:toeplitz} for an example.

We can offer a rough intuition about how these two situations differ from each other.
The presence or absence of the dimensional factor $\log(d_1 + d_2)$
depends on how much the coefficients $\mtx{B}_k$ in the matrix Gaussian series $\mtx{Z}$ 
commute with each other.  More commutativity leads to a logarithm,
while less commutativity can sometimes result in cancelations that obliterate the logarithm.
It remains a major open question to find a simple quantity, computable from the coefficients $\mtx{B}_k$,
that decides whether $\Expect \normsq{\mtx{Z}}$ contains a dimensional factor or not.

In Chapter~\ref{chap:intrinsic}, we will describe a technique that allows us to moderate the dimensional factor in~\eqref{eqn:matrix-gauss-expect-two-sided} for some types of matrix series.  But we cannot remove the dimensional factor entirely with current technology.

\subsubsection{The Tail Bound}

What about the tail bound~\eqref{eqn:matrix-gauss-tail-rect} for the norm of the Gaussian series?  Here, our results are less impressive.  It turns out that the large-deviation behavior of the spectral norm of a matrix Gaussian series $\mtx{Z}$ is controlled by a
statistic $v_\star(\mtx{Z})$ called the \term{weak variance}:
$$
v_{\star}(\mtx{Z})
	= \sup_{\norm{\vct{u}}=\norm{\vct{w}}=1} \Expect{} \abssq{ \smash{\vct{u}^\adj \mtx{Z} \vct{w}} }
	= \sup_{\norm{\vct{u}}=\norm{\vct{w}}=1} \sum\nolimits_k \abssq{ \smash{\vct{u}^\adj \mtx{B}_k \vct{w}} }.
$$
The best general inequalities between the matrix variance statistic and the weak variance are
$$
v_{\star}(\mtx{Z}) \quad\leq\quad v(\mtx{Z}) \quad\leq\quad \min\{ d_1, d_2 \} \cdot v_{\star}(\mtx{Z})
$$
There are examples of matrix Gaussian series that saturate the lower or the upper inequality.

The classical concentration inequality~\cite[Thm.~5.6]{BLM13:Concentration-Inequalities}
for a function of independent Gaussian random variables implies that
\begin{equation} \label{eqn:gauss-concentration}
\Prob{ \norm{\mtx{Z}} \geq \Expect \norm{\mtx{Z}} + t }
	\leq \econst^{-t^2/(2v_{\star}(\mtx{Z}))}.
\end{equation}
Let us emphasize that the bound~\eqref{eqn:gauss-concentration} provides no information about $\Expect \norm{\mtx{Z}}$;
it only tells us about the probability that $\norm{\mtx{Z}}$ is larger than its mean.

Together, the last two displays indicate that the exponent in the
tail bound~\eqref{eqn:matrix-gauss-tail-rect} is sometimes too big by a factor $\min\{d_1,d_2\}$.
Therefore, a direct application of Theorem~\ref{thm:matrix-gauss-rect} can badly overestimate
the tail probability $\Prob{ \norm{\mtx{Z}} > t }$ when the level $t$ is large.
Fortunately, this problem is less pronounced with the matrix Chernoff inequalities of Chapter~\ref{chap:matrix-chernoff} and the matrix Bernstein inequalities of Chapter~\ref{chap:matrix-bernstein}.

\subsubsection{Expectations and Tails}

When studying concentration of random variables, it is quite common that we need to use one method
to assess the expected value of the random variable and a separate technique to determine the
probability of a large deviation.

\begin{quotation}
\noindent
\textbf{The primary value of matrix concentration inequalities inheres in the estimates
that they provide for the expectation of the spectral norm
(or maximum eigenvalue or minimum eigenvalue) of a random matrix.}
\end{quotation}

\noindent
In many cases, matrix concentration bounds provide reasonable information about
the tail decay, but there are other situations where the tail bounds are feeble.
In this event, we recommend applying a scalar concentration inequality to
control the tails.

\section{Example: Some Gaussian Matrices} \label{sec:gauss-matrices}

Let us try out our methods on two types of Gaussian matrices that have been studied extensively in the classical literature on random matrix theory.  In these cases, precise information about the spectral distribution is available, which provides a benchmark for assessing our results.  We find that bounds based on Theorem~\ref{thm:matrix-gauss-rect}
lead to very reasonable estimates, but they are not sharp.  The advantage of our approach is that it applies to every example, whereas we are making comparisons with specialized techniques that only illuminate individual cases.
Similar conclusions hold for matrices with independent Rademacher entries.

\subsection{Gaussian Wigner Matrices} \label{sec:wigner}

We begin with a family of Gaussian Wigner matrices.  A $d \times d$ matrix $\mtx{W}_d$ from this ensemble is real-symmetric with a zero diagonal; the entries above the diagonal are independent normal variables with mean zero and variance one:
$$
\mtx{W}_d = \begin{bmatrix}
	0 & \gamma_{12} & \gamma_{13} &  \dots & \gamma_{1d} \\
	\gamma_{12} & 0 & \gamma_{23} & \dots  & \gamma_{2d} \\
	\gamma_{13} & \gamma_{23} & 0 &  & \gamma_{3d} \\
	\vdots & \vdots && \ddots & \vdots \\
	\gamma_{1d} & \gamma_{2d} & \dots & \gamma_{d-1,d} & 0
\end{bmatrix}
$$
where $\{ \gamma_{jk} : 1 \leq j < k \leq d \}$ is an independent family of standard normal variables.  We can represent this matrix compactly as a Gaussian series:
\begin{equation} \label{eqn:gauss-wigner}
\mtx{W}_d =
\sum\limits_{1 \leq j < k \leq d} \gamma_{jk} (\mathbf{E}_{jk} + \mathbf{E}_{kj}).
\end{equation}
The norm of a Wigner matrix satisfies
\begin{equation} \label{eqn:gauss-wigner-true}
\frac{1}{\sqrt{d}} \, \norm{\mtx{W}_d} \longrightarrow 2
\quad\text{as $d \to \infty$, almost surely}.
\end{equation}
For example, see~\cite[Thm.~5.1]{BS10:Spectral-Analysis}.
To make~\eqref{eqn:gauss-wigner-true} precise,
we assume that $\{\mtx{W}_d\}$ is an independent sequence of Gaussian Wigner matrices, indexed by the dimension $d$.

Theorem~\ref{thm:matrix-gauss-herm} provides a simple way to bound the norm of a Gaussian Wigner matrix.  We just need to compute the matrix variance statistic $v(\mtx{W}_d)$.
The formula~\eqref{eqn:matrix-gauss-rect-var-calc} for $v(\mtx{W}_d)$ asks us to form the sum of the squared coefficients from the representation~\eqref{eqn:gauss-wigner}:
$$
\sum\limits_{1 \leq j < k \leq d} (\mathbf{E}_{jk} + \mathbf{E}_{kj})^2
	= \sum\limits_{1 \leq j < k \leq d} (\mathbf{E}_{jj} + \mathbf{E}_{kk})
	= (d-1) \, \Id_d.
$$
Since the terms in~\eqref{eqn:gauss-wigner} are Hermitian, we have only one sum of squares to consider.
We have also used the facts that $\mathbf{E}_{jk} \mathbf{E}_{kj} = \mathbf{E}_{jj}$ while $\mathbf{E}_{jk} \mathbf{E}_{jk} = \mtx{0}$ because of the condition $j < k$ in the limits of summation.
We see that
$$
v(\mtx{W}_d) 
	= \norm{ \sum\limits_{1 \leq j < k \leq d} (\mathbf{E}_{jk} + \mathbf{E}_{kj})^2 }
	= \norm{ (d-1) \, \Id_d } = d-1.
$$
The bound~\eqref{eqn:matrix-gauss-expect-rect} for the expectation of the norm gives
\begin{equation} \label{eqn:gauss-wigner-est}
\Expect \norm{\mtx{W}_d} \leq \sqrt{2 (d-1) \log(2d)}.
\end{equation}
In conclusion, our techniques overestimate $\norm{\mtx{W}_d}$ by a factor of about $\sqrt{0.5 \log d}$.
The result~\eqref{eqn:gauss-wigner-est} is not perfect, but it only takes two lines of work.
In contrast, the classical result~\eqref{eqn:gauss-wigner-true} depends on a long moment
calculation that involves challenging combinatorial arguments.

\subsection{Rectangular Gaussian Matrices} \label{sec:marcenko-pastur}

Next, we consider a $d_1 \times d_2$ rectangular matrix with independent standard normal entries:
$$
\mtx{G} = \begin{bmatrix}
	\gamma_{11} & \gamma_{12} & \gamma_{13} & \dots & \gamma_{1d_2} \\
	\gamma_{21} & \gamma_{22} & \gamma_{23} & \dots & \gamma_{2d_2} \\
	\vdots & \vdots &&\ddots & \vdots \\
	\gamma_{d_1 1} & \gamma_{d_1 2} & \gamma_{d_1 3} & \dots & \gamma_{d_1d_2} \\
\end{bmatrix}
$$
where $\{ \gamma_{jk} \}$
is an independent family of standard normal variables.  We can express this
matrix efficiently using a Gaussian series:
\begin{equation} \label{eqn:mp-gauss-series}
\mtx{G} = \sum_{j=1}^{d_1} \sum_{k=1}^{d_2} \gamma_{jk} \mathbf{E}_{jk},
\end{equation}
There is an elegant estimate~\cite[Thm.~2.13]{DS02:Local-Operator} for the norm of this matrix:
\begin{equation} \label{eqn:gauss-rect-true}
\Expect \norm{\mtx{G}} \leq \sqrt{d_1} + \sqrt{d_2}.
\end{equation}
The inequality~\eqref{eqn:gauss-rect-true} is sharp when $d_1$ and $d_2$ tend to infinity while the ratio $d_1/d_2 \to \mathrm{const}$.  See~\cite[Thm.~5.8]{BS10:Spectral-Analysis} for details.

Theorem~\ref{thm:matrix-gauss-rect} yields another bound on the expected norm of the matrix $\mtx{G}$.
In order to compute the matrix variance statistic $v(\mtx{G})$,
we calculate the sums of the squared coefficients
from the representation~\eqref{eqn:mp-gauss-series}:
$$
\begin{aligned}
\sum_{j=1}^{d_1} \sum_{k=1}^{d_2} \mathbf{E}_{jk} \mathbf{E}_{jk}^\adj
	&= \sum_{j=1}^{d_1} \sum_{k=1}^{d_2} \mathbf{E}_{jj}
	= d_2 \, \Id_{d_1}, \quad\text{and} \\
\sum_{j=1}^{d_1} \sum_{k=1}^{d_2} \mathbf{E}_{jk}^\adj \mathbf{E}_{jk}
	&= \sum_{j=1}^{d_1} \sum_{k=1}^{d_2} \mathbf{E}_{kk}
	= d_1 \, \Id_{d_2}.
\end{aligned}
$$
The matrix variance statistic~\eqref{eqn:matrix-gauss-sigma2-rect} satisfies
$$
v(\mtx{G}) = \max\big\{ \norm{ \smash{d_2 \, \Id_{d_1}} }, \ \norm{ \smash{d_1 \, \Id_{d_2}} } \big\}
	= \max\{ d_1, \ d_2 \}.
$$
We conclude that
\begin{equation} \label{eqn:gauss-rect-est}
\Expect \norm{ \mtx{G} } \leq \sqrt{2 \max\{d_1, \ d_2\} \log(d_1 + d_2)}.
\end{equation}
The leading term is roughly correct because
$$
\sqrt{d_1} + \sqrt{d_2}
\leq 2 \sqrt{\max\{d_1, \ d_2\}}
\leq 2 \left( \sqrt{d_1} + \sqrt{d_2} \right). 
$$
The logarithmic factor in~\eqref{eqn:gauss-rect-est} does not belong, but it is rather small in comparison with the leading terms.  Once again, we have produced a reasonable result with a short argument based on general principles.

\section{Example: Matrices with Randomly Signed Entries} \label{sec:rdm-sign-mtx}

Next, we turn to an example that is superficially similar with the matrix discussed in~\S\ref{sec:marcenko-pastur} but is less understood.  Consider a fixed $d_1 \times d_2$ matrix $\mtx{B}$ with real entries, and let $\{ \varrho_{jk} \}$ be an independent family of Rademacher random variables.  Consider the $d_1 \times d_2$ random matrix
$$
\mtx{B}_{\pm} = \sum_{j=1}^{d_1} \sum_{k=1}^{d_2} \varrho_{jk} b_{jk} \mathbf{E}_{jk}
$$
In other words, we obtain the random matrix $\mtx{B}_{\pm}$ by randomly flipping the sign of each entry of $\mtx{B}$.
The expected norm of this matrix satisfies the bound
\begin{equation} \label{eqn:rdm-sign-matrix-true}
\Expect \norm{\mtx{B}_{\pm}} \leq \textrm{Const} \cdot  v^{1/2} \cdot \log^{1/4} \min\{ d_1, \ d_2 \},
\end{equation}
where the leading factor $v^{1/2}$ satisfies
\begin{equation} \label{eqn:rdm-sign-matrix-var}
v = \max\left\{ \max\nolimits_j \normsq{ \smash{\vct{b}_{j:}} }, \ \max\nolimits_k \normsq{ \vct{b}_{:k} } \right\}.
\end{equation}
We have written $\mtx{b}_{j:}$ for the $j$th row of $\mtx{B}$ and $\mtx{b}_{:k}$ for the $k$th column of $\mtx{B}$.  In other words, the expected norm of a matrix with randomly signed entries is comparable with the maximum $\ell_2$ norm achieved by any row or column.  There are cases where the bound~\eqref{eqn:rdm-sign-matrix-true} admits a matching lower bound.
These results appear in~\cite[Thms.~3.1, 3.2]{Seg00:Expected-Norm} and~\cite[Cor.~4.7]{BV14:Sharp-Nonasymptotic}.

Theorem~\ref{thm:matrix-gauss-rect} leads to a quick proof of a slightly weaker result.  We simply need to compute the
matrix variance statistic $v(\mtx{B}_{\pm})$.  To that end, note that
$$
\sum_{j=1}^{d_1} \sum_{k=1}^{d_2} (b_{jk} \mathbf{E}_{jk})(b_{jk} \mathbf{E}_{jk})^\adj 
	= \sum_{j=1}^{d_1} \left(\sum_{k=1}^{d_2} \abssq{\smash{b_{jk}}}\right) \mathbf{E}_{jj}
	= \begin{bmatrix} \normsq{ \vct{b}_{1:}} && \\ & \ddots & \\ &&\norm{\smash{\vct{b}_{d_1:}}}^2
	\end{bmatrix}.
$$
Similarly,
$$
\sum_{j=1}^{d_1} \sum_{k=1}^{d_2} (b_{jk} \mathbf{E}_{jk})^{\adj}(b_{jk} \mathbf{E}_{jk}) 
	= \sum_{k=1}^{d_2} \left(\sum_{j=1}^{d_1} \abssq{\smash{b_{jk}}}\right) \mathbf{E}_{kk}
	= \begin{bmatrix} \normsq{ \vct{b}_{:1}} && \\ & \ddots & \\ &&\norm{\smash{\vct{b}_{:d_2}}}^2
	\end{bmatrix}.
$$
Therefore, using the formula~\eqref{eqn:matrix-gauss-rect-var-calc}, we find that
\begin{align*}
v( \mtx{B}_{\pm} ) &=
	\max\left\{ \norm{ \sum_{j=1}^{d_1} \sum_{k=1}^{d_2} (b_{jk} \mathbf{E}_{jk})(b_{jk} \mathbf{E}_{jk})^\adj }, \
	\norm{ \sum_{j=1}^{d_1} \sum_{k=1}^{d_2} (b_{jk} \mathbf{E}_{jk})^{\adj}(b_{jk} \mathbf{E}_{jk}) }
	\right\} \\
	&= \max\left\{ \max\nolimits_j \norm{\smash{\vct{b}_{j:}}}^2, \
		\max\nolimits_k \norm{\smash{\vct{b}_{:k}}}^2 \right\}.
\end{align*}
We see that $v(\mtx{B}_{\pm})$ coincides with $v$, the leading term~\eqref{eqn:rdm-sign-matrix-var} in the established estimate~\eqref{eqn:rdm-sign-matrix-true}!  Now, Theorem~\ref{thm:matrix-gauss-rect} delivers the bound
\begin{equation} \label{eqn:rdm-sign-matrix-est}
\Expect \norm{\mtx{B}_{\pm}}
	\leq \sqrt{2 v(\mtx{B}_{\pm}) \log(d_1 + d_2)}.
\end{equation}
Observe that the estimate~\eqref{eqn:rdm-sign-matrix-est} for the norm matches the correct bound~\eqref{eqn:rdm-sign-matrix-true} up to the logarithmic factor. Yet again, we obtain a result that is respectably close to the optimal one, even though it is not quite sharp.

The main advantage of using results like Theorem~\ref{thm:matrix-gauss-rect} to analyze this random matrix is that we can obtain a good result with a minimal amount of arithmetic.  The  analysis that leads to~\eqref{eqn:rdm-sign-matrix-true} involves a specialized combinatorial argument.

\section{Example: Gaussian Toeplitz Matrices} \label{sec:toeplitz}

Matrix concentration inequalities offer an effective tool for analyzing random matrices whose dependency structures are more complicated than those of the classical ensembles.  In this section, we consider Gaussian Toeplitz matrices, which have applications in signal processing.

We construct an (unsymmetric) $d \times d$ Gaussian Toeplitz matrix $\mtx{\Gamma}_d$ by populating the first row and first column of the matrix with independent standard normal variables; the entries along each diagonal of the matrix take the same value:
$$
\mtx{\Gamma}_d = \begin{bmatrix}
	\gamma_0 & \gamma_1 & & \dots &&  \gamma_{d-1} \\
	\gamma_{-1} & \gamma_0 & \gamma_1 & &&  \\
	 & \gamma_{-1} & \gamma_0 & \gamma_1 && \vdots \\
	 \vdots & & \ddots & \ddots & \ddots & \\
	 & & & \gamma_{-1} & \gamma_0 & \gamma_1 \\
	\gamma_{-(d-1)} & & \dots & & \gamma_{-1} & \gamma_0
\end{bmatrix}
$$
where $\{ \gamma_k \}$ 
is an independent family of standard normal variables.
As usual, we represent the Gaussian Toeplitz matrix as a matrix Gaussian series:
\begin{equation} \label{eqn:gauss-toeplitz-cpt}
\mtx{\Gamma}_d = \gamma_0 \, \Id + \sum_{k=1}^{d-1} \gamma_k \mtx{C}^k
	+ \sum_{k=1}^{d-1} \gamma_{-k} \big(\mtx{C}^k \big)^\adj,
\end{equation}
where $\mtx{C} \in \mathbb{M}_d$ denotes the shift-up operator acting on $d$-dimensional column vectors:
$$
\mtx{C} = \begin{bmatrix} 0 & 1 \\ & 0 & 1 \\ && \ddots & \ddots \\ &&& 0 & 1 \\
&&&& 0 \end{bmatrix}.
$$
It follows that $\mtx{C}^k$ shifts a vector up by $k$ places, introducing zeros at the bottom,
while $(\mtx{C}^k)^\adj$ shifts a vector down by $k$ places, introducing zeros at the top.

We can analyze this example quickly using Theorem~\ref{thm:matrix-gauss-rect}.  First, note that
$$
\big(\mtx{C}^k\big)\big(\mtx{C}^k\big)^\adj = \sum_{j=1}^{d-k} \mathbf{E}_{jj}
\quad\text{and}\quad
\big(\mtx{C}^k\big)^\adj \big(\mtx{C}^k\big) = \sum_{j=k+1}^d \mathbf{E}_{jj}.
$$
To obtain the matrix variance statistic~\eqref{eqn:matrix-gauss-rect-var-calc}, we calculate the sum of the squares of the coefficient matrices that appear in~\eqref{eqn:gauss-toeplitz-cpt}.  In this instance, the two terms in the variance are the same.  We find that
\begin{multline}
\Id^2 + \sum_{k=1}^{d-1} \big(\mtx{C}^k\big)\big(\mtx{C}^k\big)^{\adj} + \sum_{k=1}^{d-1} \big(\mtx{C}^k\big)^\adj \big(\mtx{C}^k\big)
	= \Id + \sum_{k=1}^{d-1} \left[ \sum_{j=1}^{d-k} \mathbf{E}_{jj} + \sum_{j=k+1}^{d} \mathbf{E}_{jj} \right] \\
	= \sum_{j=1}^d \left[ 1 + \sum_{k=1}^{d-j} 1 + \sum_{k=1}^{j-1} 1 \right] \mathbf{E}_{jj}
	= \sum_{j=1}^d (1 + (d-j) + (j-1)) \, \mathbf{E}_{jj}
	= d \, \Id_d.
\end{multline}
In the second line, we (carefully) switch the order of summation and rewrite the identity matrix as a sum of diagonal standard basis matrices.  We reach
$$
v(\mtx{\Gamma}_d) = \norm{ d \, \Id_d } = d.
$$
An application of Theorem~\ref{thm:matrix-gauss-rect} leads us to conclude that
\begin{equation} \label{eqn:gauss-toeplitz-est}
\Expect \norm{ \mtx{\Gamma}_d } \leq \sqrt{2d\log(2d)}.
\end{equation}
It turns out that the inequality~\eqref{eqn:gauss-toeplitz-est} is correct up to the precise value of the constant, which does not seem to be known.  Nevertheless, the limiting value is available for the top eigenvalue of a (scaled) symmetric Toeplitz matrix whose first row contains independent standard normal variables~\cite[Thm.~1]{SV13:Top-Eigenvalue}.  From this
result, we may conclude that
$$
0.8288 \quad\leq\quad
	\frac{\Expect \norm{ \mtx{\Gamma}_d }}{ \sqrt{2d\log(2d)} }
	\quad\leq\quad 1
	\quad\text{as $d \to \infty$.}
$$
Here, we take $\{\mtx{\Gamma}_d\}$ to be a sequence of unsymmetric Gaussian Toeplitz matrices, indexed by the ambient dimension $d$.  Our simple argument gives the right scaling for this problem, and our estimate for the constant lies within 21\% of the optimal value!

\section{Application: Rounding for the MaxQP Relaxation} \label{sec:maxqp}

Our final application involves a more substantial question from combinatorial optimization.  One of the methods that has been proposed for solving a certain optimization problem leads to a matrix Rademacher series, and the analysis of this method requires the spectral norm bounds from Theorem~\ref{thm:matrix-gauss-rect}.  A detailed treatment would take us too far afield, so we just sketch the context and indicate how the random matrix arises.

There are many types of optimization problems that are computationally difficult to solve exactly.  One approach to solving these problems is to enlarge the constraint set in such a way that the problem becomes tractable, a process called ``relaxation.''  After solving the relaxed problem, we can use a randomized ``rounding'' procedure to map the solution back to the constraint set for the original problem.  If we can perform the rounding step without changing the value of the objective function substantially, then the rounded solution is also a decent solution to the original optimization problem.

One difficult class of optimization problems has a matrix decision variable, and it requires us to maximize a quadratic form in the matrix variable subject to a set of convex quadratic constraints and a spectral norm constraint~\cite{Nem07:Sums-Random}.  This problem is referred to as \textsc{MaxQP}.  The desired solution $\mtx{B}$ to this problem is a $d_1 \times d_2$ matrix.  The solution needs to satisfy several different requirements, but we focus on the condition that $\norm{\mtx{B}} \leq 1$.

There is a natural relaxation of the \textsc{MaxQP} problem.  When we solve the relaxation, we obtain a family $\{ \mtx{B}_k : k = 1, 2, \dots, n \}$ of $d_1 \times d_2$ matrices that satisfy the constraints
\begin{equation} \label{eqn:maxqp-constraint}
\sum_{k=1}^n \mtx{B}_k \mtx{B}_k^\adj \psdle \Id_{d_1}
\quad\text{and}\quad
\sum_{k=1}^n \mtx{B}_k^\adj \mtx{B}_k \psdle \Id_{d_2}.
\end{equation}
In fact, these two bounds are part of the specification of the relaxed problem.  To round the family of matrices back to a solution of the original problem, we form the random matrix
$$
\mtx{Z} = \alpha \sum_{k=1}^n \varrho_k \mtx{B}_k,
$$
where $\{ \varrho_k \}$ is an independent family of Rademacher random variables.  The scaling factor $\alpha > 0$ can be adjusted to guarantee that the norm constraint $\norm{\mtx{Z}} \leq 1$ holds with high probability.

What is the expected norm of $\mtx{Z}$?  Theorem~\ref{thm:matrix-gauss-rect} yields
$$
\Expect \norm{ \mtx{Z} }
	\leq \sqrt{ 2 v(\mtx{Z}) \log(d_1 + d_2) }.
$$
Here, the matrix variance statistic satisfies
$$
v(\mtx{Z}) = \alpha^2 \, \max\left\{ \norm{ \sum_{k=1}^n \mtx{B}_k \mtx{B}_k^\adj }, \
	\norm{ \sum_{k=1}^n \mtx{B}_k^\adj \mtx{B}_k } \right\}
	\leq \alpha^2,
$$
owing to the constraint~\eqref{eqn:maxqp-constraint} on the matrices $\mtx{B}_1, \dots, \mtx{B}_n$.  It follows that the scaling parameter $\alpha$ should satisfy
$$
\alpha^2 = \frac{1}{2 \log(d_1 + d_2)}
$$
to ensure that $\Expect \norm{\mtx{Z}} \leq 1$.  For this choice of $\alpha$, the rounded solution $\mtx{Z}$ obeys the spectral norm constraint on average.  By using the tail bound~\eqref{eqn:matrix-gauss-tail-rect}, we can even obtain high-probability estimates for the norm of the rounded solution $\mtx{Z}$.

The important fact here is that the scaling parameter $\alpha$ is usually small as compared with the other parameters of the problem ($d_1, d_2$, $n$, and so forth).  Therefore, the scaling does not have a massive effect on the value of the objective function.  Ultimately, this approach leads to a technique for solving the \textsc{MaxQP} problem that produces a feasible point whose objective value is within a factor of $\sqrt{2 \log(d_1+d_2)}$ of the maximum objective value possible.

\section{Analysis of Matrix Gaussian \& Rademacher Series}
\label{sec:matrix-gauss-proof}

We began this chapter with a concentration inequality,
Theorem~\ref{thm:matrix-gauss-rect},
for the norm of a matrix Gaussian series,
and we have explored a number of different applications of this result.
This section contains a proof of this theorem.

\subsection{Random Series with Hermitian Coefficients}

As the development in Chapter~\ref{chap:matrix-lt} suggests,
random Hermitian matrices provide the natural setting for
establishing matrix concentration inequalities.
Therefore, we begin our treatment with a detailed statement of the matrix concentration
inequality for a Gaussian series with Hermitian matrix coefficients.

\begin{thm}[Matrix Gaussian \& Rademacher Series: The Hermitian Case] \label{thm:matrix-gauss-herm}
Consider a finite sequence $\{ \mtx{A}_k \}$ of fixed Hermitian matrices with dimension $d$,
and let $\{ \gamma_k \}$ be a finite sequence of independent standard normal variables.
Introduce the matrix Gaussian series
$$
\mtx{Y} = \sum\nolimits_k \gamma_k \mtx{A}_k.
$$
Let $v(\mtx{Y})$ be the matrix variance statistic of the sum:
\begin{equation} \label{eqn:matrix-gauss-sigma2}
v(\mtx{Y})
= \norm{ \smash{\Expect{} \mtx{Y}^2} }
 = \norm{ \sum\nolimits_k \mtx{A}_k^2 }.
\end{equation}
Then %
\begin{align}
\Expect \lambda_{\max}\left( \mtx{Y} \right)
	&\leq \sqrt{2 v(\mtx{Y}) \log d}.
	\label{eqn:matrix-gauss-upper-expect} %
\end{align}
Furthermore, for all $t \geq 0$,
\begin{align}
\Prob{ \lambda_{\max}\left( \mtx{Y} \right) \geq t }
	&\leq d \, \exp\left( \frac{-t^2}{2 v(\mtx{Y})} \right).
	 \label{eqn:matrix-gauss-upper-tail} %
\end{align}
The same bounds hold when we replace $\{\gamma_k\}$ by a finite sequence of independent Rademacher random variables.
\end{thm}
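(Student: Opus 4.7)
The plan is to invoke the master bounds from Theorem~\ref{thm:master-ineq} with $\mtx{X}_k = \gamma_k \mtx{A}_k$ (respectively $\varrho_k \mtx{A}_k$), which reduces the proof to two tasks: obtaining a semidefinite bound on the matrix cgf of each summand, and then collapsing the resulting trace-exponential expression to a scalar. Throughout, I work with $\lambda_{\max}$, since the extension to general $t$ uses the same computation.

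First I would compute the matrix cgf of a single summand. In the Gaussian case, $\gamma_k \mtx{A}_k$ and $\mtx{A}_k$ commute, so I can expand the exponential as a power series and evaluate term by term using $\Expect \gamma_k^{2p} = (2p)!/(2^p p!)$ and $\Expect \gamma_k^{2p+1}=0$; this yields the clean identity $\log \Expect \econst^{\theta \gamma_k \mtx{A}_k} = (\theta^2/2) \mtx{A}_k^2$. In the Rademacher case, the analogous computation gives $\Expect \econst^{\theta \varrho_k \mtx{A}_k} = \cosh(\theta \mtx{A}_k)$; combining the scalar bound $\cosh(a) \leq \econst^{a^2/2}$ with the Transfer Rule (Proposition~\ref{prop:transfer-rule}) produces the semidefinite inequality $\cosh(\theta \mtx{A}_k) \psdle \econst^{\theta^2 \mtx{A}_k^2 / 2}$, and then the operator monotonicity of the logarithm~\eqref{eqn:log-monotone} upgrades this to $\log \Expect \econst^{\theta \varrho_k \mtx{A}_k} \psdle (\theta^2/2) \mtx{A}_k^2$.

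Next I would aggregate the per-summand bounds. Summing over $k$ gives
\begin{equation*}
\sum\nolimits_k \log \Expect \econst^{\theta \mtx{X}_k} \psdle \tfrac{\theta^2}{2} \sum\nolimits_k \mtx{A}_k^2,
\end{equation*}
and the monotonicity~\eqref{eqn:exp-trace-monotone} of the trace exponential transfers this to a bound on $\trace \exp(\cdot)$. Since $\sum_k \mtx{A}_k^2$ is positive semidefinite, its maximum eigenvalue coincides with its spectral norm $v(\mtx{Y})$; together with $\trace \econst^{\mtx{M}} \leq d \, \econst^{\lambda_{\max}(\mtx{M})}$ (Spectral Mapping plus~\eqref{eqn:maxeig-trace}), this collapses the right-hand side of the master bound to $d \exp\!\bigl( (\theta^2/2) v(\mtx{Y}) \bigr)$.

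Finally I would perform the scalar optimization. Substituting into~\eqref{eqn:master-upper-expect} yields
\begin{equation*}
\Expect \lambda_{\max}(\mtx{Y}) \leq \inf_{\theta > 0} \left[ \frac{\log d}{\theta} + \frac{\theta\, v(\mtx{Y})}{2} \right],
\end{equation*}
which is minimized at $\theta = \sqrt{2 \log d / v(\mtx{Y})}$ and delivers~\eqref{eqn:matrix-gauss-upper-expect}; inserting the same cgf estimate into~\eqref{eqn:master-upper-tail} and optimizing at $\theta = t / v(\mtx{Y})$ gives~\eqref{eqn:matrix-gauss-upper-tail}. The only step requiring genuine care is the Rademacher cgf bound, since one must pass from a semidefinite inequality for $\cosh(\theta \mtx{A}_k)$ to one for its logarithm — this is precisely where operator monotonicity of the log is essential, and is the step that would fail under a naive scalar analogy.
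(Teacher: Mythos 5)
Your proof is correct and essentially reproduces the paper's argument: compute (or bound) the per-summand matrix cgfs, feed them into the master bounds of Theorem~\ref{thm:master-ineq} via the monotonicity of the trace exponential, collapse $\trace \exp(\cdot)$ to $d \cdot \econst^{\lambda_{\max}(\cdot)}$, and optimize over $\theta$. The one place you diverge is the Rademacher cgf step, and there you slightly overstate what is needed. You first transfer the scalar inequality $\cosh(a) \leq \econst^{a^2/2}$ to get a semidefinite mgf bound and then invoke operator monotonicity of the logarithm~\eqref{eqn:log-monotone} to pass to the cgf. The paper instead observes that $\log \Expect \econst^{\theta\varrho\mtx{A}} = \log\cosh(\theta\mtx{A})$ is itself a standard matrix function of the single Hermitian matrix $\mtx{A}$, so the Transfer Rule applied directly to the scalar inequality $\log\cosh(a) \leq a^2/2$ gives the cgf bound in one step, with no appeal to operator monotonicity. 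Both routes are valid, but your claim that operator monotonicity is \emph{essential} here is not right: it is avoidable precisely because both sides of the comparison are functions of the same matrix. Operator monotonicity of the logarithm is genuinely unavoidable in the Chernoff and Bernstein cgf lemmas (Lemmas~\ref{lem:matrix-chernoff-mgf} and~\ref{lem:matrix-bernstein-mgf}), where the mgf inequality relates $\Expect \econst^{\theta\mtx{X}}$ to an exponential of $\Expect\mtx{X}$ --- two different matrices --- and no single-matrix transfer is available.
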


\noindent
The proof of this result occupies the rest of the section.

\subsection{Discussion}

Before we proceed to the analysis, let us take a moment to compare Theorem~\ref{thm:matrix-gauss-herm}
with the result for general matrix series, Theorem~\ref{thm:matrix-gauss-rect}.

First, we consider the matrix variance statistic $v(\mtx{Y})$ defined
in~\eqref{eqn:matrix-gauss-sigma2}.  Since $\mtx{Y}$ has zero mean,
this definition coincides with the general formula~\eqref{eqn:matrix-variance-herm}.
The second expression, in terms of the coefficient matrices, follows
from the additivity property~\eqref{eqn:indep-sum-herm}
for the variance of a sum of independent, random Hermitian matrices.

Next, bounds for the minimum eigenvalue $\lambda_{\min}(\mtx{Y})$
follow from the results for the maximum eigenvalue
because $-\mtx{Y}$ has the same distribution as $\mtx{Y}$.
Therefore, %
\begin{equation} \label{eqn:matrix-gauss-lower-expect}
\Expect \lambda_{\min}(\mtx{Y}) = \Expect \lambda_{\min}(-\mtx{Y})
	= - \Expect \lambda_{\max}(\mtx{Y})
	\geq - \sqrt{2 v(\mtx{Y}) \log d}.
\end{equation}
The second identity holds because of the relationship~\eqref{eqn:min-max-sign-eig}
between minimum and maximum eigenvalues.
Similar considerations lead to a lower tail bound for the minimum eigenvalue:
\begin{equation} \label{eqn:matrix-gauss-lower-tail}
\Prob{ \lambda_{\min}(\mtx{Y}) \leq -t }
	\leq d \, \exp\left( \frac{-t^2}{2v(\mtx{Y})} \right)
	\quad\text{for $t \geq 0$.}
\end{equation}
This result follows directly from the upper tail bound~\eqref{eqn:matrix-gauss-upper-tail}.

This observation points to the most important difference between the Hermitian case and the general
case. %
Indeed, Theorem~\ref{thm:matrix-gauss-herm} concerns the extreme eigenvalues
of the random series $\mtx{Y}$ instead of the norm.  This change amounts to producing
one-sided tail bounds instead of two-sided tail bounds.
For Gaussian and Rademacher series, this improvement is not really useful,
but there are random Hermitian
matrices whose minimum and maximum eigenvalues exhibit different types of behavior.
For these problems, it can be extremely valuable to examine the two tails separately.
See Chapter~\ref{chap:matrix-chernoff} and~\ref{chap:matrix-bernstein} for some results of this type.

\subsection{Analysis for Hermitian Gaussian Series}

We continue with the proof that matrix Gaussian series exhibit the behavior described in Theorem~\ref{thm:matrix-gauss-herm}.  Afterward, we show how to adapt the argument to address matrix Rademacher series.  Our main tool is Theorem~\ref{thm:master-ineq}, the set of master bounds for independent sums.  To use this result, we must identify the cgf of a fixed matrix modulated by a Gaussian random variable. %

\begin{lemma}[Gaussian $\times$ Matrix: Mgf and Cgf] \label{lem:matrix-gauss-mgf}
Suppose that $\mtx{A}$ is a fixed Hermitian matrix, and  
let $\gamma$ be a standard normal random variable.  Then
$$
\Expect \econst^{\gamma \theta \mtx{A}}
	= \econst^{\theta^2 \mtx{A}^2/2}
\quad\text{and}\quad
\log{} \Expect \econst^{\gamma \theta \mtx{A}}
	= \frac{\theta^2}{2} \mtx{A}^2
\quad\text{for $\theta \in \mathbb{R}$.}
$$
\end{lemma}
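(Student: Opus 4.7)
The plan is to reduce the matrix mgf calculation to the familiar scalar mgf identity $\Expect \econst^{s\gamma} = \econst^{s^2/2}$ for a standard normal $\gamma$, by exploiting the fact that $\econst^{\gamma\theta\mtx{A}}$ is a standard matrix function of the Hermitian matrix $\gamma\theta\mtx{A}$. The cgf statement will then follow by a single invocation of the identity~\eqref{eqn:log-defn} relating the matrix exponential and logarithm.

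Concretely, I would first diagonalize $\mtx{A} = \mtx{Q\Lambda Q}^\adj$ using the eigenvalue decomposition~\eqref{eqn:eigenvalue-decomposition}, where $\mtx{\Lambda} = \diag(\lambda_1,\dots,\lambda_d)$. For any scalar $s$, the definition of a standard matrix function gives $\econst^{s\mtx{A}} = \mtx{Q}\,\diag(\econst^{s\lambda_1},\dots,\econst^{s\lambda_d})\,\mtx{Q}^\adj$. Applying this with $s = \gamma\theta$ and taking expectation entrywise (which commutes with the fixed conjugation by $\mtx{Q}$),
\begin{equation*}
\Expect \econst^{\gamma\theta\mtx{A}}
= \mtx{Q}\,\diag\bigl(\Expect \econst^{\gamma\theta\lambda_1},\dots,\Expect \econst^{\gamma\theta\lambda_d}\bigr)\,\mtx{Q}^\adj
= \mtx{Q}\,\diag\bigl(\econst^{\theta^2\lambda_1^2/2},\dots,\econst^{\theta^2\lambda_d^2/2}\bigr)\,\mtx{Q}^\adj,
\end{equation*}
where the second equality is the scalar Gaussian mgf applied to each diagonal entry. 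Recognizing the final expression as the standard matrix function $\econst^{\theta^2\mtx{\Lambda}^2/2}$ conjugated by $\mtx{Q}$, and noting that $\mtx{A}^2 = \mtx{Q}\mtx{\Lambda}^2\mtx{Q}^\adj$ shares the same eigenvectors, yields $\Expect \econst^{\gamma\theta\mtx{A}} = \econst^{\theta^2\mtx{A}^2/2}$. The cgf formula then follows immediately by taking the matrix logarithm of both sides and using~\eqref{eqn:log-defn}, since $\theta^2\mtx{A}^2/2$ is Hermitian.

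There is essentially no obstacle here: the whole point of the argument is that standard matrix functions act on each eigenvalue separately, so any identity among scalar functions of a single real variable lifts to the matrix setting via the Transfer Rule style of reasoning. The only mild technical point is justifying the interchange of expectation with the entrywise evaluation of the matrix exponential; this is legitimate because, for each fixed unitary $\mtx{Q}$ and each fixed eigenvalue $\lambda_i$, the scalar random variable $\econst^{\gamma\theta\lambda_i}$ is integrable (standard Gaussian mgf) and the $d\times d$ matrix is a finite linear combination of these scalars with deterministic coefficients $\mtx{Q}\mathbf{E}_{ii}\mtx{Q}^\adj$. Alternatively, one could expand $\econst^{\gamma\theta\mtx{A}}$ as the power series~\eqref{eqn:exp-series}, take expectations termwise using $\Expect \gamma^{2p} = (2p)!/(2^p p!)$ and $\Expect\gamma^{2p+1}=0$, and resum to obtain $\sum_{p\ge 0}(\theta^2\mtx{A}^2/2)^p/p! = \econst^{\theta^2\mtx{A}^2/2}$; this avoids any mention of the eigenvalue decomposition but requires a separate (easy) justification for the termwise expectation.
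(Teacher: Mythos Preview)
Your proof is correct. Your primary route---diagonalize $\mtx{A}$, push the expectation through the fixed unitary conjugation, and invoke the scalar Gaussian mgf $\Expect \econst^{s\gamma} = \econst^{s^2/2}$ on each eigenvalue---is a clean reduction that the paper does not use. The paper instead takes the power-series route you sketch as an alternative: it expands $\econst^{\gamma\mtx{A}}$ via~\eqref{eqn:exp-series}, applies the Gaussian moment formulas $\Expect\gamma^{2q+1}=0$ and $\Expect\gamma^{2q}=(2q)!/(2^q q!)$ term by term, and resums to obtain $\econst^{\mtx{A}^2/2}$. Both arguments are short and elementary; your diagonalization approach has the virtue of making the reduction to the scalar identity completely transparent, while the paper's series computation avoids introducing the eigenvalue decomposition and works directly with the matrix. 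The cgf step is identical in both.
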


\begin{proof}
We may assume $\theta = 1$ by absorbing $\theta$ into the matrix $\mtx{A}$.
It is well known that the moments of a standard normal variable satisfy
$$
\Expect\big( \gamma^{2q+1} \big) = 0
\quad\text{and}\quad
\Expect \big( \gamma^{2q} \big) = \frac{(2q)!}{2^q \, q!}
\quad\text{for $q = 0, 1, 2, \dots$}.
$$
The formula for the odd moments holds because a standard normal variable is symmetric.
One way to establish the formula for the even moments is to use integration by parts
to obtain a recursion for the $(2q)$th moment in terms of the $(2q-2)$th moment.

Therefore, the matrix mgf satisfies
$$
\Expect \econst^{\gamma \mtx{A}}
	= \Id + \sum_{q=1}^\infty \frac{\Expect\big(\gamma^{2q}\big)}{(2q)!}\mtx{A}^{2q}
	= \Id + \sum_{q=1}^\infty \frac{1}{q!} \big(\mtx{A}^2/2 \big)^q
	= \econst^{ \mtx{A}^2 / 2 }.
$$
The first identity holds because the odd terms vanish from the series representation~\eqref{eqn:exp-series}
of the matrix exponential when we take the expectation.  To compute the cgf, we extract the logarithm of the mgf and recall~\eqref{eqn:log-defn}, which states that the matrix logarithm is the functional inverse of the matrix exponential.
\end{proof}

We quickly reach results on the maximum eigenvalue of a matrix Gaussian series with Hermitian coefficients.

\begin{proof}[Proof of Theorem~\ref{thm:matrix-gauss-herm}: Gaussian Case]
Consider a finite sequence $\{ \mtx{A}_k \}$ of Hermitian matrices with dimension $d$,
and let $\{ \gamma_k \}$ be a finite sequence of independent standard normal variables.
Define the matrix Gaussian series
$$
\mtx{Y} = \sum\nolimits_k \gamma_k \mtx{A}_k.
$$
We begin with the upper bound~\eqref{eqn:matrix-gauss-upper-expect} for $\Expect \lambda_{\max}(\mtx{Y})$.  The master expectation bound~\eqref{eqn:master-upper-expect} from Theorem~\ref{thm:master-ineq} implies that
\begin{align*}
\Expect \lambda_{\max}(\mtx{Y})
	&\leq \inf_{\theta > 0} \ \frac{1}{\theta} \log{} \trace \exp\left(
	\sum\nolimits_k \log{} \Expect \econst^{\gamma_k \theta \mtx{A}_k} \right) \\
	&= \inf_{\theta > 0} \ \frac{1}{\theta} \log{} \trace \exp\left(
	\frac{\theta^2}{2} \sum\nolimits_k \mtx{A}_k^2 \right) \\
	&\leq \inf_{\theta > 0} \ \frac{1}{\theta} \log{} \left[ d \, \lambda_{\max}\left(
	\exp\left( \frac{\theta^2}{2} \sum\nolimits_{k} \mtx{A}_k^2 \right) \right) \right] \\
	&= \inf_{\theta > 0} \ \frac{1}{\theta} \log{} \left[ d \, \exp\left(
	\frac{\theta^2}{2} \lambda_{\max}\left(\sum\nolimits_k \mtx{A}_k^2 \right) \right) \right] \\
	&= \inf_{\theta > 0} \ \frac{1}{\theta} \left[ \log d +
	\frac{\theta^2 v(\mtx{Y})}{2} \right]
\end{align*}
The second line follows when we introduce the cgf from Lemma~\ref{lem:matrix-gauss-mgf}.  To reach the third inequality, we bound the trace by the dimension times the maximum eigenvalue.  The fourth line is the Spectral Mapping Theorem, Proposition~\ref{prop:spectral-mapping}.  Use the formula~\eqref{eqn:matrix-gauss-sigma2} to identify the matrix variance statistic $v(\mtx{Y})$ in the exponent.
The infimum is attained at $\theta = \sqrt{2 v(\mtx{Y})^{-1} \log d}$.  This choice leads to~\eqref{eqn:matrix-gauss-upper-expect}.

Next, we turn to the proof of the upper tail bound~\eqref{eqn:matrix-gauss-upper-tail}
for $\lambda_{\max}(\mtx{Y})$.  Invoke the master tail bound~\eqref{eqn:master-upper-tail} from Theorem~\ref{thm:master-ineq}, and calculate that
\begin{align*}
\Prob{ \lambda_{\max}(\mtx{Y}) \geq t }
	&\leq \inf_{\theta > 0} \ \econst^{-\theta t} \,
	\trace \exp\left( \sum\nolimits_k \log{} \Expect \econst^{\gamma_k \theta \mtx{A}_k} \right)	\\
	&= \inf_{\theta > 0} \ \econst^{-\theta t} \,
	\trace \exp\left( \frac{\theta^2}{2} \sum\nolimits_k \mtx{A}_k^2 \right) \\
	&\leq  \inf_{\theta > 0 } \ \econst^{-\theta t} \cdot d \,
	\exp\left( \frac{\theta^2}{2} \lambda_{\max}\left(\sum\nolimits_k \mtx{A}_k^2 \right) \right) \\
	&= d \, \inf_{\theta > 0} \ \econst^{-\theta t + \theta^2 v(\mtx{Y}) / 2}.
\end{align*}
The steps here are the same as in the previous calculation.
The infimum is achieved at $\theta = t/v(\mtx{Y})$, which yields~\eqref{eqn:matrix-gauss-upper-tail}.
\end{proof}

\subsection{Analysis for Hermitian Rademacher Series}

The inequalities for matrix Rademacher series involve arguments closely related to the proofs for matrix Gaussian series, but we require one additional piece of reasoning to obtain the simplest results.  First, let us compute bounds for the matrix mgf and cgf of a Hermitian matrix modulated by a Rademacher random variable.

\begin{lemma}[Rademacher $\times$ Matrix: Mgf and Cgf] \label{lem:matrix-rad-mgf}
Suppose that $\mtx{A}$ is a fixed Hermitian~matrix, and let $\varrho$ be a Rademacher random variable.  Then
$$
\Expect \econst^{\varrho \theta \mtx{A}}
\psdle \econst^{\theta^2\mtx{A}^2/2}
\quad\text{and}\quad
\log{} \Expect \econst^{\varrho \theta \mtx{A}}
\psdle \frac{\theta^2}{2} \mtx{A}^2
\quad\text{for $\theta \in \mathbb{R}$.}
$$ 
\end{lemma}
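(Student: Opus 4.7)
The plan is to compute the matrix mgf $\Expect \econst^{\varrho \theta \mtx{A}}$ in closed form and then compare it to $\econst^{\theta^2 \mtx{A}^2/2}$ via a scalar inequality lifted through the Transfer Rule. First, since $\varrho$ takes the values $\pm 1$ with equal probability, a direct calculation gives
\begin{equation*}
\Expect \econst^{\varrho \theta \mtx{A}}
    = \tfrac{1}{2}\econst^{\theta \mtx{A}} + \tfrac{1}{2}\econst^{-\theta \mtx{A}}
    = \cosh(\theta \mtx{A}),
\end{equation*}
where $\cosh$ is understood as a standard matrix function in the sense of Definition~\ref{def:standard-matrix-fn}.

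Next, I would invoke the scalar inequality
\begin{equation*}
\cosh(a) \;\leq\; \econst^{a^2/2}
\quad\text{for all } a \in \R,
\end{equation*}
which follows by comparing Taylor series coefficients: $\cosh(a) = \sum_{q\geq 0} a^{2q}/(2q)!$, while $\econst^{a^2/2} = \sum_{q\geq 0} a^{2q}/(2^q q!)$, and the elementary bound $(2q)! \geq 2^q q!$ makes the comparison termwise. Applying the Transfer Rule (Proposition~\ref{prop:transfer-rule}) to the pair of real functions $f(a) = \cosh(\theta a)$ and $g(a) = \econst^{\theta^2 a^2/2}$ on the real line, evaluated at the Hermitian matrix $\mtx{A}$, yields
\begin{equation*}
\Expect \econst^{\varrho \theta \mtx{A}} = \cosh(\theta \mtx{A}) \psdle \econst^{\theta^2 \mtx{A}^2/2}.
\end{equation*}
Here I am implicitly using the Spectral Mapping Theorem (Proposition~\ref{prop:spectral-mapping}) to identify $g(\mtx{A})$ with $\econst^{\theta^2 \mtx{A}^2/2}$ through the eigenvalue decomposition of $\mtx{A}$.

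For the cgf bound, I would take the matrix logarithm of both sides. The matrix $\cosh(\theta \mtx{A})$ is positive definite (its eigenvalues $\cosh(\theta \lambda_j(\mtx{A})) \geq 1$), as is $\econst^{\theta^2 \mtx{A}^2/2}$, so the matrix logarithm is well defined on both sides. Applying the operator monotonicity of the logarithm recorded in~\eqref{eqn:log-monotone} gives
\begin{equation*}
\log{} \Expect \econst^{\varrho \theta \mtx{A}} \;\psdle\; \log \econst^{\theta^2 \mtx{A}^2/2} \;=\; \frac{\theta^2}{2} \mtx{A}^2,
\end{equation*}
where the final equality is~\eqref{eqn:log-defn}, the identification of the matrix logarithm as the functional inverse of the matrix exponential.

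The only nontrivial ingredient is the operator monotonicity of the logarithm, which must be used instead of simply ``taking logs of both sides'' as one would in the scalar case (recall that the matrix exponential is \emph{not} operator monotone). This is the main conceptual obstacle, but since~\eqref{eqn:log-monotone} has already been stated, the argument above is otherwise a quick chain of transfer and spectral-mapping moves.
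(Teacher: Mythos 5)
Your proof is correct, and the mgf step is essentially identical to the paper's: both compute $\Expect \econst^{\varrho\theta\mtx{A}} = \cosh(\theta\mtx{A})$, establish the scalar bound $\cosh(a) \leq \econst^{a^2/2}$ by termwise comparison of Taylor coefficients, and push the inequality through the Transfer Rule. Where you diverge is the cgf step. You obtain the cgf bound by applying the operator monotonicity of the logarithm~\eqref{eqn:log-monotone} to the already-established mgf bound. The paper instead reuses the Transfer Rule a second time: from $\cosh(a) \leq \econst^{a^2/2}$ it deduces the scalar inequality $\log\cosh(a) \leq a^2/2$ and transfers that directly, giving $\log\cosh(\mtx{A}) \psdle \tfrac{1}{2}\mtx{A}^2$ without ever invoking operator monotonicity. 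The trade-off: your route is conceptually clean in that the cgf bound visibly inherits from the mgf bound, but it leans on the operator monotonicity of the logarithm, which is a genuinely deep fact (it rests on the integral representation and the operator monotonicity of $a \mapsto -(a+u)^{-1}$, developed in Chapter~\ref{chap:lieb}). The paper's route is lighter-weight, since the Transfer Rule is an elementary consequence of the spectral theorem, and it keeps the proof of this lemma self-contained. Both arguments are valid; you correctly identified that naively ``taking logs of both sides'' is not permitted and that operator monotonicity is the right tool if one goes that way.
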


\begin{proof}
First, we establish a scalar inequality.  Comparing Taylor series,
\begin{equation} \label{eqn:cosh-exp}
\cosh(a) = \sum_{q=0}^\infty \frac{a^{2q}}{(2q)!}
	\leq \sum_{q=0}^\infty \frac{a^{2q}}{2^q q!}
	= \econst^{a^2/2}
	\quad\text{for $a \in \R$.}
\end{equation}
The inequality holds because $(2q)! \geq (2q)(2q-2)\cdots (4)(2) = 2^q q!$.

To compute the matrix mgf, we may assume $\theta = 1$.  By direct calculation,
$$
\Expect \econst^{\varrho \mtx{A}}
	= \tfrac{1}{2} \econst^{\mtx{A}} + \tfrac{1}{2} \econst^{-\mtx{A}}
	= \cosh(\mtx{A})
	\psdle \econst^{\mtx{A}^2/2}.
$$
The semidefinite bound follows when we apply the Transfer Rule~\eqref{eqn:transfer-rule} to the inequality~\eqref{eqn:cosh-exp}.

To determine the matrix cgf, observe that
$$
\log{} \Expect \econst^{\varrho \mtx{A}}
	= \log \cosh(\mtx{A})
	\psdle \tfrac{1}{2} \mtx{A}^2.
$$
The semidefinite bound follows when we apply the Transfer Rule~\eqref{eqn:transfer-rule} to the scalar inequality $\log \cosh(a) \leq a^2 / 2$ for $a \in \R$, which is a consequence of~\eqref{eqn:cosh-exp}.
\end{proof}

We are prepared to develop some probability inequalities for the maximum eigenvalue of a
Rademacher series with Hermitian coefficients.

\begin{proof}[Proof of Theorem~\ref{thm:matrix-gauss-herm}: Rademacher Case]
Consider a finite sequence $\{ \mtx{A}_k \}$ of Hermitian matrices, and let $\{ \varrho_k \}$ be a finite sequence of independent Rademacher variables.  Define the matrix Rademacher series
$$
\mtx{Y} = \sum\nolimits_k \varrho_k \mtx{A}_k.
$$
The bounds for the extreme eigenvalues of $\mtx{Y}$ follow from an argument almost identical with the proof in the Gaussian case.  The only point that requires justification is the inequality
$$
\trace \exp\left( \sum\nolimits_k \log{} \Expect \econst^{\varrho_k \theta \mtx{A}_k} \right)
	\leq \trace \exp\left( \frac{\theta^2}{2} \sum\nolimits_k \mtx{A}_k^2 \right).
$$
To obtain this result, we introduce the semidefinite bound, Lemma~\ref{lem:matrix-rad-mgf}, for the Rademacher cgf into the trace exponential.  The left-hand side increases after this substitution because of the fact~\eqref{eqn:exp-trace-monotone} that the trace exponential function is monotone with respect to the semidefinite order.
\end{proof}

\subsection{Analysis of Matrix Series with Rectangular  Coefficients} \label{sec:matrix-gauss-proof-rect}

Finally, we consider a series with non-Hermitian matrix coefficients modulated by independent Gaussian or Rademacher random variables.  The bounds for the norm of a rectangular series follow instantly from the bounds for the norm of an Hermitian series because of a formal device.  We simply apply the Hermitian results to the Hermitian dilation~\eqref{eqn:herm-dilation} of the series.

\begin{proof}[Proof of Theorem~\ref{thm:matrix-gauss-rect}]
Consider a finite sequence $\{ \mtx{B}_k \}$ of $d_1 \times d_2$ complex matrices, and let $\{\zeta_k\}$ be a finite sequence of independent random variables, either standard normal or Rademacher.

Recall from Definition~\ref{def:herm-dilation} that the Hermitian dilation is the map
$$
\coll{H} : \mtx{B} \longmapsto \begin{bmatrix} \mtx{0} & \mtx{B} \\ \mtx{B}^\adj & \mtx{0} \end{bmatrix}.
$$
This leads us to form the two series
$$
\mtx{Z} = \sum\nolimits_k \zeta_k \mtx{B}_k
\quad\text{and}\quad
\mtx{Y} = \coll{H}(\mtx{Z}) = \sum\nolimits_k \zeta_k \coll{H}(\mtx{B}_k).
$$
The second expression for $\mtx{Y}$ holds because the Hermitian dilation is real-linear.
Since we have written $\mtx{Y}$ as a matrix series with Hermitian coefficients,
we may analyze it using Theorem~\ref{thm:matrix-gauss-herm}.  We just need to express
the conclusions in terms of the random matrix $\mtx{Z}$.

First, we employ the fact~\eqref{eqn:herm-dilation-norm} that the Hermitian dilation preserves spectral information:
$$
\norm{ \mtx{Z} } = \lambda_{\max}(\coll{H}(\mtx{Z})) = \lambda_{\max}(\mtx{Y}).
$$
Therefore, bounds on $\lambda_{\max}(\mtx{Y})$ deliver bounds on $\norm{\mtx{Z}}$.
In view of the calculation~\eqref{eqn:var-stat-dilation} for the variance statistic
of a dilation, we have
$$
v(\mtx{Y}) = v( \coll{H}(\mtx{Z}) ) = v(\mtx{Z}).
$$
Recall that the matrix variance statistic $v(\mtx{Z})$ defined in~\eqref{eqn:matrix-gauss-sigma2-rect}
coincides with the general definition from~\eqref{eqn:matrix-variance-rect}.
Now, invoke Theorem~\ref{thm:matrix-gauss-herm} to obtain Theorem~\ref{thm:matrix-gauss-rect}.
\end{proof}

\section{Notes}

We give an overview of research related to matrix Gaussian series, along with references for the specific random matrices that we have analyzed.

\subsection{Matrix Gaussian and Rademacher Series}

The main results, Theorem~\ref{thm:matrix-gauss-rect} and Theorem~\ref{thm:matrix-gauss-herm}, have an interesting history.  In the precise form presented here, these two statements first appeared in~\cite{Tro11:User-Friendly-FOCM}, but we can trace them back more than two decades.

In his work~\cite[Thm.~1]{Oli10:Sums-Random}, Oliveira established the mgf bounds presented in Lemma~\ref{lem:matrix-gauss-mgf} and Lemma~\ref{lem:matrix-rad-mgf}.  He also developed an ingenious improvement on the arguments of Ahlswede \& Winter~\cite[App.]{AW02:Strong-Converse}, and he obtained a bound similar with Theorem~\ref{thm:matrix-gauss-herm}.  The constants in Oliveira's result are worse, but the dependence on the dimension is better because it depends on the number of summands.  We do not believe that the approach Ahlswede \& Winter describe in~\cite{AW02:Strong-Converse} can deliver any of these results.

Recently, there have been some minor improvements to the dimensional factor that appears in Theorem~\ref{thm:matrix-gauss-herm}.  We discuss these results and give citations in Chapter~\ref{chap:intrinsic}.

\subsection{The Noncommutative Khintchine Inequality}
\label{sec:nc-khintchine}

Our theory about matrix Rademacher and Gaussian series should be compared with
a classic result, called the \term{noncommutative Khintchine inequality},
that was originally due to Lust-Piquard~\cite{LP86:Inegalites-Khintchine};
see also the follow-up work~\cite{LPP91:Noncommutative-Khintchine}.
In its simplest form, this inequality concerns a matrix Rademacher series
with Hermitian coefficients:
$$
\mtx{Y} = \sum\nolimits_k \varrho_k \mtx{A}_k
$$
The noncommutative Khintchine inequality states that
\begin{equation} \label{eqn:nc-khintchine}
\Expect{} \trace\big[ \mtx{Y}^{2q} \big]
	\leq C_{2q} \trace\big[ \big(\Expect \mtx{Y}^2 \big)^{q} \big]
	\quad\text{for $q = 1,2,3, \dots$.}
\end{equation}
The minimum value of the constant $C_{2q} = (2q)!/(2^q \, q!)$
was obtained in the two papers~\cite{Buc01:Operator-Khintchine,Buc05:Optimal-Constants}.
Traditional proofs of the noncommutative Khintchine inequality are quite involved,
but there is now an elementary argument available~\cite[Cor.~7.3]{MJCFT12:Matrix-Concentration}.

Theorem~\ref{thm:matrix-gauss-herm} is the exponential moment analog
of the polynomial moment bound~\eqref{eqn:nc-khintchine}.  
The polynomial moment inequality is somewhat stronger than the exponential
moment inequality.  Nevertheless, the exponential results are often more
useful in practice.  For a more thorough exploration of the relationships
between Theorem~\ref{thm:matrix-gauss-herm} and noncommutative moment inequalities,
such as~\eqref{eqn:nc-khintchine}, see the discussion in~\cite[\S4]{Tro11:User-Friendly-FOCM}.

\subsection{Application to Random Matrices}

It has also been known for a long time that results such as Theorem~\ref{thm:matrix-gauss-herm} and inequality~\eqref{eqn:nc-khintchine} can be used to study random matrices.

We believe that the geometric functional analysis literature contains the earliest applications of matrix concentration results to analyze random matrices.  In a well-known paper~\cite{Rud99:Random-Vectors}, Mark Rudelson---acting on a suggestion of Gilles Pisier---showed how to use the noncommutative Khintchine inequality~\eqref{eqn:nc-khintchine} to study covariance estimation.  This work led to a significant amount of activity in which researchers used variants of Rudelson's argument to prove other types of results.  See, for example, the paper~\cite{RV07:Sampling-Large}.  This approach is powerful, but it tends to require some effort to use.

In parallel, other researchers in noncommutative probability theory also came to recognize the power of noncommutative moment inequalities in random matrix theory.  The paper~\cite{JX08:Noncommutative-Burkholder-II} contains a specific example.  Unfortunately, this literature is technically formidable, which makes it difficult for outsiders to appreciate its achievements.

The work~\cite{AW02:Strong-Converse} of Ahlswede \& Winter led to the first ``packaged'' matrix concentration inequalities
of the type that we describe in these lecture notes.  For the first few
years after this work, most of the applications concerned quantum information theory and random graph theory.  The paper~\cite{Gro11:Recovering-Low-Rank} introduced the method of Ahlswede \& Winter to researchers in mathematical signal processing and statistics, and it served to popularize matrix concentration bounds.

At this point, the available matrix concentration inequalities were still significantly suboptimal.  The main advances, in~\cite{Oli10:Concentration-Adjacency,Tro11:User-Friendly-FOCM}, led to optimal matrix concentration results of the kind that we present in these lecture notes.  These results allow researchers to obtain reasonably accurate analyses of a wide variety of random matrices with very little effort.  %

\subsection{Wigner and Mar{\v c}enko--Pastur}

Wigner matrices first emerged in the literature on nuclear physics, where they were used to model the Hamiltonians of reactions involving heavy atoms~\cite[\S1.1]{Meh04:Random-Matrices}.  Wigner~\cite{Wig55:Characteristic-Vectors} showed that the limiting spectral distribution of a certain type of Wigner matrix follows the semicircle law.  See the book~\cite[\S2.4]{Tao12:Topics-Random} of Tao for an overview and the book~\cite[Chap.~2]{BS10:Spectral-Analysis} of Bai \& Silverstein for a complete treatment.  The Bai--Yin law~\cite{BY93:Limit-Smallest} states that, up to scaling, the maximum eigenvalue of a Wigner matrix converges almost surely to two.  See~\cite[\S2.3]{Tao12:Topics-Random} or~\cite[Chap.~5]{BS10:Spectral-Analysis} for more information.  The analysis of the Gaussian Wigner matrix that we present here, using Theorem~\ref{thm:matrix-gauss-herm}, is drawn from~\cite[\S4]{Tro11:User-Friendly-FOCM}.

The first rigorous work on a rectangular Gaussian matrix is due to Mar{\v c}enko \& Pastur~\cite{MP67:Distribution-Eigenvalues}, who established that the limiting distribution of the squared singular values follows a distribution that now bears their names.  The Bai--Yin law~\cite{BY93:Limit-Smallest} gives an almost-sure limit for the largest singular value of a rectangular Gaussian matrix.  The expectation bound~\eqref{eqn:gauss-rect-true} appears in a survey article~\cite{DS02:Local-Operator} by Davidson \& Szarek.  The latter result is ultimately derived from a comparison theorem for Gaussian processes due to F{\'e}rnique~\cite{Fer75:Regularite-Trajectoires} and amplified by Gordon~\cite{Gor85:Some-Inequalities}.
Our approach, using Theorem~\ref{thm:matrix-gauss-rect}, is based on~\cite[\S4]{Tro11:User-Friendly-FOCM}.

\subsection{Randomly Signed Matrices}

Matrices with randomly signed entries have not received much attention in the literature.  The result~\eqref{eqn:rdm-sign-matrix-true} is due to Yoav Seginer~\cite{Seg00:Expected-Norm}.  There is also a well-known paper~\cite{Lat05:Some-Estimates} by Rafa{\l} Lata{\l}a that provides a bound for the expected norm of a Gaussian matrix whose entries have nonuniform variance.  Riemer \& Sch{\"u}tt~\cite{RS13:Expectation-Norm} have extended the earlier results.
The very recent paper~\cite{BV14:Sharp-Nonasymptotic} of Afonso Bandeira and Ramon Van Handel contains
an elegant new proof of Seginer's result based on a general theorem for random matrices with independent
entries.  The analysis here, using Theorem~\ref{thm:matrix-gauss-rect}, is drawn from~\cite[\S4]{Tro11:User-Friendly-FOCM}.  

\subsection{Gaussian Toeplitz Matrices}

Research on random Toeplitz matrices is surprisingly recent, but there are now a number of papers available.  Bryc, Dembo, \& Jiang obtained the limiting spectral distribution of a symmetric Toeplitz matrix based on independent and identically distributed (iid) random variables~\cite{BDJ06:Spectral-Measure}.  Later, Mark Meckes established the first bound for the expected norm of a random Toeplitz matrix based on iid random variables~\cite{Mec07:Spectral-Norm}.  More recently, Sen \& Vir{\'a}g computed the limiting value of the expected norm of a random, symmetric Toeplitz matrix whose entries have identical second-order statistics~\cite{SV13:Top-Eigenvalue}. See the latter paper for additional references.  The analysis here, based on Theorem~\ref{thm:matrix-gauss-rect}, is new.
Our lower bound for the value of $\Expect \norm{\mtx{\Gamma}_d}$ follows from the results of Sen \& Vir{\'a}g.
We are not aware of any analysis for a random Toeplitz matrix whose entries have different variances,
but this type of result would follow from a simple modification of the argument in \S\ref{sec:toeplitz}.

\subsection{Relaxation and Rounding of \textsc{MaxQP}}

The idea of using semidefinite relaxation and rounding to solve the \textsc{MaxQP} problem is due to Arkadi Nemirovski~\cite{Nem07:Sums-Random}.  He obtained nontrivial results on the performance of his method using some matrix moment calculations, but he was unable to reach the sharpest possible bound.  Anthony So~\cite{So09:Moment-Inequalities} pointed out that matrix moment inequalities imply an optimal result; he also showed that matrix concentration inequalities have applications to robust optimization.  The presentation here, using Theorem~\ref{thm:matrix-gauss-rect}, is essentially equivalent with the approach in~\cite{So09:Moment-Inequalities}, but we have achieved slightly better bounds for the constants.

\makeatletter{}%

\chapter{A Sum of Random Positive-Semidefinite Matrices} \label{chap:matrix-chernoff}

This chapter presents matrix concentration inequalities that are analogous with the classical Chernoff bounds.  In the matrix setting, Chernoff-type inequalities allow us to control the extreme eigenvalues of a sum of independent, random, positive-semidefinite matrices.

More formally, we consider a finite sequence $\{ \mtx{X}_k \}$ of independent, random Hermitian matrices that satisfy
$$
0 \leq \lambda_{\min}(\mtx{X}_k)
\quad\text{and}\quad
\lambda_{\max}(\mtx{X}_k) \leq L
\quad\text{for each index $k$.}
$$
Introduce the sum $\mtx{Y} = \sum_k \mtx{X}_k$.  Our goal is to study the expectation and tail behavior of $\lambda_{\max}(\mtx{Y})$ and $\lambda_{\min}(\mtx{Y})$.  
Bounds on the maximum eigenvalue $\lambda_{\max}(\mtx{Y})$ give us information about the norm of the matrix $\mtx{Y}$, a measure of how much the action of the matrix can dilate a vector.  Bounds for the minimum eigenvalue $\lambda_{\min}(\mtx{Y})$ tell us when the matrix $\mtx{Y}$ is nonsingular; they also provide evidence about the norm of the inverse $\mtx{Y}^{-1}$, when it exists.

The matrix Chernoff inequalities are quite powerful, and they have numerous applications.  We demonstrate the relevance of this theory by considering two examples.  First, we show how to study the norm of a random submatrix drawn from a fixed matrix, and we explain how to check when the random submatrix has full rank.  Second, we develop an analysis to determine when a random graph is likely to be connected.  These two problems are closely related to basic questions in statistics and in combinatorics.

In contrast, the matrix Bernstein inequalities, appearing in Chapter~\ref{chap:matrix-bernstein}, describe how much a random matrix deviates from its mean value.  As such, the matrix Bernstein bounds are more suitable than the matrix Chernoff bounds for problems that concern matrix approximations.  Matrix Bernstein inequalities are also more appropriate when the variance $v(\mtx{Y})$ is small in comparison with the upper bound $L$ on the summands.

\subsubsection{Overview}

Section~\ref{sec:matrix-chernoff} presents the main results on the expectations and the tails of the extreme eigenvalues of a sum of independent, random, positive-semidefinite matrices.  Section~\ref{sec:rdm-submatrix} explains how the matrix Chernoff bounds provide spectral information about a random submatrix drawn from a fixed matrix.  In \S\ref{sec:erdos-renyi},
we use the matrix Chernoff bounds to study when a random graph is connected.  Afterward, in \S\ref{sec:matrix-chernoff-proof} we explain how to prove the main results.

\section{The Matrix Chernoff Inequalities} \label{sec:matrix-chernoff}

In the scalar setting, the Chernoff inequalities describe the behavior of a sum of independent, nonnegative random variables
that are subject to a uniform upper bound.  These results are often applied to study the number $Y$ of
successes in a sequence of independent---but not necessarily identical---Bernoulli trials with
small probabilities of success.  In this case, the Chernoff bounds show that $Y$ behaves
like a Poisson random variable.  The random variable $Y$ concentrates near the expected number
of successes.  Its lower tail has Gaussian decay, while its upper tail drops
off faster than that of an exponential random variable.
See~\cite[\S2.2]{BLM13:Concentration-Inequalities} for more background.

In the matrix setting, we encounter similar phenomena when we consider a sum of
independent, random, positive-semidefinite matrices whose eigenvalues meet
a uniform upper bound.  This behavior emerges from the next theorem,
which closely parallels the scalar Chernoff theorem.

\begin{thm}[Matrix Chernoff] \label{thm:matrix-chernoff}
Consider a finite sequence $\{ \mtx{X}_k \}$ of independent, random, Hermitian matrices
with common dimension $d$.  Assume that
$$
0 \leq \lambda_{\min}(\mtx{X}_k)
\quad\text{and}\quad
\lambda_{\max}(\mtx{X}_k) \leq L
\quad\text{for each index $k$.}
$$
Introduce the random matrix
$$
\mtx{Y} = \sum\nolimits_k \mtx{X}_k.
$$
Define the minimum eigenvalue $\mu_{\min}$ and maximum eigenvalue $\mu_{\max}$ of the expectation $\Expect \mtx{Y}$: 
\begin{align}
\mu_{\min} &= \lambda_{\min}(\Expect \mtx{Y} )
	= \lambda_{\min}\left( \sum\nolimits_{k} \Expect \mtx{X}_k \right),
	\quad\text{and}\quad
	\label{eqn:matrix-chernoff-mu-min} \\
\mu_{\max} &= \lambda_{\max}(\Expect \mtx{Y} )
	= \lambda_{\max}\left( \sum\nolimits_{k} \Expect \mtx{X}_k \right).
	\label{eqn:matrix-chernoff-mu-max}
\end{align}
Then, for $\theta > 0$,
\begin{align}
\Expect \lambda_{\min}(\mtx{Y}) &\geq \frac{1 - \econst^{-\theta}}{\theta} \, \mu_{\min} - \frac{1}{\theta} \, L \, \log d,
	\quad\text{and}
	\label{eqn:matrix-chernoff-lower-expect} \\
\Expect \lambda_{\max}(\mtx{Y}) &\leq \frac{\econst^{\theta} - 1}{\theta} \, \mu_{\max} + \frac{1}{\theta} \, L \, \log d.
	\label{eqn:matrix-chernoff-upper-expect}
\end{align}
Furthermore,
\begin{align}
\Prob{ \lambda_{\min}\left( \mtx{Y} \right) \leq (1 - \eps) \mu_{\min} }
	&\leq d \left[ \frac{\econst^{-\eps}}{(1 - \eps)^{1-\eps}} \right]^{\mu_{\min}/ L} 
	\quad\text{for $\eps \in [0, 1)$,\quad and} \label{eqn:matrix-chernoff-lower-tail} \\
\Prob{ \lambda_{\max}\left( \mtx{Y} \right) \geq (1 + \eps) \mu_{\max} }
	&\leq d \left[ \frac{\econst^{\eps}}{(1 + \eps)^{1+\eps}} \right]^{\mu_{\max} / L} 
	\quad\text{for $\eps \geq 0$.} \label{eqn:matrix-chernoff-upper-tail}
\end{align}
\end{thm}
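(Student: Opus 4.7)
The plan is to apply the master bounds for sums of independent random matrices (Theorem~\ref{thm:master-ineq}). These reduce the problem to bounding, for each summand, the matrix cgf $\log \Expect \econst^{\theta \mtx{X}_k}$; the boundedness hypothesis $\mtx{0} \psdle \mtx{X}_k \psdle L \Id$ makes this tractable via a single secant estimate.

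The central lemma to prove is: if $\mtx{X}$ is a random Hermitian matrix with $\mtx{0} \psdle \mtx{X} \psdle L \Id$, then for every $\theta \in \R$,
\begin{equation*}
\log \Expect \econst^{\theta \mtx{X}} \psdle \frac{\econst^{\theta L}-1}{L} \, \Expect \mtx{X}.
\end{equation*}
To derive this, I start from the fact that convexity of $x \mapsto \econst^{\theta x}$ on $[0,L]$ gives $\econst^{\theta x} \leq 1 + L^{-1}(\econst^{\theta L} - 1)x$. The Transfer Rule (Proposition~\ref{prop:transfer-rule}) promotes this to $\econst^{\theta \mtx{X}} \psdle \Id + L^{-1}(\econst^{\theta L}-1) \mtx{X}$; taking expectations preserves the semidefinite order. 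Operator monotonicity of $\log$~\eqref{eqn:log-monotone} together with a second Transfer-Rule application to the scalar inequality $\log(1+a)\leq a$ completes the argument. The strict condition needed for $\log(\Id + \mtx{A})$ to be well defined holds in both signs of $\theta$ because $\norm{L^{-1}(\econst^{\theta L}-1)\,\Expect \mtx{X}} < 1$, since $\Expect \mtx{X} \psdle L \Id$.

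Next I feed this cgf estimate into each of the four master inequalities~\eqref{eqn:master-upper-expect}--\eqref{eqn:master-lower-tail}. Summing over $k$ yields the semidefinite bound $\sum_k \log \Expect \econst^{\theta \mtx{X}_k} \psdle L^{-1}(\econst^{\theta L}-1)\, \Expect \mtx{Y}$. Trace-exponential monotonicity~\eqref{eqn:exp-trace-monotone} transfers the inequality into the trace exp, after which $\trace$ is bounded by $d$ times an appropriate extreme eigenvalue of $\exp(\alpha \Expect \mtx{Y})$ and the Spectral Mapping Theorem (Proposition~\ref{prop:spectral-mapping}) pulls the eigenvalue through the scalar exponential. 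On the $\lambda_{\max}$ branch $\theta > 0$, the scalar multiplier $\alpha = L^{-1}(\econst^{\theta L}-1)$ is positive, so the dominant eigenvalue of $\exp(\alpha \Expect \mtx{Y})$ sits at $\mu_{\max}$ and the bound becomes $d \cdot \exp(\alpha \mu_{\max})$. On the $\lambda_{\min}$ branch $\theta < 0$, the multiplier $\alpha$ is negative, so the dominant eigenvalue of $\exp(\alpha \Expect \mtx{Y})$ instead sits at $\mu_{\min}$, giving $d \cdot \exp(\alpha \mu_{\min})$; dividing by the negative $\theta$ correctly reverses the resulting scalar inequality into a lower bound.

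What remains is a calculus exercise. For the expectation bounds, the substitution $\theta \mapsto \pm \theta / L$ rewrites the estimates in the declared form~\eqref{eqn:matrix-chernoff-lower-expect}--\eqref{eqn:matrix-chernoff-upper-expect}. For the tail bounds at $t = (1 \pm \eps)\mu$, I minimize $-\theta t + L^{-1}(\econst^{\theta L}-1)\mu$ over the appropriate sign of $\theta$; the first-order condition yields the optimal value $\theta L = \log(1 \pm \eps)$, and substituting back produces the classical Chernoff quantity $[\econst^{\pm \eps}/(1 \pm \eps)^{1 \pm \eps}]^{\mu/L}$ appearing in~\eqref{eqn:matrix-chernoff-lower-tail}--\eqref{eqn:matrix-chernoff-upper-tail}. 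The only real obstacle is bookkeeping: carefully tracking the direction of every inequality on the $\lambda_{\min}$ side, where $\theta$, the multiplier $\alpha$, and the optimizer are all negative.
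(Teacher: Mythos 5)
Your proof is correct and follows essentially the same route as the paper's: the same secant/Transfer-Rule argument produces the cgf bound $\log \Expect \econst^{\theta \mtx{X}_k} \psdle L^{-1}(\econst^{\theta L}-1)\,\Expect \mtx{X}_k$, which is then fed through the master bounds with the same sign bookkeeping on the $\lambda_{\min}$ branch and the same optimal $\theta L = \log(1 \pm \eps)$. The one trivial slip is your justification $\norm{L^{-1}(\econst^{\theta L}-1)\,\Expect \mtx{X}} < 1$, which fails for $\theta L > \log 2$; but for $\theta > 0$ the matrix $\Id + L^{-1}(\econst^{\theta L}-1)\,\Expect\mtx{X}$ is $\psdge \Id$ so the logarithm is defined regardless, and for $\theta < 0$ your bound does hold, so the conclusion is unaffected.
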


\noindent
The proof of Theorem~\ref{thm:matrix-chernoff} appears below in \S\ref{sec:matrix-chernoff-proof}.

\subsection{Discussion}

Let us consider some facets of Theorem~\ref{thm:matrix-chernoff}.

\subsubsection{Aspects of the Matrix Chernoff Inequality}

In many situations, it is easier to work with streamlined versions of the expectation
bounds:
\begin{align}
\Expect \lambda_{\min}(\mtx{Y}) &\geq 0.63 \, \mu_{\min} - L \, \log d,
	\quad\text{and}
	\label{eqn:matrix-chernoff-lower-expect-simp} \\
\Expect \lambda_{\max}(\mtx{Y}) &\leq 1.72 \, \mu_{\max} + L \, \log d.
	\label{eqn:matrix-chernoff-upper-expect-simp}
\end{align}
We obtain these results by selecting $\theta = 1$ in both~\eqref{eqn:matrix-chernoff-lower-expect} and~\eqref{eqn:matrix-chernoff-upper-expect} and evaluating the numerical constants.

These simplifications also help to clarify the meaning of Theorem~\ref{thm:matrix-chernoff}.
On average, $\lambda_{\min}(\mtx{Y})$ is not much smaller than
$\lambda_{\min}(\Expect \mtx{Y})$, minus a fluctuation term that reflects the maximum size $L$
of a summand and the ambient dimension $d$.  Similarly, the average value of $\lambda_{\max}(\mtx{Y})$
is close to $\lambda_{\max}(\Expect \mtx{Y})$, plus the same fluctuation term.

We can also weaken the tail bounds~\eqref{eqn:matrix-chernoff-lower-tail} and~\eqref{eqn:matrix-chernoff-upper-tail}
to reach
\begin{align*}
\Prob{ \lambda_{\min}\left( \mtx{Y} \right)
	\leq t \mu_{\min} }
	&\leq d \, \econst^{-(1-t)^2 \mu_{\min}/2L}
	\quad\text{for $t \in [0, 1)$, and} \\
\Prob{ \lambda_{\max}\left(\mtx{Y} \right)
	\geq t \mu_{\max} }
	&\leq d \, \left( \frac{\econst}{t} \right)^{t \mu_{\max}/L}
	\quad\text{for $t \geq \econst$.}
\end{align*}
The first bound shows that the lower tail of $\lambda_{\min}(\mtx{Y})$ decays at a subgaussian rate
with variance $L/\mu_{\min}$.
The second bound manifests that the upper tail of $\lambda_{\max}(\mtx{Y})$ decays faster than that of an exponential random variable with mean $L/\mu_{\max}$.  
This is the same type of prediction we receive from the scalar Chernoff inequalities.

As with other matrix concentration results, the tail bounds~\eqref{eqn:matrix-chernoff-lower-tail}
and~\eqref{eqn:matrix-chernoff-upper-tail} can overestimate the actual tail probabilities for the
extreme eigenvalues of $\mtx{Y}$, especially at large deviations from the mean.
The value of the matrix Chernoff theorem derives from the estimates~\eqref{eqn:matrix-chernoff-lower-expect}
and~\eqref{eqn:matrix-chernoff-upper-expect} for the expectation of the minimum and maximum eigenvalue of $\mtx{Y}$.
Scalar concentration inequalities may provide better estimates for tail probabilities.

\subsubsection{Related Results}

We can moderate the dimensional factor $d$ in the bounds
for $\lambda_{\max}(\mtx{Y})$ from Theorem~\ref{thm:matrix-chernoff}
when the random matrix $\mtx{Y}$ has limited spectral content in most directions.
We take up this analysis in Chapter~\ref{chap:intrinsic}.

Next, let us present an important refinement~\cite[Thm.~A.1]{CGT12:Masked-Sample}
of the bound~\eqref{eqn:matrix-chernoff-upper-expect-simp}
that can be very useful in practice:
\begin{equation} \label{eqn:matrix-rosenthal}
\Expect \lambda_{\max}(\mtx{Y})
	\leq 2 \mu_{\max} + 8\econst\, \big( \Expect{} \max\nolimits_k \lambda_{\max}(\mtx{X}_k) \big) \log d.
\end{equation}
This estimate may be regarded as a matrix version of Rosenthal's inequality~\cite{Ros70:Subspaces-Lp}.
Observe that the uniform bound $L$ appearing in~\eqref{eqn:matrix-chernoff-upper-expect-simp}
always exceeds the large parenthesis on the right-hand side of~\eqref{eqn:matrix-rosenthal}.
Therefore, the estimate~\eqref{eqn:matrix-rosenthal} is valuable when the summands
are unbounded and, especially, when they have heavy tails.  See the notes at
the end of the chapter for more information.

\subsection{Optimality of the Matrix Chernoff Bounds}
\label{sec:matrix-chernoff-sharp}

In this section, we explore how well bounds such as Theorem~\ref{thm:matrix-chernoff}
and inequality~\eqref{eqn:matrix-rosenthal} describe the behavior of a random matrix
$\mtx{Y}$ formed as a sum of independent, random positive-semidefinite matrices.

\subsubsection{The Upper Chernoff Bounds}

We will demonstrate that both terms in the matrix Rosenthal
inequality~\eqref{eqn:matrix-rosenthal} are necessary.  More precisely,
\begin{equation} \label{eqn:rosenthal-two-sided}
\begin{aligned}
\mathrm{const} \cdot \big[ \mu_{\max} \ +\ \Expect{} \max\nolimits_k \lambda_{\max}(\mtx{X}_k) \big]
	\quad&\leq\quad \Expect \lambda_{\max}(\mtx{Y}) \\
	\quad&\leq\quad \mathrm{Const} \cdot \big[ \mu_{\max}
	\ + \ \big(\Expect{} \max\nolimits_k \lambda_{\max}(\mtx{X}_k)\big) \log d \big].
\end{aligned}
\end{equation}
Therefore, we have identified appropriate parameters for bounding $\Expect{} \lambda_{\max}(\mtx{Y})$,
although the constants and the logarithm may not be sharp in every case.

The appearance of $\mu_{\max}$ on the left-hand side of~\eqref{eqn:rosenthal-two-sided}
is a consequence of Jensen's inequality.  Indeed, the maximum eigenvalue is convex, so
$$
\Expect \lambda_{\max}(\mtx{Y}) \geq \lambda_{\max}(\Expect \mtx{Y}) = \mu_{\max}.
$$
To justify the other term, apply the fact that
the summands $\mtx{X}_k$ are positive semidefinite to conclude that
$$
\Expect \lambda_{\max}(\mtx{Y})
	= \Expect \lambda_{\max}\left( \sum\nolimits_k \mtx{X}_k \right)
	\geq \Expect{} \max\nolimits_k \lambda_{\max}(\mtx{X}_k).
$$
We have used the fact that $\lambda_{\max}(\mtx{A} + \mtx{H}) \geq \lambda_{\max}(\mtx{A})$
whenever $\mtx{H}$ is positive semidefinite.  Average the last two
displays to develop the left-hand side of~\eqref{eqn:rosenthal-two-sided}.
The right-hand side of~\eqref{eqn:rosenthal-two-sided}
is obviously just~\eqref{eqn:matrix-rosenthal}.  

A simple example suffices to show that the logarithm cannot always
be removed from the second term in~\eqref{eqn:matrix-chernoff-upper-expect-simp}
or from~\eqref{eqn:matrix-rosenthal}.
For each natural number $n$, consider the $d \times d$ random matrix
$$
\mtx{Y}_n = \sum_{i=1}^n \sum_{k=1}^d \delta_{ik}^{(n)} \,\mathbf{E}_{kk}
$$
where $\big\{\delta_{ik}^{(n)} \big\}$ is an independent family of $\textsc{bernoulli}(n^{-1})$
random variables and $\mathbf{E}_{kk}$ is the $d \times d$ matrix with a one
in the $(k, k)$ entry an zeros elsewhere.
An easy application of~\eqref{eqn:matrix-chernoff-upper-expect-simp} delivers
$$
\lambda_{\max}(\mtx{Y}_n) \leq 1.72 + \log d.
$$
Using the Poisson limit of a binomial random variable and the
Skorokhod representation, we can construct an independent family $\{Q_k\}$
of $\textsc{poisson}(1)$ random variables for which
$$
\mtx{Y}_n \to \sum_{k=1}^d Q_k \, \mathbf{E}_{kk}
\quad\text{almost surely as $n \to \infty$}.
$$
It follows that
$$
\Expect \lambda_{\max}(\mtx{Y}_n) \to \Expect{} \max\nolimits_k Q_k
	\approx \mathrm{const} \cdot \frac{\log d}{\log \log d}
	\quad\text{as $n \to \infty$.}
$$
Therefore, the logarithm on the second term in~\eqref{eqn:matrix-chernoff-upper-expect-simp}
cannot be reduced by a factor larger than the iterated logarithm $\log \log d$.
This modest loss comes from approximations we make when developing the estimate for the
mean.  The tail bound~\eqref{eqn:matrix-chernoff-upper-tail} accurately predicts the order
of $\lambda_{\max}(\mtx{Y}_n)$ in this example.

The latter example depends on the commutativity of the summands
and the infinite divisibility of the Poisson distribution,
so it may seem rather special.
Nevertheless, the logarithm really does belong in many (but not all!)
examples that arise in practice.  In particular, it is necessary in
the application to random submatrices in~\S\ref{sec:rdm-submatrix}.

\subsubsection{The Lower Chernoff Bounds}

The upper expectation bound~\eqref{eqn:matrix-chernoff-upper-expect}
is quite satisfactory, but the situation is murkier
for the lower expectation bound~\eqref{eqn:matrix-chernoff-lower-expect}.
The mean term appears naturally in the lower bound:
$$
\Expect \lambda_{\min}(\mtx{Y}) \leq \lambda_{\min}(\Expect \mtx{Y})
	= \mu_{\min}.
$$
This estimate is a consequence of Jensen's inequality and
the concavity of the minimum eigenvalue.
On the other hand, it is not clear what the correct form of the second
term in~\eqref{eqn:matrix-chernoff-lower-expect}
should be for a general sum of random positive-semidefinite matrices.

Nevertheless, a simple example demonstrates that the lower Chernoff bound
~\eqref{eqn:matrix-chernoff-lower-expect} is numerically sharp in some situations.
Let $\mtx{X}$ be a $d \times d$ random positive-semidefinite matrix that satisfies
$$
\mtx{X} = d \, \mathbf{E}_{ii}
\quad\text{with probability $d^{-1}$ for each index $i = 1, \dots, d$.}
$$
It is clear that $\Expect \mtx{X} = \Id_d$.  Form the random matrix
$$
\mtx{Y}_n = \sum_{k=1}^n \mtx{X}_k
\quad\text{where each $\mtx{X}_k$ is an independent copy of $\mtx{X}$.}
$$
The lower Chernoff bound~\eqref{eqn:matrix-chernoff-lower-expect} implies that
$$
\Expect \lambda_{\min}(\mtx{Y}_n) \geq \frac{1- \econst^{-\theta}}{\theta} \cdot n - \frac{1}{\theta} d \log d.
$$
The parameter $\theta > 0$ is at our disposal.  This analysis predicts that
$\Expect \lambda_{\min}(\mtx{Y}_n) > 0$ precisely when $n > d \log d$.

On the other hand, $\lambda_{\max}(\mtx{Y}_n) > 0$ if and only if
each diagonal matrix $d \, \mathbf{E}_{ii}$ appears at least once among the summands
$\mtx{X}_1, \dots, \mtx{X}_n$.  To determine the probability that this event occurs,
notice that this question is an instance of the coupon collector problem~\cite[\S3.6]{MR95:Randomized-Algorithms}.
The probability of collecting all $d$ coupons within $n$ draws undergoes a phase transition
from about zero to about one at $n = d \log d$.  By refining this argument~\cite{Tro11:Improved-Analysis},
we can verify that both lower Chernoff bounds~\eqref{eqn:matrix-chernoff-lower-expect}
and~\eqref{eqn:matrix-chernoff-lower-tail} provide a numerically
sharp lower bound for the value of $n$ where the phase transition
occurs.  In other words, the lower matrix Chernoff bounds are themselves sharp.

\section{Example: A Random Submatrix of a Fixed Matrix} \label{sec:rdm-submatrix}

The matrix Chernoff inequality can be used to bound the extreme singular values of a random submatrix drawn from a fixed matrix.  Theorem~\ref{thm:matrix-chernoff} might not seem suitable for this purpose because it deals with eigenvalues, but we can connect the method with the problem via a simple transformation.  The results in this section have found applications in randomized linear algebra, sparse approximation, machine learning, and other fields.  See the notes at the end of the chapter for some additional discussion and references.

\subsection{A Random Column Submatrix} \label{sec:column-submatrix}

Let $\mtx{B}$ be a fixed $d \times n$ matrix, and let $\vct{b}_{:k}$ denote the $k$th column of this matrix.  The matrix can be expressed as the sum of its columns:
$$
\mtx{B} = \sum_{k=1}^n \vct{b}_{:k} \mathbf{e}_k^\adj.
$$
The symbol $\mathbf{e}_k$ refers to the standard basis (column) vector with a one in the $k$th component and zeros elsewhere; the length of the vector $\mathbf{e}_k$ is determined by context.

We consider a simple model for a random column submatrix.  Let $\{ \delta_k \}$ be an independent sequence of $\textsc{bernoulli}(p/n)$ random variables.  Define the random matrix
$$
\mtx{Z} = \sum_{k=1}^n \delta_k \, \vct{b}_{:k} \mathbf{e}_k^\adj.
$$
That is, we include each column independently with probability $p/n$, which means that there are typically about $p$ nonzero columns in the matrix.  We do not remove the other columns; we just zero them out.

In this section, we will obtain bounds on the expectation of the extreme singular values $\sigma_1(\mtx{Z})$ and $\sigma_d(\mtx{Z})$ of the $d \times n$ random matrix $\mtx{Z}$.  More precisely,
\begin{equation} \label{eqn:column-submatrix}
\begin{aligned}
\Expect{} \sigma_1(\mtx{Z})^2 
	&\leq 1.72 \cdot \frac{p}{n} \cdot \sigma_1(\mtx{B})^2
		+ (\log d) \cdot \max\nolimits_k \normsq{\vct{b}_{:k}},
		\quad\text{and}\quad \\
\Expect{} \sigma_d(\mtx{Z})^2 
	&\geq 0.63 \cdot \frac{p}{n} \cdot \sigma_d(\mtx{B})^2
		- (\log d) \cdot \max\nolimits_k \normsq{\vct{b}_{:k}}.
\end{aligned}	
\end{equation}
That is, the random submatrix $\mtx{Z}$ gets its ``fair share''
of the squared singular values of the original matrix $\mtx{B}$.
There is a fluctuation term that depends on largest norm of a column of $\mtx{B}$ and the logarithm of the number $d$ of rows in $\mtx{B}$.  This result is very useful because a positive lower bound on $\sigma_d(\mtx{Z})$ ensures that the rows of the random submatrix $\mtx{Z}$ are linearly independent.

\subsubsection{The Analysis}

To study the singular values of $\mtx{Z}$, it is convenient to define a $d \times d$ random, positive-semidefinite matrix
$$
\mtx{Y} = \mtx{ZZ}^\adj = \sum_{j,k=1}^n \delta_j \delta_k \, (\vct{b}_{:j} \mathbf{e}_j^\adj)( \mathbf{e}_k \vct{b}_{:k}^\adj )
	= \sum\limits_{k=1}^n \delta_k \, \vct{b}_{:k} \vct{b}_{:k}^\adj.
$$
Note that $\delta_k^2 = \delta_k$ because $\delta_k$ only takes the values zero and one.  The eigenvalues of $\mtx{Y}$ determine the singular values of $\mtx{Z}$, and vice versa.  In particular,
$$
\lambda_{\max}(\mtx{Y}) = \lambda_{\max}(\mtx{ZZ}^\adj) = \sigma_1(\mtx{Z})^2
\quad\text{and}\quad
\lambda_{\min}(\mtx{Y}) = \lambda_{\min}(\mtx{ZZ}^\adj) = \sigma_d(\mtx{Z})^2,
$$
where we arrange the singular values of $\mtx{Z}$ in weakly decreasing order
$\sigma_1(\mtx{Z}) \geq \dots \geq \sigma_d(\mtx{Z})$.

The matrix Chernoff inequality provides bounds for the expectations of the eigenvalues of $\mtx{Y}$.  To apply the result, first calculate
$$
\Expect \mtx{Y} = \sum_{k=1}^n (\Expect \delta_k) \, \vct{b}_{:k}\vct{b}_{:k}^\adj
	= \frac{p}{n} \sum_{k=1}^n \vct{b}_{:k} \vct{b}_{:k}^\adj
	= \frac{p}{n} \cdot \mtx{BB}^\adj,
$$
so that
$$
\mu_{\max} = \lambda_{\max}(\Expect \mtx{Y}) = \frac{p}{n} \, \sigma_1(\mtx{B})^2
\quad\text{and}\quad
\mu_{\min} = \lambda_{\min}(\Expect \mtx{Y}) = \frac{p}{n} \, \sigma_d(\mtx{B})^2.
$$
Define $L = \max_k \normsq{\vct{b}_{:k}}$, and observe that $\norm{\smash{\delta_k \vct{b}_{:k} \vct{b}_{:k}^\adj }} \leq L$ for each index $k$.  The simplified matrix Chernoff bounds~\eqref{eqn:matrix-chernoff-lower-expect-simp} and~\eqref{eqn:matrix-chernoff-upper-expect-simp} now deliver the result~\eqref{eqn:column-submatrix}.

\subsection{A Random Row and Column Submatrix}

Next, we consider a model for a random set of rows and columns drawn from a fixed $d \times n$ matrix $\mtx{B}$.  In this case, it is helpful to use matrix notation to represent the extraction of a submatrix.
Define independent random projectors
$$
\mtx{P} = \diag( \delta_1, \dots, \delta_d )
\quad\text{and}\quad
\mtx{R} = \diag( \xi_1, \dots, \xi_n )
$$
where $\{\delta_k\}$ is an independent family of $\textsc{bernoulli}(p/d)$ random variables and
$\{\xi_k\}$ is an independent family of $\textsc{bernoulli}(r/n)$ random variables.  Then
$$
\mtx{Z} = \mtx{PBR}
$$
is a random submatrix of $\mtx{B}$ with about $p$ nonzero rows and $r$ nonzero columns.

In this section, we will show that
\begin{multline} \label{eqn:rdm-submatrix-normsq}
\Expect{} \normsq{\mtx{Z}}
	\leq
	3 \cdot \frac{p}{d} \cdot \frac{r}{n} \cdot \normsq{\mtx{B}}
	+ 2 \cdot \frac{p \log d}{d} \cdot \max\nolimits_k \norm{\smash{ \vct{b}_{:k}} }^2 \\
	+ 2 \cdot \frac{r \log n}{n} \cdot \max\nolimits_j \norm{\smash{ \vct{b}_{j:}} }^2
	+ (\log d)(\log n) \cdot \max\nolimits_{j,k} \abs{\smash{b_{jk}}}^2.
\end{multline}
The notations $\vct{b}_{j:}$ and $\vct{b}_{:k}$ refer to the $j$th row and $k$th column of the matrix $\mtx{B}$,
while $b_{jk}$ is the $(j, k)$ entry of the matrix.  In other words, the random submatrix $\mtx{Z}$ gets its share of the total squared norm of the matrix $\mtx{B}$.  The fluctuation terms reflect the maximum row norm and the maximum column norm of $\mtx{B}$, as well as the size of the largest entry.  There is also a weak dependence on the ambient dimensions $d$ and $n$.

\subsubsection{The Analysis}

The argument has much in common with the calculations for a random column submatrix, but we need to do some extra work to handle the interaction between the random row sampling and the random column sampling.

To begin, we express the squared norm $\normsq{\mtx{Z}}$ in terms of the maximum eigenvalue of a random positive-semidefinite matrix:
$$
\begin{aligned}\Expect{} \normsq{\mtx{Z}}
	&= \Expect \lambda_{\max}( (\mtx{PBR})(\mtx{PBR})^\adj ) \\
	&= \Expect \lambda_{\max}( (\mtx{PB}) \mtx{R} (\mtx{PB})^\adj )
	= \Expect\left[ \Expect\left[ \lambda_{\max}\left( \sum_{k=1}^n \xi_k \, (\mtx{PB})_{:k} (\mtx{PB})_{:k}^\adj \right)
	\, \bigg\vert \, \mtx{P} \right] \right]
\end{aligned}
$$
We have used the fact that $\mtx{RR}^\adj = \mtx{R}$,
and the notation $(\mtx{PB})_{:k}$ refers to the $k$th column of the matrix $\mtx{PB}$.
Observe that the random positive-semidefinite matrix on the right-hand side has dimension $d$.
Invoking the matrix Chernoff inequality~\eqref{eqn:matrix-chernoff-upper-expect-simp}, conditional
on the choice of $\mtx{P}$, we obtain
\begin{equation} \label{eqn:rdm-submatrix-1}
\Expect{}  \normsq{\mtx{Z}}
	\leq 1.72 \cdot \frac{r}{n} \cdot \Expect \lambda_{\max}((\mtx{PB})(\mtx{PB})^\adj)
	+ (\log d) \cdot \Expect \max\nolimits_k \normsq{(\mtx{PB})_{:k}}.
\end{equation}
The required calculation is analogous with the one in the Section~\ref{sec:column-submatrix},
so we omit the details.
To reach a deterministic bound, we still have two more expectations to control.

Next, we examine the term in~\eqref{eqn:rdm-submatrix-1} that involves the maximum eigenvalue:
$$
\Expect \lambda_{\max}((\mtx{PB})(\mtx{PB})^\adj)
	= \Expect \lambda_{\max}( \mtx{B}^\adj \mtx{P} \mtx{B} )
	= \Expect \lambda_{\max}\left( \sum_{j=1}^d \delta_j \vct{b}_{j:}^\adj \vct{b}_{j:} \right).
$$
The first identity holds because $\lambda_{\max}(\mtx{CC}^\adj) = \lambda_{\max}(\mtx{C}^\adj \mtx{C})$
for any matrix $\mtx{C}$, and $\mtx{PP}^\adj = \mtx{P}$.
Observe that the random positive-semidefinite matrix on the right-hand side has dimension $n$,
and apply the matrix Chernoff inequality~\eqref{eqn:matrix-chernoff-upper-expect-simp} again to reach
\begin{equation} \label{eqn:rdm-submatrix-2}
\Expect \lambda_{\max}((\mtx{PB})(\mtx{PB})^\adj)
	\leq 1.72 \cdot \frac{p}{d}\cdot \lambda_{\max}(\mtx{B}^\adj \mtx{B})
	+ (\log n) \cdot \max\nolimits_j \norm{ \smash{\vct{b}_{j:}} }^2.
\end{equation}
Recall that $\lambda_{\max}(\mtx{B}^\adj\mtx{B}) = \normsq{\mtx{B}}$ to simplify this expression slightly.

Last, we develop a bound on the maximum column norm in~\eqref{eqn:rdm-submatrix-1}.
This result also follows from the matrix Chernoff inequality, but we need to do a little
work to see why.  There are more direct proofs, but this approach is closer in spirit to
the rest of our proof.

We are going to treat the maximum column norm as the maximum eigenvalue
of a sum of independent, random diagonal matrices.  Observe that
$$
\normsq{ (\mtx{PB})_{:k} } = \sum_{j=1}^d \delta_j \,\abs{\smash{b_{jk}}}^2
\quad\text{for each $k = 1, \dots, n$.}
$$
Using this representation, we see that
$$
\begin{aligned}
\max\nolimits_k \normsq{ (\mtx{PB})_{:k} }
	&= \lambda_{\max} \begin{bmatrix} \sum_{j=1}^d \delta_j \, \abs{\smash{b_{j1}}}^2 \\ & \ddots \\ && \sum_{j=1}^d \delta_j \, \abs{\smash{b_{jn}}}^2 \end{bmatrix} \\
	&= \lambda_{\max}\left( \sum_{j=1}^d \delta_j\, \diag\left( \abs{\smash{b_{j1}}}^2,
	\dots,\abs{\smash{b_{jn}}}^2 \right) \right).
\end{aligned}
$$
To activate the matrix Chernoff bound, we need to compute the two parameters that appear
in~\eqref{eqn:matrix-chernoff-upper-expect-simp}.  First,
the uniform upper bound $L$ satisfies
$$
L = \max\nolimits_j \lambda_{\max}\left( \diag\left( \abs{\smash{b_{j1}}}^2,
	\dots,\abs{\smash{b_{jn}}}^2 \right) \right)
	= \max\nolimits_j \max\nolimits_k \abs{\smash{b_{jk}}}^2.
$$
Second, to compute $\mu_{\max}$, note that
$$
\begin{aligned}
\Expect \sum_{j=1}^d \delta_j \diag\left( \abs{\smash{b_{j1}}}^2, \dots, \abs{\smash{b_{jn}}}^2 \right)
	&= \frac{p}{d} \cdot \diag\left( \sum_{j=1}^d \abs{\smash{b_{j1}}}^2,
	\dots, \sum_{j=1}^d  \abs{\smash{b_{jn}}}^2 \right) \\
	&= \frac{p}{d} \cdot \diag\left( \normsq{\smash{\vct{b}_{:1}}},
	\dots, \normsq{\smash{\vct{b}_{:n}}} \right).
\end{aligned}
$$
Take the maximum eigenvalue of this expression to reach
$$
\mu_{\max} = \frac{p}{d} \cdot \max\nolimits_k \norm{\smash{ \vct{b}_{:k} }}^2.
$$
Therefore, the matrix Chernoff inequality implies
\begin{equation} \label{eqn:rdm-submatrix-3}
\Expect \max\nolimits_k \normsq{ (\mtx{PB})_{:k} }
	\leq 1.72 \cdot \frac{p}{d} \cdot \max\nolimits_k \norm{\smash{\vct{b}_{:k}} }^2
	+ (\log n) \cdot \max\nolimits_{j,k} \abs{\smash{b_{jk}}}^2.
\end{equation}
On average, the maximum squared column norm of a
random submatrix $\mtx{PB}$ with approximately $p$ nonzero rows
gets its share $p/d$ of the maximum squared column norm of $\mtx{B}$,
plus a fluctuation term that depends on the magnitude of
the largest entry of $\mtx{B}$ and the logarithm of the number $n$ of columns.

Combine the three bounds~\eqref{eqn:rdm-submatrix-1},~\eqref{eqn:rdm-submatrix-2}, and
~\eqref{eqn:rdm-submatrix-3} to reach the result~\eqref{eqn:rdm-submatrix-normsq}.  We
have simplified numerical constants to make the expression more compact.

\section{Application: When is an Erd{\H os}--R{\'e}nyi Graph Connected?}
\label{sec:erdos-renyi}

Random graph theory concerns probabilistic models for the interactions between pairs of objects.  One basic question about a random graph is to ask whether there is a path connecting every pair of vertices or whether there are vertices segregated in different parts of the graph.  It is possible to address this problem by studying the eigenvalues of random matrices, a challenge that we take up in this section.

\subsection{Background on Graph Theory}
\label{sec:graph-theory}

Recall that an \term{undirected graph} is a pair $G = (V, E)$.  The elements of the set $V$ are called \term{vertices}.  The set $E$ is a collection of unordered pairs $\{u, v\}$ of distinct vertices, called \term{edges}.  We say that the graph has an edge between vertices $u$ and $v$ in $V$ if the pair $\{u, v\}$ appears in $E$.  For simplicity, we assume that the vertex set $V = \{1, \dots, n\}$.  The \term{degree} $\textrm{deg}(k)$ of the vertex $k$ is the number of edges in $E$ that include the vertex $k$.

There are several natural matrices associated with an undirected graph.  The \term{adjacency matrix} of the graph $G$ is an $n \times n$ symmetric matrix $\mtx{A}$ whose entries indicate which edges are present:
$$
a_{jk} = \begin{cases}
	1, & \{j, k\} \in E \\
	0, & \{j, k\} \notin E.
\end{cases}
$$
We have assumed that edges connect distinct vertices, so the diagonal entries of the matrix $\mtx{A}$ equal zero.  Next, define a diagonal matrix $\mtx{D} = \diag( \textrm{deg}(1), \dots, \textrm{deg}(n) )$ whose entries list the degrees of the vertices.  The \term{Laplacian} $\mtx{\Delta}$ and \term{normalized Laplacian} $\mtx{H}$ of the graph are the matrices
$$
\mtx{\Delta} = \mtx{D} - \mtx{A}
\quad\text{and}\quad
\mtx{H} = \mtx{D}^{-1/2} \mtx{\Delta} \mtx{D}^{-1/2}.
$$
We place the convention that $\mtx{D}^{-1/2}(k,k) = 0$ when $\textrm{deg}(k) = 0$.  The Laplacian matrix $\mtx{\Delta}$ is always positive semidefinite.  The vector $\mathbf{e} \in \R^n$ of ones is always an eigenvector of $\mtx{\Delta}$ with eigenvalue zero.

These matrices and their spectral properties play a dominant role in modern graph theory.  For example, the graph $G$ is connected if and only if the second-smallest eigenvalue of $\mtx{\Delta}$ is strictly positive.  The second smallest eigenvalue of $\mtx{H}$ controls the rate at which a random walk on the graph $G$ converges to the stationary distribution (under appropriate assumptions).  See the book~\cite{GR01:Algebraic-Graph} for more information about these connections.

\subsection{The Model of Erd{\H o}s \& R{\'e}nyi}

The simplest possible example of a random graph is the independent model $G(n, p)$ of Erd{\H o}s and R{\'e}nyi~\cite{ER60:Evolution-Random}.  The number $n$ is the number of vertices in the graph, and $p \in (0,1)$ is the probability that two vertices are connected.  More precisely, here is how to construct a random graph in $G(n, p)$.  Between each pair of distinct vertices, we place an edge independently at random with probability $p$.  In other words, the adjacency matrix takes the form
\begin{equation} \label{eqn:er-graph-adjacency-entries}
a_{jk} = \begin{cases}
	\xi_{jk}, & 1 \leq j < k \leq n \\
	\xi_{kj}, & 1 \leq k < j \leq n \\
	0, & j = k.
\end{cases}
\end{equation}
The family $\{ \xi_{jk} : 1 \leq j < k \leq n \}$ consists of mutually independent $\textsc{bernoulli}(p)$ random variables.  Figure~\ref{fig:er-graph} shows one realization of the adjacency matrix of an Erd{\H o}s--R{\'e}nyi graph.

\begin{figure}
\begin{center}
\includegraphics[width=3in]{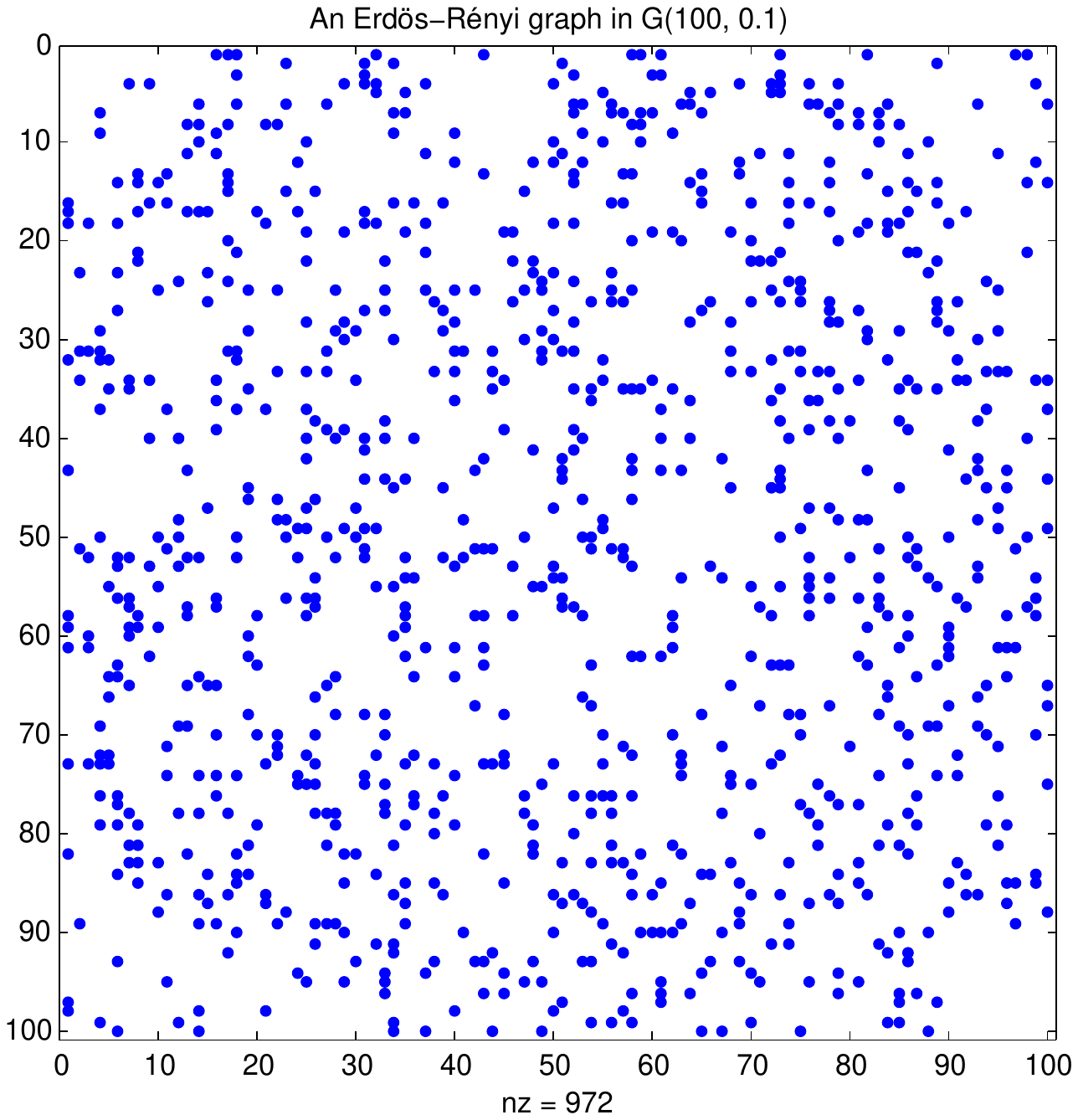}
\end{center}
\begin{caption} %
{\textbf{The adjacency matrix of an Erd{\H o}s--R{\'e}nyi graph.}  This figure shows the pattern of nonzero entries in the adjacency matrix $\mtx{A}$ of a random graph drawn from $G(100, 0.1)$.  Out of a possible 4,950 edges, there are 486 edges present.  A basic question is whether the graph is connected.  The graph is \emph{dis}connected if and only if there is a permutation of the vertices so that the adjacency matrix is block diagonal.  This property is reflected in the second-smallest eigenvalue of the Laplacian matrix $\mtx{\Delta}$.}
\end{caption} \label{fig:er-graph}
\end{figure}

Let us explain how to represent the adjacency matrix and Laplacian matrix of an Erd{\H o}s--R{\'e}nyi graph as a sum of independent random matrices.
The adjacency matrix $\mtx{A}$ of a random graph in $G(n, p)$ can be written as
\begin{equation} \label{eqn:er-graph-adjacency}
\mtx{A} = \sum_{1 \leq j < k \leq n} \xi_{jk}\, (\mathbf{E}_{jk} + \mathbf{E}_{kj}).
\end{equation}
This expression is a straightforward translation of the definition~\eqref{eqn:er-graph-adjacency-entries} into matrix form.  Similarly, the Laplacian matrix $\mtx{\Delta}$ of the random graph can be expressed as
\begin{equation} \label{eqn:er-graph-laplacian}
\mtx{\Delta} = \sum_{1 \leq j < k \leq n} \xi_{jk}\, (\mathbf{E}_{jj} + \mathbf{E}_{kk} - \mathbf{E}_{jk} - \mathbf{E}_{kj}).
\end{equation}
To verify the formula~\eqref{eqn:er-graph-laplacian}, observe that the presence of an edge between the vertices $j$ and $k$ increases the degree of $j$ and $k$ by one.  Therefore, when $\xi_{jk} = 1$, we augment the $(j, j)$ and $(k, k)$ entries of $\mtx{\Delta}$ to reflect the change in degree, and we mark the $(j, k)$ and $(k, j)$ entries with $-1$ to reflect the presence of the edge between $j$ and $k$.

\subsection{Connectivity of an Erd{\H o}s--R{\'e}nyi Graph}

We will obtain a near-optimal bound for the range of parameters where an Erd{\H o}s--R{\'e}nyi graph $G(n, p)$ is likely to be connected.  We can accomplish this goal by showing that the second smallest eigenvalue of the $n \times n$ random Laplacian matrix $\mtx{\Delta} = \mtx{D} - \mtx{A}$ is strictly positive.  We will solve the problem by using the matrix Chernoff inequality to study the
second-smallest eigenvalue of the random Laplacian $\mtx{\Delta}$.

We need to form a random matrix $\mtx{Y}$ that consists of independent positive-semidefinite terms and whose minimum eigenvalue coincides with the second-smallest eigenvalue of $\mtx{\Delta}$.  Our approach is to compress the matrix
$\mtx{Y}$ to the orthogonal complement of the vector $\mathbf{e}$ of ones.  To that end, we introduce an $(n - 1) \times n$ partial isometry $\mtx{R}$ that satisfies
\begin{equation} \label{eqn:er-restrict-prop}
\mtx{RR}^\adj = \Id_{n-1}
\quad\text{and}\quad
\mtx{R} \mathbf{e} = \vct{0}.
\end{equation}
Now, consider the $(n -1) \times (n-1)$ random matrix
\begin{equation} \label{eqn:er-Y-defn}
\mtx{Y} = \mtx{R \Delta R}^\adj = \sum_{1 \leq j < k \leq n} \xi_{jk} \cdot
	\mtx{R} \, (\mathbf{E}_{jj} + \mathbf{E}_{kk} - \mathbf{E}_{jk} - \mathbf{E}_{kj}) \mtx{R}^\adj.
\end{equation}
Recall that $\{ \xi_{jk} \}$ is an independent family of $\textsc{bernoulli}(p)$ random variables, so the summands are mutually independent.  The Conjugation Rule~\eqref{eqn:conjugation-rule} ensures that each summand remains positive semidefinite.  Furthermore, the Courant--Fischer theorem implies that the minimum eigenvalue of $\mtx{Y}$ coincides
with the second-smallest eigenvalue of $\mtx{\Delta}$ because the smallest eigenvalue of $\mtx{\Delta}$
has eigenvector $\mathbf{e}$.

To apply the matrix Chernoff inequality, we show that $L = 2$ is an upper bound for the eigenvalues of each summand in~\eqref{eqn:er-restrict-prop}.  We have
$$
\norm{ \smash{\xi_{jk} \cdot
	\mtx{R} \, (\mathbf{E}_{jj} + \mathbf{E}_{kk} - \mathbf{E}_{jk} - \mathbf{E}_{kj}) \mtx{R}^\adj } }
	\leq \abs{\smash{\xi_{jk}}} \cdot \norm{ \mtx{R} } \cdot
	\norm{ \smash{\mathbf{E}_{jj} + \mathbf{E}_{kk} - \mathbf{E}_{jk} - \mathbf{E}_{kj}} } \cdot \norm{ \smash{\mtx{R}^\adj} }
	\leq 2.	
$$
The first bound follows from the submultiplicativity of the spectral norm.  To obtain the second bound, note that $\xi_{jk}$ takes 0--1 values.  The matrix $\mtx{R}$ is a partial isometry so its norm equals one.  Finally, a direct calculation shows that $\mtx{T} = \mathbf{E}_{jj} + \mathbf{E}_{kk} - \mathbf{E}_{jk} - \mathbf{E}_{kj}$ satisfies the polynomial $\mtx{T}^2 = 2\mtx{T}$, so each eigenvalue of $\mtx{T}$ must equal zero or two.

Next, we compute the expectation of the matrix $\mtx{Y}$.
$$
\Expect \mtx{Y} = 
	p \cdot \mtx{R} \left[ \sum_{1 \leq j < k \leq n} (\mathbf{E}_{jj} + \mathbf{E}_{kk} - \mathbf{E}_{jk} - \mathbf{E}_{kj}) \right] \mtx{R}^\adj %
	= p \cdot \mtx{R} \bigl[ (n-1) \, \Id_n - (\mathbf{ee}^\adj - \Id_n) \bigr] \mtx{R}^\adj
	= pn \cdot \Id_{n-1}.
$$
The first identity follows when we apply linearity of expectation to~\eqref{eqn:er-Y-defn} and then use linearity of matrix multiplication to draw the sum inside the conjugation by $\mtx{R}$.  The term $(n-1) \, \Id_n$ emerges when we sum the diagonal matrices.  The term $\mathbf{ee}^\adj - \Id_n$ comes from the off-diagonal matrix units, once we note that the matrix $\mathbf{ee}^\adj$ has one in each component.  The last identity holds because of the properties of $\mtx{R}$ displayed in~\eqref{eqn:er-restrict-prop}.  We conclude that
$$
\lambda_{\min}(\Expect \mtx{Y}) = pn.
$$
This is all the information we need.

To arrive at a probability inequality for the second-smallest eigenvalue $\lambda_2^{\uparrow}(\mtx{\Delta})$ of the matrix $\mtx{\Delta}$, we apply the tail bound~\eqref{eqn:matrix-chernoff-lower-tail} to the matrix $\mtx{Y}$.  We obtain, for $t \in (0, 1)$,
$$
\Prob{ \lambda_{2}^\uparrow(\mtx{\Delta}) \leq t \cdot pn }
	= \Prob{ \lambda_{\min}(\mtx{Y}) \leq t \cdot pn }
	\leq (n-1) \left[ \frac{\econst^{t-1}}{t^t} \right]^{pn/2}.
$$
To appreciate what this means, we may think about the situation where $t \to 0$.  Then the bracket tends to $\econst^{-1}$, and we see that the second-smallest eigenvalue of $\mtx{\Delta}$ is unlikely to be zero when $\log(n-1) - pn / 2 < 0$.  Rearranging this expression, we obtain a sufficient condition
$$
p > \frac{2 \log(n-1)}{n}
$$
for an Erd{\H o}s--R{\'e}nyi graph $G(n,p)$ to be connected with high probability as $n \to \infty$.  This bound is quite close to the optimal result, which lacks the factor two on the right-hand side.  It is possible to make this reasoning more precise, but it does not seem worth the fuss.

\section{Proof of the Matrix Chernoff Inequalities} \label{sec:matrix-chernoff-proof}

The first step toward the matrix Chernoff inequalities is to develop an appropriate semidefinite bound for the mgf and cgf of a random positive-semidefinite matrix.  The method for establishing this result mimics the proof in the scalar case: we simply bound the exponential with a linear function.

\begin{lemma}[Matrix Chernoff: Mgf and Cgf Bound] \label{lem:matrix-chernoff-mgf}
Suppose that $\mtx{X}$ is a random matrix that satisfies $0 \leq \lambda_{\min}(\mtx{X})$ and  $\lambda_{\max}(\mtx{X}) \leq L$.  Then
$$
\Expect \econst^{\theta\mtx{X}}
	\psdle \exp\left( \frac{\econst^{\theta L} - 1}{L} \cdot \Expect \mtx{X} \right)
	\quad\text{and}\quad
\log{} \Expect \econst^{\theta\mtx{X}}
	\psdle \frac{\econst^{\theta L} - 1}{L} \cdot \Expect \mtx{X}
	\quad\text{for $\theta \in \mathbb{R}$}.
$$
\end{lemma}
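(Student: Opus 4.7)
The plan is to reduce the semidefinite bounds to a sharp scalar inequality by invoking the Transfer Rule (Proposition~\ref{prop:transfer-rule}). The key scalar fact is that, because the exponential is convex, the chord joining the points $(0, 1)$ and $(L, \econst^{\theta L})$ on the graph of $x \mapsto \econst^{\theta x}$ lies above the curve on the interval $[0, L]$. That is,
\begin{equation*}
\econst^{\theta x} \leq 1 + \frac{\econst^{\theta L} - 1}{L} \cdot x \quad\text{for $x \in [0, L]$ and all $\theta \in \mathbb{R}$.}
\end{equation*}
This is the same trick used to prove the classical scalar Chernoff bound, with the tangent-line bound replaced by a chord bound that exploits the \emph{uniform upper} bound on the summands.

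First, I would apply the Transfer Rule to the displayed scalar inequality. The hypotheses $0 \psdle \mtx{X}$ and $\lambda_{\max}(\mtx{X}) \leq L$ place the eigenvalues of $\mtx{X}$ inside $[0, L]$, so
\begin{equation*}
\econst^{\theta \mtx{X}} \psdle \Id + \frac{\econst^{\theta L} - 1}{L} \cdot \mtx{X}.
\end{equation*}
Since expectation preserves the semidefinite order, taking expectations yields
\begin{equation*}
\Expect \econst^{\theta \mtx{X}} \psdle \Id + \frac{\econst^{\theta L} - 1}{L} \cdot \Expect \mtx{X}.
\end{equation*}
Next, I would apply the Transfer Rule again to the scalar bound $1 + a \leq \econst^{a}$, valid for all real $a$, with the matrix argument $\frac{\econst^{\theta L} - 1}{L} \cdot \Expect \mtx{X}$ (which is positive semidefinite, hence Hermitian with real spectrum). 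Chaining the two semidefinite inequalities produces the asserted mgf bound.

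For the cgf bound, I would invoke the operator monotonicity of the matrix logarithm, recorded as~\eqref{eqn:log-monotone}, to the mgf bound. Since both sides of the mgf inequality are positive definite (the matrix exponential is always positive definite), applying $\log$ is legitimate, and we obtain
\begin{equation*}
\log \Expect \econst^{\theta \mtx{X}} \psdle \log \exp\left( \frac{\econst^{\theta L} - 1}{L} \cdot \Expect \mtx{X} \right) = \frac{\econst^{\theta L} - 1}{L} \cdot \Expect \mtx{X},
\end{equation*}
where the final equality uses that $\log$ and $\exp$ are functional inverses on Hermitian matrices. There is no real obstacle here: the only conceptual step is choosing the chord bound rather than a tangent-line bound, and the only non-elementary ingredient is the operator monotonicity of $\log$, which is borrowed from earlier in the text. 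I would, however, take care to note that the chord inequality is valid for every $\theta \in \mathbb{R}$, since convexity of $x \mapsto \econst^{\theta x}$ is insensitive to the sign of $\theta$; this is what makes the same lemma serve in the proofs of both the maximum- and minimum-eigenvalue Chernoff bounds.
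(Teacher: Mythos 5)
Your proof is correct and follows essentially the same route as the paper: chord bound on the interval $[0,L]$, Transfer Rule, expectation, a second Transfer Rule application of $1+a\le\econst^a$, and operator monotonicity of the logarithm. One small imprecision: in the parenthetical you claim the matrix $\frac{\econst^{\theta L}-1}{L}\cdot\Expect\mtx{X}$ is positive semidefinite, which is false when $\theta<0$; what you actually need (and have) is just that it is Hermitian, so the Transfer Rule applies because $1+a\le\econst^a$ holds on all of $\R$.
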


\begin{proof}
Consider the function $f(x) = \econst^{\theta x}$.  Since $f$ is convex, its graph lies below the chord connecting any two points on the graph.  In particular,
$$
f(x) \leq f(0) + \frac{f(L) - f(0)}{L} \cdot x
\quad\text{for $x \in [0, L]$.}
$$
In detail,
$$
\econst^{\theta x} \leq 1 + \frac{\econst^{\theta L} - 1}{L} \cdot x
\quad\text{for $x \in [0, L]$.}
$$
By assumption, each eigenvalue of $\mtx{X}$ lies in the interval $[0, L]$.  Thus, the Transfer Rule~\eqref{eqn:transfer-rule} implies that
$$
\econst^{\theta \mtx{X}} \psdle \Id + \frac{\econst^{\theta L} - 1}{L} \cdot \mtx{X}.
$$
Expectation respects the semidefinite order, so
$$
\Expect \econst^{\theta \mtx{X}}
	\psdle \Id + \frac{\econst^{\theta L} - 1}{L} \cdot \Expect\mtx{X}
	\psdle \exp\left( \frac{\econst^{\theta L} - 1}{L} \cdot \Expect\mtx{X} \right).
$$
The second relation is a consequence of the fact that $\Id + \mtx{A} \psdle \econst^{\mtx{A}}$
for every matrix $\mtx{A}$, which we obtain by applying the Transfer Rule~\eqref{eqn:transfer-rule}
to the inequality $1 + a \leq \econst^a$, valid for all $a \in \mathbb{R}$.

To obtain the semidefinite bound for the cgf, we simply take the
logarithm of the semidefinite bound for the mgf.  This operation
preserves the semidefinite order because of
the property~\eqref{eqn:log-monotone} that
the logarithm is operator monotone.
\end{proof}

We break the proof of the matrix inequality into two pieces.  First, we establish
the bounds on the maximum eigenvalue, which are slightly easier.  Afterward, we
develop the bounds on the minimum eigenvalue.

\begin{proof}[Proof of Theorem~\ref{thm:matrix-chernoff}, Maximum Eigenvalue Bounds]
Consider a finite sequence $\{\mtx{X}_k\}$ of independent, random Hermitian matrices
with common dimension $d$.  Assume that
$$
0 \leq \lambda_{\min}(\mtx{X}_k)
\quad\text{and}\quad
\lambda_{\max}(\mtx{X}_k) \leq L
\quad\text{for each index $k$.}
$$
The cgf bound, Lemma~\ref{lem:matrix-chernoff-mgf}, states that
\begin{equation} \label{eqn:upper-chernoff-proof-cgf}
\log{} \Expect \econst^{\theta \mtx{X}_k}
	\psdle g(\theta) \cdot \Expect \mtx{X}_k
	\quad\text{where}\quad
	g(\theta) = \frac{\econst^{\theta L} - 1}{L}
	\quad\text{for $\theta > 0$.}
\end{equation}
We begin with the upper bound~\eqref{eqn:matrix-chernoff-upper-expect}
for $\Expect \lambda_{\max}(\mtx{Y})$.
Using the fact~\eqref{eqn:exp-trace-monotone} that the trace of the exponential
function is monotone with respect to the semidefinite order, we substitute these
cgf bounds into the master inequality~\eqref{eqn:master-upper-expect} for the
expectation of the maximum eigenvalue to reach
$$
\begin{aligned}
\Expect \lambda_{\max}(\mtx{Y})
	&\leq \inf_{\theta > 0} \ \frac{1}{\theta} \, \log{} \trace
	\exp\Big( g(\theta) \sum\nolimits_k \Expect \mtx{X}_k \Big) \\
	&\leq \inf_{\theta > 0} \ \frac{1}{\theta} \, \log{} \left[ d \, \lambda_{\max}\left(
	\exp\left( g(\theta) \cdot \Expect \mtx{Y} \right) \right) \right] \\
	&= \inf_{\theta > 0} \ \frac{1}{\theta} \, \log{} \left[ d \,
	\exp\left( \lambda_{\max}\left( g(\theta) \cdot \Expect \mtx{Y} \right) \right) \right]	\\
	&= \inf_{\theta > 0} \ \frac{1}{\theta} \, \log{} \left[ d \,
	\exp\left( g(\theta) \cdot \lambda_{\max}(\Expect \mtx{Y}) \right) \right] \\
	&= \inf_{\theta > 0} \ \frac{1}{\theta} \left[ \log d + g(\theta) \cdot \mu_{\max} \right].
\end{aligned}
$$
In the second line, we use the fact that the matrix exponential is positive definite to bound the trace by $d$ times the maximum eigenvalue; we have also identified the sum as $\Expect \mtx{Y}$.  The third line follows from the Spectral Mapping Theorem, Proposition~\ref{prop:spectral-mapping}.  Next, we use the fact~\eqref{eqn:eig-pos-homo} that the maximum eigenvalue is a positive-homogeneous map, which depends on the observation that $g(\theta) > 0$ for $\theta > 0$.
Finally, we identify the statistic~$\mu_{\max}$ defined in~\eqref{eqn:matrix-chernoff-mu-max}.
The infimum does not admit a closed form, but we can obtain the expression~\eqref{eqn:matrix-chernoff-upper-expect} by making the change of variables $\theta \mapsto \theta/L$.

Next, we turn to the upper bound~\eqref{eqn:matrix-chernoff-upper-tail} for the upper tail of the maximum eigenvalue.  Substitute the cgf bounds~\eqref{eqn:upper-chernoff-proof-cgf} into the master inequality~\eqref{eqn:master-upper-tail} to reach
$$
\begin{aligned}
\Prob{ \lambda_{\max}(\mtx{Y}) \geq t }
	&\leq \inf_{\theta > 0} \ \econst^{-\theta t} \,
	\trace \exp\Big( g(\theta) \sum\nolimits_k \Expect \mtx{X}_k \Big) \\
	&\leq \inf_{\theta > 0} \ \econst^{-\theta t} \cdot
	d \, \exp\left( g(\theta) \cdot \mu_{\max} \right).
\end{aligned}
$$
The steps here are identical with the previous argument.  To complete the proof, make the change of variables $t \mapsto (1+\eps) \mu_{\max}$.  Then the infimum is achieved at $\theta = L^{-1} \log(1+\eps)$, which leads to the tail bound~\eqref{eqn:matrix-chernoff-upper-tail}.
\end{proof}

The lower bounds follow from a related argument that is slightly more delicate.

\begin{proof}[Proof of Theorem~\ref{thm:matrix-chernoff}, Minimum Eigenvalue Bounds]
Once again, consider a finite sequence $\{\mtx{X}_k\}$ of independent, random Hermitian matrices
with dimension $d$.  Assume that
$$
0 \leq \lambda_{\min}(\mtx{X}_k)
\quad\text{and}\quad
\lambda_{\max}(\mtx{X}_k) \leq L
\quad\text{for each index $k$.}
$$
The cgf bound, Lemma~\ref{lem:matrix-chernoff-mgf}, states that
\begin{equation} \label{eqn:lower-chernoff-proof-cgf}
\log{} \Expect \econst^{\theta \mtx{X}_k}
	\psdle g(\theta) \cdot \Expect \mtx{X}_k
	\quad\text{where}\quad
	g(\theta) = \frac{\econst^{\theta L} - 1}{L}
	\quad\text{for $\theta < 0$.}
\end{equation}
Note that $g(\theta) < 0$ for $\theta < 0$, which
alters a number of the steps in the argument.

We commence with the lower bound~\eqref{eqn:matrix-chernoff-lower-expect}
for $\Expect \lambda_{\min}(\mtx{Y})$.
As stated in~\eqref{eqn:exp-trace-monotone},
the trace exponential function is monotone with respect to the semidefinite order,
so the master inequality~\eqref{eqn:master-lower-expect}
for the minimum eigenvalue delivers
$$
\begin{aligned}
\Expect \lambda_{\min}(\mtx{Y})
	&\geq \sup_{\theta < 0} \ \frac{1}{\theta} \, \log{} \trace
	\exp\Big( g(\theta) \sum\nolimits_k \Expect \mtx{X}_k \Big) \\
	&\geq \sup_{\theta < 0} \ \frac{1}{\theta} \, \log{} \left[ d \, \lambda_{\max}\left(
	\exp\left( g(\theta) \cdot \Expect \mtx{Y} \right) \right) \right] \\	
	&= \sup_{\theta < 0} \ \frac{1}{\theta} \, \log{} \left[ d \, \exp\left(
	\lambda_{\max}\left( g(\theta) \cdot \Expect \mtx{Y} \right) \right) \right] \\	
	&= \sup_{\theta < 0} \ \frac{1}{\theta} \, \log{} \left[ d \,
	\exp\left( g(\theta) \cdot \lambda_{\min}(\Expect \mtx{Y}) \right) \right] \\
	&= \sup_{\theta < 0} \ \frac{1}{\theta} \left[ \log d + g(\theta) \cdot \mu_{\min} \right].
\end{aligned}
$$
Most of the steps are the same as in the proof of the upper bound~\eqref{eqn:matrix-chernoff-upper-expect}, so we focus on the differences.  Since the factor $\theta^{-1}$ in the first and second lines is negative, upper bounds on the trace reduce the value of the expression.  We move to the fourth line by invoking the property $\lambda_{\max}(\alpha \mtx{A}) = \alpha \lambda_{\min}(\mtx{A})$ for $\alpha < 0$, which follows from~\eqref{eqn:eig-pos-homo} and~\eqref{eqn:min-max-sign-eig}.
This piece of algebra depends on the fact that $g(\theta) < 0$ when $\theta < 0$.
To obtain the result~\eqref{eqn:matrix-chernoff-lower-expect}, we change variables: $\theta \mapsto -\theta /L$.

Finally, we establish the bound~\eqref{eqn:matrix-chernoff-lower-tail} for the lower tail of the minimum eigenvalue.  Introduce the cgf bounds~\eqref{eqn:lower-chernoff-proof-cgf} into the master inequality~\eqref{eqn:master-lower-tail} to reach
$$
\begin{aligned}
\Prob{ \lambda_{\min}(\mtx{Y}) \leq t }
	&\leq \inf_{\theta < 0} \ \econst^{-\theta t} \,
	\trace \exp\Big( g(\theta) \sum\nolimits_k \Expect \mtx{X}_k \Big) \\
	&\leq \inf_{\theta < 0} \ \econst^{-\theta t} \cdot
	d \, \exp\left( g(\theta) \cdot \mu_{\min} \right).
\end{aligned}
$$
The justifications here match those in with the previous argument.  Finally, we make the change of variables $t \mapsto (1-\eps) \mu_{\min}$.  The infimum is attained at $\theta = L^{-1} \log(1-\eps)$, which yields the tail bound~\eqref{eqn:matrix-chernoff-lower-tail}.
\end{proof}

\section{Notes}

As usual, we continue with an overview of background references and related work.

\subsection{Matrix Chernoff Inequalities}

Scalar Chernoff inequalities date to the paper~\cite[Thm.~1]{Che52:Measure-Asymptotic} by Herman Chernoff.  The original result provides probability bounds for the number of successes in a sequence of independent but non-identical Bernoulli trials.  Chernoff's proof combines the scalar Laplace transform method with refined bounds on the mgf of a Bernoulli random variable.  It is very common to encounter simplified versions of Chernoff's result, such as~\cite[Exer.~8]{Lug09:Concentration-Measure} or~\cite[\S4.1]{MR95:Randomized-Algorithms}.

In their paper~\cite{AW02:Strong-Converse}, Ahlswede \& Winter developed a matrix version of the Chernoff inequality.  The matrix mgf bound, Lemma~\ref{lem:matrix-chernoff-mgf}, essentially appears in their work.  Ahlswede \& Winter focus on the case of independent and identically distributed random matrices, in which case their results are roughly equivalent with Theorem~\ref{thm:matrix-chernoff}.  For the general case, their approach leads to matrix expectation statistics of the form
$$
\mu_{\min}^{\textrm{AW}} = \sum\nolimits_k \lambda_{\min}(\Expect \mtx{X}_k)
\quad\text{and}\quad
\mu_{\max}^{\textrm{AW}} = \sum\nolimits_k \lambda_{\max}(\Expect \mtx{X}_k).
$$
It is clear that their $\mu_{\min}^{\textrm{AW}}$ may be substantially smaller than the quantity $\mu_{\min}$ we defined in Theorem~\ref{thm:matrix-chernoff}.  Similarly, their $\mu_{\max}^{\textrm{AW}}$ may be substantially larger than the quantity $\mu_{\max}$ that drives the upper Chernoff bounds.

The tail bounds from Theorem~\ref{thm:matrix-chernoff} are drawn from~\cite[\S5]{Tro11:User-Friendly-FOCM}, but the expectation bounds we present are new.  The technical report~\cite{GT11:Tail-Bounds} extends the matrix Chernoff inequality to provide upper and lower tail bounds for all eigenvalues of a sum of random, positive-semidefinite matrices.  Chapter~\ref{chap:intrinsic} contains a slight improvement of the bounds for the maximum eigenvalue in Theorem~\ref{thm:matrix-chernoff}.

Let us mention a few other results that are related to the matrix Chernoff inequality.
First, Theorem~\ref{thm:matrix-chernoff} has a lovely information-theoretic formulation
where the tail bounds are stated in terms of an information divergence.  To establish
this result, we must restructure the proof and eliminate some of the approximations.
See~\cite[Thm.~19]{AW02:Strong-Converse} or~\cite[Thm.~5.1]{Tro11:User-Friendly-FOCM}.

Second, the problem of bounding the minimum eigenvalue of a sum of random,
positive-semidefinite matrices has a special character.  The reason, roughly,
is that a sum of independent, nonnegative random variables cannot easily take the value zero.
A closely related phenomenon holds in the matrix setting,
and it is possible to develop estimates that exploit this observation.
See~\cite[Thm.~3.1]{Oli13:Lower-Tail} and~\cite[Thm.~1.3]{KM13:Bounding-Smallest}
for two wildly different approaches.

\subsection{The Matrix Rosenthal Inequality}

The matrix Rosenthal inequality~\eqref{eqn:matrix-rosenthal}
is one of the earliest matrix concentration bounds.  In his paper~\cite{Rud99:Random-Vectors},
Rudelson used the noncommutative Khintchine inequality~\eqref{eqn:nc-khintchine}
to establish a specialization of~\eqref{eqn:matrix-rosenthal} to rank-one summands.
A refinement appears in~\cite{RV07:Sampling-Large}, and explicit
constants were first derived in~\cite{Tro08:Random-Paving}.  We believe
that the paper~\cite{CGT12:Masked-Sample} contains the first complete
statement of the moment bound~\eqref{eqn:matrix-rosenthal}
for general positive-semidefinite summands;
see also the work~\cite{MZ11:Low-Rank-Matrix-Valued}.
The constants in~\cite[Thm.~A.1]{CGT12:Masked-Sample},
and hence in~\eqref{eqn:matrix-rosenthal}, can be improved
slightly by using the sharp version of the noncommutative Khintchine
inequality from~\cite{Buc01:Operator-Khintchine,Buc05:Optimal-Constants}.
Let us stress that all of these results follow from easy variations
of Rudelson's argument.

The work~\cite[Cor.~7.4]{MJCFT12:Matrix-Concentration} provides a self-contained
and completely elementary proof of a matrix Rosenthal inequality that is closely
related to~\eqref{eqn:matrix-rosenthal}.  This result depends on different
principles from the works mentioned in the last paragraph.

\subsection{Random Submatrices}

The problem of studying a random submatrix drawn from a fixed matrix has a long history.
An early example is the paving problem from operator theory, which asks for a maximal well-conditioned set of columns (or a well-conditioned submatrix) inside a fixed matrix.  Random selection provides a natural way to approach this question.  The papers of Bourgain \& Tzafriri~\cite{BT87:Invertibility-Large,BT91:Problem-Kadison-Singer} and Kashin \& Tzafriri~\cite{KT94:Some-Remarks} study random paving using sophisticated tools from functional analysis.  See the paper~\cite{NT12:Paved-Good} for a summary of research on randomized methods for constructing pavings.  Very recently, Adam Marcus, Dan Spielman, \& Nikhil Srivastava~\cite{MSS13:Interlacing-Families-II} have solved the paving problem completely.

Later, Rudelson and Vershynin~\cite{RV07:Sampling-Large} showed that the noncommutative Khintchine inequality provides a clean way to bound the norm of a random column submatrix (or a random row and column submatrix) drawn from a fixed matrix.  Their ideas have found many applications in the mathematical signal processing literature.  For example, the paper~\cite{Tro08:Conditioning-Random} uses similar techniques to analyze the perfomance of $\ell_1$ minimization for recovering a random sparse signal.  The same methods support the paper~\cite{Tro08:Random-Paving}, which contains a modern proof of the random paving result~\cite[Thm.~2.1]{BT91:Problem-Kadison-Singer} of Bourgain \& Tzafriri.

The article~\cite{Tro11:Improved-Analysis} contains the observation that the matrix Chernoff inequality is an ideal tool for studying random submatrices.  It applies this technique to study a random matrix that arises in numerical linear algebra~\cite{HMT11:Finding-Structure}, and it achieves an optimal estimate for the minimum singular value of the random matrix that arises in this setting.  Our analysis of a random column submatrix is based on this work.  The analysis of a random row and column submatrix is new.  The paper~\cite{CD12:Invertibility-Random}, by Chr{\'e}tien and Darses, uses matrix Chernoff bounds in a more sophisticated way to develop tail bounds for the norm of a random row and column submatrix.

\subsection{Random Graphs}

The analysis of random graphs and random hypergraphs appeared as one of the earliest applications of matrix concentration inequalities~\cite{AW02:Strong-Converse}.  Christofides and Markstr{\"o}m developed a matrix Hoeffding inequality to aid in this purpose~\cite{CM08:Expansion-Properties}.  Later, Oliveira wrote two papers~\cite{Oli10:Concentration-Adjacency,Oli11:Spectrum-Random} on random graph theory based on matrix concentration.  We recommend these works for further information.

To analyze the random graph Laplacian, we compressed the Laplacian to a subspace so that the minimum eigenvalue of the compression coincides with the second-smallest eigenvalue of the original Laplacian.  This device can be extended to
obtain tail bounds for all the eigenvalues of a sum of independent random matrices.
See the technical report~\cite{GT11:Tail-Bounds} for a development of this idea.

\makeatletter{}%

\chapter[A Sum of Bounded Random Matrices]{A Sum of Bounded \\ Random Matrices}
\label{chap:matrix-bernstein}

In this chapter, we describe matrix concentration inequalities that generalize the classical Bernstein bound.  The matrix Bernstein inequalities concern a random matrix formed as a sum of independent, random matrices that are bounded in spectral norm.  The results allow us to study how much this type of random matrix deviates from its mean value in the spectral norm.

Formally, we consider an finite sequence $\{ \mtx{S}_k \}$ of random matrices of the same dimension.
Assume that the matrices satisfy the conditions
$$
\Expect \mtx{S}_k = \mtx{0}
\quad\text{and}\quad
\norm{ \mtx{S}_k } \leq L
\quad\text{for each index $k$.}
$$
Form the sum $\mtx{Z} = \sum_k \mtx{S}_k$.  The matrix Bernstein inequality controls the expectation and tail behavior of $\norm{\mtx{Z}}$ in terms of the matrix variance statistic $v(\mtx{Z})$ and the uniform bound $L$.

The matrix Bernstein inequality is a powerful tool with a huge number of applications.  In these pages, we can only give a coarse indication of how researchers have used this result, so we have chosen to focus on problems that use random sampling to approximate a specified matrix.  This model applies to the sample covariance matrix in the introduction.  In this chapter, we outline several additional examples.  First, we consider the technique of randomized sparsification, in which we replace a dense matrix with a sparse proxy that has similar spectral behavior.  Second, we explain how to develop a randomized algorithm for approximate matrix multiplication, and we establish an error bound for this method.  Third, we develop an analysis of random features, a method for approximating kernel matrices that has become popular in contemporary machine learning.

As these examples suggest, the matrix Bernstein inequality is very effective for studying randomized approximations of a given matrix.  Nevertheless, when the matrix Chernoff inequality, Theorem~\ref{thm:matrix-chernoff}, happens to apply to a problem, it often delivers better results.

\subsubsection{Overview}

Section~\ref{sec:bernstein-rect} describes the matrix Bernstein inequality.  Section~\ref{sec:rdm-mtx-approx} explains how to use the Bernstein inequality to study randomized methods for matrix approximation.  In \S\S\ref{sec:rdm-sparse},~\ref{sec:rdm-mtx-mult}, and~\ref{sec:rdm-features}, we apply the latter result to three matrix approximation problems. %
We conclude with the proof of the matrix Bernstein inequality in~\S\ref{sec:bernstein-proof}.

\section{A Sum of Bounded Random Matrices} \label{sec:bernstein-rect}

In the scalar setting, the label ``Bernstein inequality'' applies to a very large number of concentration results.  Most of these bounds have extensions to matrices.  For simplicity, we focus on the most famous of the scalar results, a tail bound for the sum $Z$ of independent, zero-mean random variables that are subject to a uniform bound.  In this case, the Bernstein inequality shows that $Z$ concentrates around zero.  The tails of $Z$ make a transition from subgaussian decay at moderate deviations to subexponential decay at large deviations.  See~\cite[\S2.7]{BLM13:Concentration-Inequalities} for more information about Bernstein's inequality.

In analogy, the simplest matrix Bernstein inequality concerns a sum of independent, zero-mean random matrices whose norms are bounded above.  The theorem demonstrates that the norm of the sum acts much like the scalar random variable $Z$ that we discussed in the last paragraph.

\begin{thm}[Matrix Bernstein] \label{thm:matrix-bernstein-rect}
Consider a finite sequence $\{\mtx{S}_k\}$ of independent, random matrices with common dimension $d_1 \times d_2$.  Assume that
$$
\Expect \mtx{S}_k = \mtx{0}
\quad\text{and}\quad
\norm{ \mtx{S}_k } \leq L
\quad\text{for each index $k$.}
$$
Introduce the random matrix
$$
\mtx{Z} = \sum\nolimits_k \mtx{S}_k.
$$
Let $v(\mtx{Z})$ be the matrix variance statistic of the sum:
\begin{align} %
v(\mtx{Z}) &= \max\big\{ \norm{ \smash{\Expect(\mtx{ZZ}^\adj )} },\
	\norm{ \smash{\Expect(\mtx{Z}^\adj \mtx{Z})} } \big\}
	\label{eqn:matrix-bernstein-sigma2-rect} \\
	&= \max\left\{ \norm{\sum\nolimits_k \Expect \left(\mtx{S}_k\mtx{S}_k^\adj \right)}, \
	\norm{ \sum\nolimits_k \Expect\left( \mtx{S}_k^\adj \mtx{S}_k \right) } \right\}.
	\label{eqn:matrix-bernstein-rect-var-calc}
\end{align}
Then
\begin{equation} \label{eqn:matrix-bernstein-expect-rect}
\Expect \norm{\mtx{Z}} \leq \sqrt{2 v(\mtx{Z}) \log(d_1 + d_2)} + \frac{1}{3} L\, \log(d_1 + d_2).
\end{equation}
Furthermore, for all $t \geq 0$,
\begin{equation} \label{eqn:matrix-bernstein-tail-rect}
\Prob{ \norm{ \mtx{Z} } \geq t }
	\leq (d_1 + d_2)\, \exp\left( \frac{-t^2/2}{v(\mtx{Z}) + Lt/3} \right).
\end{equation}
\end{thm}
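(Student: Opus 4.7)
The plan is to follow exactly the template established for matrix Gaussian and Rademacher series in \S\ref{sec:matrix-gauss-proof}: first prove an analogous statement for a sum of bounded, centered, random Hermitian matrices; then pass to the rectangular case by applying the Hermitian dilation $\coll{H}$ from Definition~\ref{def:herm-dilation}, exploiting the norm identity~\eqref{eqn:herm-dilation-norm} and the variance identity~\eqref{eqn:var-stat-dilation} to translate the conclusions. The Hermitian version will give a one-sided bound on $\lambda_{\max}(\mtx{Y})$ for $\mtx{Y} = \sum_k \mtx{X}_k$, which becomes a two-sided bound on $\norm{\mtx{Z}}$ after dilation (introducing the dimensional factor $d_1+d_2$ instead of $d$).

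The core technical step is a semidefinite cgf estimate: if $\mtx{X}$ is a random Hermitian matrix with $\Expect \mtx{X} = \mtx{0}$ and $\norm{\mtx{X}} \leq L$, then
\begin{equation*}
\log \Expect \econst^{\theta \mtx{X}} \psdle \frac{\theta^2/2}{1 - L\abs{\theta}/3}\cdot \Expect \mtx{X}^2
\quad \text{for } \abs{\theta} < 3/L.
\end{equation*}
I would derive this from the scalar inequality $\econst^{y} \leq 1 + y + (y^2/2)/(1 - \abs{y}/3)$ for $\abs{y} < 3$, which itself comes from comparing the Taylor series $\sum_{q \geq 2} y^q/q!$ against the geometric series $\sum_{q\geq 2} y^q/(2\cdot 3^{q-2})$. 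Substituting $y = \theta x$ and using the Transfer Rule (Proposition~\ref{prop:transfer-rule}) for $x \mapsto \econst^{\theta x} - \theta x$ on the spectrum $[-L, L]$ of $\mtx{X}$, then taking expectations, gives
\begin{equation*}
\Expect \econst^{\theta \mtx{X}} \psdle \Id + \frac{\theta^2/2}{1 - L\abs{\theta}/3} \Expect \mtx{X}^2 \psdle \exp\!\left(\frac{\theta^2/2}{1 - L\abs{\theta}/3}\Expect \mtx{X}^2\right),
\end{equation*}
where the second bound uses $\Id + \mtx{A} \psdle \econst^{\mtx{A}}$. Operator monotonicity of the logarithm~\eqref{eqn:log-monotone} yields the advertised cgf bound.

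With this cgf estimate in hand, the proof for the Hermitian case proceeds in direct analogy with the Gaussian argument. Substitute into the master tail bound~\eqref{eqn:master-upper-tail} and master expectation bound~\eqref{eqn:master-upper-expect} of Theorem~\ref{thm:master-ineq}; bound the trace by $d$ times the maximum eigenvalue; use the Spectral Mapping Theorem and positive homogeneity to pull out $\lambda_{\max}(\sum_k \Expect \mtx{X}_k^2) \leq v(\mtx{Y})$; and obtain, for $0 < \theta < 3/L$,
\begin{equation*}
\Expect \lambda_{\max}(\mtx{Y}) \leq \frac{\log d}{\theta} + \frac{\theta\, v(\mtx{Y})/2}{1 - L\theta/3}, \qquad \Prob{\lambda_{\max}(\mtx{Y}) \geq t} \leq d\cdot \exp\!\left(-\theta t + \frac{\theta^2 v(\mtx{Y})/2}{1 - L\theta/3}\right).
\end{equation*}
Optimizing in $\theta$ is the step demanding a little care: the exact minimizers are messy, so I would instead choose $\theta = t/(v(\mtx{Y}) + Lt/3)$ in the tail bound (the minimizer of the corresponding scalar Bernstein expression), which recovers~\eqref{eqn:matrix-bernstein-tail-rect} after the dilation swap $d \to d_1+d_2$. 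For the expectation bound~\eqref{eqn:matrix-bernstein-expect-rect}, I would take $\theta = (\sqrt{2v(\mtx{Y})\log d} + (L/3)\log d)^{-1}\cdot \log d$ (or simply integrate the tail bound as in~\S\ref{sec:matrix-gauss-sharp}), which splits cleanly into the subgaussian term $\sqrt{2v\log d}$ and the subexponential term $\tfrac{1}{3}L\log d$.

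The main obstacle is getting the algebraic form of the scalar exponential inequality right and then carrying the denominator $1 - L\abs{\theta}/3$ cleanly through the optimization without losing the sharp constants $2$ and $1/3$ that appear in the stated bounds. Everything else is mechanical: the dilation step is formally identical to the proof of Theorem~\ref{thm:matrix-gauss-rect} given in~\S\ref{sec:matrix-gauss-proof-rect}, using that $\coll{H}$ is real-linear, preserves the zero-mean property, satisfies $\norm{\coll{H}(\mtx{S}_k)} = \norm{\mtx{S}_k} \leq L$, and obeys $v(\coll{H}(\mtx{Z})) = v(\mtx{Z})$ by~\eqref{eqn:var-stat-dilation}.
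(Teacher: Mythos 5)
Your plan follows the same route the paper takes: establish a Hermitian Bernstein inequality via the master bounds of Theorem~\ref{thm:master-ineq}, then transfer to the rectangular case through the Hermitian dilation exactly as you describe. The one genuine methodological difference lies in the mgf/cgf lemma. You propose to apply the Transfer Rule to the scalar bound $\econst^{y} \leq 1 + y + \tfrac{y^2/2}{1-\abs{y}/3}$ on the spectrum $[-L,L]$; this works, but it requires the two-sided hypothesis $\norm{\mtx{X}}\leq L$. The paper's Lemma~\ref{lem:matrix-bernstein-mgf} instead decomposes $\econst^{\theta\mtx{X}} = \Id + \theta\mtx{X} + \mtx{X}\,f(\mtx{X})\,\mtx{X}$ with $f(x) = (\econst^{\theta x}-\theta x - 1)/x^2$, observes that $f$ is increasing, and invokes the Conjugation Rule to get $\mtx{X}f(\mtx{X})\mtx{X}\psdle f(L)\mtx{X}^2$. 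That argument needs only the one-sided hypothesis $\lambda_{\max}(\mtx{X})\leq L$, which is the assumption of the intermediate Hermitian result (Theorem~\ref{thm:matrix-bernstein-herm}) and is highlighted there as strictly weaker than a norm bound. Either lemma suffices for the rectangular theorem, since the dilation $\coll{H}(\mtx{S}_k)$ has spectrum in $[-L,L]$ anyway, so your approach does reach the stated conclusion.

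There is a small computational slip in your optimization of the expectation bound. The proposed $\theta = \log d / \big(\sqrt{2v\log d} + \tfrac13 L\log d\big)$ does not minimize $\theta\mapsto \theta^{-1}\log d + \tfrac12\theta v/(1-\theta L/3)$: substituting it gives $\tfrac54\sqrt{2v\log d} + \tfrac13 L\log d$, overshooting the subgaussian term by a factor of $5/4$. The true minimizer is $\theta = \sqrt{2\log d}/\big(\sqrt{v} + \tfrac13 L\sqrt{2\log d}\big)$, which yields exactly $\sqrt{2v\log d} + \tfrac13 L\log d$. Your alternative suggestion of integrating the tail bound is sound in principle but would also sacrifice constants; the sharp constants in~\eqref{eqn:matrix-bernstein-expect-rect} require the exact minimizer.
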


\noindent
The proof of Theorem~\ref{thm:matrix-bernstein-rect} appears in \S\ref{sec:bernstein-proof}.

\subsection{Discussion}

Let us spend a few moments to discuss the matrix Bernstein inequality, Theorem~\ref{thm:matrix-bernstein-rect},
its consequences, and some of the improvements that are available.

\subsubsection{Aspects of the Matrix Bernstein Inequality}

First, observe that the matrix variance statistic $v(\mtx{Z})$ appearing in~\eqref{eqn:matrix-bernstein-sigma2-rect}
coincides with the general definition~\eqref{eqn:matrix-variance-rect} because $\mtx{Z}$ has zero mean.
To reach~\eqref{eqn:matrix-bernstein-rect-var-calc}, we have used the additivity law~\eqref{eqn:indep-sum-rect}
for an independent sum to express the matrix variance statistic in terms of the summands.
Observe that, when the summands $\mtx{S}_k$ are Hermitian, the two terms in the maximum coincide.

The expectation bound~\eqref{eqn:matrix-bernstein-expect-rect} shows that $\Expect \norm{\mtx{Z}}$ is on the same scale as the root $\sqrt{v(\mtx{Z})}$ of the matrix variance statistic and the upper bound $L$ for the summands; there is also a weak dependence on the ambient dimension $d$.  In general, all three of these features are necessary.  Nevertheless, the bound may not be very tight for particular examples.  See Section~\ref{sec:bernstein-optimality} for some evidence.

Next, let us explain how to interpret the tail bound~\eqref{eqn:matrix-bernstein-tail-rect}.  The main difference between this result and the scalar Bernstein bound is the appearance of the dimensional factor $d_1 + d_2$, which reduces the range of $t$ where the inequality is informative.  To get a better idea of what this result means, it is helpful to make a further estimate:
\begin{equation} \label{eqn:split-bernstein}
\Prob{ \norm{\mtx{Z}} \geq t } \leq
\begin{dcases}
	(d_1+d_2) \cdot \econst^{ - 3t^2 / (8v(\mtx{Z})) }, & t \leq v(\mtx{Z})/L \\
	(d_1+d_2) \cdot \econst^{ - 3t / (8L) }, & t \geq v(\mtx{Z})/L.
\end{dcases}
\end{equation}
In other words, for moderate values of $t$, the tail probability decays as fast as the tail of a Gaussian random variable whose variance is comparable with $v(\mtx{Z})$.  For larger values of $t$, the tail probability decays at least as fast as that of an exponential random variable whose mean is comparable with $L$.
As usual, we insert a warning that the tail behavior reported by the matrix Bernstein inequality can overestimate the actual tail behavior.

Last, it is helpful to remember that the matrix Bernstein inequality extends to a sum of \emph{uncentered} random matrices.  In this case, the result describes the spectral-norm deviation of the random sum from its mean value.
For reference, we include the statement here.

\begin{cor}[Matrix Bernstein: Uncentered Summands] %
Consider a finite sequence $\{\mtx{S}_k\}$ of independent random matrices with common dimension $d_1 \times d_2$.
Assume that each matrix has uniformly bounded deviation from its mean:
$$
\norm{ \mtx{S}_k - \Expect \mtx{S}_k } \leq L
\quad\text{for each index $k$.}
$$
Introduce the sum
$$
\mtx{Z} = \sum\nolimits_k \mtx{S}_k,
$$
and let $v(\mtx{Z})$ denote the matrix variance statistic of the sum:
\begin{equation*} %
\begin{aligned}
v(\mtx{Z}) &= %
	\max\left\{ \norm{ \Expect\bigl[ (\mtx{Z} - \Expect \mtx{Z})(\mtx{Z} - \Expect \mtx{Z})^\adj \bigr] }, \ 
	\norm{ \Expect \bigl[ (\mtx{Z} - \Expect \mtx{Z})^\adj (\mtx{Z} - \Expect \mtx{Z}) \bigr] } \right\} \\
	&= \max\left\{ \norm{ \sum\nolimits_k \Expect\bigl[ (\mtx{S}_k - \Expect \mtx{S}_k)(\mtx{S}_k - \Expect \mtx{S}_k)^\adj \bigr] }, \ 
	\norm{ \sum\nolimits_k \Expect \bigl[ (\mtx{S}_k - \Expect \mtx{S}_k)^\adj (\mtx{S}_k - \Expect \mtx{S}_k) \bigr] } \right\}.
\end{aligned}
\end{equation*}
Then
\begin{equation*} %
\Expect \norm{ \mtx{Z} - \Expect \mtx{Z} }
	\leq \sqrt{ 2 v(\mtx{Z}) \log(d_1 + d_2) } + \frac{1}{3} L \,\log(d_1 + d_2).
\end{equation*}
Furthermore, for all $t \geq 0$,
\begin{equation*} %
\Prob{ \norm{ \mtx{Z} - \Expect \mtx{Z} } \geq t }
	\leq (d_1 + d_2) \cdot \exp\left( \frac{-t^2/2}{v(\mtx{Z}) + Lt/3} \right).
\end{equation*}
\end{cor}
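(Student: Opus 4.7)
The plan is to reduce the uncentered case to Theorem~\ref{thm:matrix-bernstein-rect} by the standard centering trick. For each index $k$, introduce the centered summand $\tilde{\mtx{S}}_k = \mtx{S}_k - \Expect \mtx{S}_k$. Since centering is a deterministic operation applied to each independent random matrix, the family $\{\tilde{\mtx{S}}_k\}$ remains independent, and by construction $\Expect \tilde{\mtx{S}}_k = \mtx{0}$. The hypothesis $\norm{\mtx{S}_k - \Expect \mtx{S}_k} \leq L$ is precisely the uniform bound $\norm{\tilde{\mtx{S}}_k} \leq L$ required by Theorem~\ref{thm:matrix-bernstein-rect}.

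Next I would form the centered sum $\tilde{\mtx{Z}} = \sum_k \tilde{\mtx{S}}_k$ and observe, by linearity of expectation, that
$$
\tilde{\mtx{Z}} = \sum\nolimits_k \mtx{S}_k - \sum\nolimits_k \Expect \mtx{S}_k = \mtx{Z} - \Expect \mtx{Z}.
$$
Therefore $\norm{\tilde{\mtx{Z}}} = \norm{\mtx{Z} - \Expect \mtx{Z}}$, which is exactly the quantity the corollary seeks to control.

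The remaining step is to verify that the matrix variance statistic $v(\tilde{\mtx{Z}})$, in the sense of Theorem~\ref{thm:matrix-bernstein-rect}, coincides with the statistic $v(\mtx{Z})$ defined in the corollary. Because $\Expect \tilde{\mtx{Z}} = \mtx{0}$, the formula~\eqref{eqn:matrix-bernstein-sigma2-rect} for the centered sum gives
$$
v(\tilde{\mtx{Z}}) = \max\left\{ \norm{\Expect(\tilde{\mtx{Z}}\tilde{\mtx{Z}}^\adj)}, \ \norm{\Expect(\tilde{\mtx{Z}}^\adj\tilde{\mtx{Z}})} \right\},
$$
and substituting $\tilde{\mtx{Z}} = \mtx{Z} - \Expect\mtx{Z}$ recovers the first line of the corollary's variance expression. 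The second line, written in terms of the individual summands, then follows from the additivity rule~\eqref{eqn:indep-sum-rect} for the matrix variance of an independent sum applied to $\{\tilde{\mtx{S}}_k\}$, together with the identity $\tilde{\mtx{S}}_k - \Expect\tilde{\mtx{S}}_k = \mtx{S}_k - \Expect \mtx{S}_k$.

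With the hypotheses of Theorem~\ref{thm:matrix-bernstein-rect} verified and the relevant quantities identified, applying the expectation bound~\eqref{eqn:matrix-bernstein-expect-rect} and the tail bound~\eqref{eqn:matrix-bernstein-tail-rect} to $\tilde{\mtx{Z}}$ yields the two displayed inequalities in the corollary verbatim. There is no real obstacle here; the argument is mechanical once one recognizes that the corollary is just a cosmetic relabeling. The only point meriting any care is bookkeeping the variance statistic correctly, to confirm that the quantity named $v(\mtx{Z})$ in the corollary agrees with what Theorem~\ref{thm:matrix-bernstein-rect} produces when applied to the centered summands.
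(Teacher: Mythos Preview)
Your proposal is correct and matches the paper's approach exactly. The paper itself offers no detailed argument, stating only that ``this result follows as an immediate corollary of Theorem~\ref{thm:matrix-bernstein-rect},'' and your centering reduction is precisely what is intended.
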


\noindent
This result follows as an immediate corollary of Theorem~\ref{thm:matrix-bernstein-rect}.

\subsubsection{Related Results}

The bounds in Theorem~\ref{thm:matrix-bernstein-rect} are stated in
terms of the ambient dimensions $d_1$ and $d_2$ of the random matrix $\mtx{Z}$.
The dependence on the ambient dimension is not completely natural.  For example,
consider embedding the random matrix $\mtx{Z}$ into the top corner of a much
larger matrix which is zero everywhere else.  It turns out that we can achieve
results that reflect only the ``intrinsic dimension'' of $\mtx{Z}$.
We turn to this analysis in Chapter~\ref{chap:intrinsic}.

In addition, there are many circumstances where the uniform upper bound $L$
that appears in~\eqref{eqn:matrix-bernstein-expect-rect} does not accurately
reflect the tail behavior of the random matrix.
For instance, the summands themselves may have very heavy tails.
In such emergencies, the following expectation bound~\cite[Thm.~A.1]{CGT12:Masked-Sample}
can be a lifesaver.
\begin{equation} \label{eqn:matrix-moment-ineq}
\left( \Expect \normsq{\mtx{Z}} \right)^{1/2}
	\leq \sqrt{2 \econst v(\mtx{Z}) \log(d_1 + d_2)}
	+ 4\econst \, \big( \Expect{} \max\nolimits_k \normsq{\mtx{S}_k} \big)^{1/2} \log(d_1 + d_2).
\end{equation}
This result is a matrix formulation of the Rosenthal--Pinelis inequality~\cite[Thm.~4.1]{Pin94:Optimum-Bounds}.

Finally, let us reiterate that there are other types of matrix Bernstein inequalities.
For example, we can sharpen the tail bound~\eqref{eqn:matrix-bernstein-tail-rect} to obtain a matrix Bennett inequality.
We can also relax the boundedness assumption to a weaker hypothesis on the growth of the moments of each summand $\mtx{S}_k$.
In the Hermitian setting, the result can also discriminate the behavior of the upper and lower tails,
which is a consequence of Theorem~\ref{thm:matrix-bernstein-herm} below.
See the notes at the end of this chapter and the annotated bibliography for more information.

\subsection{Optimality of the Matrix Bernstein Inequality} \label{sec:bernstein-optimality}

To use the matrix Bernstein inequality, Theorem~\ref{thm:matrix-bernstein-rect}, and its relatives
with intelligence, one must appreciate their strengths and weaknesses.  We will focus on the matrix
Rosenthal--Pinelis inequality~\eqref{eqn:matrix-moment-ineq}.
Nevertheless, similar insights are relevant to the estimate~\eqref{eqn:matrix-bernstein-expect-rect}.

\subsubsection{The Expectation Bound}

Let us present lower bounds to demonstrate that the matrix
Rosenthal--Pinelis inequality~\eqref{eqn:matrix-moment-ineq} requires
both terms that appear.
First, the quantity $v(\mtx{Z})$ cannot be omitted because Jensen's inequality implies that
$$
\Expect{} \normsq{ \mtx{Z} }
	= \Expect{} \max\big\{ \norm{ \smash{\mtx{ZZ}^\adj} }, \ \norm{\smash{\mtx{Z}^\adj \mtx{Z}}} \big\}
	\geq \max\big\{ \norm{ \smash{\Expect(\mtx{ZZ}^\adj)} }, \ \norm{ \smash{\Expect(\mtx{Z}^\adj \mtx{Z})}} \big\}
	= v(\mtx{Z}).
$$
Under a natural hypothesis, the second term on the right-hand side of~\eqref{eqn:matrix-moment-ineq}
also is essential.  Suppose that each summand $\mtx{S}_k$ is a symmetric random variable;
that is, $\mtx{S}_k$ and $- \mtx{S}_k$ have the same distribution.
In this case, an involved argument~\cite[Prop.~6.10]{LT91:Probability-Banach} leads to the bound
\begin{equation} \label{eqn:mtx-bernstein-lower-unif}
\Expect{} \normsq{ \mtx{Z} }
	\geq \mathrm{const} \cdot \Expect{} \max\nolimits_k \normsq{ \mtx{S}_k }.
\end{equation}
There are examples where the right-hand side of~\eqref{eqn:mtx-bernstein-lower-unif}
is comparable with the uniform upper bound $L$ on the summands, but this is not always so.

In summary, when the summands $\mtx{S}_k$ are symmetric, we have matching estimates
\begin{multline*}
\mathrm{const} \cdot \left[ \sqrt{v(\mtx{Z})} + \big(\Expect{} \max\nolimits_k \normsq{ \mtx{S}_k } \big)^{1/2} \right]
	\leq \left( \Expect{} \normsq{\mtx{Z}} \right)^{1/2} \\
	\leq \mathrm{Const} \cdot \left[ \sqrt{v(\mtx{Z}) \log(d_1 + d_2)} 
	+ \big( \Expect{} \max\nolimits_k \normsq{ \mtx{S}_k }\big)^{1/2} \log(d_1 + d_2) \right].
\end{multline*}
We see that the bound~\eqref{eqn:matrix-moment-ineq} must include some version
of each term that appears, but the logarithms are not always necessary.

\subsubsection{Examples where the Logarithms Appear}

First, let us show that the variance term in~\eqref{eqn:matrix-moment-ineq} must contain
a logarithm.  For each natural number $n$,
consider the $d \times d$ random matrix $\mtx{Z}$ of the form
$$
\mtx{Z}_n =  \frac{1}{\sqrt{n}} \sum_{i=1}^n \sum_{k=1}^d \varrho_{ik} \, \mathbf{E}_{kk}
$$
where $\{\varrho_{ik}\}$ is an independent family of Rademacher random
variables.  An easy application of the bound~\eqref{eqn:matrix-moment-ineq}
implies that
$$
\Expect{} \norm{ \mtx{Z}_n } \leq \mathrm{Const} \cdot \left( \sqrt{\log(2d)} + \frac{1}{\sqrt{n}} \log(2d) \right)
\to \mathrm{Const} \cdot \sqrt{\log(2d)}
\quad\text{as $n \to \infty$.}
$$
Using the central limit theorem and the Skorokhod representation,
we can construct an independent family $\{ \gamma_k \}$ of standard
normal random variables for which
$$
\mtx{Z}_n \to \sum_{k=1}^d \gamma_k \, \mathbf{E}_{kk}
\quad\text{almost surely as $n \to \infty$.}
$$
But this fact ensures that
$$
\Expect{} \norm{ \mtx{Z}_n } \rightarrow \Expect{} \norm{ \sum_{k=1}^d \gamma_k \mathbf{E}_{kk} }
	= \Expect{} \max\nolimits_k \abs{\smash{\gamma_k}}
	\approx \sqrt{2 \log d}
	\quad\text{as $n \to \infty$.}
$$
Therefore, we cannot remove the logarithm from the variance term in~\eqref{eqn:matrix-moment-ineq}.

Next, let us justify the logarithm on the norm of the summands in~\eqref{eqn:matrix-moment-ineq}.
For each natural number $n$, consider a $d \times d$ random matrix $\mtx{Z}$ of the form
$$
\mtx{Z}_n = \sum_{i=1}^n \sum_{k=1}^d \big(\delta_{ik}^{(n)} - n^{-1} \big) \, \mathbf{E}_{kk}
$$
where $\big\{\delta_{ik}^{(n)} \big\}$ is an independent family of $\textsc{bernoulli}(n^{-1})$
random variables.  The matrix Rosenthal--Pinelis inequality~\eqref{eqn:matrix-moment-ineq} ensures that
$$
\Expect{} \norm{ \mtx{Z}_n } \leq \mathrm{Const} \cdot \left( \sqrt{\log(2d)} + \log(2d) \right).
$$
Using the Poisson limit of a binomial random variable and the Skorohod representation,
we can construct an independent family $\{Q_k\}$ of $\textsc{poisson}(1)$ random variables
for which
$$
\mtx{Z}_n \rightarrow \sum_{k=1}^d (Q_k - 1) \, \mathbf{E}_{kk}
\quad\text{almost surely as $n \to \infty$.}
$$
Therefore,
$$
\Expect{} \norm{\mtx{Z}_n} \rightarrow
	\Expect \norm{ \sum_{k=1}^d (Q_k - 1) \, \mathbf{E}_{kk} }
	= \Expect{} \max\nolimits_k \abs{ Q_k - 1 }
	\approx \mathrm{const} \cdot \frac{\log d}{\log \log d}
	\quad\text{as $n \to \infty$.}
$$
In short, the bound we derived from~\eqref{eqn:matrix-moment-ineq} requires the logarithm
on the second term, but it is suboptimal by a $\log \log$ factor.
The upper matrix Chernoff inequality~\eqref{eqn:matrix-chernoff-upper-tail}
correctly predicts the appearance of the iterated logarithm in this example,
as does the matrix Bennett inequality.

The last two examples rely heavily on the commutativity of the summands
as well as the infinite divisibility of the normal and Poisson distributions.
As a consequence, it may appear that the logarithms
only appear in very special contexts.  In fact, many (but not all!) examples that arise in practice
do require the logarithms that appear in the matrix Bernstein inequality.  It is a subject of ongoing
research to obtain a simple criterion for deciding when the logarithms belong.

\section{Example: Matrix Approximation by Random Sampling} \label{sec:rdm-mtx-approx}

In applied mathematics, we often need to approximate a complicated target object
by a more structured object.  In some situations, we can solve this problem using
a beautiful probabilistic approach called \term{empirical approximation}.
The basic idea is to construct a ``simple'' random object whose expectation equals the target.
We obtain the approximation by averaging several independent copies of the simple random object.
As the number of terms in this average increases, the approximation becomes
more complex, but it represents the target more faithfully.  The challenge is to
quantify this tradeoff.  %

In particular, we often encounter problems where we need to approximate
a matrix by a more structured matrix.  For example, we may wish to find a sparse matrix
that is close to a given matrix, or we may need to construct a low-rank matrix that is
close to a given matrix.  Empirical approximation provides a mechanism for obtaining
these approximations.  The matrix Bernstein inequality offers a natural tool for
assessing the quality of the randomized approximation.

This section develops a general framework for empirical approximation of matrices.
Subsequent sections explain how this technique applies to specific examples from
the fields of randomized linear algebra and machine learning.

\subsection{Setup} \label{sec:matrix-approx-setup}

Let $\mtx{B}$ be a target matrix that we hope to approximate by a more
structured matrix.
To that end, let us represent the target as a sum of ``simple'' matrices:
\begin{equation} \label{eqn:matrix-sum-decomp}
\mtx{B} = \sum_{i=1}^N \mtx{B}_i.
\end{equation}
The idea is to identify summands with desirable properties that we want our approximation to inherit.
The examples in this chapter depend on decompositions of the form~\eqref{eqn:matrix-sum-decomp}.

Along with the decomposition~\eqref{eqn:matrix-sum-decomp}, we need a set of sampling probabilities:
\begin{equation} \label{eqn:matrix-sampling-probs}
\sum_{i=1}^N p_i = 1
\quad\text{and}\quad
p_i > 0
\quad\text{for $i = 1, \dots, N$.}
\end{equation}
We want to ascribe larger probabilities to ``more important'' summands.
Quantifying what ``important'' means is the most difficult aspect of randomized
matrix approximation.  Choosing the right sampling distribution for a specific
problem requires insight and ingenuity.

Given the data~\eqref{eqn:matrix-sum-decomp} and~\eqref{eqn:matrix-sampling-probs},
we may construct a ``simple'' random matrix $\mtx{R}$ by sampling:
\begin{equation} \label{eqn:matrix-sampling-model}
\mtx{R} = p_i^{-1} \mtx{B}_i
\quad\text{with probability $p_i$.}
\end{equation}
This construction ensures that $\mtx{R}$ is an unbiased estimator of the target:
$\Expect \mtx{R} = \mtx{B}$.
Even so, the random matrix $\mtx{R}$ offers a poor approximation of the target $\mtx{B}$
because it has a lot more structure.  To improve the quality of the approximation,
we average $n$ independent copies of the random matrix $\mtx{R}$.
We obtain an estimator of the form
$$
\bar{\mtx{R}}_n = \frac{1}{n} \sum_{k=1}^n \mtx{R}_k
\quad\text{where each $\mtx{R}_k$ is an independent copy of $\mtx{R}$.}
$$
By linearity of expectation, this estimator is also unbiased:
$
\Expect \bar{\mtx{R}}_n = \mtx{B}.
$
The approximation $\bar{\mtx{R}}_n$ remains structured when the number $n$ of terms
in the approximation is small as compared with the number $N$ of terms
in the decomposition~\eqref{eqn:matrix-sum-decomp}.

Our goal is to quantify the approximation error as a function
of the complexity $n$ of the approximation:
$$
\Expect \norm{ \smash{\bar{\mtx{R}}_n - \mtx{B}} } \leq \mathrm{error}(n).
$$
There is a tension between the total number $n$ of terms in the approximation
and the error $\mathrm{error}(n)$ the approximation incurs.  In applications,
it is essential to achieve the right balance.

\subsection{Error Estimate for Matrix Sampling Estimators}

We can obtain an error estimate for the approximation scheme described
in Section~\ref{sec:matrix-approx-setup} as an immediate corollary of the
matrix Bernstein inequality, Theorem~\ref{thm:matrix-bernstein-rect}.

\begin{cor}[Matrix Approximation by Random Sampling] \label{cor:matrix-approx-sampling}
Let $\mtx{B}$ be a fixed $d_1 \times d_2$ matrix.
Construct a $d_1 \times d_2$ random matrix $\mtx{R}$ that satisfies
$$
\Expect \mtx{R} = \mtx{B}
\quad\text{and}\quad
\norm{\mtx{R}} \leq L.
$$
Compute the per-sample second moment:
\begin{equation} \label{eqn:matrix-second-moment}
m_2(\mtx{R}) = \max\big\{ \norm{ \smash{\Expect(\mtx{RR}^\adj)} }, \ \norm{ \smash{\Expect( \mtx{R}^\adj \mtx{R})} } \big\}.
\end{equation}
Form the matrix sampling estimator
$$
\bar{\mtx{R}}_n = \frac{1}{n} \sum_{k=1}^n \mtx{R}_k
\quad\text{where each $\mtx{R}_k$ is an independent copy of $\mtx{R}$.}
$$
Then the estimator satisfies
\begin{equation} \label{eqn:matrix-approx-err-expect}
\Expect \norm{ \smash{\bar{\mtx{R}}_n - \mtx{B}} }
	\leq \sqrt{\frac{2 m_2(\mtx{R}) \log(d_1 + d_2)}{n}} + \frac{2L \log(d_1 + d_2)}{3n}.
\end{equation}
Furthermore, for all $t \geq 0$,
\begin{equation} \label{eqn:matrix-approx-err-tail}
\Prob{ \norm{ \smash{\bar{\mtx{R}}_n - \mtx{B}} } \geq t }
	\leq (d_1 + d_2) \exp\left( \frac{-nt^2/2}{m_2(\mtx{R}) + 2Lt/3} \right).
\end{equation}
\end{cor}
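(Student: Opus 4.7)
The plan is to deduce Corollary~\ref{cor:matrix-approx-sampling} as a direct specialization of the matrix Bernstein inequality, Theorem~\ref{thm:matrix-bernstein-rect}. To that end, I would define the centered, independent summands
\[
\mtx{S}_k = \frac{1}{n}\bigl(\mtx{R}_k - \mtx{B}\bigr), \qquad k = 1, \dots, n,
\]
so that $\Expect \mtx{S}_k = \mtx{0}$ and $\mtx{Z} \defby \sum_k \mtx{S}_k = \bar{\mtx{R}}_n - \mtx{B}$. The statements~\eqref{eqn:matrix-approx-err-expect} and~\eqref{eqn:matrix-approx-err-tail} are then precisely the expectation bound~\eqref{eqn:matrix-bernstein-expect-rect} and tail bound~\eqref{eqn:matrix-bernstein-tail-rect} applied to $\mtx{Z}$, once we identify the correct uniform bound on $\norm{\mtx{S}_k}$ and a usable estimate for the matrix variance statistic $v(\mtx{Z})$.

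First I would handle the uniform bound. By the triangle inequality and Jensen's inequality (which gives $\norm{\mtx{B}} = \norm{\Expect \mtx{R}} \leq \Expect \norm{\mtx{R}} \leq L$), we have $\norm{\mtx{S}_k} \leq 2L/n$. This is the quantity that will play the role of ``$L$'' in Theorem~\ref{thm:matrix-bernstein-rect}, and substituting $2L/n$ in place of $L$ in~\eqref{eqn:matrix-bernstein-expect-rect} and~\eqref{eqn:matrix-bernstein-tail-rect} produces the factors $\tfrac{2L}{3n}$ and $\tfrac{2L}{3}$ that appear in the conclusion.

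The main (and really only) computational step is to bound $v(\mtx{Z})$ by $m_2(\mtx{R})/n$. Since the $\mtx{R}_k$ are iid copies of $\mtx{R}$, a direct calculation gives
\[
\sum_{k=1}^n \Expect\bigl(\mtx{S}_k \mtx{S}_k^\adj\bigr)
	= \frac{1}{n}\bigl[\Expect(\mtx{R}\mtx{R}^\adj) - \mtx{B}\mtx{B}^\adj\bigr],
\]
and similarly for the other square. Because $\mtx{B}\mtx{B}^\adj \psdge \mtx{0}$, the semidefinite relation $\Expect(\mtx{R}\mtx{R}^\adj) - \mtx{B}\mtx{B}^\adj \psdle \Expect(\mtx{R}\mtx{R}^\adj)$ holds, and the same argument applies to $\mtx{R}^\adj \mtx{R}$. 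Taking norms and invoking the definition~\eqref{eqn:matrix-second-moment} of $m_2(\mtx{R})$ yields $v(\mtx{Z}) \leq m_2(\mtx{R})/n$ via the formula~\eqref{eqn:matrix-bernstein-rect-var-calc}.

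Once these two ingredients are in place, the conclusion is essentially algebra: substituting $v(\mtx{Z}) \leq m_2(\mtx{R})/n$ and the uniform bound $2L/n$ into~\eqref{eqn:matrix-bernstein-expect-rect} gives~\eqref{eqn:matrix-approx-err-expect}, while the same substitution in~\eqref{eqn:matrix-bernstein-tail-rect} produces
\[
\frac{-t^2/2}{v(\mtx{Z}) + (2L/n)\,t/3} \leq \frac{-nt^2/2}{m_2(\mtx{R}) + 2Lt/3},
\]
which is exactly~\eqref{eqn:matrix-approx-err-tail}. There is no real obstacle here; the only place one must be slightly careful is the semidefinite drop of the $\mtx{B}\mtx{B}^\adj$ and $\mtx{B}^\adj \mtx{B}$ terms when estimating the variance, which mirrors the calculation already performed in~\S\ref{sec:sample-covar} for the sample covariance estimator.
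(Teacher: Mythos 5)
Your proposal is correct and follows essentially the same route as the paper: center the summands as $\mtx{S}_k = n^{-1}(\mtx{R}_k - \Expect\mtx{R})$, establish the uniform bound $\norm{\mtx{S}_k}\leq 2L/n$ via the triangle inequality and Jensen, bound $v(\mtx{Z})\leq m_2(\mtx{R})/n$ by discarding the positive-semidefinite term $(\Expect\mtx{R})(\Expect\mtx{R})^\adj$ (resp.\ its transpose), and then invoke Theorem~\ref{thm:matrix-bernstein-rect}. The details of your variance calculation and the substitutions into~\eqref{eqn:matrix-bernstein-expect-rect} and~\eqref{eqn:matrix-bernstein-tail-rect} all check out.
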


\begin{proof}
Since $\mtx{R}$ is an unbiased estimator of the target matrix $\mtx{B}$,
we can write
$$
\mtx{Z} =  \bar{\mtx{R}}_n - \mtx{B} = \frac{1}{n} \sum_{k=1}^n (\mtx{R}_k - \Expect \mtx{R})
	= \sum_{k=1}^n \mtx{S}_k.
$$
We have defined the summands $\mtx{S}_k = n^{-1} (\mtx{R}_k - \Expect \mtx{R})$.
These random matrices form an independent and identically distributed family,
and each $\mtx{S}_k$ has mean zero.

Now, each of the summands is subject to an upper bound:
$$
\norm{\mtx{S}_k} \leq \frac{1}{n} \left( \norm{ \mtx{R}_k } + \norm{ \Expect \mtx{R} } \right)
	\leq \frac{1}{n} \left( \norm{\mtx{R}_k} + \Expect \norm{\mtx{R}} \right)
	\leq \frac{2L}{n}.
$$
The first relation is the triangle inequality; the second is Jensen's inequality.  The last estimate
follows from our assumption that $\norm{\mtx{R}} \leq L$.

To control the matrix variance statistic $v(\mtx{Z})$, first note that
$$
v(\mtx{Z}) = \max\left\{ \norm{ \sum_{k=1}^n \Expect \big( \mtx{S}_k \mtx{S}_k^\adj  \big) }, \
	\norm{ \sum_{k=1}^n \Expect  \big( \mtx{S}_k^\adj \mtx{S}_k  \big) } \right\}
	= n \cdot \max\left\{ \norm{ \smash{\Expect( \mtx{S}_1 \mtx{S}_1^\adj )} }, \ 
	\norm{ \smash{\Expect( \mtx{S}_1^\adj \mtx{S}_1 )} } \right\}.
$$
The first identity follows from the expression~\eqref{eqn:matrix-bernstein-rect-var-calc}
for the matrix variance statistic, and the
second holds because the summands $\mtx{S}_k$ are identically distributed.  We may calculate that
\begin{align*}
\mtx{0} \psdle \Expect( \mtx{S}_1 \mtx{S}_1^\adj )
	&= n^{-2} \Expect \big[ (\mtx{R} - \Expect \mtx{R})(\mtx{R} - \Expect \mtx{R})^\adj \big] \\
	&= n^{-2} \big[ \Expect( \mtx{R} \mtx{R}^\adj) - (\Expect \mtx{R})(\Expect \mtx{R})^{\adj} \big]
	\psdle n^{-2} \Expect (\mtx{RR}^\adj).
\end{align*}
The first relation holds because the expectation of the random positive-semidefinite
matrix $\mtx{S}_1 \mtx{S}_1^\adj$ is positive semidefinite.  The first identity follows from the
definition of $\mtx{S}_1$ and the fact that $\mtx{R}_1$ has the same distribution as $\mtx{R}$.
The second identity is a direct calculation.
The last relation holds because $(\Expect \mtx{R})(\Expect \mtx{R})^\adj$ is positive semidefinite.
As a consequence,
$$
\norm{ \smash{\Expect( \mtx{S}_1 \mtx{S}_1^\adj )} } \leq \frac{1}{n^2} \norm{ \smash{\Expect (\mtx{RR}^\adj)} }.
$$
Likewise,
$$
\norm{ \smash{\Expect( \mtx{S}_1^\adj \mtx{S}_1 )} } \leq \frac{1}{n^2} \norm{ \smash{\Expect (\mtx{R}^\adj \mtx{R})} }.
$$
In summary,
$$
v(\mtx{Z}) \leq \frac{1}{n} \max\left\{ \norm{ \smash{\Expect (\mtx{RR}^\adj)} }, \ 
	\norm{ \smash{\Expect (\mtx{R}^\adj \mtx{R})} } \right\}
	= \frac{ m_2(\mtx{R})}{n}.
$$
The last line follows from the definition~\eqref{eqn:matrix-second-moment} of $m_2(\mtx{R})$.

We are prepared to apply the matrix Bernstein inequality, Theorem~\ref{thm:matrix-bernstein-rect},
to the random matrix $\mtx{Z} = \sum_k \mtx{S}_k$.  This operation results in the statement of the corollary.
\end{proof}

\subsection{Discussion}

One of the most common applications of the matrix Bernstein inequality is to
analyze empirical matrix approximations.  As a consequence,
Corollary~\ref{cor:matrix-approx-sampling} is one of the most useful
forms of the matrix Bernstein inequality.
Let us discuss some of the important aspects of this result.

\subsubsection{Understanding the Bound on the Approximation Error}

First, let us examine how many samples $n$ suffice to bring the approximation error bound
in Corollary~\ref{cor:matrix-approx-sampling} below a specified positive tolerance $\eps$.
Examining inequality~\eqref{eqn:matrix-approx-err-expect}, we find that
\begin{equation} \label{eqn:matrix-approx-sample-cost}
n \geq \frac{2 m_2(\mtx{R}) \log(d_1 + d_2)}{\eps^2} + \frac{2L \log(d_1 + d_2)}{3\eps}
\quad\text{implies}\quad
\Expect \norm{ \smash{\bar{\mtx{R}}_n - \mtx{B}} } \leq 2 \eps.
\end{equation}
Roughly, the number $n$ of samples should be on the scale of the per-sample second moment $m_2(\mtx{R})$
and the uniform upper bound $L$.

The bound~\eqref{eqn:matrix-approx-sample-cost} also reveals an unfortunate aspect of empirical
matrix approximation.  To make the tolerance $\eps$ small, the number $n$ of samples must
increase proportional with $\eps^{-2}$.  In other words, it takes many samples to achieve
a highly accurate approximation.  We cannot avoid this phenomenon, which ultimately is a
consequence of the central limit theorem.

On a more positive note, it is quite valuable that the error bounds~\eqref{eqn:matrix-approx-err-expect}
and~\eqref{eqn:matrix-approx-err-tail} involve the spectral norm.
This type of estimate simultaneously controls the error in every
linear function of the approximation:
$$
\norm{ \smash{\bar{\mtx{R}}_n - \mtx{B}} } \leq \eps
\quad\text{implies}\quad
\abs{ \trace( \bar{\mtx{R}}_n \mtx{C}) - \trace( \mtx{BC} ) } \leq \eps 
\quad\text{when $\pnorm{S_1}{\mtx{C}} \leq 1$.}
$$
The Schatten $1$-norm $\pnorm{S_1}{\cdot}$ is defined in~\eqref{eqn:schatten-1-norm}.
These bounds also control the error in each singular value $\sigma_j(\bar{\mtx{R}}_n)$
of the approximation:
$$
\norm{ \smash{\bar{\mtx{R}}_n - \mtx{B}} } \leq \eps
\quad\text{implies}\quad
\abs{ \sigma_j(\bar{\mtx{R}}_n) - \sigma_j(\mtx{B}) } \leq \eps
\quad\text{for each $j = 1, 2, 3, \dots, \min\{d_1, d_2\}$.}
$$
When there is a gap between two singular values of $\mtx{B}$, we can also obtain bounds for the
discrepancy between the associated singular vectors of $\bar{\mtx{R}}_n$ and $\mtx{B}$
using perturbation theory.

To construct a good sampling estimator $\mtx{R}$,
we ought to control both $m_2(\mtx{R})$ and $L$.
In practice, this demands considerable creativity.
This observation hints at the possibility of achieving
a bias--variance tradeoff when approximating $\mtx{B}$.
To do so, we can drop all of the ``unimportant''
terms in the representation~\eqref{eqn:matrix-sum-decomp},
i.e., those whose sampling probabilities are small.
Then we construct a random approximation $\mtx{R}$ only for the
``important'' terms that remain.  Properly executed,
this process may decrease both the per-sample second moment
$m_2(\mtx{R})$ and the upper bound $L$.
The idea is analogous with shrinkage in statistical estimation.

\subsubsection{A General Sampling Model}

Corollary~\ref{cor:matrix-approx-sampling} extends beyond the
sampling model based on the finite expansion~\eqref{eqn:matrix-sum-decomp}.
Indeed, we can consider a more general decomposition of the target matrix $\mtx{B}$:
$$
\mtx{B} = \int_{\Omega} \mtx{B}(\omega) \idiff{\mu}(\omega)
$$
where $\mu$ is a probability measure on a sample space $\Omega$.  As before, the idea is
to represent the target matrix $\mtx{B}$ as an average of ``simple'' matrices $\mtx{B}(\omega)$.
The main difference is that the family of simple matrices may now be infinite.
In this setting, we construct the random approximation $\mtx{R}$ so that
$$
\Prob{ \mtx{R} \in E } = \mu \{ \omega : \mtx{B}(\omega) \in E \}
\quad\text{for $E \subset \mathbb{M}^{d_1 \times d_2}$}
$$
In particular, it follows that
$$
\Expect \mtx{R} = \mtx{B}
\quad\text{and}\quad
\norm{\mtx{R}} \leq \sup_{\omega \in \Omega} \norm{\mtx{B}(\omega)}.
$$
As we will discuss, this abstraction is important for applications in machine learning.

\subsubsection{Suboptimality of Sampling Estimators}

Another fundamental point about sampling estimators is that they are usually suboptimal.
In other words, the matrix sampling estimator may incur an error substantially worse
than the error in the best structured approximation of the target matrix.

To see why, let us consider a simple form of low-rank approximation by random sampling.
The method here does not have practical value, but it highlights the reason that sampling
estimators usually do not achieve ideal results.
Suppose that $\mtx{B}$ has singular value decomposition
$$
\mtx{B} = \sum_{i=1}^{N} \sigma_i \vct{u}_i \vct{v}_i^\adj
\quad\text{where}\quad
\sum_{i=1}^{N} \sigma_i = 1
\quad\text{and}\quad
N = \min\{d_1, d_2\}.
$$
Given the SVD, we can construct a random rank-one approximation $\mtx{R}$ of the form
$$
\mtx{R} = \vct{u}_i \vct{v}_i^\adj
\quad\text{with probability}\quad
\sigma_i.
$$
Per Corollary~\ref{cor:matrix-approx-sampling}, the error in the associated sampling estimator
$\bar{\mtx{R}}_n$ of $\mtx{B}$ satisfies
$$
\norm{ \smash{\bar{\mtx{R}}_n - \mtx{B}} }
	\leq \sqrt{ \frac{ 2 \log(d_1 + d_2) }{ n } } + \frac{2 \log(d_1 + d_2)}{n}
$$
On the other hand, a best rank-$n$ approximation of $\mtx{B}$
takes the form $\mtx{B}_n = \sum_{j=1}^n \sigma_j \vct{u}_j \vct{v}_j^\adj$, and it incurs error
$$
\norm{ \mtx{B}_n - \mtx{B} } = \sigma_{n+1} \leq \frac{1}{n+1}.
$$
The second relation is Markov's inequality, which provides an accurate estimate
only when the singular values $\sigma_1, \dots, \sigma_{n+1}$ are comparable.  In
that case, the sampling estimator arrives within a logarithmic factor of the optimal error.
But there are many matrices whose singular values decay quickly, so that $\sigma_{n+1} \ll (n+1)^{-1}$.
In the latter situation, the error in the sampling estimator is much worse than the optimal error.

\subsubsection{Warning: Frobenius-Norm Bounds}

We often encounter papers that develop Frobenius-norm error bounds for matrix approximations,
perhaps because the analysis is more elementary.  But one must recognize that
Frobenius-norm error bounds are not acceptable in most cases of practical interest:

\begin{quotation}
\textbf{Frobenius-norm error bounds are typically vacuous.}
\end{quotation}

\noindent
In particular, this phenomenon occurs in data analysis whenever we try to approximate a matrix
that contains white or pink noise.

To illustrate this point, let us consider the ubiquitous problem of approximating a low-rank
matrix corrupted by additive white Gaussian noise:
\begin{equation} \label{eqn:matrix-approx-bad-ex}
\mtx{B} = \vct{xx}^\adj + \alpha \mtx{E} \in \mathbb{M}_d.
\quad\text{where}\quad
\normsq{\vct{x}} = 1.
\end{equation}
The desired approximation of the matrix $\mtx{B}$ is the rank-one matrix
$\mtx{B}_{\mathrm{opt}} = \vct{xx}^\adj$.
For modeling purposes, %
we assume that $\mtx{E}$ has independent $\normal(0, d^{-1})$ entries.  As a consequence,
$$
\norm{ \mtx{E} } \approx 2
\quad\text{and}\quad
\fnorm{ \mtx{E} } \approx \sqrt{d}.
$$
Now, the spectral-norm error in the desired approximation satisfies
$$
\norm{\smash{\mtx{B}_{\mathrm{opt}} - \mtx{B}}} = \alpha \norm{ \mtx{E} } \approx 2 \alpha.
$$
On the other hand, the Frobenius-norm error in the desired approximation satisfies
$$
\fnorm{\smash{\mtx{B}_{\mathrm{opt}} - \mtx{B}} } = \alpha \fnorm{ \mtx{E} } \approx \alpha \sqrt{ d }.
$$
We see that the Frobenius-norm error can be quite large, even when we find the required approximation.

Here is another way to look at the same fact.  Suppose we construct an approximation
$\widehat{\mtx{B}}$ of the matrix $\mtx{B}$ from~\eqref{eqn:matrix-approx-bad-ex} whose
Frobenius-norm error is comparable with the optimal error:
$$
\fnorm{ \smash{\widehat{\mtx{B}} - \mtx{B}} } \leq \eps \sqrt{ d }.
$$
There is no reason for the approximation $\widehat{\mtx{B}}$ to have any relationship with
the desired approximation $\mtx{B}_{\mathrm{opt}}$.  For example, the approximation
$\widehat{\mtx{B}} = \alpha{\mtx{E}}$ satisfies this error bound with $\eps = d^{-1/2}$
even though $\widehat{\mtx{B}}$ consists only of noise.

\section{Application: Randomized Sparsification of a Matrix} \label{sec:rdm-sparse}

Many tasks in data analysis involve large, dense matrices
that contain a lot of redundant information.  For example,
an experiment that tabulates many variables about a large
number of subjects typically results in a low-rank data
matrix because subjects are often similar with each other.
Many questions that we pose about these data
matrices can be addressed by spectral computations.
In particular, factor analysis involves a singular value decomposition.

When the data matrix is approximately low rank, it
has fewer degrees of freedom than its ambient dimension.
Therefore, we can construct a simpler approximation that
still captures most of the information in the matrix.
One method for finding this approximation is to replace the
dense target matrix by a sparse matrix that is close in
spectral-norm distance.  An elegant way to identify this
sparse proxy is to randomly select a small number of 
entries from the original matrix to retain.
This is a type of empirical approximation.

Sparsification has several potential advantages.  First,
it is considerably less expensive to store a sparse matrix
than a dense matrix.  Second, many algorithms for spectral
computation operate more efficiently on sparse matrices.

In this section, we examine a very recent approach to randomized
sparsification due to Kundu \& Drineas~\cite{KD14:Note-Randomized}.
The analysis is an immediate consequence of Corollary~\ref{cor:matrix-approx-sampling}.
See the notes at the end of the chapter for history and references.

\subsection{Problem Formulation \& Randomized Algorithm}

Let $\mtx{B}$ be a fixed $d_1 \times d_2$ complex matrix.  The sparsification problem
requires us to find a sparse matrix $\widehat{B}$ that has small distance from $\mtx{B}$
with respect to the spectral norm.
We can achieve this goal using an empirical approximation strategy.

First, let us express the target matrix as a sum of its entries:
$$
\mtx{B} = \sum_{i = 1}^{d_1} \sum_{j=1}^{d_2} b_{ij} \, \mathbf{E}_{ij}.
$$
Introduce sampling probabilities
\begin{equation} \label{eqn:sparsify-prob}
p_{ij} = \frac{1}{2} \left[ \frac{\abssq{\smash{b_{ij}}}}{\fnormsq{\mtx{B}}} + \frac{\abs{\smash{b_{ij}}}}{\pnorm{\ell_1}{\mtx{B}}} \right]
\quad\text{for $i = 1, \dots, d_1$ and $j = 1, \dots, d_2$.}
\end{equation}
The Frobenius norm is defined in~\eqref{eqn:frobenius-norm},
and the entrywise $\ell_1$ norm is defined in~\eqref{eqn:matrix-l1-norm}.
It is easy to check that the numbers $p_{ij}$ form a probability distribution.
Let us emphasize that the non-obvious form of the
distribution~\eqref{eqn:sparsify-prob} represents a decade of research.

Now, we introduce a $d_1 \times d_2$ random matrix $\mtx{R}$ that has exactly one nonzero entry:
$$
\mtx{R} = \frac{1}{p_{ij}} \cdot b_{ij} \, \mathbf{E}_{ij}
\quad\text{with probability $p_{ij}$.}
$$
We use the convention that $0/0 = 0$ so that we do not need to treat zero entries separately.
It is immediate that
$$
\Expect \mtx{R} = \sum_{i=1}^{d_1} \sum_{j=1}^{d_2} \frac{1}{p_{ij}} \cdot b_{ij}\, \mathbf{E}_{ij} \cdot p_{ij}
	= \sum_{i=1}^{d_1} \sum_{j=1}^{d_2} b_{ij} \, \mathbf{E}_{ij} = \mtx{B}.
$$
Therefore, $\mtx{R}$ is an unbiased estimate of $\mtx{B}$.

Although the expectation of $\mtx{R}$ is correct, its variance is quite high.  Indeed, $\mtx{R}$
has only one nonzero entry, while $\mtx{B}$ typically has many nonzero entries.  To reduce the
variance, we combine several independent copies of the simple estimator:
$$
\bar{\mtx{R}}_n = \frac{1}{n} \sum_{k=1}^n \mtx{R}_k
\quad\text{where each $\mtx{R}_k$ is an independent copy of $\mtx{R}$.}
$$
By linearity of expectation, $\Expect \bar{\mtx{R}}_n = \mtx{B}$.
Therefore, the matrix $\bar{\mtx{R}}_n$ has at most $n$ nonzero entries,
and its also provides an unbiased estimate of the target.  The challenge
is to quantify the error $\norm{ \smash{\bar{\mtx{R}}_n - \mtx{B}} }$ as
a function of the sparsity level $n$.

\subsection{Performance of Randomized Sparsification}

The randomized sparsification method is clearly a type of empirical approximation,
so we can use Corollary~\ref{cor:matrix-approx-sampling} to perform the analysis.
We will establish the following error bound.
\begin{equation} \label{eqn:sparsify-bound}
\Expect \norm{ \smash{\bar{\mtx{R}}_n - \mtx{B}} }
	\leq \sqrt{ \frac{4 \fnormsq{\mtx{B}} \cdot \max\{ d_1, d_2 \} \log(d_1 + d_2) }{n} }
	+ \frac{4 \norm{\mtx{B}}_{\ell_1} \log(d_1 + d_2)}{3n}.
\end{equation}
The short proof of~\eqref{eqn:sparsify-bound} appears below in
Section~\ref{sec:sparsification-analysis}.

Let us explain the content of the estimate~\eqref{eqn:sparsify-bound}.
First, the bound~\eqref{eqn:matrix-l1-l2} allows us to replace the $\ell_1$ norm by the Frobenius norm:
$$
\pnorm{\ell_1}{\mtx{B}} \leq \sqrt{d_1 d_2} \cdot \fnorm{\mtx{B}} \leq \max\{ d_1, \ d_2 \} \cdot \fnorm{\mtx{B}}.
$$
Placing the error~\eqref{eqn:sparsify-bound} on a relative scale, we see that
$$
\begin{aligned}
\frac{\Expect \norm{ \smash{\bar{\mtx{R}}_n - \mtx{B} }}}{\norm{\mtx{B}}}
	\leq \frac{\fnorm{\mtx{B}}}{\norm{\mtx{B}}} \cdot \left[ \sqrt{\frac{4 \max\{d_1, \ d_2\} \log(d_1 + d_2)}{n}}
		+ \frac{4 \max\{d_1, \ d_2\} \log(d_1+d_2)}{3n} \right] %
\end{aligned}
$$
The stable rank $\strank(\mtx{B})$, defined in~\eqref{eqn:stable-rank},
emerges naturally as a quantity of interest.

Now, suppose that the sparsity level $n$ satisfies
$$
n \geq \eps^{-2} \cdot \strank(\mtx{B}) \cdot \max\{d_1, \ d_2\} \log(d_1 + d_2)
$$
where the tolerance $\eps \in (0, 1]$.  We determine that
$$
\frac{\Expect \norm{ \smash{\bar{\mtx{R}}_n - \mtx{B}} }}{\norm{\mtx{B}}}
	\leq 2\eps + \frac{4}{3} \cdot \frac{\eps^2}{\sqrt{\strank(\mtx{B})}}.
$$
Since the stable rank always exceeds one and we have assumed that $\eps \leq 1$,
this estimate implies that
$$
\frac{\Expect \norm{ \smash{\bar{\mtx{R}}_n - \mtx{B}} }}{\norm{\mtx{B}}}
	\leq 4 \eps.
$$
We discover that it is possible to replace the matrix $\mtx{B}$
by a matrix with at most $n$ nonzero entries while achieving a small
relative error in the spectral norm.  When
$\strank(\mtx{B}) \ll \min\{ d_1, \ d_2 \}$,
we can achieve a dramatic reduction in the
number of nonzero entries needed to carry
the spectral information in the matrix $\mtx{B}$.

\subsection{Analysis of Randomized Sparsification}
\label{sec:sparsification-analysis}

Let us proceed with the analysis of randomized sparsification.
To apply Corollary~\ref{cor:matrix-approx-sampling},
we need to obtain bounds for the per-sample variance $m_2(\mtx{R})$
and the uniform upper bound $L$.
The key to both calculations is to obtain appropriate \emph{lower} bounds
on the sampling probabilities $p_{ij}$.   Indeed,
\begin{equation} \label{eqn:sparsify-prob-lower}
p_{ij} \geq \frac{1}{2} \cdot \frac{\abs{\smash{b_{ij}}}}{\pnorm{\ell_1}{\mtx{B}}}
\quad\text{and}\quad
p_{ij} \geq \frac{1}{2} \cdot \frac{\abssq{\smash{b_{ij}}}}{\fnormsq{\mtx{B}}}.
\end{equation}
Each estimate follows by neglecting one term in~\eqref{eqn:sparsify-prob-lower}.

First, we turn to the uniform bound on the random matrix $\mtx{R}$.  We have
$$
\norm{ \mtx{R} } \leq \max_{ij} \norm{ \smash{p_{ij}^{-1} b_{ij} \mathbf{E}_{ij}} }
	= \max_{ij} \frac{1}{p_{ij}} \cdot \abs{\smash{b_{ij}}}
	\leq 2 \pnorm{\ell_1}{\mtx{B}}.
$$
The last inequality depends on the first bound in~\eqref{eqn:sparsify-prob-lower}.
Therefore, we may take $L = 2 \pnorm{\ell_1}{\mtx{B}}$.

Second, we turn to the computation of the per-sample second moment $m_2(\mtx{R})$.  We have
$$
\begin{aligned}
\Expect( \mtx{RR}^\adj )
	&= \sum_{i=1}^{d_1} \sum_{j=1}^{d_2} \frac{1}{p_{ij}^2} \cdot (b_{ij} \mathbf{E}_{ij})(b_{ij} \mathbf{E}_{ij})^\adj p_{ij} \\
	&= \sum_{i=1}^{d_1} \sum_{j=1}^{d_2} \frac{\abssq{\smash{b_{ij}}}}{p_{ij}} \cdot \mathbf{E}_{ii} \\
	&\psdle 2 \fnormsq{\mtx{B}} \sum_{i=1}^{d_1} \sum_{j=1}^{d_2} \mathbf{E}_{ii}
	= 2 d_2 \fnormsq{\mtx{B}} \cdot \Id_{d_1}.
\end{aligned}
$$
The semidefinite inequality holds because each matrix $\abssq{\smash{b_{ij}}} \mathbf{E}_{ii}$ is positive semidefinite
and because of the second bound in~\eqref{eqn:sparsify-prob-lower}.  Similarly,
$$
\Expect( \mtx{R}^\adj \mtx{R} ) \psdle 2d_1 \fnormsq{\mtx{B}} \cdot \Id_{d_2}.
$$
In summary,
$$
m_2(\mtx{R}) = \max\big\{ \norm{\smash{\Expect( \mtx{RR}^\adj )}}, \ \norm{\smash{ \Expect( \mtx{R}^\adj \mtx{R} ) }} \big\}
	\leq 2 \max\{ d_1, d_2 \}.
$$
This is the required estimate for the per-sample second moment.

Finally, to reach the advertised error bound~\eqref{eqn:sparsify-bound},
we invoke Corollary~\ref{cor:matrix-approx-sampling} with the parameters
$L = \pnorm{\ell_1}{\mtx{B}}$ and $m_2(\mtx{R}) \leq 2 \max\{ d_1, d_2 \}$.

\section{Application: Randomized Matrix Multiplication} \label{sec:rdm-mtx-mult}

Numerical linear algebra (NLA) is a well-established and important part of computer
science. Some of the basic problems in this area include multiplying matrices, solving
linear systems, computing eigenvalues and eigenvectors, and solving linear least-squares
problems. Historically, the NLA community has focused on developing highly accurate
deterministic methods that require as few floating-point operations as possible. Unfortunately,
contemporary applications can strain standard NLA methods because problems
have continued to become larger. Furthermore, on modern computer architectures,
computational costs depend heavily on communication and other resources that the
standard algorithms do not manage very well.

In response to these challenges, researchers have started to develop randomized
algorithms for core problems in NLA. In contrast to the classical algorithms, these new
methods make random choices during execution to achieve computational efficiencies.
These randomized algorithms can also be useful for large problems or for modern
computer architectures. On the other hand, randomized methods can fail with some
probability, and in some cases they are less accurate than their classical competitors.

Matrix concentration inequalities are one of the key tools used to design and analyze
randomized algorithms for NLA problems.  In this section, we will describe a randomized
method for matrix multiplication developed by
Magen \& Zouzias~\cite{MZ11:Low-Rank-Matrix-Valued,Zou12:Randomized-Primitives}.
We will analyze this algorithm using Corollary~\ref{cor:matrix-approx-sampling}.
Turn to the notes at the end of the chapter for more information about the history.

\subsection{Problem Formulation \& Randomized Algorithm}

One of the basic tasks in numerical linear algebra is to multiply two matrices with compatible dimensions.
Suppose that $\mtx{B}$ is a $d_1 \times N$ complex matrix and that $\mtx{C}$ is an $N \times d_2$ complex matrix,
and we wish to compute the product $\mtx{BC}$.  The straightforward algorithm forms the product entry by entry:
\begin{equation} \label{eqn:matrix-mult-ip}
(\mtx{BC})_{ik} = \sum_{j=1}^N b_{ij} c_{jk}
\quad\text{for each $i = 1, \dots, d_1$ and $k = 1, \dots, d_2$.}
\end{equation}
This approach takes $\bigO(N \cdot d_1 d_2)$ arithmetic operations.  There are algorithms, such as Strassen's divide-and-conquer method, that can reduce the cost, but these approaches are not considered practical for most applications.

Suppose that the inner dimension $N$ is substantially larger than the outer dimensions $d_1$ and $d_2$.
In this setting, both matrices $\mtx{B}$ and $\mtx{C}$ are rank-deficient, so the columns of $\mtx{B}$
contain a lot of linear dependencies, as do the rows of $\mtx{C}$.
As a consequence, a random sample of columns from $\mtx{B}$
(or rows from $\mtx{C}$) can be used as a proxy for the full matrix.
Formally, the key to this approach is to view
the matrix product as a sum of outer products:
\begin{equation} \label{eqn:matrix-mult-outer-product}
\mtx{BC} = \sum_{j=1}^N \vct{b}_{:j} \vct{c}_{j:}.
\end{equation}
As usual, $\vct{b}_{:j}$ denotes the $j$th column of $\mtx{B}$,
while $\vct{c}_{j:}$ denotes the $j$th row of $\mtx{C}$.
We can approximate this sum using the empirical method.

To develop an algorithm, the first step is to construct a simple
random matrix $\mtx{R}$ that provides an unbiased
estimate for the matrix product.  To that end, we pick
a random index and form a rank-one matrix from
the associated columns of $\mtx{B}$ and row of $\mtx{C}$.
More precisely, define
\begin{equation} \label{eqn:matrix-mult-prob}
p_j = \frac{\normsq{\smash{\vct{b}_{:j}}} + \normsq{\smash{\vct{c}_{j:}}}}{\fnormsq{\mtx{B}} + \fnormsq{\mtx{C}}}
	\quad\text{for $j = 1, 2, 3, \dots, N$.}
\end{equation}
The Frobenius norm is defined in~\eqref{eqn:frobenius-norm}.
Using the properties of the norms, we can easily check
that $(p_1, p_2, p_3, \dots, p_N)$ forms a bonafide probability
distribution.  The cost of computing these probabilities
is at most $O(N \cdot (d_1 + d_2))$ arithmetic operations,
which is much smaller than the cost of forming the product
$\mtx{BC}$ when $d_1$ and $d_2$ are large.

We now define a $d_1 \times d_2$ random matrix $\mtx{R}$ by
the expression
$$
\mtx{R} = \frac{1}{p_j} \cdot \vct{b}_{:j} \vct{c}_{j:}
\quad\text{with probability $p_j$.}
$$
We use the convention that $0/0 = 0$ so we do not have to treat zero rows and
columns separately.  It is straightforward to compute the expectation
of $\mtx{R}$:
$$
\Expect \mtx{R} = \sum_{j=1}^N \frac{1}{p_j} \cdot \vct{b}_{:j} \vct{c}_{j:} \cdot p_j
	= \sum_{j=1}^N \vct{b}_{:j} \vct{c}_{j:}
	= \mtx{BC}.
$$
As required, $\mtx{R}$ is an unbiased estimator for the product $\mtx{BC}$.

Although the expectation of $\mtx{R}$ is correct, its variance is quite high.
Indeed, $\mtx{R}$ has rank one, while the rank of $\mtx{BC}$ is usually larger!
To reduce the variance, we combine several independent copies of the simple estimator:
\begin{equation} \label{eqn:matrix-mult-approx}
\bar{\mtx{R}}_n = \frac{1}{n} \sum_{k=1}^n \mtx{R}_k
\quad\text{where}\quad
\text{each $\mtx{R}_k$ is an independent copy of $\mtx{R}$.}
\end{equation}
By linearity of expectation, $\Expect \bar{\mtx{R}}_n = \mtx{BC}$, so we
imagine that $\bar{\mtx{R}}_n$ approximates the product well.

To see whether this heuristic holds true, we need to understand how
the error $\Expect \norm{ \smash{\bar{\mtx{R}}_n - \mtx{BC}} }$ depends on the number $n$ of samples.
It costs $O(n \cdot d_1 d_2)$ floating-point operations to determine
all the entries of $\bar{\mtx{R}}_n$.  Therefore, when the number $n$ of samples
is much smaller than the inner dimension $N$ of the matrices, we can achieve
significant economies over the na{\"i}ve matrix multiplication algorithm.

In fact, it requires no computation beyond sampling the row/column indices
to express $\bar{\mtx{R}}_n$ in the form~\eqref{eqn:matrix-mult-approx}.
This approach gives an inexpensive way to represent the product approximately.

\subsection{Performance of Randomized Matrix Multiplication}

To simplify our presentation, we will assume that both matrices have been scaled
so that their spectral norms are equal to one:
$$
\norm{\mtx{B}} = \norm{\mtx{C}} = 1.
$$
It is relatively inexpensive to compute the spectral norm of a matrix accurately, so this
preprocessing step is reasonable.

Let $\textsf{asr} = \half (\strank(\mtx{B}) + \strank(\mtx{C}))$ be the
average stable rank of the two factors; see~\eqref{eqn:stable-rank}
for the definition of the stable rank.  In~\S\ref{sec:rdm-mtx-mult-anal},
we will prove that
\begin{equation} \label{eqn:matrix-mult-error}
\Expect \norm{\smash{\bar{\mtx{R}}_n - \mtx{BC}}}
	\leq \sqrt{ \frac{4 \cdot \textsf{asr} \cdot \log(d_1 + d_2)}{n} } + \frac{2 \cdot \textsf{asr} \cdot \log(d_1 + d_2)}{3n}.
\end{equation}
To appreciate what this estimate means, suppose that the number $n$ of samples satisfies
$$
n \geq \eps^{-2} \cdot \textsf{asr} \cdot \log(d_1 + d_2)
$$
where $\eps$ is a positive tolerance.  Then we obtain a relative error bound
for the randomized matrix multiplication method
$$
\frac{\Expect \norm{\smash{\bar{\mtx{R}}_n - \mtx{BC}}}}{\norm{\mtx{B}} \norm{\mtx{C}}}
	\leq 2 \eps + \frac{2}{3} \eps^2.
$$
This expression depends on the normalization of $\mtx{B}$ and $\mtx{C}$.  The computational
cost of forming the approximation is
$$
O( \eps^{-2} \cdot \textsf{asr} \cdot d_1 d_2 \log(d_1 + d_2) )
\quad\text{arithmetic operations}.
$$
In other words, when the average stable rank $\textsf{asr}$ is substantially smaller
than the inner dimension $N$ of the two matrices $\mtx{B}$ and $\mtx{C}$,
the random estimate $\bar{\mtx{R}}_n$ for the product $\mtx{BC}$ achieves
a small error relative to the scale of the factors.

\subsection{Analysis of Randomized Matrix Multiplication}
\label{sec:rdm-mtx-mult-anal}

The randomized matrix multiplication method is just a specific example
of empirical approximation, and the error bound~\eqref{eqn:matrix-mult-error}
is an immediate consequence of Corollary~\ref{cor:matrix-approx-sampling}.

To pursue this approach, we need to establish a uniform bound on the norm
of the estimator $\mtx{R}$ for the product.  Observe that
$$
\norm{\mtx{R}} \leq \max\nolimits_j \norm{ \smash{p_j^{-1} \vct{b}_{:j} \vct{c}_{j:}} }
	= \max\nolimits_j \frac{\norm{ \smash{ \vct{b}_{:j}} } \norm{ \smash{\vct{c}_{j:}}}}{p_j}.
$$
To obtain a bound, recall the value~\eqref{eqn:matrix-mult-prob} of the probability $p_j$,
and invoke the inequality between geometric and arithmetic means:
$$
\norm{\mtx{R}} \leq \big(\fnormsq{\mtx{B}} + \fnormsq{\mtx{C}} \big)
	\cdot \max\nolimits_j \frac{\norm{ \smash{ \vct{b}_{:j}} } \norm{ \smash{\vct{c}_{j:}}}}
	{\normsq{\smash{\vct{b}_{:j}}} + \normsq{\smash{\vct{c}_{j:}}}}
	\leq \frac{1}{2} \big(\fnormsq{\mtx{B}} + \fnormsq{\mtx{C}} \big).
$$
Since the matrices $\mtx{B}$ and $\mtx{C}$ have unit spectral norm,
we can express this inequality in terms of the average stable rank:
$$
\norm{\mtx{R}} \leq \frac{1}{2} \big( \strank(\mtx{B}) + \strank(\mtx{C}) \big) = \textsf{asr}.
$$
This is the exactly kind of bound that we need.

Next, we need an estimate for the per-sample second moment $m_2(\mtx{R})$.  By
direct calculation,
$$
\begin{aligned}
\Expect( \mtx{RR}^\adj )
	&= \sum_{j=1}^N \frac{1}{p_j^2} \cdot (\vct{b}_{:j} \vct{c}_{j:})(\vct{b}_{:j} \vct{c}_{j:})^\adj \cdot p_j \\
	&= \big( \fnormsq{\mtx{B}} + \fnormsq{\mtx{C}} \big) \cdot
	\sum_{j=1}^n \frac{\normsq{\smash{\vct{c}_{j:}}}}{\normsq{\smash{\vct{b}_{:j}}} + \normsq{\smash{\vct{c}_{j:}}}}
	\cdot \vct{b}_{:j} \vct{b}_{:j}^\adj \\
	&\psdle \big( \fnormsq{\mtx{B}} + \fnormsq{\mtx{C}} \big) \cdot \mtx{BB}^\adj.
\end{aligned}
$$
The semidefinite relation holds because each fraction lies between zero and one,
and each matrix $\vct{b}_{:j} \vct{b}_{:j}^\adj$ is positive semidefinite.  Therefore,
increasing the fraction to one only increases in the matrix in the semidefinite order.
Similarly,
$$
\Expect( \mtx{RR}^\adj )
	\psdle \big( \fnormsq{\mtx{B}} + \fnormsq{\mtx{C}} \big) \cdot \mtx{C}^\adj \mtx{C}.
$$
In summary,
$$
\begin{aligned}
m_2(\mtx{R}) &= \max\big\{ \norm{ \smash{\Expect(\mtx{RR}^\adj)} },
	\ \norm{ \smash{\Expect(\mtx{R}^\adj \mtx{R})} } \\
	&\leq \big( \fnormsq{\mtx{B}} + \fnormsq{\mtx{C}} \big)
	\cdot \max\big\{ \norm{ \smash{\mtx{BB}^\adj}}, \ \norm{ \smash{\mtx{C}^\adj \mtx{C}}} \big\} \\
	&= \big( \fnormsq{\mtx{B}} + \fnormsq{\mtx{C}} \big) \\
	&= 2 \cdot \textsf{asr}.
\end{aligned}
$$
The penultimate line depends on the identity~\eqref{eqn:spectral-norm-square}
and our assumption that both matrices $\mtx{B}$ and $\mtx{C}$ have norm one.

Finally, to reach the stated estimate~\eqref{eqn:matrix-mult-error},
we apply Corollary~\ref{cor:matrix-approx-sampling} with the parameters
$L = \textsf{asr}$ and $m_2(\mtx{R}) \leq 2 \cdot \textsf{asr}$.

\section{Application: Random Features}
\label{sec:rdm-features}

As a final application of empirical matrix approximation,
let us discuss a contemporary idea from machine learning
called \term{random features}.  Although this technique
may appear more sophisticated than randomized sparsification
or randomized matrix multiplication, it depends on
exactly the same principles.
Random feature maps were proposed by Ali Rahimi
and Ben Recht~\cite{RR07:Random-Features}.
The analysis in this section is due to David Lopez-Paz
et al.~\cite{LSS+14:Randomized-Nonlinear}.

\subsection{Kernel Matrices}

Let $\coll{X}$ be a set.  We think about the elements of the set $\coll{X}$
as (potential) observations that we would like to use to perform learning
and inference tasks.  Let us introduce a bounded measure $\Phi$ of similarity
between pairs of points in the set:
$$
\Phi : \coll{X} \times \coll{X} \to [-1, +1].
$$
The similarity measure $\Phi$ is often called a \term{kernel}.
We assume that the kernel returns the value $+1$ when its arguments
are identical, and it returns smaller values when its arguments are dissimilar.
We also assume that the kernel is symmetric;
that is, $\Phi(\vct{x}, \vct{y}) = \Phi(\vct{y}, \vct{x})$
for all arguments $\vct{x}, \vct{y} \in \coll{X}$.

A simple example of a kernel is the angular similarity between a pair of points
in a Euclidean space:
\begin{equation} \label{eqn:angular-similarity}
\Phi( \vct{x}, \vct{y} ) = \frac{2}{\pi} \arcsin \frac{ \ip{ \vct{x} }{ \smash{\vct{y}}} }{\norm{\vct{x}}{\norm{\smash{\vct{y}}}}}
= 1 - \frac{2 \measuredangle (\vct{x}, \vct{y})}{ \pi }
\quad\text{for $\vct{x}, \vct{y} \in \R^d$.}
\end{equation}
We write $\measuredangle( \cdot, \cdot )$ for the planar angle between two vectors, measured
in radians.  As usual, we instate the convention that $0/0=0$.  See Figure~\ref{fig:angle-sim-new}
for an illustration.  %

Suppose that $\vct{x}_1, \dots, \vct{x}_N \in \coll{X}$ are observations.
The kernel matrix $\mtx{G} = [ g_{ij} ] \in \mathbb{M}_N$ just tabulates
the values of the kernel function for each pair of data points:
$$
g_{ij} = \Phi( \vct{x}_i, \vct{x}_j )
\quad\text{for $i, j = 1, \dots, N$.}
$$
It may be helpful to think about the kernel matrix $\mtx{G}$ as a generalization of the
Gram matrix of a family of points in a Euclidean space.
We say that the kernel $\Phi$ is \term{positive definite} if the
kernel matrix $\mtx{G}$ is positive semidefinite for any choice of observations
$\{ \vct{x}_i \} \subset \coll{X}$.  We will be concerned only with
positive-definite kernels in this discussion.

In the Euclidean setting, there are statistical learning methods that only
require the inner product between each pair of observations.  These algorithms can
be extended to the kernel setting by replacing each inner product with a kernel
evaluation.  As a consequence, kernel matrices can be used for classification,
regression, and feature selection.
In these applications, kernels are advantageous because they work outside
the Euclidean domain, and they allow task-specific measures of similarity.
This idea, sometimes called the \term{kernel trick}, is one of the major
insights in modern machine learning.

A significant challenge for algorithms based on kernels is that the kernel matrix is big.
Indeed, $\mtx{G}$ contains $O( N^2 )$ entries, where $N$ is the number
of data points.  Furthermore, the cost of constructing the kernel matrix
is $O( d N^2 )$ where $d$ is the number of parameters required to specify
a point in the universe $\coll{X}$.

Nevertheless, there is an opportunity.  Large data sets tend
to be redundant, so the kernel matrix also tends to be redundant.
This manifests in the kernel matrix being close to a low-rank matrix.
As a consequence, we may try to replace the kernel matrix by a
low-rank proxy.  For some similarity measures, we can accomplish
this task using empirical approximation.

\subsection{Random Features and Low-Rank Approximation of the Kernel Matrix}

In certain cases, a positive-definite kernel can be written as an expectation,
and we can take advantage of this representation to construct an empirical
approximation of the kernel matrix.  Let us begin with the general construction,
and then we will present a few examples in Section~\ref{sec:random-feature-examples}.

Let $\coll{W}$ be a sample space equipped with a sigma-algebra and a probability measure $\mu$.
Introduce a bounded \term{feature map}:
$$
\psi : \coll{X} \times \coll{W} \to [-b, +b]
\quad\text{where $b \geq 0$.}
$$
Consider a random variable $\vct{w}$ taking values in $\coll{W}$ 
and distributed according to the measure $\mu$.  We assume that
this random variable satisfies the \term{reproducing property}
\begin{equation} \label{eqn:reproducing-property}
\Phi( \vct{x}, \vct{y} )
	= \Expect_{\vct{w}} \big[ \psi( \vct{x}; \vct{w} ) \cdot \psi( \vct{y}; \vct{w} ) \big]
	\quad\text{for all $\vct{x}, \vct{y} \in \coll{X}$.}
\end{equation}
The pair $(\psi, \vct{w})$ is called a \term{random feature map} for the kernel $\Phi$.

We want to approximate the kernel matrix with a set
$\{ \vct{x}_1, \dots, \vct{x}_N \} \subset \coll{X}$
of observations.  To do so, we draw a random vector
$\vct{w} \in \coll{W}$ distributed according to $\mu$.
Form a random vector $\vct{z} \in \R^N$
by applying the feature map to each data point with the
\emph{same} choice of the random vector $\vct{w}$.  That is,
$$
\vct{z} = \begin{bmatrix} z_1 \\ \vdots \\ z_N \end{bmatrix}
	= \begin{bmatrix} \psi( \vct{x}_1; \vct{w} ) \\ \vdots \\ \psi(\vct{x}_N; \vct{w}) \end{bmatrix}.
$$
The vector $\vct{z}$ is sometimes called a \term{random feature}.
By the reproducing property~\eqref{eqn:reproducing-property}
for the random feature map,
$$
g_{ij} = \Phi(\vct{x}_i, \vct{x}_j)
	= \Expect_{\vct{w}} \big[\psi( \vct{x}_i; \vct{w} ) \cdot \psi( \vct{x}_j; \vct{w} ) \big]
	= \Expect_{\vct{w}} \big[ z_i \cdot z_j \big]
\quad\text{for $i, j = 1, 2, 3, \dots, N$.}
$$
We can write this relation in matrix form as $\mtx{G} = \Expect( \vct{zz}^\adj )$.
Therefore, the random matrix $\mtx{R} = \vct{zz}^\adj$ is
an unbiased rank-one estimator for the kernel matrix $\mtx{G}$.
This representation demonstrates that random feature maps, as defined here,
only exist for positive-definite kernels.

As usual, we construct a better empirical approximation of the kernel
matrix $\mtx{G}$ by averaging several realizations of the simple estimator $\mtx{R}$:
\begin{equation} \label{eqn:kernel-approx}
\bar{\mtx{R}}_n = \frac{1}{n} \sum_{k=1}^n \mtx{R}_k
\quad\text{where each $\mtx{R}_k$ is an independent copy of $\mtx{R}$.}
\end{equation}
In other words, we are using $n$ independent random features
$\vct{z}_1, \dots, \vct{z}_n$ to approximate the kernel matrix.
The question is how many random features are needed before our
estimator is accurate.

\subsection{Examples of Random Feature Maps}
\label{sec:random-feature-examples}

Before we continue with the analysis, let us describe some random feature
maps.  This discussion is tangential to our theme of matrix concentration,
but it is valuable to understand why random feature maps exist.

First, let us consider the angular similarity~\eqref{eqn:angular-similarity}
defined on $\R^d$.  We can construct a random feature map using a classical
result from plane geometry.  If we draw $\vct{w}$ uniformly from
the unit sphere $\mathbb{S}^{d-1} \subset \R^d$, then
\begin{equation} \label{eqn:angular-similarity-avg}
\Phi( \vct{x}; \vct{y} ) = 1 - \frac{2 \measuredangle(\vct{x}, \vct{y})}{\pi}
	= \Expect_{\vct{w}} \big[ \sgn{ \ip{ \vct{x} }{ \vct{w} } } \cdot \sgn{ \ip{\smash{\vct{y}}}{\vct{w}} }\big]
	\quad\text{for all $\vct{x}, \vct{y} \in \coll{X}$.}
\end{equation}
The easy proof of this relation should be visible from
the diagram in Figure~\ref{fig:angle-sim-new}.  In light
of the formula~\eqref{eqn:angular-similarity-avg},
we set $\coll{W} = \mathbb{S}^{d-1}$ with the uniform
measure, and we define the feature map
$$
\psi( \vct{x}; \vct{w} ) = \sgn{ \ip{\vct{x}}{\vct{w}} }.
$$
The reproducing property~\eqref{eqn:reproducing-property}
follows immediately from~\eqref{eqn:angular-similarity-avg}.
Therefore, the pair $(\psi, \vct{w})$ is a random feature map
for the angular similarity kernel.

\begin{figure}
\begin{center}
\begin{tikzpicture}[scale=2]
	\fill[fill=red!25!white] (2.5,0.1) rectangle (2.8,0.4)
	node[below right,scale=1]{$\sgn{ \ip{\vct{x}}{\vct{u}} } \cdot \sgn{ \ip{\smash{\vct{y}}}{\vct{u}} } = +1$};

	\fill[fill=blue!40!white] (2.5,-0.4) rectangle (2.8,-0.1)
	node[below right,scale=1]{$\sgn{ \ip{\vct{x}}{\vct{u}} } \cdot \sgn{ \ip{\smash{\vct{y}}}{\vct{u}} } = -1$};

	\fill[fill=blue!40!white] (0,0) -- (90:1) arc (90:120:1) -- cycle;
	\fill[fill=red!25!white] (0,0) -- (120:1) arc (120:270:1) -- cycle;
	\fill[fill=blue!40!white] (0,0) -- (270:1) arc (270:300:1) -- cycle;
	\fill[fill=red!25!white] (0,0) -- (300:1) arc (300:450:1) -- cycle;
	
	\draw (270:1.5) -- (90:1.75) node[below right,scale=1]{$\ip{\vct{x}}{\vct{u}} = 0$};
	\draw (120:1.5) -- (300:1.75) node[above right,scale=1]{$\ip{\smash{\vct{y}}}{\vct{u}} = 0$};
	
	\draw[very thick] (0,0) circle (1);
	\filldraw (0,0) circle (.02);
	\draw[->,very thick] (0,0) -- (1.75,0) node[right]{$\vct{x}$};
	\draw[->,very thick] (0,0) -- +(30:1.75) node[right]{$\vct{y}$};
	
	\draw[thick] (0:0.25) arc (0:30:0.25); \draw[thin] (15:0.23) -- (15:0.27);
	\draw[thick] (90:0.25) arc (90:120:0.25); \draw[thin] (105:0.23) -- (105:0.27);
	\draw[thick] (270:0.25) arc (270:300:0.25); \draw[thin] (285:0.23) -- (285:0.27);
\end{tikzpicture}
\end{center} 
\begin{caption} 
{\textbf{The angular similarity between two vectors.}  Let $\vct{x}$ and $\vct{y}$ be nonzero vectors
in $\R^2$ with angle $\measuredangle(\vct{x}, \vct{y})$.
The red region contains the directions $\vct{u}$ where the product $\sgn{ \ip{ \vct{x} }{\vct{u} }} \cdot \sgn{ \ip{ \smash{\vct{y}} }{ \vct{u} }}$ equals $+1$, and the blue region contains the directions $\vct{u}$
where the same product equals $-1$.  The blue region subtends a total angle of $2\measuredangle(\vct{x}, \vct{y})$,
and the red region subtends a total angle of $2 \pi - 2\measuredangle(\vct{x}, \vct{y})$.} \label{fig:angle-sim-new}
\end{caption}
\end{figure} 

Next, let us describe an important class of kernels that
can be expressed using random feature maps.  A kernel on $\R^d$
is \term{translation invariant} if there is a function $\phi: \R^d \to \R$
for which
$$
\Phi(\vct{x}, \vct{y}) = \phi( \vct{x} - \vct{y} )
\quad\text{for all $\vct{x}, \vct{y} \in \R^d$.}
$$
B{\^o}chner's Theorem, a classical result from harmonic analysis, gives
a representation for each continuous, positive-definite, translation-invariant
kernel:
\begin{equation} \label{eqn:bochner}
\Phi( \vct{x}, \vct{y} )
	= \phi(\vct{x} - \vct{y})
	= c \int_{\R^d} \econst^{\iunit \,\ip{ \vct{x} }{ \vct{w} }}
	\cdot \econst^{-\iunit \, \ip{\smash{\vct{y}}}{\vct{w}}} \idiff{\mu}(\vct{w})
	\quad\text{for all $\vct{x}, \vct{y} \in \R^d$.}
\end{equation}
In this expression, the positive scale factor $c$ and the probability measure $\mu$
depend only on the function $\phi$.  The formula~\eqref{eqn:bochner} yields
a (complex-valued) random feature map:
$$
\psi_{\C}(\vct{x}; \vct{w}) = \sqrt{c} \, \econst^{\iunit\, \ip{\vct{x}}{\vct{w}}}
\quad\text{where $\vct{w}$ has distribution $\mu$ on $\R^d$.}
$$
This map satisfies a complex variant of the reproducing
property~\eqref{eqn:reproducing-property}:
$$
\Phi(\vct{x}, \vct{y}) = \Expect_{\vct{w}} \big[ \psi_{\C}(\vct{x}; \vct{w})
	\cdot \psi_{\C}(\vct{y}; \vct{w})^\adj \big]
	\quad\text{for all $\vct{x}, \vct{y} \in \R^d$},
$$
where we have written ${}^\adj$ for complex conjugation.

With a little more work, we can construct a real-valued random feature map.
Recall that the kernel $\Phi$ is symmetric, so the complex exponentials
in~\eqref{eqn:bochner} can be written in terms of cosines.  This observation
leads to the random feature map
\begin{equation} \label{eqn:bochner-real}
\psi(\vct{x}; \vct{w}, U) = \sqrt{2c} \cos\big( \ip{\vct{x}}{\vct{w}} + U \big)
\quad\text{where}\quad
\text{$\vct{w} \sim \mu$ and $U \sim \uniform[0,2\pi]$.}
\end{equation}
To verify that $(\psi, (\vct{w}, U))$ reproduces the kernel $\Phi$,
as required by~\eqref{eqn:reproducing-property},
we just make a short calculation using the angle-sum formula for the cosine. 

We conclude this section with the most important example of a random feature map
from the class we have just described.  Consider the Gaussian radial basis function kernel:
$$
\Phi( \vct{x}, \vct{y} ) = \econst^{-\alpha \normsq{\smash{\vct{x} - \vct{y}}}/ 2}
\quad\text{for all $\vct{x}, \vct{y} \in \R^d$.}
$$
The positive parameter $\alpha$ reflects how close two points must be before they are
regarded as ``similar.''  For the Gaussian kernel, B{\^o}chner's Theorem~\eqref{eqn:bochner}
holds with the scaling factor $c = 1$ and the probability measure
$\mu = \normal( \vct{0}, \alpha\, \Id_d )$. In summary, we define
$$
\psi( \vct{x}; \vct{w}, U ) = \sqrt{2} \cos\big( \ip{ \vct{x} }{ \vct{w} } + U \big)
\quad\text{where}\quad
\text{$\vct{w} \sim \normal(\vct{0}, \alpha \,\Id_d)$ and $U \sim \uniform[0,2\pi]$.}
$$
This random feature map reproduces the Gaussian radial basis function kernel.

\subsection{Performance of the Random Feature Approximation}

We will demonstrate that the approximation $\bar{\mtx{R}}_n$ of the $N \times N$
kernel matrix $\mtx{G}$ using $n$ random features, constructed in~\eqref{eqn:kernel-approx},
leads to an estimate of the form
\begin{equation} \label{eqn:random-feature-bound}
\Expect \norm{ \smash{\bar{\mtx{R}}_n - \mtx{G}} }
	\leq \sqrt{\frac{2bN \norm{\mtx{G}} \log(2N)}{n}} + \frac{2bN \log(2N)}{3n}.
\end{equation}
In this expression, $b$ is the uniform bound on the magnitude of the feature map $\psi$.
The short proof of~\eqref{eqn:random-feature-bound} appears in~\S\ref{sec:analysis-rdm-features}.

To clarify what this result means, we introduce the \term{intrinsic dimension}
of the $N \times N$ kernel matrix $\mtx{G}$:
$$
\intdim(\mtx{G}) = \strank( \mtx{G}^{1/2} )
	= \frac{\trace \mtx{G}}{ \norm{\mtx{G}}}
	= \frac{N}{\norm{\mtx{G}}}.
$$
The stable rank is defined in Section~\ref{sec:stable-rank}.
We have used the assumption that the similarity measure is positive definite
to justify the computation of the square root of the kernel matrix,
and $\trace \mtx{G} = N$ because of the requirement that
$\Phi(\vct{x}, \vct{x}) = +1$ for all $\vct{x} \in \coll{X}$.
See \S\ref{sec:int-dim} for further discussion
of the intrinsic dimension

Now, assume that the number $n$ of random features satisfies the bound
$$
n \geq 2 b \eps^{-2} \cdot \intdim(\mtx{G}) \cdot \log(2N),
$$
In view of~\eqref{eqn:random-feature-bound}, the relative error in the
empirical approximation of the kernel matrix satisfies
$$
\frac{\Expect \norm{ \smash{\bar{\mtx{R}}_n - \mtx{G}} }}{\norm{\mtx{G}}}
	\leq \eps + \eps^{-2}.
$$
We learn that the randomized approximation of the kernel matrix $\mtx{G}$ is accurate
when its intrinsic dimension is much smaller than the number of data points.  That is,
$\intdim(\mtx{G}) \ll N$.

\subsection{Analysis of the Random Feature Approximation}
\label{sec:analysis-rdm-features}

The analysis of random features is based on Corollary~\ref{cor:matrix-approx-sampling}.
To apply this result, we need the per-sample second-moment $m_2(\mtx{R})$ and the uniform
upper bound $L$.  Both are easy to come by.

First, observe that
$$
\norm{ \mtx{R} } = \norm{ \smash{\mtx{zz}^\adj} } = \normsq{\vct{z}}
	\leq b N
$$
Recall that $b$ is the uniform bound on the feature map $\psi$,
and $N$ is the number of components in the random feature vector $\vct{z}$.

Second, we calculate that
$$
\Expect \mtx{R}^2 = \Expect\big( \normsq{\vct{z}} \vct{zz}^\adj \big)
	\psdle bN \cdot \Expect( \vct{zz}^\adj )
	= bN \cdot \mtx{G}.
$$
Each random matrix $\vct{zz}^\adj$ is positive semidefinite, so we can introduce
the upper bound $\normsq{\vct{z}} \leq bN$.  The last identity holds because $\mtx{R}$
is an unbiased estimator of the kernel matrix $\mtx{G}$.  It follows that
$$
m_2(\mtx{R}) = \norm{ \smash{\Expect \mtx{R}^2} } \leq bN \cdot \norm{\mtx{G}}.
$$
This is our bound for the per-sample second moment.

Finally, we invoke Corollary~\ref{cor:matrix-approx-sampling} with
parameters  $L = bN$ and $m_2(\mtx{R}) \leq bN \norm{\mtx{G}}$ to
arrive at the estimate~\eqref{eqn:random-feature-bound}.

\section{Proof of the Matrix Bernstein Inequality}
\label{sec:bernstein-proof}

Now, let us turn to the proof of the matrix Bernstein inequality, Theorem~\ref{thm:matrix-bernstein-rect}.
This result is a corollary of a matrix concentration inequality for a sum of 
bounded random Hermitian matrices.  We begin with a statement and discussion
of the Hermitian result, and then we explain how the general result follows.

\subsection{A Sum of Bounded Random Hermitian Matrices}

The first result is a Bernstein inequality for a sum of independent,
random Hermitian matrices whose eigenvalues are bounded above.

\begin{thm}[Matrix Bernstein: Hermitian Case] \label{thm:matrix-bernstein-herm}
Consider a finite sequence $\{ \mtx{X}_k \}$ of independent, random, Hermitian matrices with dimension $d$.  Assume that
$$
\Expect \mtx{X}_k = \mtx{0}
\quad\text{and}\quad
\lambda_{\max}(\mtx{X}_k) \leq L
\quad\text{for each index $k$.}
$$
Introduce the random matrix
$$
\mtx{Y} = \sum\nolimits_k \mtx{X}_k.
$$
Let $v(\mtx{Y})$ be the matrix variance statistic of the sum:
\begin{equation} \label{eqn:matrix-bernstein-sigma2}
v(\mtx{Y}) = \norm{ \smash{\Expect{} \mtx{Y}^2} }
	= \norm{ \sum\nolimits_k \Expect{} \mtx{X}_k^2 }.
\end{equation}
Then
\begin{equation} \label{eqn:matrix-bernstein-expect}
\Expect \lambda_{\max}(\mtx{Y})
	\leq \sqrt{2v(\mtx{Y}) \log d} + \frac{1}{3} L \,\log d.
\end{equation}
Furthermore, for all $t \geq 0$.
\begin{equation} \label{eqn:matrix-bernstein-tail}
\Prob{ \lambda_{\max}\left( \mtx{Y} \right) \geq t }
	\leq d \cdot \exp\left( \frac{-t^2/2}{v(\mtx{Y}) + Lt/3} \right).
\end{equation}
\end{thm}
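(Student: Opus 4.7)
The plan is to follow the blueprint developed in Chapter~\ref{chap:matrix-lt}: produce a semidefinite bound for the matrix cgf $\log \Expect \econst^{\theta \mtx{X}_k}$ of each summand, then feed this into the master inequalities of Theorem~\ref{thm:master-ineq}, and finally optimize the free parameter $\theta$ to obtain the stated forms. The rectangular result in Theorem~\ref{thm:matrix-bernstein-rect} will come for free at the end by applying the Hermitian version to the dilation $\coll{H}(\mtx{Z})$, exactly as in the proof at the end of~\S\ref{sec:matrix-gauss-proof-rect}; the key identities to invoke are $\lambda_{\max}(\coll{H}(\mtx{Z})) = \norm{\mtx{Z}}$ from~\eqref{eqn:herm-dilation-norm} and $v(\coll{H}(\mtx{Z})) = v(\mtx{Z})$ from~\eqref{eqn:var-stat-dilation}.

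The first step, and the technical heart of the argument, is to establish a cgf bound of the form
$$
\log \Expect \econst^{\theta \mtx{X}_k} \ \psdle \ g(\theta) \cdot \Expect \mtx{X}_k^2
\quad \text{where} \quad g(\theta) = \frac{\econst^{\theta L} - 1 - \theta L}{L^2}, \quad 0 < \theta < 3/L.
$$
To get this, I would start from the scalar observation that the function $x \mapsto (\econst^{\theta x} - 1 - \theta x)/x^2$ (extended by continuity at $0$) is increasing in $x$, so for any $x \leq L$ we have $\econst^{\theta x} \leq 1 + \theta x + g(\theta)\, x^2$. The Transfer Rule (Proposition~\ref{prop:transfer-rule}) promotes this to the semidefinite inequality $\econst^{\theta \mtx{X}_k} \psdle \Id + \theta \mtx{X}_k + g(\theta)\, \mtx{X}_k^2$, which survives under expectation. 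Since $\Expect \mtx{X}_k = \mtx{0}$, the linear term drops out. The bound $\Id + \mtx{A} \psdle \econst^{\mtx{A}}$ then lifts the estimate into the exponential, and operator monotonicity of the logarithm~\eqref{eqn:log-monotone} extracts the cgf bound.

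The second step is to plug this cgf bound into the master inequalities~\eqref{eqn:master-upper-expect} and~\eqref{eqn:master-upper-tail}. Using the fact~\eqref{eqn:exp-trace-monotone} that $\trace \exp$ is monotone with respect to $\psdle$, then bounding $\trace \psdle d \cdot \lambda_{\max}$, applying the Spectral Mapping Theorem, and identifying the matrix variance statistic $v(\mtx{Y}) = \lambda_{\max}(\sum_k \Expect \mtx{X}_k^2)$, we obtain the two intermediate estimates
$$
\Expect \lambda_{\max}(\mtx{Y}) \leq \inf_{0 < \theta < 3/L} \frac{1}{\theta}\bigl[\log d + g(\theta)\, v(\mtx{Y})\bigr],
$$
and analogously $\Prob{\lambda_{\max}(\mtx{Y}) \geq t} \leq d \cdot \inf_{0 < \theta < 3/L} \exp(-\theta t + g(\theta)\, v(\mtx{Y}))$.

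The final step is the optimization, which proceeds just as in the scalar Bernstein argument. The hardest part of the whole proof will be choosing $\theta$ cleanly: one uses the convexity estimate $g(\theta) \leq \frac{\theta^2/2}{1 - \theta L/3}$ on $0 < \theta < 3/L$, then takes $\theta = t/(v(\mtx{Y}) + Lt/3)$ to derive the tail bound~\eqref{eqn:matrix-bernstein-tail}, and for the expectation bound takes $\theta$ to balance the $\log d$ term against the $\theta^2 v/2$ term, namely $\theta = \min\{\sqrt{2\log d / v(\mtx{Y})},\ 3/L\}$; splitting on which term in the minimum is active yields the sum of square-root and linear contributions in~\eqref{eqn:matrix-bernstein-expect}. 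Once the Hermitian bounds~\eqref{eqn:matrix-bernstein-expect} and~\eqref{eqn:matrix-bernstein-tail} are in hand, the rectangular version stated in Theorem~\ref{thm:matrix-bernstein-rect} follows at once by passing to the Hermitian dilation, as noted above.
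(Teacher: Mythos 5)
Your blueprint matches the paper's proof in all essentials: the same cgf bound $\log \Expect \econst^{\theta\mtx{X}_k} \psdle g(\theta)\,\Expect \mtx{X}_k^2$ for $0<\theta<3/L$, the same insertion into the master inequalities, and the same tail-bound optimizer $\theta = t/(v(\mtx{Y}) + Lt/3)$. Your derivation of the cgf bound via a one-shot application of the Transfer Rule to the scalar inequality $\econst^{\theta x} \leq 1 + \theta x + g(\theta)x^2$ on $(-\infty,L]$ is a slightly more direct route than the paper's, which first writes $\econst^{\theta\mtx{X}} = \Id + \theta\mtx{X} + \mtx{X}f(\mtx{X})\mtx{X}$, bounds $f(\mtx{X}) \psdle f(L)\Id$, and then invokes the Conjugation Rule; both yield the identical relation, so this is a cosmetic difference.

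There is, however, a concrete error in your optimization step for the expectation bound. You propose $\theta = \min\{\sqrt{2\log d/v(\mtx{Y})},\ 3/L\}$. The second option sits on the boundary where the factor $(1-\theta L/3)^{-1}$ diverges, so it cannot be used at all. And the first option does not yield the claimed bound either: plugging $\theta = \sqrt{2\log d/v}$ into $h(\theta) = \frac{\log d}{\theta} + \frac{\theta v/2}{1-\theta L/3}$ gives $h(\theta) = \sqrt{v\log d/2} + \frac{\sqrt{v\log d/2}}{1-\theta L/3}$, which always exceeds $\sqrt{2v\log d} + \frac{1}{3}L\log d$ whenever $L > 0$, because the required inequality reduces to $(1 - \theta L/3)^{-1} \leq 1$. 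The fix is to choose
$$
\theta^{\star} \;=\; \frac{1}{\sqrt{v(\mtx{Y})/(2\log d)} + L/3} \;\in\; (0, 3/L),
$$
for which one computes $1/\theta^{\star} = \sqrt{v/(2\log d)} + L/3$ and $\frac{\theta^{\star}/2}{1-\theta^{\star}L/3} = \sqrt{\log d/(2v)}$, so that $h(\theta^{\star}) = \sqrt{v\log d/2} + \frac{1}{3}L\log d + \sqrt{v\log d/2} = \sqrt{2v(\mtx{Y})\log d} + \frac{1}{3}L\log d$ exactly. With this substitution your argument closes, and the dilation step you describe for the rectangular case is correct as written.
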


\noindent
The proof of Theorem~\ref{thm:matrix-bernstein-herm} appears below in \S\ref{sec:bernstein-proof}.

\subsection{Discussion}

Theorem~\ref{thm:matrix-bernstein-herm} also yields information about the minimum eigenvalue of an independent sum of $d$-dimensional Hermitian matrices.  Suppose that the independent random matrices satisfy
$$
\Expect \mtx{X}_k = \mtx{0}
\quad\text{and}\quad
\lambda_{\min}(\mtx{X}_k) \geq - \underline{L}
\quad\text{for each index $k$.}
$$
Applying the expectation bound~\eqref{eqn:matrix-bernstein-expect} to $-\mtx{Y}$, we obtain
\begin{equation} \label{eqn:matrix-bernstein-lower-expect}
\Expect \lambda_{\min}(\mtx{Y}) \geq - \sqrt{2 v(\mtx{Y}) \log d} - \frac{1}{3} \underline{L} \, \log d.
\end{equation}
We can use~\eqref{eqn:matrix-bernstein-tail} to develop a tail bound.
For $t \geq 0$,
$$
\Prob{ \lambda_{\min}(\mtx{Y}) \leq - t }
	\leq d \cdot \exp\left( \frac{-t^2/2}{v(\mtx{Y}) + \underline{L} t/3} \right).
$$
Let us emphasize that the bounds for $\lambda_{\max}(\mtx{Y})$ and $\lambda_{\min}(\mtx{Y})$
may diverge because the two parameters $L$ and $\underline{L}$ can take sharply different values.
This fact indicates that the maximum eigenvalue bound in Theorem~\ref{thm:matrix-bernstein-herm}
is a less strict assumption than the spectral norm bound in Theorem~\ref{thm:matrix-bernstein-rect}.

\subsection{Bounds for the Matrix Mgf and Cgf}

In establishing the matrix Bernstein inequality, the main challenge is to obtain
an appropriate bound for the matrix mgf and cgf of a zero-mean random matrix
whose norm satisfies a uniform bound.  We do not present the sharpest
estimate possible, but rather the one that leads most directly to the useful
results stated in Theorem~\ref{thm:matrix-bernstein-herm}.

\begin{lemma}[Matrix Bernstein: Mgf and Cgf Bound] \label{lem:matrix-bernstein-mgf}
Suppose that $\mtx{X}$ is a random Hermitian matrix that satisfies
$$
\Expect \mtx{X} = \mtx{0}
\quad\text{and}\quad
\lambda_{\max}( \mtx{X} ) \leq L.
$$
Then, for $0 < \theta < 3/L$,
$$
\begin{aligned}
\Expect \econst^{\theta \mtx{X}}
	\psdle \exp\left( \frac{\theta^2/2}{1-\theta L/3}
	\cdot \Expect{} \mtx{X}^2 \right)
\quad\text{and}\quad
\log{} \Expect \econst^{\theta \mtx{X}}
	\psdle \frac{\theta^2/2}{1-\theta L/3}
	\cdot \Expect{} \mtx{X}^2.
\end{aligned}
$$
\end{lemma}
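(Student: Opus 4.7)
The plan is to lift a one-dimensional scalar inequality to the semidefinite setting via the Transfer Rule, exactly mirroring the strategy used in Lemma~\ref{lem:matrix-chernoff-mgf}. The central ingredient will be the following scalar statement: for every $x \leq L$ and every $\theta$ with $0 < \theta < 3/L$,
\begin{equation} \label{eqn:bernstein-scalar}
\econst^{\theta x} \ \leq\ 1 + \theta x + g(\theta) \, x^2,
\qquad\text{where}\qquad
g(\theta) = \frac{\theta^2/2}{1 - \theta L /3}.
\end{equation}
Once \eqref{eqn:bernstein-scalar} is in hand, the Transfer Rule~\eqref{eqn:transfer-rule} (applied to the Hermitian matrix $\mtx{X}$, whose eigenvalues lie in $(-\infty, L]$) gives $\econst^{\theta\mtx{X}} \psdle \Id + \theta \mtx{X} + g(\theta)\, \mtx{X}^2$. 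Taking expectations (which preserves the semidefinite order) and using $\Expect \mtx{X} = \mtx{0}$ yields
\[
\Expect \econst^{\theta\mtx{X}} \psdle \Id + g(\theta) \cdot \Expect \mtx{X}^2 \psdle \exp\bigl(g(\theta) \cdot \Expect \mtx{X}^2\bigr),
\]
where the last step uses the operator inequality $\Id + \mtx{A} \psdle \econst^{\mtx{A}}$ (derived by Transfer Rule from $1 + a \leq \econst^a$) applied to the positive-semidefinite matrix $g(\theta) \Expect \mtx{X}^2$. The cgf statement then follows on taking the matrix logarithm, because $\log$ is operator monotone by~\eqref{eqn:log-monotone}.

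To establish the scalar inequality~\eqref{eqn:bernstein-scalar}, I will split at $x = L$. First, for $x = L$ (the largest allowed value), expand the exponential as a Taylor series and bound the tail using $(q+2)! \geq 2 \cdot 3^q$ for $q \geq 0$:
\[
\econst^{\theta L} - 1 - \theta L \;=\; \frac{(\theta L)^2}{2}\sum_{q=0}^\infty \frac{2\,(\theta L)^q}{(q+2)!} \;\leq\; \frac{(\theta L)^2}{2}\sum_{q=0}^\infty \left(\frac{\theta L}{3}\right)^{\!q} \;=\; \frac{\theta^2 L^2/2}{1 - \theta L/3},
\]
where the geometric series converges precisely because $\theta L < 3$. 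This shows \eqref{eqn:bernstein-scalar} holds at $x = L$.

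Next, to propagate this to all $x \leq L$, consider $\phi(x) = \econst^{\theta x} - 1 - \theta x - g(\theta)\, x^2$. I will observe that $\phi(0) = 0$, $\phi'(0) = 0$, and $\phi''(x) = \theta^2 \econst^{\theta x} - 2 g(\theta)$ is monotonically increasing in $x$. Direct computation at the origin gives $\phi''(0) = \theta^2 - 2g(\theta) = -\theta^3 L /(3(1-\theta L/3)) < 0$, so $\phi''$ is negative on $(-\infty, 0]$, making $\phi$ concave there; combined with $\phi(0) = \phi'(0) = 0$, this gives $\phi \leq 0$ on $(-\infty, 0]$. On $[0, L]$, the monotonicity of $\phi''$ means it changes sign at most once, so $\phi$ has at most one interior critical point in $(0, L)$ (a local minimum). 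Since $\phi(0) = 0$ and $\phi(L) \leq 0$ by the Taylor bound, the single-well shape forces $\phi \leq 0$ throughout $[0, L]$. This completes~\eqref{eqn:bernstein-scalar}.

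The main obstacle is really just the scalar inequality: one must pick the right rational function $g(\theta)$ so that the geometric-series comparison closes up at $x = L$ while the monotonicity argument covers the remaining $x \leq L$. The matrix part is then routine, because the entire lifting is handled uniformly by the Transfer Rule, the operator monotonicity of $\log$, and the fact that $\Expect \mtx{X} = \mtx{0}$ kills the linear term so that only the (positive-semidefinite) quadratic contribution $g(\theta)\Expect \mtx{X}^2$ remains inside the exponential.
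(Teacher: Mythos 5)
Your proof is correct and follows essentially the same strategy as the paper's: both reduce to the scalar inequality $\econst^{\theta x} \leq 1 + \theta x + g(\theta)x^2$ on $(-\infty, L]$, lift it to matrices, use $\Expect \mtx{X} = \mtx{0}$ to drop the linear term, apply $\Id + \mtx{A} \psdle \econst^{\mtx{A}}$, and take logarithms by operator monotonicity. The only divergence is in how the scalar bound is established: the paper writes $\econst^{\theta x} = 1 + \theta x + x^2 f(x)$ with $f(x) = (\econst^{\theta x} - \theta x - 1)/x^2$, asserts $f$ is increasing so that $f(x) \leq f(L)$, bounds $f(L)$ by the same Taylor-tail comparison you use at $x=L$, and then lifts via the Transfer Rule applied to $f$ followed by the Conjugation Rule; you instead argue directly on the difference $\phi(x) = \econst^{\theta x} - 1 - \theta x - g(\theta)x^2$ using the monotonicity of $\phi''$ together with $\phi(0)=\phi'(0)=0$, $\phi''(0)<0$, and the endpoint check $\phi(L)\leq 0$, and then apply the Transfer Rule once to the whole polynomial. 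Your route makes the scalar claim fully self-contained (the paper leaves ``$f$ is increasing'' unproved) at the cost of a slightly more delicate calculus argument, and it avoids the Conjugation Rule entirely.
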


\begin{proof}
Fix the parameter $\theta > 0$.  In the exponential $\econst^{\theta \mtx{X}}$, we would like to expose the random matrix $\mtx{X}$ and its square $\mtx{X}^2$ so that we can exploit information about the mean and variance.  To that end, we write
\begin{equation} \label{eqn:bernstein-mgf-pf-1}
\econst^{\theta \mtx{X}}
	= \Id + \theta \mtx{X} + (\econst^{\theta \mtx{X}} - \theta \mtx{X} - \Id)
	= \Id + \theta \mtx{X} + \mtx{X} \cdot f(\mtx{X}) \cdot \mtx{X},
\end{equation}
where $f$ is a function on the real line:
$$
f(x) = \frac{\econst^{\theta x} - \theta x - 1}{x^2}
\quad\text{for $x \neq 0$}\quad\text{and}\quad
f(0) = \frac{\theta^2}{2}.
$$
The function $f$ is increasing because its derivative is positive.  Therefore, $f(x) \leq f(L)$ when $x \leq L$.  By assumption, the eigenvalues of $\mtx{X}$ do not exceed $L$, so the Transfer Rule~\eqref{eqn:transfer-rule} implies that
\begin{equation} \label{eqn:bernstein-mgf-pf-2}
f(\mtx{X}) \psdle f(L) \cdot \Id.
\end{equation}
The Conjugation Rule~\eqref{eqn:conjugation-rule} allows us to introduce the relation~\eqref{eqn:bernstein-mgf-pf-2} into our expansion~\eqref{eqn:bernstein-mgf-pf-1} of the matrix exponential:
$$
\econst^{\theta \mtx{X}}
	\psdle \Id + \theta \mtx{X} + \mtx{X}( f(L) \cdot \Id ) \mtx{X}
	= \Id + \theta \mtx{X} + f(L) \cdot \mtx{X}^2.
$$
This relation is the basis for our matrix mgf bound.

To obtain the desired result, we develop a further estimate for $f(L)$.
This argument involves a clever application of Taylor series:
\begin{equation*} \label{eqn:bernstein-mgf-pf-4}
f(L) = \frac{\econst^{\theta L} - \theta L - 1}{L^2}
	= \frac{1}{L^2} \sum_{q=2}^\infty \frac{(\theta L)^q}{q!}
	\leq \frac{\theta^{2}}{2} \sum_{q=2}^\infty \frac{(\theta L)^{q-2}}{3^{q-2}}
	= \frac{\theta^2/2}{1- \theta L/3}.
\end{equation*}
The second expression is simply the Taylor expansion of the fraction, viewed as a function of $\theta$.  We obtain the inequality by factoring out $(\theta L)^2/2$ from each term in the series and invoking the bound $q! \geq 2\cdot 3^{q-2}$, valid for each $q = 2, 3, 4,\dots$.  Sum the geometric series to obtain the final identity.

To complete the proof of the mgf bound, we combine the last two displays:
$$
\econst^{\theta \mtx{X}}
	\psdle \Id + \theta \mtx{X} + \frac{\theta^2/2}{1- \theta L/3} \cdot \mtx{X}^2.
$$
This estimate is valid because $\mtx{X}^2$ is positive semidefinite.
Expectation preserves the semidefinite order, so
$$
\Expect \econst^{\theta \mtx{X}}
	\psdle \Id +\frac{\theta^2/2}{1-\theta L/3} \cdot \Expect{} \mtx{X}^2
	\psdle \exp\left( \frac{\theta^2/2}{1 - \theta L/3} \cdot \Expect{} \mtx{X}^2 \right).
$$
We have used the assumption that $\mtx{X}$ has zero mean.
The second semidefinite relation follows when we apply the Transfer Rule~\eqref{eqn:transfer-rule}
to the inequality $1 + a \leq \econst^a$, which holds for $a \in \mathbb{R}$.

To obtain the semidefinite bound for the cgf, we extract the logarithm of the mgf bound using the fact~\eqref{eqn:log-monotone} that the logarithm is operator monotone.
\end{proof}

\subsection{Proof of the Hermitian Case}

We are prepared to establish the matrix Bernstein inequalities for random Hermitian matrices.

\begin{proof}[Proof of Theorem~\ref{thm:matrix-bernstein-herm}]
Consider a finite sequence $\{ \mtx{X}_k \}$ of random Hermitian matrices with dimension $d$.  Assume that
$$
\Expect \mtx{X}_k = \mtx{0}
\quad\text{and}\quad
\lambda_{\max}( \mtx{X}_k ) \leq L
\quad\text{for each index $k$.}
$$
The matrix Bernstein cgf bound, Lemma~\ref{lem:matrix-bernstein-mgf}, provides that
\begin{equation} \label{eqn:bernstein-pf-cgf}
\log{} \Expect \econst^{\theta \mtx{X}_k}
	\psdle g(\theta) \cdot \Expect{}\mtx{X}_k^2
	\quad\text{where}\quad
	g(\theta) = \frac{\theta^2/2}{1-\theta L/3}
	\quad\text{for $0 < \theta < 3/L$.}
\end{equation}
Introduce the sum $\mtx{Y} = \sum_k \mtx{X}_k$. %

We begin with the bound~\eqref{eqn:matrix-bernstein-expect} for the expectation $\Expect \lambda_{\max}(\mtx{Y})$.
Invoke the master inequality, relation~\eqref{eqn:master-upper-expect} in Theorem~\ref{thm:master-ineq}, to find that
$$
\begin{aligned}
\Expect \lambda_{\max}(\mtx{Y})
	&\leq \inf_{\theta > 0} \ \frac{1}{\theta} \log{} \trace \exp\left(
		\sum\nolimits_k \log{} \Expect \econst^{\theta \mtx{X}_k} \right) \\
	&\leq \inf_{0 < \theta < 3/L} \ \frac{1}{\theta} \log{} \trace \exp\bigg(
		g(\theta) \sum\nolimits_k \Expect{} \mtx{X}_k^2 \bigg) \\
	&= \inf_{0 < \theta < 3/L} \ \frac{1}{\theta} \log{} \trace \exp\left(
	g(\theta) \cdot \Expect{} \mtx{Y}^2 \right).
\end{aligned}
$$
As usual, to move from the first to the second line,
we invoke the fact~\eqref{eqn:exp-trace-monotone} that the trace exponential is
monotone to introduce the semidefinite bound~\eqref{eqn:bernstein-pf-cgf}
for the cgf.  Then we use the additivity rule~\eqref{eqn:matrix-variance-add}
for the variance of an independent sum to identify
$\Expect{} \mtx{Y}^2$.
The rest of the argument glides along a well-oiled track:
$$
\begin{aligned}
\Expect \lambda_{\max}(\mtx{Y})
	&\leq \inf_{0 < \theta < 3/L} \ \frac{1}{\theta} \log{} \left[ d \, \lambda_{\max}\left( \exp\left(
		g(\theta) \cdot \Expect{} \mtx{Y}^2 \right) \right) \right] \\
	&= \inf_{0 < \theta < 3/L} \ \frac{1}{\theta} \log{} \left[ d \, \exp\left(
		g(\theta) \cdot \lambda_{\max}\left( \Expect{} \mtx{Y}^2 \right) \right)\right] \\
	&\leq \inf_{0 < \theta < 3/L} \ \frac{1}{\theta} \log{} \left[ d \, \exp\left(
		g(\theta) \cdot v(\mtx{Y}) \right)\right] \\
	&= \inf_{0 < \theta < 3/L} \ \left[ \frac{\log d}{\theta} +
		\frac{\theta/2}{1 - \theta L/3} \cdot v(\mtx{Y}) \right].
\end{aligned}
$$
In the first inequality, we bound the trace of the exponential by the dimension $d$ times the maximum eigenvalue.  The next line follows from the Spectral Mapping Theorem, Proposition~\ref{prop:spectral-mapping}.  In the third line, we identify the matrix variance statistic $v(\mtx{Y})$ from~\eqref{eqn:matrix-bernstein-sigma2}.  Afterward, we extract the logarithm and simplify.  Finally, we compute the infimum to complete the proof of~\eqref{eqn:matrix-bernstein-expect}.  For reference, the optimal argument is
$$
\theta = \frac{6 L \log d + 9 \sqrt{2v(\mtx{Y}) \log d}}{2 L^2 t + 9 (\mtx{Y}) + 6L\sqrt{2(\mtx{Y}) \log d}}.
$$
We recommend using a computer algebra system to confirm this point.

Next, we develop the tail bound~\eqref{eqn:matrix-bernstein-tail} for $\lambda_{\max}(\mtx{Y})$.  Owing to the master tail inequality~\eqref{eqn:master-upper-tail}, we have
$$
\begin{aligned}
\Prob{ \lambda_{\max}(\mtx{Y}) \geq t }
	&\leq \inf_{\theta > 0} \ \econst^{-\theta t} \, \trace \exp\left(
	\sum\nolimits_k \log{} \Expect \econst^{\theta \mtx{X}_k} \right) \\
	&\leq \inf_{0 < \theta < 3/L} \ \econst^{-\theta t} \, \trace \exp\bigg(
		g(\theta) \sum\nolimits_k \Expect{} \mtx{X}_k^2 \bigg) \\
	&\leq \inf_{0 < \theta < 3/L} \ d \, \econst^{-\theta t} \, \exp\left(
		g(\theta) \cdot v(\mtx{Y}) \right).
\end{aligned}
$$
The justifications are the same as before.  The exact value of the infimum is messy, so we proceed with the inspired choice $\theta = t/(v(\mtx{Y}) + L t / 3)$, which results in the elegant bound~\eqref{eqn:matrix-bernstein-tail}.
\end{proof} 

\subsection{Proof of the General Case}

Finally, we explain how to derive Theorem~\ref{thm:matrix-bernstein-rect}, for general matrices, from Theorem~\ref{thm:matrix-bernstein-herm}.  This result follows immediately when we apply the matrix Bernstein bounds for Hermitian matrices to the Hermitian dilation of a sum of general matrices.

\begin{proof}[Proof of Theorem~\ref{thm:matrix-bernstein-rect}]
Consider a finite sequence $\{ \mtx{S}_k \}$ of $d_1 \times d_2$ random matrices, and assume that
$$
\Expect \mtx{S}_k = \mtx{0}
\quad\text{and}\quad
\norm{\mtx{S}_k} \leq L
\quad\text{for each index $k$.}
$$
We define the two random matrices
$$
\mtx{Z} = \sum\nolimits_k \mtx{S}_k
\quad\text{and}\quad
\mtx{Y} = \coll{H}(\mtx{Z})
	= \sum\nolimits_k \coll{H}(\mtx{S}_k)
$$
where $\coll{H}$ is the Hermitian dilation~\eqref{eqn:herm-dilation}.
The second expression for $\mtx{Y}$ follows from the property that the
dilation is a real-linear map.

We will apply Theorem~\ref{thm:matrix-bernstein-herm} to analyze $\norm{ \mtx{Z} }$.
First, recall the fact~\eqref{eqn:herm-dilation-norm} that
$$
\norm{ \mtx{Z} } = \lambda_{\max}(\coll{H}(\mtx{Z})) = \lambda_{\max}(\mtx{Y}).
$$
Next, we express the variance~\eqref{eqn:matrix-bernstein-sigma2} of the random Hermitian matrix $\mtx{Y}$ in terms of the general matrix $\mtx{Z}$.  Indeed, the calculation~\eqref{eqn:var-stat-dilation} of the variance statistic
of a dilation shows that
$$
v(\mtx{Y}) = v(\coll{H}(\mtx{Y})) = v(\mtx{Z}).
$$
Recall that the matrix variance statistic $v(\mtx{Z})$ defined in~\eqref{eqn:matrix-bernstein-sigma2-rect}
coincides with the general definition from~\eqref{eqn:matrix-variance-rect}.
Finally, we invoke Theorem~\ref{thm:matrix-bernstein-herm} to establish Theorem~\ref{thm:matrix-bernstein-rect}.
\end{proof}

\section{Notes}

The literature contains a wide variety of Bernstein-type inequalities in the scalar case, and the matrix case is no different.  The applications of the matrix Bernstein inequality are also numerous.  We only give a brief summary here.

\subsection{Matrix Bernstein Inequalities}

David Gross~\cite{Gro11:Recovering-Low-Rank} and Ben Recht~\cite{Rec11:Simpler-Approach} used the approach of Ahlswede \& Winter~\cite{AW02:Strong-Converse} to develop two different versions of the matrix Bernstein inequality.
These papers helped to popularize the use matrix concentration inequalities in mathematical
signal processing and statistics.  Nevertheless, their results involve a suboptimal variance parameter of the form
$$
v_{\textrm{AW}}(\mtx{Y}) = \sum\nolimits_k \norm{\Expect{} \mtx{X}_k^2 }.
$$
This parameter can be significantly larger than the matrix variance statistic~\eqref{eqn:matrix-bernstein-sigma2} that appears in Theorem~\ref{thm:matrix-bernstein-herm}.  They do coincide in some special cases, such as when the summands are independent and identically distributed.

Oliveira~\cite{Oli10:Concentration-Adjacency} established the first version of the matrix Bernstein inequality that yields the correct matrix variance statistic~\eqref{eqn:matrix-bernstein-sigma2}.  He accomplished this task with an elegant application of the Golden--Thompson inequality~\eqref{eqn:golden-thompson}.  His method even gives a result, called the matrix Freedman inequality, that holds for matrix-valued martingales.  His bound is roughly equivalent with Theorem~\ref{thm:matrix-bernstein-herm}, up to the precise value of the constants.

The matrix Bernstein inequality we have stated here, Theorem~\ref{thm:matrix-bernstein-herm}, first appeared in the paper~\cite[\S6]{Tro11:User-Friendly-FOCM} by the author of these notes.
The bounds for the expectation are new.  The argument is based on Lieb's Theorem, and it also delivers a matrix Bennett inequality. %
This paper also describes how to establish matrix Bernstein inequalities for sums of unbounded random matrices, given some control over the matrix moments.

The research in~\cite{Tro11:User-Friendly-FOCM} is independent from Oliveira's work~\cite{Oli10:Concentration-Adjacency}, although Oliveira's paper motivated the subsequent article~\cite{Tro11:Freedmans-Inequality} and the technical report~\cite{Tro11:User-Friendly-Martingale-TR}, which explain how to use Lieb's Theorem to study matrix martingales.  The technical report~\cite{GT11:Tail-Bounds} develops a Bernstein inequality for interior eigenvalues using the Lieb--Seiringer Theorem~\cite{LS05:Stronger-Subadditivity}.

For more versions of the matrix Bernstein inequality, see Vladimir Koltchinskii's lecture notes from Saint-Flour~\cite{Kol11:Oracle-Inequalities}.  In Chapter~\ref{chap:intrinsic}, we present another extension of the matrix Bernstein
inequality that involves a smaller dimensional parameter.

\subsection{The Matrix Rosenthal--Pinelis Inequality}

The matrix Rosenthal--Pinelis inequality~\eqref{eqn:matrix-moment-ineq} is a close cousin of
the matrix Rosenthal inequality~\eqref{eqn:matrix-rosenthal}.  Both results are derived
from the noncommutative Khintchine inequality~\eqref{eqn:nc-khintchine} using the same pattern
of argument~\cite[Thm.~A.1]{CGT12:Masked-Sample}.  We believe that~\cite{CGT12:Masked-Sample}
is the first paper to recognize and state the result~\eqref{eqn:matrix-moment-ineq},
even though it is similar in spirit with the work in~\cite{Rud99:Random-Vectors}.
A self-contained, elementary proof of a related matrix Rosenthal-Pinelis inequality
appears in~\cite[Cor.~7.4]{MJCFT12:Matrix-Concentration}.

Versions of the matrix Rosenthal--Pinelis inequality first appeared
in the literature~\cite{JX03:Noncommutative-Burkholder}
on noncommutative martingales, where they were called \term{noncommutative
Burkholder inequalities}.  For an application to random matrices,
see the follow-up work~\cite{JX08:Noncommutative-Burkholder-II} by the same authors.
Subsequent papers~\cite{JZ12:Noncommutative-Martingale,JZ13:Noncommutative-Bennett}
contain related noncommutative martingale inequalities inspired
by the research in~\cite{Oli10:Sums-Random,Tro11:User-Friendly-FOCM}.

\subsection{Empirical Approximation}

Matrix approximation by random sampling is a special case of
a general method that Bernard Maurey developed to compute
entropy numbers of convex hulls.  Let us give a short presentation
of the original context, along with references to some other applications.

\subsubsection{Empirical Bounds for Covering Numbers}

Suppose that $X$ is a Banach space.  Consider the convex hull
$E = \conv\{ \vct{e}_1, \dots, \vct{e}_N \}$ of a set of $N$ points
in $X$, and assume that $\norm{ \vct{e}_k } \leq L$.
We would like to give an upper bound for the number of balls of radius
$\eps$ it takes to cover this set.

Fix a point $\vct{u} \in E$,
and express $\vct{u}$ as a convex combination:
$$
\vct{u} = \sum_{i=1}^N p_i \vct{e}_i
\quad\text{where}\quad
\sum_{i=1}^N p_i = 1
\quad\text{and}\quad
p_i \geq 0.
$$
Let $\vct{x}$ be the random vector in $X$ that takes value $\vct{e}_k$
with probability $p_k$.  We can approximate the point $\vct{u}$ as
an average $\bar{\vct{x}} = n^{-1} \sum_{k=1}^n \vct{x}_k$
of independent copies $\vct{x}_1, \dots, \vct{x}_n$ of
the random vector $\vct{x}$.  Then
$$
\Expect{} \norm{ \bar{\vct{x}}_n - \vct{u} }_X
	= \frac{1}{n} \Expect{} \norm{ \sum\nolimits_{k=1}^n (\vct{x}_k - \Expect \vct{x}) }_X
	\leq \frac{2}{n} \Expect{} \norm{ \sum\nolimits_{k=1}^n \varrho_k \vct{x}_k }_X
	\leq \frac{2}{n} \left(\Expect{} \normsq{ \sum\nolimits_{k=1}^n \varrho_k \vct{x}_k }_{X} \right)^{1/2}.
$$
The family $\{\varrho_k\}$ consists of independent Rademacher random variables.
The first inequality depends on the symmetrization procedure~\cite[Lem.~6.3]{LT91:Probability-Banach},
and the second is H{\"o}lder's.  In certain Banach spaces, a Khintchine-type inequality holds:
$$
\Expect{} \norm{ \bar{\vct{x}}_n - \vct{u} }_X
	\leq \frac{2  T_2(X)}{n} \left( \sum\nolimits_{k=1}^n \Expect{} \normsq{\vct{x}_k}_X \right)^{1/2}
	\leq \frac{2 T_2(X) L}{\sqrt{n}}.
$$
The last inequality depends on the uniform bound $\norm{\vct{e}_k} \leq L$.
This estimate controls the expected error in approximating an arbitrary point
in $E$ by randomized sampling.

The number $T_2(X)$ is called the \term{type two constant} of the Banach space $X$,
and it can be estimated in many concrete instances; see~\cite[Chap.~9]{LT91:Probability-Banach}
or~\cite[Chap.~11]{Pis89:Volume-Convex}.
For our purposes, the most relevant example is the Banach space $\mathbb{M}^{d_1 \times d_2}$
consisting of $d_1 \times d_2$ matrices equipped with the spectral norm.  Its type two constant
satisfies
$$
T_2\big(\mathbb{M}^{d_1 \times d_2}\big) \leq \mathrm{Const} \cdot \sqrt{\log(d_1 + d_2)}.
$$
This result follows from work of Tomczak--Jaegermann~\cite[Thm.~3.1(ii)]{TJ74:Moduli-Smoothness}.  In fact,
the space $\mathbb{M}^{d_1 \times d_2}$ enjoys an even stronger property with respect to 
averages, namely the noncommutative Khintchine inequality~\eqref{eqn:nc-khintchine}.

Now, suppose that the number $n$ of samples in our empirical approximation $\bar{\vct{x}}_n$
of the point $\vct{u} \in E$ satisfies
$$
n \geq \left(\frac{ 2 T_2(X) L}{\eps} \right)^2.
$$
Then the probabilistic method ensures that there is a some collection
of $\vct{u}_1, \dots, \vct{u}_n$ of points drawn with repetition from
the set $\{\vct{e}_1, \dots, \vct{e}_N\}$ that satisfies
$$
\norm{ \left( n^{-1} \sum\nolimits_{k=1}^n \vct{u}_k \right) - \vct{u} }_{X}
	\leq \eps.
$$
There are at most $N^n$ different ways to select the points $\vct{u}_k$.
It follows that we can cover the convex hull $E = \conv\{\vct{e}_1, \dots, \vct{e}_N\}$
in $X$ with at most $N^n$ norm balls of radius $\eps$.

\subsubsection{History and Applications of Empirical Approximation}

Maurey did not publish his ideas, and the method was first broadcast in a paper
of Pisier~\cite[Lem.~1]{Pis81:Remarques-Resultat}.  Another early reference
is the work of Carl~\cite[Lem.~1]{Car85:Inequalities-Bernstein}.  More recently,
this covering argument has been used to study the restricted isomorphism
behavior of a random set of rows drawn from a discrete Fourier
transform matrix~\cite{RV06:Sparse-Reconstruction}.

By now, empirical approximation has appeared in a wide range of applied contexts,
although many papers do not recognize the provenance of the method.
Let us mention some examples in machine learning.  Empirical approximation
has been used to study what functions can be approximated
by neural networks~\cite{Bar93:Universal-Approximation,LBW96:Efficient-Agnostic}.
The same idea appears in papers on sparse modeling, such as~\cite{SS08:Low-l1-Norm},
and it supports the method of random features~\cite{RR07:Random-Features}.
Empirical approximation also stands at the core of a recent algorithm
for constructing approximate Nash equilibria~\cite{Bar14:Approximate-Version}.

It is difficult to identify the earliest work in computational mathematics
that invoked the empirical method to approximate matrices.  The
paper of Achlioptas \& McSherry~\cite{AM01:Fast-Computation} on
randomized sparsification is one possible candidate.

Corollary~\ref{cor:matrix-approx-sampling}, which we use to perform the analysis
of matrix approximation by sampling, does not require the full power of
the matrix Bernstein inequality, Theorem~\ref{thm:matrix-bernstein-rect}.
Indeed, Corollary~\ref{cor:matrix-approx-sampling} can be derived from
the weaker methods of Ahlswede \& Winter~\cite{AW02:Strong-Converse};
for example, see the papers~\cite{Gro11:Recovering-Low-Rank,Rec11:Simpler-Approach}.

\subsection{Randomized Sparsification}

The idea of using randomized sparsification to accelerate spectral computations appears in a paper of Achlioptas \& McSherry~\cite{AM01:Fast-Computation,AM07:Fast-Computation}.  d'Aspr{\'e}mont~\cite{dAs11:Subsampling-Algorithms}
proposed to use sparsification to accelerate algorithms for semidefinite programming.
The paper~\cite{AKL13:Near-Optimal-Entrywise} by Achlioptas, Karnin, \& Liberty recommends sparsification
as a mechanism for data compression.

After the initial paper~\cite{AM01:Fast-Computation}, several other researchers developed sampling schemes for randomized sparsification~\cite{AHK06:Fast-Random,GT09:Error-Bounds}.  Later, Drineas \& Zouzias~\cite{DZ11:Note-Elementwise} pointed out that matrix concentration inequalities can be used to analyze this type of algorithm.  The paper~\cite{AKL13:Near-Optimal-Entrywise} refined this analysis to obtain sharper bounds.  The simple analysis here is drawn from a recent note by Kundu \& Drineas~\cite{KD14:Note-Randomized}.

\subsection{Randomized Matrix Multiplication}

The idea of using random sampling to accelerate matrix multiplication appeared in nascent form
in a paper of Frieze, Kannan, \& Vempala~\cite{FKV98:Fast-Monte-Carlo}.  The
paper~\cite{DK01:Fast-Monte-Carlo} of Drineas \& Kannan develops this idea in full generality,
and the article~\cite{DKM06:Fast-Monte-Carlo-I} of Drineas, Kannan, \& Mahoney contains a more
detailed treatment.  Subsequently, Tam{\'a}s Sarl{\'os} obtained a significant improvement in the performance of this algorithm~\cite{Sar06:Improved-Approximation}.  Rudelson \& Vershynin~\cite{RV07:Sampling-Large}
obtained the first error bound for approximate matrix multiplication with respect to the spectral norm.
The analysis that we presented is adapted from the dissertation~\cite{Zou12:Randomized-Primitives}
of Tassos Zouzias, which refines an earlier treatment by Magen \& Zouzias~\cite{MZ11:Low-Rank-Matrix-Valued}.
See the monographs of Mahoney~\cite{Mah11:Randomized-Algorithms} and Woodruff~\cite{Woo14:Sketching-Tool}
for a more extensive discussion.

\subsection{Random Features}

Our discussion of kernel methods is adapted from the book~\cite{SS98:Learning-Kernels}.
The papers~\cite{RR07:Random-Features,RR08:Weighted-Sums} of Ali Rahimi and Ben Recht
proposed the idea of using random features to summarize data for large-scale kernel machines.
The construction~\eqref{eqn:bochner-real} of a random feature map for a translation-invariant, positive-definite
kernel appears in their work.  This approach has received a significant amount of attention
over the last few years, and there has been a lot of subsequent development.
For example, the paper~\cite{KK12:Random-Feature} of Kar \& Karnick shows how to construct random features
for inner-product kernels, and the paper~\cite{HXGD14:Compact-Random} of Hamid et al.~develops random features
for polynomial kernels.  Our analysis of random features using the matrix Bernstein inequality
is drawn from the recent article~\cite{LSS+14:Randomized-Nonlinear} of Lopez-Paz et al.
The presentation here is adapted from the author's tutorial on randomized matrix approximation,
given at ICML 2014 in Beijing.  We recommend the two
papers~\cite{HXGD14:Compact-Random,LSS+14:Randomized-Nonlinear} for an up-to-date bibliography.  

\makeatletter{}%

\chapter[Results Involving the Intrinsic Dimension]{Results Involving \\ the Intrinsic Dimension} \label{chap:intrinsic}

A minor shortcoming of our matrix concentration results is the dependence on the ambient dimension of the matrix.  In this chapter, we show how to obtain a dependence on an intrinsic dimension parameter, which occasionally is much smaller than the ambient dimension.  In many cases, intrinsic dimension bounds offer only a modest improvement.  Nevertheless, there are examples where the benefits are significant enough that we can obtain nontrivial results for infinite-dimensional random matrices.

In this chapter, present a version of the matrix Chernoff inequality that involves an intrinsic dimension parameter.  We also describe a version of the matrix Bernstein inequality that involves an intrinsic dimension parameter.  The intrinsic Bernstein result usually improves on Theorem~\ref{thm:matrix-bernstein-rect}.  These results depend on a new argument that distills ideas from a paper~\cite{Min11:Some-Extensions} of Stanislav Minsker.  We omit intrinsic dimension bounds for matrix series, which the reader may wish to develop as an exercise.

To give a sense of what these new results accomplish, we revisit some of the examples from earlier chapters.  We apply the intrinsic Chernoff bound to study a random column submatrix of a fixed matrix.  We also reconsider the randomized matrix multiplication algorithm in light of the intrinsic Bernstein bound.  In each case, the intrinsic dimension parameters have an attractive interpretation in terms of the problem data.

\subsubsection{Overview}

We begin our development in \S\ref{sec:int-dim} with the definition of the intrinsic dimension of a matrix.  In \S\ref{sec:int-chernoff}, we present the intrinsic Chernoff bound and some of its consequences.  In \S\ref{sec:int-bernstein}, we describe the intrinsic Bernstein inequality and its applications.  Afterward, we describe the new ingredients that are required in the proofs.  Section~\ref{sec:new-laplace} explains how to extend the matrix Laplace transform method beyond the exponential function, and \S\ref{sec:int-dim-lemma} describes a simple but powerful lemma that allows us to obtain the dependence on the intrinsic dimension.  Section~\ref{sec:int-chernoff-proof} contains the proof of the intrinsic Chernoff bound, and \S\ref{sec:int-bernstein-proof} develops the proof of the intrinsic Bernstein bound.

\section{The Intrinsic Dimension of a Matrix} \label{sec:int-dim}

Some types of random matrices are concentrated in a small number of dimensions,
while they have little content in other dimensions.  So far,
our bounds do not account for the difference.  We need to introduce
a more refined notion of dimension that will help us to discriminate
among these examples.

\begin{defn}[Intrinsic Dimension] \label{def:int-dim}
For a positive-semidefinite matrix $\mtx{A}$,
the \term{intrinsic dimension} is the quantity
$$
\intdim(\mtx{A}) = \frac{\trace \mtx{A}}{\norm{\mtx{A}}}.
$$
\end{defn}

\noindent
We interpret the intrinsic dimension as a measure
of the number of dimensions where $\mtx{A}$ has
significant spectral content.

Let us make a few observations that support this view.
By expressing the trace and the norm in terms of the eigenvalues,
we can verify that
$$
1 \leq \intdim(\mtx{A}) \leq \rank(\mtx{A}) \leq \dim(\mtx{A}).
$$
The first inequality is attained precisely when $\mtx{A}$ has rank one, while
the second inequality is attained precisely when $\mtx{A}$ is a multiple of the
identity.
The intrinsic dimension is 0-homogeneous,
so it is insensitive to changes in the scale of the matrix $\mtx{A}$.
The intrinsic dimension is \textit{not} monotone with respect to the semidefinite order.
Indeed, we can drive the intrinsic dimension to one by increasing one eigenvalue of $\mtx{A}$
substantially.

\section{Matrix Chernoff with Intrinsic Dimension} \label{sec:int-chernoff}

Let us present an extension of the matrix Chernoff inequality.
This result controls the maximum eigenvalue of a sum of random,
positive-semidefinite matrices in terms of the intrinsic dimension of the
expectation of the sum.

\begin{thm}[Matrix Chernoff: Intrinsic Dimension] \label{thm:intdim-chernoff}
Consider a finite sequence $\{ \mtx{X}_k \}$ of random, Hermitian matrices
of the same size, and assume that
$$
0 \leq \lambda_{\min}(\mtx{X}_k)
\quad\text{and}\quad
\lambda_{\max}(\mtx{X}_k) \leq L
\quad\text{for each index $k$.}
$$
Introduce the random matrix
$$
\mtx{Y} = \sum\nolimits_k \mtx{X}_k.
$$
Suppose that we have a semidefinite upper bound $\mtx{M}$ for the expectation $\Expect \mtx{Y}$:
$$
\mtx{M} \psdge \Expect \mtx{Y} = \sum\nolimits_k \Expect \mtx{X}_k.
$$
Define an intrinsic dimension bound and a mean bound:
$$
d = \intdim\left( \mtx{M} \right)
\quad\text{and}\quad
\mu_{\max} = \lambda_{\max}( \mtx{M} ).
$$
Then, for $\theta > 0$,
\begin{equation} \label{eqn:intdim-chernoff-expect}
\Expect \lambda_{\max}(\mtx{Y}) \leq \frac{\econst^{\theta} - 1}{\theta} \cdot \mu_{\max}
	+ \frac{1}{\theta} \cdot L \, \log(2d).
\end{equation}
Furthermore,
\begin{equation} \label{eqn:intdim-chernoff-upper}
\Prob{ \lambda_{\max}(\mtx{Y}) \geq (1+\eps) \mu_{\max} }
	\leq 2 d \cdot \left[ \frac{\econst^{\eps}}{(1+\eps)^{1+\eps}} \right]^{\mu_{\max}/L}
	\quad\text{for $\eps \geq L/\mu_{\max}$.}
\end{equation}
\end{thm}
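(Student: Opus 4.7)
The plan is to adapt the matrix Laplace transform machinery from Chapter~\ref{chap:matrix-lt} by replacing the exponential with a function that vanishes at the origin, so that the intrinsic dimension of the variance proxy $\mtx{M}$ can take over the role of the ambient dimension. The natural choice is $\varphi_\theta(x) = \econst^{\theta x} - 1$ for $\theta > 0$, which is nonnegative and nondecreasing on $[0, \infty)$. Since $\mtx{Y}$ is positive semidefinite (being a sum of PSD matrices), Markov's inequality together with the fact that $\varphi_\theta(\mtx{Y})$ is PSD yields
\begin{equation*}
\Prob{\lambda_{\max}(\mtx{Y}) \geq t}
  \leq \frac{\Expect \varphi_\theta(\lambda_{\max}(\mtx{Y}))}{\varphi_\theta(t)}
  \leq \frac{\Expect \trace \varphi_\theta(\mtx{Y})}{\econst^{\theta t} - 1}
  = \frac{\Expect \trace \econst^{\theta \mtx{Y}} - d_{\textrm{amb}}}{\econst^{\theta t} - 1}.
\end{equation*}

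The central new ingredient is the elementary intrinsic dimension lemma foreshadowed in \S\ref{sec:int-dim-lemma}: if $\psi:[0,\infty)\to[0,\infty)$ satisfies $\psi(0)=0$ and $x \mapsto \psi(x)/x$ is nondecreasing, then every PSD matrix $\mtx{A}$ obeys $\trace \psi(\mtx{A}) \leq \intdim(\mtx{A}) \cdot \psi(\norm{\mtx{A}})$; this follows by bounding each eigenvalue term $\psi(\lambda_k) \leq (\lambda_k/\norm{\mtx{A}})\psi(\norm{\mtx{A}})$ and summing. Now chain the subadditivity of matrix cgfs (Lemma~\ref{lem:cgf-indep}) with the PSD cgf estimate (Lemma~\ref{lem:matrix-chernoff-mgf}) and the trace-exponential monotonicity~\eqref{eqn:exp-trace-monotone} applied through the semidefinite upper bound $\mtx{M} \psdge \Expect \mtx{Y}$:
\begin{equation*}
\Expect \trace \econst^{\theta \mtx{Y}}
  \leq \trace \exp\bigl(g(\theta)\, \mtx{M}\bigr),
  \qquad g(\theta) = \frac{\econst^{\theta L} - 1}{L}.
\end{equation*}
Subtracting $d_{\textrm{amb}}$ from both sides rewrites the left side as $\trace \psi(g(\theta)\mtx{M})$ for $\psi(x) = \econst^x - 1$, and the intrinsic dimension lemma (using scale invariance $\intdim(g(\theta)\mtx{M}) = \intdim(\mtx{M}) = d$) then gives the master estimate
\begin{equation*}
\Expect \trace \varphi_\theta(\mtx{Y}) \leq d \cdot \bigl(\econst^{g(\theta)\mu_{\max}} - 1\bigr).
\end{equation*}

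Both conclusions now fall out by the usual optimization. For the tail bound~\eqref{eqn:intdim-chernoff-upper}, I set $t = (1+\eps)\mu_{\max}$ and $\theta = L^{-1}\log(1+\eps)$ as in the proof of Theorem~\ref{thm:matrix-chernoff}, giving $g(\theta)\mu_{\max} = \eps \mu_{\max}/L$. The ratio in the Markov step simplifies to $d \cdot (\econst^{\eps \mu_{\max}/L}-1)/((1+\eps)^{(1+\eps)\mu_{\max}/L}-1)$. Under the hypothesis $\eps \geq L/\mu_{\max}$, the inequality $(1+\eps)\log(1+\eps) \geq \eps$ forces $(1+\eps)^{(1+\eps)\mu_{\max}/L} \geq \econst > 2$, which lets me replace the denominator's ``$-1$'' at the cost of a factor $2$ and drop the numerator's ``$-1$'' for free, yielding~\eqref{eqn:intdim-chernoff-upper}. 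For the expectation bound~\eqref{eqn:intdim-chernoff-expect}, I mimic Proposition~\ref{prop:matrix-expect}: by Jensen's inequality, $\Expect \lambda_{\max}(\mtx{Y}) \leq \theta^{-1} \log \Expect \lambda_{\max}(\econst^{\theta \mtx{Y}})$, and since $\mtx{Y} \psdge \mtx{0}$ I can write $\lambda_{\max}(\econst^{\theta \mtx{Y}}) \leq 1 + \trace(\econst^{\theta \mtx{Y}} - \Id)$. The master estimate then gives $\Expect \lambda_{\max}(\mtx{Y}) \leq \theta^{-1} \log(1 + d(\econst^{g(\theta)\mu_{\max}}-1)) \leq \theta^{-1}(g(\theta)\mu_{\max} + \log(2d))$, using $1 + d(\econst^a-1) \leq 2d\econst^a$ for $d \geq 1$; rescaling $\theta \mapsto \theta/L$ converts $g(\theta)/\theta$ into $(\econst^\theta - 1)/\theta$, matching~\eqref{eqn:intdim-chernoff-expect}.

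The main conceptual obstacle is that the pure exponential cannot be controlled by intrinsic dimension because it does not vanish at the origin; the whole extension therefore hinges on the switch to $\varphi_\theta$ and the corresponding generalized Laplace transform argument developed in \S\ref{sec:new-laplace}. A secondary wrinkle is that replacing $\Expect \mtx{Y}$ by its semidefinite upper bound $\mtx{M}$ must be done at the level of $\trace \exp$ rather than inside $\varphi_\theta$, which is exactly what~\eqref{eqn:exp-trace-monotone} permits. The factor of $2$ in the leading constant and the restriction $\eps \geq L/\mu_{\max}$ in~\eqref{eqn:intdim-chernoff-upper} are the only artifacts of the subtract-then-absorb manipulation, and together they let the ambient-dimension factor $d_{\textrm{amb}}$ of Theorem~\ref{thm:matrix-chernoff} be replaced by $2 \intdim(\mtx{M})$.
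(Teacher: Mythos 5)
Your proof is correct and follows essentially the same route as the paper: switch to $\varphi_\theta(x) = \econst^{\theta x}-1$ so that Proposition~\ref{prop:matrix-lt-general} can be invoked, bound the expected trace of $\varphi_\theta(\mtx{Y})$ via Lemma~\ref{lem:matrix-chernoff-mgf}, trace-exponential monotonicity through $\mtx{M}$, and the intrinsic dimension lemma, then optimize. The only cosmetic divergences are in bookkeeping: you absorb the factor of two by noting $(1+\eps)^{(1+\eps)\mu_{\max}/L}\geq\econst>2$ directly, where the paper uses $\econst^a/(\econst^a-1)\leq 1+1/a$; and for the expectation bound you write out the Jensen step and the trace comparison explicitly rather than phrasing it through $\psi^{-1}$ as the paper does --- both are the same estimate.
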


\noindent
The proof of this result appears below in~\S\ref{sec:int-chernoff-proof}.

\subsection{Discussion}

Theorem~\ref{thm:intdim-chernoff} is almost identical with the parts of
the basic matrix Chernoff inequality that concern the maximum eigenvalue
$\lambda_{\max}(\mtx{Y})$.
Let us call attention to the differences.  The key advantage is that the current
result depends on the intrinsic dimension of the matrix $\mtx{M}$
instead of the ambient dimension.  When the eigenvalues of $\mtx{M}$ decay,
the improvement can be dramatic.  We do suffer a small cost in the
extra factor of two, and the tail bound is restricted to a smaller range
of the parameter $\eps$.  Neither of these limitations is particularly
significant.

We have chosen to frame the result in terms of the upper bound $\mtx{M}$
because it can be challenging to calculate the mean $\Expect \mtx{Y}$
exactly.  The statement here allows us to draw conclusions directly from the
upper bound $\mtx{M}$.  These estimates do not follow formally from a
result stated for $\Expect \mtx{Y}$ because the intrinsic dimension
is not monotone with respect to the semidefinite order.

A shortcoming of Theorem~\ref{thm:intdim-chernoff} is that it does
not provide any information about $\lambda_{\min}(\mtx{Y})$.
Curiously, the approach we use to prove the result just does not work for
the minimum eigenvalue.

\subsection{Example: A Random Column Submatrix}

To demonstrate the value of Theorem~\ref{thm:intdim-chernoff},
let us return to one of the problems we studied in \S\ref{sec:rdm-submatrix}.
We can now develop a refined estimate for the expected norm of a random
column submatrix drawn from a fixed matrix.

In this example, we consider a fixed $m \times n$ matrix $\mtx{B}$,
and we let $\{ \delta_k \}$ be an independent family of $\textsc{bernoulli}(p/n)$
random variables.  We form the random submatrix
$$
\mtx{Z} = \sum\nolimits_k \delta_k \, \vct{b}_{:k} \mathbf{e}_k^\adj
$$
where $\vct{b}_{:k}$ is the $k$th column of $\mtx{B}$.  This random submatrix
contains an average of $p$ nonzero columns from $\mtx{B}$.
To study the norm of $\mtx{Z}$, we consider the positive-semidefinite
random matrix
$$
\mtx{Y} = \mtx{ZZ}^\adj = \sum_{k=1}^n \delta_k \, \vct{b}_{:k} \vct{b}_{:k}^\adj.
$$
This time, we invoke Theorem~\ref{thm:intdim-chernoff} to obtain a new estimate
for the maximum eigenvalue of $\mtx{Y}$.

We need a semidefinite bound $\mtx{M}$ for the mean $\Expect \mtx{Y}$ of the random matrix.
In this case, the exact value is available:
$$
\mtx{M} = \Expect \mtx{Y} = \frac{p}{n} \, \mtx{BB}^\adj.
$$
We can easily calculate the intrinsic dimension of this matrix:
$$
d = \intdim(\mtx{M})
	= \intdim \left( \frac{p}{n} \mtx{BB}^\adj \right)
	= \intdim( \mtx{BB}^\adj )
	= \frac{\trace(\mtx{BB}^\adj)}{\norm{\mtx{BB}^\adj}}
	= \frac{\fnormsq{\mtx{B}}}{\normsq{\mtx{B}}}
	= \strank(\mtx{B}).
$$
The second identity holds because the intrinsic dimension is scale invariant.
The last relation is simply the definition~\eqref{eqn:stable-rank} of the stable rank.
The maximum eigenvalue of $\mtx{M}$ verifies
$$
\lambda_{\max}(\mtx{M}) = \frac{p}{n} \lambda_{\max}(\mtx{BB}^\adj)
	= \frac{p}{n} \normsq{\mtx{B}}.
$$
The maximum norm $L$ of any term in the sum $\mtx{Y}$ satisfies $L = \max\nolimits_k \normsq{\smash{\vct{b}_{:k}}}$.

We may now apply the intrinsic Chernoff inequality.
The expectation bound~\eqref{eqn:intdim-chernoff-expect} with $\theta = 1$ delivers
$$
\Expect{} \normsq{\mtx{Z}}
	= \Expect \lambda_{\max}(\mtx{Y})
	\leq 1.72 \cdot \frac{p}{n} \cdot \normsq{\mtx{B}}
	+ \log( 2 \strank(\mtx{B}) ) \cdot \max\nolimits_k \normsq{\smash{\vct{b}_{:k}}}.
$$
In the earlier analysis, we obtained a similar bound~\eqref{eqn:column-submatrix}.
The new result depends on the logarithm of the stable rank instead of $\log m$,
the logarithm of the number of rows of $\mtx{B}$.  When the stable rank of $\mtx{B}$
is small---meaning that many rows are almost collinear---then the revised estimate
can result in a substantial improvement.

\section{Matrix Bernstein with Intrinsic Dimension} \label{sec:int-bernstein}

Next, we present an extension of the matrix Bernstein inequality.  These results
provide tail bounds for an independent sum of bounded random matrices that
depend on the intrinsic dimension of the variance.  This theorem is essentially
due to Stanislav Minsker.

\begin{thm}[Intrinsic Matrix Bernstein] \label{thm:intdim-bernstein-rect}
Consider a finite sequence $\{ \mtx{S}_k \}$ of random complex matrices with the same size,
and assume that
$$
\Expect \mtx{S}_k = \mtx{0}
\quad\text{and}\quad
\norm{\mtx{S}_k} \leq L.
$$
Introduce the random matrix
$$
\mtx{Z} = \sum\nolimits_k \mtx{S}_k.
$$
Let $\mtx{V}_1$ and $\mtx{V}_2$ be semidefinite upper bounds for the
matrix-valued variances $\mVar_1(\mtx{Z})$ and $\mVar_2(\mtx{Z})$:
$$
\begin{aligned}
\mtx{V}_1 &\psdge \mVar_1(\mtx{Z}) = \Expect( \mtx{ZZ}^\adj ) = \sum\nolimits_k \Expect \big( \mtx{S}_k\mtx{S}_k^\adj \big),
\quad\text{and} \\
\mtx{V}_2 &\psdge \mVar_2(\mtx{Z}) = \Expect( \mtx{Z}^\adj \mtx{Z} ) = \sum\nolimits_k \Expect\big( \mtx{S}_k^\adj \mtx{S}_k \big).
\end{aligned}
$$
Define an intrinsic dimension bound and a variance bound
\begin{equation} \label{eqn:intdim-var-bernstein-rect}
d = \intdim \begin{bmatrix} \mtx{V}_1 & \mtx{0} \\
	\mtx{0} & \mtx{V}_2 \end{bmatrix}
\quad\text{and}\quad
v = \max\big\{ \norm{\mtx{V}_1}, \ \norm{\mtx{V}_2} \big\}.
\end{equation}
Then, for $t \geq \sqrt{v} + L/3$,
\begin{equation} \label{eqn:intdim-bernstein-tail-rect}
\Prob{ \norm{\mtx{Z}} \geq t }
	\leq 4 d \, \exp\left( \frac{-t^2/2}{v + Lt/3} \right).
\end{equation}
\end{thm}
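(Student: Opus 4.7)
The plan is to reduce the rectangular case to the Hermitian case by the Hermitian dilation and then apply a variant of the matrix Laplace transform method that is tuned to expose the intrinsic dimension.

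First I would set $\mtx{Y} = \coll{H}(\mtx{Z}) = \sum_k \coll{H}(\mtx{S}_k)$. Each summand is a zero-mean Hermitian matrix, and $\norm{\coll{H}(\mtx{S}_k)} = \norm{\mtx{S}_k} \leq L$, so the hypotheses of the Hermitian Bernstein machinery are in force. Moreover $\norm{\mtx{Z}} = \lambda_{\max}(\mtx{Y})$ by~\eqref{eqn:herm-dilation-norm}, and by~\eqref{eqn:var-dilation},
\begin{equation*}
\Expect \mtx{Y}^2 = \begin{bmatrix} \mVar_1(\mtx{Z}) & \mtx{0} \\ \mtx{0} & \mVar_2(\mtx{Z}) \end{bmatrix}
	\psdle \mtx{V} := \begin{bmatrix} \mtx{V}_1 & \mtx{0} \\ \mtx{0} & \mtx{V}_2 \end{bmatrix},
\end{equation*}
with $\intdim(\mtx{V}) = d$ and $\norm{\mtx{V}} = v$ by the definition~\eqref{eqn:intdim-var-bernstein-rect}. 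So it suffices to prove the tail bound~\eqref{eqn:intdim-bernstein-tail-rect} for $\lambda_{\max}(\mtx{Y})$ in terms of these quantities.

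Next, instead of the ordinary Markov step $\Prob{\lambda_{\max}(\mtx{Y}) \geq t} \leq \econst^{-\theta t} \Expect \trace \econst^{\theta \mtx{Y}}$, I would invoke the generalized matrix Laplace transform machinery from~\S\ref{sec:new-laplace} with the auxiliary function $\phi(u) = \econst^u - u - 1$, which is nonnegative, nondecreasing on $[0,\infty)$, and vanishes to second order at the origin. Because $\phi \geq 0$ and $\phi$ is increasing,
\begin{equation*}
\phi(\theta t) \cdot \Prob{\lambda_{\max}(\mtx{Y}) \geq t}
	\leq \Expect \phi(\theta \lambda_{\max}(\mtx{Y}))
	\leq \Expect \trace \phi(\theta \mtx{Y})
	= \Expect \trace \econst^{\theta \mtx{Y}} - \trace \mtx{I},
\end{equation*}
where the mean term drops because $\Expect \mtx{Y} = \mtx{0}$. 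Subadditivity of matrix cgfs (Lemma~\ref{lem:cgf-indep}) combined with the Bernstein mgf bound (Lemma~\ref{lem:matrix-bernstein-mgf}) and the monotonicity~\eqref{eqn:exp-trace-monotone} of the trace exponential give
\begin{equation*}
\Expect \trace \econst^{\theta \mtx{Y}}
	\leq \trace \exp\left( g(\theta) \cdot \Expect \mtx{Y}^2 \right)
	\leq \trace \exp(g(\theta) \mtx{V})
	\quad\text{where}\quad g(\theta) = \frac{\theta^2/2}{1-\theta L/3},
\end{equation*}
for $0 < \theta < 3/L$. Subtracting $\trace \mtx{I}$ rewrites the right-hand side as $\trace \phi_0(g(\theta) \mtx{V})$, where $\phi_0(u) = \econst^u - 1$.

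The key improvement over the usual Bernstein argument is the step that replaces the ambient dimension by $\intdim(\mtx{V})$. The function $\phi_0$ satisfies $\phi_0(0) = 0$, and $u \mapsto \phi_0(u)/u$ is nondecreasing on $[0,\infty)$, so the intrinsic-dimension trace inequality from~\S\ref{sec:int-dim-lemma} applies to give $\trace \phi_0(\mtx{A}) \leq \intdim(\mtx{A}) \cdot \phi_0(\norm{\mtx{A}})$ for any positive-semidefinite $\mtx{A}$. Applied to $\mtx{A} = g(\theta) \mtx{V}$, scale-invariance of $\intdim$ yields $\intdim(g(\theta) \mtx{V}) = d$, so
\begin{equation*}
\Prob{\lambda_{\max}(\mtx{Y}) \geq t}
	\leq d \cdot \frac{\econst^{g(\theta) v} - 1}{\econst^{\theta t} - \theta t - 1}.
\end{equation*}

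Finally I would optimize by choosing $\theta = t/(v + Lt/3)$, exactly as in the proof of Theorem~\ref{thm:matrix-bernstein-herm}; this choice makes $g(\theta) v \leq \theta t / 2$, so the numerator is at most $\econst^{\theta t/2} - 1 \leq \econst^{\theta t/2}$ and reproduces the familiar Bernstein exponent $-t^2/2/(v + Lt/3)$. The main obstacle is the denominator $\econst^{\theta t} - \theta t - 1$, which differs from the usual $\econst^{\theta t}$ and forces the restriction $t \geq \sqrt{v} + L/3$: this condition ensures that $\theta t$ exceeds an absolute constant, after which the elementary inequality $\econst^u - u - 1 \geq \tfrac{1}{4} \econst^u$ kicks in. Incorporating this factor $1/4$ produces the stated prefactor $4d$ and completes the tail bound~\eqref{eqn:intdim-bernstein-tail-rect}.
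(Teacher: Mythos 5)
Your proposal is correct and follows essentially the same route as the paper's proof: reduce to the Hermitian case via the dilation, apply the generalized Laplace transform bound (Proposition~\ref{prop:matrix-lt-general}) with $\psi(u) = \econst^{\theta u} - \theta u - 1$, exploit $\Expect \mtx{Y} = \mtx{0}$ to produce $\trace(\econst^{\theta\mtx{Y}} - \Id)$, pass through the Bernstein cgf bound and trace-exponential monotonicity to reach $\trace\phi_0(g(\theta)\mtx{V})$ with $\phi_0(u) = \econst^u - 1$, and finish with Lemma~\ref{lem:intrinsic-dim}. The only cosmetic difference is the final algebraic cleanup: the paper uses $\econst^a/(\econst^a - a - 1) \leq 1 + 3/a^2$ and then bounds the parenthesis by $4$ under the constraint $\theta t \geq 1$, whereas you apply the equivalent estimate $\econst^u - u - 1 \geq \tfrac14 \econst^u$ for $u \geq 1$ directly; both produce the prefactor $4d$ under the same restriction $t \geq \sqrt{v} + L/3$.
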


\noindent
The proof of this result appears below in \S\ref{sec:int-bernstein-proof}.

\subsection{Discussion}

Theorem~\ref{thm:intdim-bernstein-rect} is quite similar to Theorem~\ref{thm:matrix-bernstein-rect},
so we focus on the differences.
Although the statement of Theorem~\ref{thm:intdim-bernstein-rect} may seem circumspect,
it is important to present the result in terms of upper bounds $\mtx{V}_1$ and $\mtx{V}_2$
for the matrix-valued variances.  Indeed, it can be challenging to calculate the
matrix-valued variances exactly.  The fact that the intrinsic dimension
is not monotone interferes with our ability to use a simpler result.  %

Note that the tail bound~\eqref{eqn:intdim-bernstein-tail-rect}
now depends on the intrinsic dimension of the block-diagonal matrix $\diag(\mtx{V}_1, \mtx{V}_2)$.
This intrinsic dimension quantity never exceeds the total of the two side lengths of the random
matrix $\mtx{Z}$.
As a consequence, the new tail bound always has a better dimensional dependence than the
earlier result.
The costs of this improvement are small: We pay an extra factor of four in the probability bound,
and we must restrict our attention to a more limited
range of the parameter $t$.  Neither of these changes is significant.

The result does not contain an explicit estimate for $\Expect \norm{\mtx{Z}}$,
but we can obtain such a bound by integrating the tail
inequality~\eqref{eqn:intdim-bernstein-tail-rect}.
This estimate is similar with the earlier bound~\eqref{eqn:matrix-bernstein-expect-rect},
but it depends on the intrinsic dimension instead of the ambient dimension.

\begin{cor}[Intrinsic Matrix Bernstein: Expectation Bound] \label{cor:intdim-bernstein-expect-rect}
Instate the notation and hypotheses of Theorem~\ref{thm:intdim-bernstein-rect}.
Then
\begin{equation} \label{eqn:intdim-bernstein-tail-int}
\Expect \norm{\mtx{Z}} \leq \mathrm{Const} \cdot
	\left( \sqrt{v \log(1+ d)} + L \, \log(1+ d) \right).
\end{equation}
\end{cor}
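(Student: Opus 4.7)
The plan is the standard ``integrate the tail'' argument, with the split point chosen so that the dimensional factor $4d$ from Theorem~\ref{thm:intdim-bernstein-rect} is absorbed by the exponential decay. Concretely, I would write
\[
\Expect \norm{\mtx{Z}} \;=\; \int_0^\infty \Prob{\norm{\mtx{Z}} \geq t}\idiff{t}
\;\leq\; E \;+\; \int_E^\infty \Prob{\norm{\mtx{Z}} \geq t}\idiff{t},
\]
where I bound the integrand by $1$ on $[0,E]$ and apply the tail inequality~\eqref{eqn:intdim-bernstein-tail-rect} on $[E,\infty)$. The threshold will be
\[
E \;=\; C\bigl(\sqrt{v\log(1+d)} \;+\; L\log(1+d)\bigr)
\]
for a sufficiently large absolute constant $C$. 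Because $\log(1+d) \geq \log 2 > 0$, this choice of $E$ automatically satisfies the admissibility restriction $E \geq \sqrt{v} + L/3$ needed to invoke~\eqref{eqn:intdim-bernstein-tail-rect}.

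Next I would tame the Bernstein exponent using the elementary bound
\[
v + Lt/3 \;\leq\; 2\max\{v,\ Lt/3\},
\]
which gives the pointwise inequality
\[
\exp\left(\frac{-t^2/2}{v + Lt/3}\right)
\;\leq\; \exp\!\left(\frac{-t^2}{4v}\right) \;+\; \exp\!\left(\frac{-3t}{4L}\right)
\quad\text{for $t \geq 0$.}
\]
Splitting the tail integral according to these two terms, I would estimate
\[
\int_E^\infty 4d\,\exp\!\left(\frac{-t^2}{4v}\right)\idiff{t}
\;\leq\; 4d\,\sqrt{\pi v}\;\exp\!\left(\frac{-E^2}{4v}\right),
\qquad
\int_E^\infty 4d\,\exp\!\left(\frac{-3t}{4L}\right)\idiff{t}
\;=\; \tfrac{16}{3}\,dL\,\exp\!\left(\frac{-3E}{4L}\right).
\]
With the choice of $E$ above, one has $E^2/(4v) \geq c\log(1+d)$ and $3E/(4L) \geq c\log(1+d)$ for an absolute constant $c$. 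Taking $C$ large enough, the factors $d\,\sqrt{v}$ and $dL$ are absorbed by the exponentials, leaving a bound of order $\sqrt{v} + L$. Combining this with the leading $E$ term yields the advertised estimate~\eqref{eqn:intdim-bernstein-tail-int}.

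The argument is essentially routine bookkeeping; there is no real technical obstacle beyond choosing the constant $C$ so that the dimensional factor $4d$ is controlled by the exponential decay and so that the admissibility threshold $\sqrt{v} + L/3$ is respected. The main point to keep track of is that the two terms in $E$ exactly match the two regimes (subgaussian in $v$ and subexponential in $L$) of the Bernstein tail, which is why both appear in the final bound.
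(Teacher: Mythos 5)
Your proposal is correct and follows essentially the same route as the paper's proof: integrate the tail, split the integral at a threshold proportional to the final answer, decompose the Bernstein exponent into subgaussian and subexponential pieces, and bound each tail integral so that the dimensional factor $4d$ is absorbed by the exponential decay. The only differences are cosmetic: you state the exponent split with the correct factor of two in the denominator ($v + Lt/3 \le 2\max\{v, Lt/3\}$, giving $\econst^{-t^2/(4v)} + \econst^{-3t/(4L)}$), whereas the paper writes a slightly looser form and simply agglomerates constants at the end, and you pick the split point abstractly as $C\bigl(\sqrt{v\log(1+d)} + L\log(1+d)\bigr)$ for a large $C$ while the paper fixes the explicit value $\mu = \sqrt{2v\log(1+d)} + \tfrac{2}{3}L\log(1+d)$.
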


\noindent
See \S\ref{sec:int-bernstein-expect-pf} for the proof.

Next, let us have a closer look at the intrinsic dimension quantity defined in~\eqref{eqn:intdim-var-bernstein-rect}.
$$
d = \frac{ \trace \mtx{V}_1 + \trace \mtx{V}_2  }
	{\max\big\{ \norm{ \mtx{V}_1 }, \ \norm{ \mtx{V}_2 } \big\}}.
$$
We can make a further bound on the denominator to obtain an estimate in terms
of the intrinsic dimensions of the two blocks:
\begin{equation} \label{eqn:intdim-bernstein-intdim-bounds}
\min\big\{ \intdim(\mtx{V}_1), \ \intdim(\mtx{V}_2) \big\}
	\quad\leq\quad d \quad\leq\quad \intdim(\mtx{V}_1) + \intdim(\mtx{V}_2).
\end{equation}
This bound reflects a curious phenomenon: the intrinsic dimension parameter
$d$ is not necessarily comparable with the larger
of $\intdim(\mtx{V}_1)$ or $\intdim(\mtx{V}_2)$.

The other commentary about the original matrix Bernstein inequality, Theorem~\ref{thm:matrix-bernstein-rect}, also applies to the intrinsic dimension result.  For example, we can adapt the result to a sum of uncentered, independent, random, bounded matrices.  In addition, the theorem becomes somewhat simpler for a Hermitian random matrix because there is only one matrix-valued variance to deal with.  The modifications required in these cases are straightforward.

\subsection{Example: Matrix Approximation by Random Sampling}

We can apply the intrinsic Bernstein inequality to study the behavior of
randomized methods for matrix approximation.  The following result
is an immediate consequence of Theorem~\ref{thm:intdim-bernstein-rect}
and Corollary~\ref{cor:intdim-bernstein-expect-rect}.

\begin{cor}[Matrix Approximation by Random Sampling: Intrinsic Dimension Bounds]
\label{cor:matrix-approx-sampling-intdim}
Let $\mtx{B}$ be a fixed $d_1 \times d_2$ matrix.
Construct a $d_1 \times d_2$ random matrix $\mtx{R}$ that satisfies
$$
\Expect \mtx{R} = \mtx{B}
\quad\text{and}\quad
\norm{\mtx{R}} \leq L.
$$
Let $\mtx{M}_1$ and $\mtx{M}_2$ be semidefinite upper bounds for the expected squares:
$$
\mtx{M}_1 \psdge \Expect( \mtx{RR}^\adj )
\quad\text{and}\quad
\mtx{M}_2 \psdge \Expect( \mtx{R}^\adj \mtx{R}).
$$
Define the quantities
$$
d = \intdim \begin{bmatrix} \mtx{M}_1 & \mtx{0} \\ \mtx{0} & \mtx{M}_2 \end{bmatrix}
\quad\text{and}\quad
m = \max\big\{ \norm{ \mtx{M}_1 }, \ \norm{\mtx{M}_2} \big\}.
$$
Form the matrix sampling estimator
$$
\bar{\mtx{R}}_n = \frac{1}{n} \sum_{k=1}^n \mtx{R}_k
\quad\text{where each $\mtx{R}_k$ is an independent copy of $\mtx{R}$.}
$$
Then the estimator satisfies
\begin{equation} \label{eqn:matrix-approx-err-expect}
\Expect \norm{ \smash{\bar{\mtx{R}}_n - \mtx{B}} }
	\leq \mathrm{Const} \cdot \left[ \sqrt{\frac{m \log(1 + d)}{n}} + \frac{L \log(1 + d)}{n} \right].
\end{equation}
Furthermore, for all $t \geq \sqrt{m} + L/3$,
\begin{equation} \label{eqn:matrix-approx-err-tail}
\Prob{ \norm{ \smash{\bar{\mtx{R}}_n - \mtx{B}} } \geq t }
	\leq 4d \exp\left( \frac{-nt^2/2}{m + 2Lt/3} \right).
\end{equation}
\end{cor}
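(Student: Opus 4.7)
The plan is to apply Theorem \ref{thm:intdim-bernstein-rect} and Corollary \ref{cor:intdim-bernstein-expect-rect} to the recentered sum
$$
\mtx{Z} = \bar{\mtx{R}}_n - \mtx{B} = \sum_{k=1}^n \mtx{S}_k
\quad\text{where}\quad
\mtx{S}_k = \tfrac{1}{n}\bigl(\mtx{R}_k - \Expect \mtx{R}_k\bigr).
$$
The summands are independent and centered because $\Expect \mtx{R} = \mtx{B}$ and the $\mtx{R}_k$ are i.i.d.\ copies of $\mtx{R}$. This is exactly the setup already used in the proof of Corollary \ref{cor:matrix-approx-sampling}, and the bulk of the work carries over verbatim; the only new ingredient is to track how the intrinsic dimension rescales.

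First I would verify the uniform norm bound $\norm{\mtx{S}_k} \leq 2L/n$ via the triangle inequality and Jensen, exactly as in Corollary \ref{cor:matrix-approx-sampling}. Next I would produce semidefinite upper bounds for the two matrix-valued variances of $\mtx{Z}$. Since the summands are i.i.d.,
$$
\mVar_1(\mtx{Z}) = n \cdot \Expect(\mtx{S}_1 \mtx{S}_1^\adj)
	= \tfrac{1}{n}\bigl[ \Expect(\mtx{RR}^\adj) - (\Expect\mtx{R})(\Expect\mtx{R})^\adj \bigr]
	\psdle \tfrac{1}{n} \Expect(\mtx{RR}^\adj) \psdle \tfrac{1}{n} \mtx{M}_1,
$$
using the hypothesis $\mtx{M}_1 \psdge \Expect(\mtx{RR}^\adj)$ in the final step. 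An identical calculation gives $\mVar_2(\mtx{Z}) \psdle \tfrac{1}{n} \mtx{M}_2$. Thus we may take $\mtx{V}_i = \mtx{M}_i / n$ in the intrinsic Bernstein bound.

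Now the key observation is that the intrinsic dimension is $0$-homogeneous, so
$$
\intdim \begin{bmatrix} \mtx{V}_1 & \mtx{0} \\ \mtx{0} & \mtx{V}_2 \end{bmatrix}
	= \intdim \begin{bmatrix} \mtx{M}_1 & \mtx{0} \\ \mtx{0} & \mtx{M}_2 \end{bmatrix} = d,
$$
while the variance parameter becomes $v = \max\{\norm{\mtx{V}_1}, \norm{\mtx{V}_2}\} = m/n$ and the uniform bound is $L' = 2L/n$. Substituting $v \mapsto m/n$ and $L \mapsto 2L/n$ into the tail bound \eqref{eqn:intdim-bernstein-tail-rect} produces
$$
\Prob{\norm{\mtx{Z}} \geq t}
	\leq 4d \exp\left( \frac{-t^2/2}{m/n + 2Lt/(3n)} \right)
	= 4d \exp\left( \frac{-n t^2/2}{m + 2Lt/3} \right),
$$
which is precisely \eqref{eqn:matrix-approx-err-tail}. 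The same substitution in the expectation bound \eqref{eqn:intdim-bernstein-tail-int} yields \eqref{eqn:matrix-approx-err-expect} (after absorbing the factor of $2$ into the universal constant).

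The only step that requires any vigilance is confirming that the intrinsic dimension really does pass through unchanged under the $1/n$ rescaling of both blocks, and that the variance upper bounds we obtain are compatible with the semidefinite hypotheses of Theorem \ref{thm:intdim-bernstein-rect}. Beyond that, the proof is a direct application of the intrinsic Bernstein machinery; no delicate analysis is needed. The permissible range of $t$ in the tail bound inherits the restriction $t \geq \sqrt{m/n} + 2L/(3n)$ from the theorem, which is implied by any reasonable regime of interest.
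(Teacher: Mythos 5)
Your proposal is correct and follows exactly the route the paper has in mind: decompose $\bar{\mtx{R}}_n - \mtx{B}$ into the centered i.i.d.\ summands $\mtx{S}_k = n^{-1}(\mtx{R}_k - \Expect\mtx{R})$, reuse the uniform bound and variance estimates from the proof of Corollary~\ref{cor:matrix-approx-sampling}, and feed them into Theorem~\ref{thm:intdim-bernstein-rect} and Corollary~\ref{cor:intdim-bernstein-expect-rect} instead of Theorem~\ref{thm:matrix-bernstein-rect}. The paper itself omits the argument, stating only that it parallels the earlier corollary, and your observation that the $0$-homogeneity of the intrinsic dimension lets the parameter $d$ pass through the $1/n$ rescaling of both variance blocks is the one genuinely new point — you identified it correctly. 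One small remark: the range you derive from the theorem, $t \geq \sqrt{m/n} + 2L/(3n)$, is actually wider than the range $t \geq \sqrt{m} + L/3$ stated in the corollary; the stated range is sufficient for $n \geq 2$, but your derivation is in fact the sharper (and correct) one.
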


\noindent
The proof is similar with that of Corollary~\ref{cor:matrix-approx-sampling},
so we omit the details.

\subsection{Application: Randomized Matrix Multiplication}

We will apply Corollary~\ref{cor:matrix-approx-sampling-intdim} to study
the randomized matrix multiplication algorithm from~\S\ref{sec:rdm-mtx-mult}.
This method results in a small, but very appealing, improvement in the
number of samples that are required.  This argument is essentially
due to Tassos Zouzias~\cite{Zou12:Randomized-Primitives}.

Our goal is to approximate the product of a $d_1 \times N$ matrix $\mtx{B}$
and an $N \times d_2$ matrix $\mtx{C}$.  We assume that both matrices
$\mtx{B}$ and $\mtx{C}$ have unit spectral norm.  The results are stated
in terms of the average stable rank
$$
\mathsf{asr} = \half( \strank(\mtx{B}) + \strank(\mtx{C})).
$$
The stable rank was introduced in~\eqref{eqn:stable-rank}.
To approximate the product $\mtx{BC}$, we constructed a simple
random matrix $\mtx{R}$ whose mean $\Expect \mtx{R} = \mtx{BC}$,
and then we formed the estimator
$$
\bar{\mtx{R}}_n = \frac{1}{n} \sum_{k=1}^n \mtx{R}_k
\quad\text{where each $\mtx{R}_k$ is an independent copy of $\mtx{R}$.}
$$
The challenge is to bound the error $\norm{ \smash{\bar{\mtx{R}}_n - \mtx{BC}} }$.

To do so, let us refer back to our calculations from \S\ref{sec:rdm-mtx-mult}.
We find that
$$
\begin{aligned}
\norm{\mtx{R}} &\leq \mathsf{asr}, \\
\Expect(\mtx{RR}^\adj) &\psdle 2 \cdot \mathsf{asr} \cdot \mtx{BB}^\adj, \quad\text{and} \\
\Expect(\mtx{R}^\adj \mtx{R}) &\psdle 2 \cdot \mathsf{asr} \cdot \mtx{C}^\adj \mtx{C}.
\end{aligned}
$$
Starting from this point, we can quickly improve on our earlier
analysis by incorporating the intrinsic dimension bounds.

It is natural to set $\mtx{M}_1 = 2 \cdot \mathsf{asr} \cdot \mtx{BB}^\adj$
and $\mtx{M}_2 = 2 \cdot \mathsf{asr} \cdot \mtx{C}^\adj \mtx{C}$.
We may now bound the intrinsic dimension parameter
$$
\begin{aligned}
d &= \intdim \begin{bmatrix} \mtx{M}_1 & \mtx{0} \\ \mtx{0} & \mtx{M}_2 \end{bmatrix}
	\leq \intdim(\mtx{M}_1) + \intdim(\mtx{M}_2) \\
	&= \frac{\trace(\mtx{BB}^\adj)}{\norm{\mtx{BB}^\adj}} + \frac{\trace(\mtx{C}^\adj \mtx{C})}{\norm{\mtx{C}^\adj \mtx{C}}} 
	= \frac{\fnormsq{\mtx{B}}}{\normsq{\mtx{B}}} + \frac{\fnormsq{\mtx{C}}}{\normsq{\mtx{C}}} \\
	&= \strank(\mtx{B}) + \strank(\mtx{C}) = 2 \cdot \mathsf{asr}.
\end{aligned}
$$
The first inequality follows from~\eqref{eqn:intdim-bernstein-intdim-bounds},
and the second is Definition~\ref{def:int-dim}, of the intrinsic dimension.
The third relation depends on the norm identities~\eqref{eqn:trace-Frobenius}
and~\eqref{eqn:spectral-norm-square}.  Finally, we identify
the stable ranks of $\mtx{B}$ and $\mtx{C}$ and the average stable rank.
The calculation of the quantity $m$ proceeds from the same considerations
as in \S\ref{sec:rdm-mtx-mult}.  Thus,
$$
m = \max\big\{ \norm{\mtx{M}_1}, \ \norm{\mtx{M}_2} \big\}
	= 2 \cdot \mathsf{asr}.
$$
This is all the information we need to collect.

Corollary~\ref{cor:matrix-approx-sampling-intdim} now implies that
$$
\Expect \norm{\smash{\bar{\mtx{R}}_n - \mtx{BC}}}
	\leq \mathrm{Const} \cdot \left( \sqrt{\frac{\mathsf{asr} \cdot \log(1 + \mathsf{asr})}{n}}
	+ \frac{ \mathsf{asr} \cdot \log(1 + \mathsf{asr}) }{ n } \right).
$$
In other words, if the number $n$ of samples satisfies
$$
n \geq \eps^{-2} \cdot \mathsf{asr} \cdot \log(1 + \mathsf{asr}),
$$
then the error satisfies
$$
\Expect{} \norm{ \smash{\bar{\mtx{R}}_n - \mtx{BC}}}
	\leq \mathrm{Const} \cdot \big( \eps + \eps^2 \big).
$$
In the original analysis from \S\ref{sec:rdm-mtx-mult},
our estimate for the number $n$ of samples contained the
term $\log(d_1 + d_2)$ instead of $\log(1 + \mathsf{asr})$.
We have replaced the dependence on the
ambient dimension of the product $\mtx{BC}$ by a measure of the
stable rank of the two factors.
When the average stable rank is small in comparison with the dimension of the product,
the analysis based on the intrinsic dimension offers an improvement in 
the bound on the number of samples required to approximate the product.

\section{Revisiting the Matrix Laplace Transform Bound} \label{sec:new-laplace}

Let us proceed with the proofs of the matrix concentration inequalities
based on intrinsic dimension.  The challenge is to identify and
remedy the weak points in the arguments from Chapter~\ref{chap:matrix-lt}.

After some reflection, we can trace the dependence on the ambient dimension
in our earlier results to the
proof of Proposition~\ref{prop:matrix-lt}.
In the original argument, we used an exponential function to transform the tail event
before applying Markov's inequality.
This approach leads to trouble for the simple reason that the exponential function
does not pass through the origin, which gives undue weight to eigenvalues that are
close to zero.

We can resolve this problem by using other types of maps to
transform the tail event.
The functions we have in mind are adjusted versions of the exponential.
In particular, for fixed $\theta > 0$, we can consider
$$
\psi_1(t) = \max\bigl\{ 0, \ \econst^{\theta t} - 1 \bigr\}
\quad\text{and}\quad
\psi_2(t) = \econst^{\theta t} - \theta t - 1.
$$
Both functions are nonnegative and convex, and they are nondecreasing
on the positive real line.  In each case, $\psi_i(0) = 0$.
At the same time, the presence of the exponential function allows
us to exploit our bounds for the trace mgf.

\begin{prop}[Generalized Matrix Laplace Transform Bound] \label{prop:matrix-lt-general}
Let $\mtx{Y}$ be a random Hermitian matrix.
Let $\psi : \R \to \R_+$ be a nonnegative function
that is nondecreasing on $[0,\infty)$.
For each $t \geq 0$,
$$
\Prob{ \lambda_{\max}(\mtx{Y}) \geq t }
	\leq \frac{1}{\psi(t)} \, \Expect \trace \psi(\mtx{Y}).
$$
\end{prop}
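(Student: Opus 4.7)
The plan is to retrace the proof of Proposition~\ref{prop:matrix-lt}, replacing the exponential map by the general function $\psi$ and being careful where monotonicity and spectral-mapping arguments enter. There are essentially three moves: (i) transform the tail event via $\psi$, (ii) apply Markov's inequality, and (iii) pass from the scalar $\psi(\lambda_{\max}(\mtx{Y}))$ to the matrix quantity $\trace \psi(\mtx{Y})$.

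First, I would observe that we may assume $\psi(t) > 0$ (otherwise the claimed bound is vacuous). Since $t \geq 0$ and $\psi$ is nondecreasing on $[0, \infty)$, whenever $\lambda_{\max}(\mtx{Y}) \geq t$ we also have $\psi(\lambda_{\max}(\mtx{Y})) \geq \psi(t)$. Hence
$$
\Prob{ \lambda_{\max}(\mtx{Y}) \geq t }
	\;\leq\; \Prob{ \psi(\lambda_{\max}(\mtx{Y})) \geq \psi(t) }
	\;\leq\; \frac{1}{\psi(t)} \, \Expect \psi(\lambda_{\max}(\mtx{Y})),
$$
where the second inequality is Markov's inequality~\eqref{eqn:markov}, applicable because $\psi \geq 0$.

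The remaining step is to bound $\Expect \psi(\lambda_{\max}(\mtx{Y}))$ by $\Expect \trace \psi(\mtx{Y})$. This is the point where the original argument in Proposition~\ref{prop:matrix-lt} invoked the chain of equalities $\econst^{\lambda_{\max}(\theta \mtx{Y})} = \lambda_{\max}(\econst^{\theta \mtx{Y}}) \leq \trace \econst^{\theta \mtx{Y}}$, which relied on the exponential being monotone on all of $\R$. Here I would proceed slightly differently: by the Spectral Mapping Theorem (Proposition~\ref{prop:spectral-mapping}), $\psi(\lambda_{\max}(\mtx{Y}))$ is \emph{an} eigenvalue of the Hermitian matrix $\psi(\mtx{Y})$ — though, because $\psi$ need not be monotone on the full range of eigenvalues of $\mtx{Y}$, it may not be the largest eigenvalue. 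However, since $\psi \geq 0$ everywhere, the Transfer Rule (Proposition~\ref{prop:transfer-rule}) applied to $0 \leq \psi$ gives $\mtx{0} \psdle \psi(\mtx{Y})$, so every eigenvalue of $\psi(\mtx{Y})$ is nonnegative. Therefore $\psi(\lambda_{\max}(\mtx{Y})) \leq \trace \psi(\mtx{Y})$ by~\eqref{eqn:maxeig-trace}. Taking expectations and combining with the Markov step yields the claim.

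The main subtlety — what might be called the ``hard part,'' though it is not really hard — is precisely the point just discussed: we cannot imitate the original proof verbatim, because $\psi$ is only assumed nondecreasing on $[0, \infty)$, not on all of $\R$. The cure is to replace the appeal to monotonicity of $\psi$ with the observation that $\psi(\mtx{Y})$ is positive semidefinite, so that any individual eigenvalue is dominated by the trace. This is a mild but essential adjustment, and it explains why the hypothesis ``$\psi \geq 0$'' is included alongside the monotonicity assumption in the statement.
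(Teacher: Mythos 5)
Your proof is correct and follows essentially the same route as the paper's: transform the tail event using the monotonicity of $\psi$ on $[0,\infty)$, apply Markov's inequality, and then exploit positive-semidefiniteness of $\psi(\mtx{Y})$ to dominate an eigenvalue by the trace. The only cosmetic difference is that you apply Markov's inequality to the scalar $\psi(\lambda_{\max}(\mtx{Y}))$ directly, whereas the paper first passes to $\lambda_{\max}(\psi(\mtx{Y}))$ and applies Markov to that; both variants are valid, and your explicit observation that $\psi(\lambda_{\max}(\mtx{Y}))$ need not be the maximum eigenvalue of $\psi(\mtx{Y})$ is precisely the ``finesse'' the paper alludes to.
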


\begin{proof}
The proof follows the same lines as the proof of Proposition~\ref{prop:matrix-lt},
but it requires some additional finesse.  Since $\psi$ is nondecreasing
on $[0, \infty)$, the bound $a \geq t$ implies that $\psi(a) \geq \psi(t)$.
As a consequence,
$$
\lambda_{\max}(\mtx{Y}) \geq t
\quad\text{implies}\quad
\lambda_{\max}(\psi(\mtx{Y})) \geq \psi(t).
$$
Indeed, on the tail event $\lambda_{\max}(\mtx{Y}) \geq t$, we must have $\psi(\lambda_{\max}(\mtx{Y})) \geq \psi(t)$.  The Spectral Mapping Theorem, Proposition~\ref{prop:spectral-mapping},
indicates that the number $\psi(\lambda_{\max}(\mtx{Y}))$ is one of the eigenvalues of the matrix $\psi(\mtx{Y})$,
so we determine that $\lambda_{\max}(\psi(\mtx{Y}))$ also exceeds $\psi(t)$.

Returning to the tail probability, we discover that
$$
\Prob{ \lambda_{\max}(\mtx{Y}) \geq t }
	\leq \Prob{ \lambda_{\max}(\psi(\mtx{Y})) \geq \psi(t) }
	\leq \frac{1}{\psi(t)} \, \Expect \lambda_{\max}(\psi(\mtx{Y})).
$$
The second bound is Markov's inequality~\eqref{eqn:markov}, which is valid because $\psi$ is nonnegative.
Finally,
$$
\Prob{ \lambda_{\max}(\mtx{Y}) \geq t }
	\leq \frac{1}{\psi(t)} \, \Expect \trace \psi(\mtx{Y}).
$$
The inequality holds because of the fact~\eqref{eqn:maxeig-trace}
that the trace of $\psi(\mtx{Y})$, a positive-semidefinite matrix,
must be at least as large as its maximum eigenvalue.
\end{proof}

\section{The Intrinsic Dimension Lemma} \label{sec:int-dim-lemma}

The other new ingredient is a simple observation that allows us to control
a trace function applied to a positive-semidefinite matrix
in terms of the intrinsic dimension of the matrix.

\begin{lemma}[Intrinsic Dimension] \label{lem:intrinsic-dim}
Let $\phi$ be a convex function on the interval $[0, \infty)$,
and assume that $\phi(0) = 0$.
For any positive-semidefinite matrix $\mtx{A}$, it holds that
$$
\trace \phi(\mtx{A}) \leq \intdim(\mtx{A}) \cdot \phi(\norm{\mtx{A}}).
$$
\end{lemma}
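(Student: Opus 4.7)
The plan is to reduce the matrix inequality to a scalar inequality via the spectral theorem, then exploit the convexity of $\phi$ together with the anchoring condition $\phi(0) = 0$. Write $L = \norm{\mtx{A}}$, and let $\lambda_1, \dots, \lambda_d$ denote the eigenvalues of $\mtx{A}$. Since $\mtx{A} \psdge \mtx{0}$, each $\lambda_i \in [0, L]$. By the definition of a standard matrix function together with the Spectral Mapping Theorem (Proposition~\ref{prop:spectral-mapping}), we have $\trace \phi(\mtx{A}) = \sum_i \phi(\lambda_i)$ and $\trace \mtx{A} = \sum_i \lambda_i$. So the inequality to prove reduces to
$$
\sum_i \phi(\lambda_i) \;\leq\; \frac{\sum_i \lambda_i}{L} \, \phi(L).
$$

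The key scalar step is the pointwise bound $\phi(\lambda) \leq (\lambda/L)\, \phi(L)$ for $\lambda \in [0, L]$. This follows by writing $\lambda$ as the convex combination $\lambda = (\lambda/L) \cdot L + (1 - \lambda/L) \cdot 0$ with $\lambda/L \in [0,1]$, applying convexity of $\phi$, and using the hypothesis $\phi(0) = 0$ to kill the second term. Applying this to each eigenvalue and summing yields
$$
\sum_i \phi(\lambda_i) \;\leq\; \frac{\phi(L)}{L} \sum_i \lambda_i \;=\; \frac{\trace \mtx{A}}{\norm{\mtx{A}}} \cdot \phi(\norm{\mtx{A}}) \;=\; \intdim(\mtx{A}) \cdot \phi(\norm{\mtx{A}}),
$$
which is the claim.

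There is no real obstacle in this argument. The only minor subtlety is the edge case $\mtx{A} = \mtx{0}$, where $\norm{\mtx{A}} = 0$ and $\intdim(\mtx{A})$ is formally indeterminate; but in that case $\phi(\mtx{A}) = \mtx{0}$ as well, and the inequality holds trivially (or can be handled by convention). The proof is only a few lines, and its power comes entirely from the simple geometric observation that a convex function anchored at the origin is dominated on $[0,L]$ by the secant line through $(0,0)$ and $(L, \phi(L))$.
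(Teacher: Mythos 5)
Your proof is correct and follows essentially the same route as the paper's: both bound $\phi$ on $[0,L]$ by the secant line through $(0,0)$ and $(L,\phi(L))$ using convexity and $\phi(0)=0$, then pass to the trace. The only cosmetic difference is that you sum over eigenvalues explicitly, whereas the paper invokes its Transfer Rule to lift the scalar inequality to standard matrix functions; these are the same step in different clothing.
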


\begin{proof}
Since the function $a \mapsto \phi(a)$ is convex on the interval $[0, L]$, it is bounded above by the chord
connecting the graph at the endpoints.  That is, for $a \in [0,L]$, 
$$
\phi(a) \leq \left(1 - \frac{a}{L} \right) \cdot \phi(0) + \frac{a}{L} \cdot \phi(L)
	= \frac{a}{L} \cdot \phi(L).
$$
The eigenvalues of $\mtx{A}$ fall in the interval $[0, L]$, where $L = \norm{\mtx{A}}$.
As an immediate consequence of the Transfer Rule~\eqref{eqn:transfer-rule}, we find that
$$
\trace \phi(\mtx{A}) \leq \frac{\trace \mtx{A}}{\norm{\mtx{A}}} \cdot \phi(\norm{\mtx{A}}).
$$
Identify the intrinsic dimension of $\mtx{A}$ to complete the argument.
\end{proof}

\section{Proof of the Intrinsic Chernoff Bound} \label{sec:int-chernoff-proof}

With these results at hand, we are prepared to prove our first intrinsic dimension
result, which extends the matrix Chernoff inequality.

\begin{proof}[Proof of Theorem~\ref{thm:intdim-chernoff}]

Consider a finite sequence $\{\mtx{X}_k\}$ of independent, random Hermitian matrices with
$$
0 \leq \lambda_{\min}(\mtx{X}_k)
\quad\text{and}\quad
\lambda_{\max}(\mtx{X}_k) \leq L
\quad\text{for each index $k$.}
$$
Introduce the sum
$$
\mtx{Y} = \sum\nolimits_k \mtx{X}_k.
$$
The challenge is to establish bounds for $\lambda_{\max}(\mtx{Y})$
that depend on the intrinsic dimension of a matrix $\mtx{M}$ that
satisfies $\mtx{M} \psdge \Expect \mtx{Y}$.
We begin the argument with the proof of the tail bound~\eqref{eqn:intdim-chernoff-upper}.
Afterward, we show how to extract the expectation bound~\eqref{eqn:intdim-chernoff-expect}.

Fix a number $\theta > 0$, and define the function $\psi(t) = \max\big\{0, \ \econst^{\theta t} - 1 \big\}$
for $t \in \R$.
For $t \geq 0$,
the general version of the matrix Laplace transform bound, Proposition~\ref{prop:matrix-lt-general},
states that
\begin{equation} \label{eqn:intdim-chernoff-pf-1}
\Prob{ \lambda_{\max}(\mtx{Y}) \geq t }
	\leq \frac{1}{\psi(t)} \Expect \trace \psi(\mtx{Y})
	= \frac{1}{\econst^{\theta t} - 1} \Expect \trace\bigl( \econst^{\theta \mtx{Y}} - \Id \bigr).
\end{equation}
We have exploited the fact that $\mtx{Y}$ is positive semidefinite and the assumption that $t \geq 0$.
The presence of the identity matrix on the right-hand side allows us to draw stronger
conclusions than we could before.

Let us study the expected trace term on the right-hand side of~\eqref{eqn:intdim-chernoff-pf-1}.
As in the proof of the original matrix Chernoff bound, Theorem~\ref{thm:matrix-chernoff},
we have the estimate
$$
\Expect \trace \econst^{\theta \mtx{Y}}
	\leq \trace \exp\left(  g(\theta) \cdot \Expect \mtx{Y} \right)
	\quad\text{where}\quad
	g(\theta) = \frac{\econst^{\theta L} - 1}{L}.
$$
Introduce the function $\phi(a) = \econst^{a} - 1$, and observe that
$$
\Expect \trace \bigl( \econst^{\theta \mtx{Y}} - \Id \bigr)
	\leq \trace \bigl( \econst^{g(\theta) \cdot \Expect \mtx{Y}} - \Id \bigr)
	\leq \trace \bigl( \econst^{g(\theta) \cdot \mtx{M}} - \Id \bigr)
	= \trace \phi( g(\theta) \cdot \mtx{M} ).
$$
The second inequality follows from the assumption that $\Expect \mtx{Y} \psdle \mtx{M}$
and the monotonicity~\eqref{eqn:exp-trace-monotone} of the trace exponential.
Now, apply the intrinsic dimension bound, Lemma~\ref{lem:intrinsic-dim}, to reach
$$
\Expect \trace\bigl( \econst^{\theta \mtx{Y}} - \Id \bigr) 
	\leq \intdim(\mtx{M}) \cdot \phi\left( g(\theta) \norm{ \mtx{M} } \right).
$$
We have used the fact that the intrinsic dimension does not depend on the scaling factor $g(\theta)$.
Recalling the notation $d = \intdim(\mtx{M})$ and $\mu_{\max} = \norm{\mtx{M}}$,
we continue the calculation:
\begin{equation} \label{eqn:intdim-chernoff-pf-2}
\Expect \trace\bigl( \econst^{\theta \mtx{Y}} - \Id \bigr)
	\leq d \cdot \phi\left( g(\theta) \cdot \mu_{\max} \right)
	\leq d \cdot \econst^{ g(\theta) \cdot \mu_{\max} }.
\end{equation}
We have invoked the trivial inequality $\phi(a) \leq \econst^a$, which holds for $a \in \R$.

Next, introduce the bound~\eqref{eqn:intdim-chernoff-pf-2} on the expected trace into
the probability bound~\eqref{eqn:intdim-chernoff-pf-1} to obtain
\begin{equation} \label{eqn:intdim-chernoff-pf-2.5}
\Prob{ \lambda_{\max}(\mtx{Y}) \geq t }
	\leq d \cdot \frac{\econst^{\theta t}}{\econst^{\theta t} - 1}
	\cdot \econst^{ -\theta t + g(\theta) \cdot \mu_{\max} }
	\leq d \cdot \left(1+ \frac{1}{\theta t}\right) \cdot
	\econst^{-\theta t + g(\theta) \cdot \mu_{\max}}.
\end{equation}
To control the fraction, we have observed that
$$
\frac{\econst^a}{\econst^a - 1}
	= 1 + \frac{1}{\econst^a - 1}
	\leq 1 + \frac{1}{a}
	\quad\text{for $a \geq 0$.}
$$
We obtain the latter inequality by replacing the convex function
$a \mapsto \econst^a - 1$ with its tangent line at $a = 0$.

In the estimate~\eqref{eqn:intdim-chernoff-pf-2.5},
we make the change of variables $t \mapsto (1+\eps) \mu_{\max}$.
The bound is valid for all $\theta > 0$,
so we can select $\theta = L^{-1} \log(1+\eps)$ to minimize the exponential.
Altogether, these steps lead to the estimate
\begin{equation} \label{eqn:intdim-chernoff-pf-3}
\Prob{ \lambda_{\max}(\mtx{Y}) \geq (1+\eps)\mu_{\max} }
	\leq d \cdot \left( 1 + \frac{L/\mu_{\max}}{(1+\eps) \log(1+\eps)} \right)
	\cdot \left[ \frac{\econst^\eps}{(1+\eps)^{1+\eps}} \right]^{\mu_{\max}/L}.
\end{equation}
Now, instate the assumption that $\eps \geq L/\mu_{\max}$.  The function
$a \mapsto (1 + a) \log(1+a)$ is convex when $a \geq -1$,
so we can bound it below using its tangent at $\eps = 0$.  Thus,
$$
(1 + \eps) \log (1 + \eps) \geq \eps \geq \frac{L}{\mu_{\max}}.
$$
It follows that the parenthesis in~\eqref{eqn:intdim-chernoff-pf-3} is bounded by two,
which yields the conclusion~\eqref{eqn:intdim-chernoff-upper}.

Now, we turn to the expectation bound~\eqref{eqn:intdim-chernoff-expect}.
Observe that the functional inverse of $\psi$ is the increasing
concave function
$$
\psi^{-1}(u) = \frac{1}{\theta} \log(1 + u)
\quad\text{for $u \geq 0$.}
$$
Since $\mtx{Y}$ is a positive-semidefinite matrix, we can calculate that
\begin{align} \label{eqn:intdim-chernoff-pf-4}
\Expect \lambda_{\max}(\mtx{Y})
	= \Expect \psi^{-1}( \psi( \lambda_{\max}(\mtx{Y}) ) )
	&\leq \psi^{-1}( \Expect \psi(\lambda_{\max}(\mtx{Y}) ) ) \notag \\
	&= \psi^{-1}( \Expect \lambda_{\max}(\psi(\mtx{Y})))
	\leq \psi^{-1}( \Expect \trace \psi(\mtx{Y}) ).
\end{align}
The second relation is Jensen's inequality~\eqref{eqn:jensen},
which is valid because $\psi^{-1}$ is concave.
The third relation follows from the Spectral Mapping Theorem,
Proposition~\ref{prop:spectral-mapping}, because the function $\psi$
is increasing.  We can bound the maximum eigenvalue by the
trace because $\psi(\mtx{Y})$ is positive semidefinite and
$\psi^{-1}$ is an increasing function.

Now, substitute the bound~\eqref{eqn:intdim-chernoff-pf-2} into
the last display~\eqref{eqn:intdim-chernoff-pf-4}
to reach
\begin{align*}
\Expect \lambda_{\max}(\mtx{Y})
	\leq \psi^{-1}( d \cdot \exp( g(\theta) \cdot \mu_{\max} ) )
	&= \frac{1}{\theta} \log\bigl(1 + d \cdot \econst^{g(\theta) \cdot \mu_{\max}} \bigr) \\
	&\leq \frac{1}{\theta} \log\bigl( 2d \cdot \econst^{g(\theta) \cdot \mu_{\max}} \bigr)
	= \frac{1}{\theta} \left( \log(2d) + g(\theta) \cdot \mu_{\max} \right).
\end{align*}
The first inequality again requires the property that $\psi^{-1}$ is increasing.
The second inequality follows because $1 \leq d \cdot \econst^{g(\theta) \cdot \mu_{\max}}$,
which is a consequence of the fact that the intrinsic dimension exceeds one and the exponent is nonnegative.
To complete the argument,
introduce the definition of $g(\theta)$, and make the change of variables
$\theta \mapsto \theta / L$.  These steps yield~\eqref{eqn:intdim-chernoff-expect}.
\end{proof}

\section{Proof of the Intrinsic Bernstein Bounds} \label{sec:int-bernstein-proof}

In this section, we present the arguments that lead up to the intrinsic Bernstein
bounds.  That is, we develop tail inequalities for an independent sum of bounded
random matrices that depend on the intrinsic dimension of the variance.

\subsection{The Hermitian Case} %

As usual, Hermitian matrices provide the natural setting for matrix concentration.
We begin with an explicit statement and proof of a bound for the Hermitian case.

\begin{thm}[Matrix Bernstein: Hermitian Case with Intrinsic Dimension] \label{thm:intdim-bernstein-herm}
Consider a finite sequence $\{ \mtx{X}_k \}$ of random Hermitian matrices of the same size,
and assume that
$$
\Expect \mtx{X}_k = \mtx{0}
\quad\text{and}\quad
\lambda_{\max}(\mtx{X}_k) \leq L
\quad\text{for each index $k$.}
$$
Introduce the random matrix
$$
\mtx{Y} = \sum\nolimits_k \mtx{X}_k.
$$
Let $\mtx{V}$ be a semidefinite upper bound for the matrix-valued variance $\mVar(\mtx{Y})$:
$$
\mtx{V} \psdge \mVar(\mtx{Y}) = \Expect{} \mtx{Y}^2 = \sum\nolimits_k \Expect{} \mtx{X}_k^2.
$$
Define the intrinsic dimension bound and variance bound
$$
d = \intdim(\mtx{V})
\quad\text{and}\quad
v = \norm{\mtx{V}}.
$$
Then, for $t \geq \sqrt{v} + L/3$,
\begin{equation} \label{eqn:intdim-bernstein-tail}
\Prob{ \lambda_{\max}(\mtx{Y}) \geq t }
	\leq 4 d \cdot \exp\left( \frac{-t^2/2}{v + Lt/3} \right).
\end{equation}
\end{thm}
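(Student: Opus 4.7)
The plan is to mimic the proof of the intrinsic Chernoff bound, Theorem~\ref{thm:intdim-chernoff}, but substitute the Bernstein cgf bound from Lemma~\ref{lem:matrix-bernstein-mgf} for the Chernoff cgf bound. Three ingredients work together: the generalized Laplace transform bound (Proposition~\ref{prop:matrix-lt-general}), subadditivity of matrix cgfs (Lemma~\ref{lem:cgf-indep}), and the intrinsic dimension lemma (Lemma~\ref{lem:intrinsic-dim}).

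First, fix $\theta \in (0, 3/L)$ and apply Proposition~\ref{prop:matrix-lt-general} with $\psi(t) = \max\{0, \econst^{\theta t} - 1\}$, which is nonnegative, convex, nondecreasing on $[0,\infty)$, and vanishes at zero. For $t \geq 0$, this yields
\[
\Prob{ \lambda_{\max}(\mtx{Y}) \geq t }
    \leq \frac{1}{\econst^{\theta t} - 1} \cdot \Expect \trace\bigl( \econst^{\theta \mtx{Y}} - \Id \bigr).
\]
The key point is that subtracting $\Id$ inside the trace is exactly what allows the intrinsic dimension to appear.

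Next, I would bound the trace expectation. Combining Lemma~\ref{lem:cgf-indep} with the Bernstein cgf estimate in Lemma~\ref{lem:matrix-bernstein-mgf}, and using the hypothesis $\sum_k \Expect \mtx{X}_k^2 \psdle \mtx{V}$ together with monotonicity~\eqref{eqn:exp-trace-monotone} of the trace exponential, I obtain
\[
\Expect \trace\bigl( \econst^{\theta \mtx{Y}} - \Id \bigr)
    \leq \trace \bigl( \econst^{g(\theta) \mtx{V}} - \Id \bigr)
    = \trace \phi( g(\theta) \mtx{V})
\]
where $g(\theta) = (\theta^2/2)/(1 - \theta L/3)$ and $\phi(a) = \econst^{a} - 1$. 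Since $\phi$ is convex on $[0,\infty)$ with $\phi(0)=0$, Lemma~\ref{lem:intrinsic-dim} gives
\[
\trace \phi( g(\theta) \mtx{V}) \leq \intdim(\mtx{V}) \cdot \phi( g(\theta) \norm{\mtx{V}})
    \leq d \cdot \econst^{g(\theta) v},
\]
where I used scale-invariance of the intrinsic dimension and the crude bound $\phi(a) \leq \econst^a$. Putting these pieces together and using the same estimate $\econst^{\theta t}/(\econst^{\theta t}-1) \leq 1 + 1/(\theta t)$ that appears in the proof of Theorem~\ref{thm:intdim-chernoff}, I reach
\[
\Prob{\lambda_{\max}(\mtx{Y}) \geq t} \leq d \cdot \left(1 + \frac{1}{\theta t}\right) \cdot \exp\bigl(-\theta t + g(\theta) v\bigr).
\]

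Finally, I would make the classical Bernstein choice $\theta = t / (v + Lt/3)$, which lies in $(0, 3/L)$ and produces the identity $-\theta t + g(\theta) v = -t^2/(2(v + Lt/3))$. The main---and only---obstacle is controlling the correction factor $1 + 1/(\theta t)$. For this choice, $\theta t = t^2/(v + Lt/3)$, so $\theta t \geq 1$ is equivalent to $t^2 - Lt/3 - v \geq 0$. A direct calculation shows that the assumption $t \geq \sqrt{v} + L/3$ implies $t^2 - Lt/3 - v \geq L\sqrt{v}/3 \geq 0$, so $\theta t \geq 1$ and the correction factor is at most $2$. This yields the bound~\eqref{eqn:intdim-bernstein-tail} with constant $4d$ (with room to spare). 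The rectangular Theorem~\ref{thm:intdim-bernstein-rect} will then follow by applying this Hermitian result to the Hermitian dilation $\coll{H}(\mtx{Z})$, exactly as in the passage from Theorem~\ref{thm:matrix-bernstein-herm} to Theorem~\ref{thm:matrix-bernstein-rect}, using the computation~\eqref{eqn:var-dilation} to identify the block-diagonal variance proxy $\diag(\mtx{V}_1, \mtx{V}_2)$.
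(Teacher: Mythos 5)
There is a genuine gap at the very first step. You apply Proposition~\ref{prop:matrix-lt-general} with the Chernoff-style choice $\psi(t) = \max\{0, \econst^{\theta t} - 1\}$, and then replace $\Expect \trace \psi(\mtx{Y})$ by $\Expect \trace(\econst^{\theta \mtx{Y}} - \Id)$ as though these were equal. They are not: for any eigenvalue $\lambda_i(\mtx{Y}) < 0$ one has $\psi(\lambda_i) = 0 > \econst^{\theta \lambda_i} - 1$, so in fact $\trace \psi(\mtx{Y}) \geq \trace(\econst^{\theta \mtx{Y}} - \Id)$ always, with strict inequality whenever $\mtx{Y}$ has negative spectrum. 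The truncation built into $\psi$ was harmless in the intrinsic Chernoff proof precisely because there $\mtx{Y}$ is a sum of positive-semidefinite matrices and hence $\mtx{Y} \psdge \mtx{0}$, so the max does nothing. In the Bernstein setting $\mtx{Y}$ is a centered sum and is generically indefinite, so the truncation is material. Since the Laplace transform bound gives $\Prob\{\cdot\} \leq \psi(t)^{-1}\Expect\trace\psi(\mtx{Y})$ and you want to pass to a \emph{smaller} quantity on the right, the chain of inequalities breaks.

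The repair, which is what the paper does, is to take $\psi(t) = \econst^{\theta t} - \theta t - 1$. This is a nonnegative, nondecreasing-on-$[0,\infty)$ function with $\psi(0) = 0$ \emph{globally} (no truncation is needed because $\econst^x - x - 1 \geq 0$ for all real $x$), and its standard matrix extension is exactly $\psi(\mtx{Y}) = \econst^{\theta \mtx{Y}} - \theta \mtx{Y} - \Id$. Taking expectation and using the hypothesis $\Expect \mtx{Y} = \mtx{0}$ kills the linear term: $\Expect \trace \psi(\mtx{Y}) = \Expect \trace(\econst^{\theta \mtx{Y}} - \Id)$. In other words, the extra $-\theta t$ in this $\psi$ is precisely what subtracting the identity bought you in the Chernoff case, except here the hypothesis that makes it vanish is mean-zero rather than positive semidefiniteness. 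From there your remaining steps go through essentially unchanged, except that the correction factor becomes $\econst^{\theta t}/(\econst^{\theta t} - \theta t - 1) \leq 1 + 3/(\theta t)^2$ rather than $1 + 1/(\theta t)$; the same Bernstein choice $\theta = t/(v + Lt/3)$ and the same restriction $t \geq \sqrt{v} + L/3$ give $\theta t \geq 1$ and hence a factor at most $4$, matching the stated constant. Your observations about the subadditivity of cgfs, Lemma~\ref{lem:matrix-bernstein-mgf}, and Lemma~\ref{lem:intrinsic-dim} applied to $\phi(a) = \econst^a - 1$, as well as the reduction of the rectangular case to the Hermitian case via the dilation, are all correct and match the paper.
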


\noindent
The proof of this result appears in the next section. %

\subsection{Proof of the Hermitian Case}

We commence with the results for an independent sum of random Hermitian matrices
whose eigenvalues are subject to an upper bound.

\begin{proof}[Proof of Theorem~\ref{thm:intdim-bernstein-herm}]
Consider a finite sequence $\{\mtx{X}_k\}$ of independent, random, Hermitian matrices with
$$
\Expect \mtx{X}_k = \mtx{0}
\quad\text{and}\quad
\lambda_{\max}(\mtx{X}_k) \leq L
\quad\text{for each index $k$.}
$$
Introduce the random matrix
$$
\mtx{Y} = \sum\nolimits_k \mtx{X}_k.
$$
Our goal is to obtain a tail bound for $\lambda_{\max}(\mtx{Y})$
that reflects the intrinsic dimension of a matrix $\mtx{V}$
that satisfies $\mtx{V} \psdge \mVar(\mtx{Y})$.

Fix a number $\theta > 0$, and define the function $\psi(t) = \econst^{\theta t} - \theta t - 1$ for $t \in \R$.
The general version of the matrix Laplace transform bound, Proposition~\ref{prop:matrix-lt-general}, implies that
\begin{equation} \label{eqn:intdim-gauss-pf-1}
\begin{aligned}
\Prob{ \lambda_{\max}(\mtx{Y}) \geq t }
	&\leq \frac{1}{\psi(t)} \Expect \trace \psi(\mtx{Y}) \\
	&= \frac{1}{\psi(t)}
	\Expect \trace\bigl( \econst^{\theta \mtx{Y}} - \theta \mtx{Y} - \Id \bigr) \\
	&= \frac{1}{\econst^{\theta t} - \theta t - 1}
	\Expect \trace\bigl( \econst^{\theta \mtx{Y}} - \Id \bigr).
\end{aligned}
\end{equation}
The last identity holds because the random matrix $\mtx{Y}$ has zero mean.

Let us focus on the expected trace on the right-hand side of~\eqref{eqn:intdim-gauss-pf-1}.
Examining the proof of the original matrix Bernstein bound for Hermitian matrices,
Theorem~\ref{thm:matrix-bernstein-herm}, we see that
$$
\Expect \trace \econst^{\theta \mtx{Y}}
	\leq \trace \exp\left( g(\theta) \cdot \Expect{} \mtx{Y}^2 \right)
	\quad\text{where}\quad
	g(\theta) = \exp\left( \frac{\theta^2/2}{1 - \theta L/3} \right).
$$
Introduce the function $\phi(a) = \econst^a - 1$, and observe that
\begin{equation*} %
\Expect\trace \bigl( \econst^{\theta \mtx{Y}} - \Id \bigr)
	\leq \trace \bigl( \econst^{g(\theta) \cdot \Expect{} \mtx{Y}^2} - \Id \bigr)
	\leq \trace \bigl( \econst^{g(\theta) \cdot \mtx{V}} - \Id \bigr)
	= \trace \phi\left(g(\theta)  \cdot \mtx{V} \right).
\end{equation*}
The second inequality depends on the assumption that $\Expect{}\mtx{Y}^2 = \mVar(\mtx{Y}) \psdle\mtx{V}$
and the monotonicity property~\eqref{eqn:exp-trace-monotone} of the trace exponential.
Apply the intrinsic dimension bound, Lemma~\ref{lem:intrinsic-dim}, to reach
\begin{equation} \label{eqn:intdim-gauss-pf-2}
\Expect\trace \bigl( \econst^{\theta \mtx{Y}} - \Id \bigr)
	\leq \intdim(\mtx{V}) \cdot \phi\bigl( g(\theta) \cdot \norm{\mtx{V}} \bigr)
	= d \cdot \phi \bigl( g(\theta) \cdot v \bigr)
	\leq d \cdot \econst^{g(\theta) \cdot v}.
\end{equation}
We have used the fact that the intrinsic dimension is scale invariant.
Then we identified $d = \intdim(\mtx{V})$ and $v = \norm{\mtx{V}}$.
The last inequality depends on the trivial estimate $\phi(a) \leq \econst^a$,
valid for all $a \in \R$.

Substitute the bound~\eqref{eqn:intdim-gauss-pf-2} into the probability inequality~\eqref{eqn:intdim-gauss-pf-1}
to discover that
\begin{equation} \label{eqn:intdim-gauss-pf-3}
\Prob{ \lambda_{\max}(\mtx{Y}) \geq t }
	\leq d \cdot \frac{\econst^{\theta t}}{\econst^{\theta t} - \theta t - 1}
	\cdot \econst^{- \theta t + g(\theta) \cdot v}
	\leq d \cdot \left( 1 + \frac{3}{\theta^2 t^2} \right)
	\cdot \econst^{- \theta t + g(\theta) \cdot v}.
\end{equation}
This estimate holds for any positive value  of $\theta$.
To control the fraction, we have observed that
\begin{equation*} %
\frac{\econst^{a}}{\econst^{a} - a - 1}
	= 1 + \frac{1 + a}{\econst^{a} - a - 1}
	\leq 1 + \frac{3}{a^2}
	\quad\text{for all $a \geq 0$.}
\end{equation*}
The inequality above is a consequence of the numerical fact
$$
\frac{\econst^a - a - 1}{a^2} - \frac{1+a}{3} > 0
\quad\text{for all $a \in \R$.} 
$$
Indeed, the left-hand side of the latter expression defines a convex function of $a$, whose minimal value, attained near $a \approx 1.30$, is strictly positive.

In the tail bound~\eqref{eqn:intdim-gauss-pf-3}, we select $\theta = t/(v + Lt/3)$ to reach
$$
\Prob{ \lambda_{\max}(\mtx{Y}) \geq t }
	\leq d \cdot \left(1 + 3 \cdot \frac{(v + Lt/3)^2}{t^4} \right) \cdot 
	\exp\left( \frac{-t^2/2}{v + Lt/3} \right).
$$
This probability inequality is typically vacuous when $t^2 < v + Lt/3$, so we may as
well limit our attention to the case where $t^2 \geq v + Lt/3$.  Under this assumption,
the parenthesis is bounded by four, which gives the tail bound~\eqref{eqn:intdim-bernstein-tail}.
We can simplify the restriction on $t$ by solving the quadratic inequality to obtain the sufficient condition
$$
t \geq \frac{1}{2} \left[ \frac{L}{3} + \sqrt{\frac{L^2}{9} + 4 v} \right].
$$
We develop an upper bound for the right-hand side of this inequality as follows.
$$
\begin{aligned}
\frac{1}{2} \left[ \frac{L}{3} + \sqrt{\frac{L^2}{9} + 4 v} \right]
	= \frac{L}{6} \left[ 1 + \sqrt{1 + \frac{36v}{L^2}} \right]
	\leq \frac{L}{6} \left[ 1 + 1 + \frac{6\sqrt{v}}{L} \right]
	= \sqrt{v} + \frac{L}{3}.
\end{aligned}
$$
We have used the numerical fact $\sqrt{a + b} \leq \sqrt{a} + \sqrt{b}$ for all $a, b \geq 0$.
Therefore, the tail bound~\eqref{eqn:intdim-bernstein-tail} is valid when $t \geq \sqrt{v} + L/3$.
\end{proof}

\subsection{Proof of the Rectangular Case}

Finally, we present the proof of the intrinsic Bernstein inequality, Theorem~\ref{thm:intdim-bernstein-rect},
for general random matrices.

\begin{proof}[Proof of Theorem~\ref{thm:intdim-bernstein-rect}]
Suppose that $\{\mtx{S}_k\}$ is a finite sequence of independent random matrices that satisfy
$$
\Expect \mtx{S}_k = \mtx{0}
\quad\text{and}\quad
\norm{ \mtx{S}_k } \leq L
\quad\text{for each index $k$.}
$$
Form the sum $\mtx{Z} = \sum_k \mtx{S}_k$.
As in the proof of Theorem~\ref{thm:matrix-bernstein-rect}, we derive the result by applying
Theorem~\ref{thm:intdim-bernstein-herm} to the Hermitian dilation $\mtx{Y} = \coll{H}(\mtx{Z})$.
The only new point that requires attention is the modification to the intrinsic dimension and
variance terms.

Recall the calculation of the variance of the dilation from~\eqref{eqn:var-dilation}:
$$
\Expect{} \mtx{Y}^2 = \Expect{} \coll{H}(\mtx{Z})^2
	= \begin{bmatrix} \mVar_1(\mtx{Z}) & \mtx{0} \\ \mtx{0} & \mVar_2(\mtx{Z}) \end{bmatrix}
	\psdle \begin{bmatrix} \mtx{V}_1 & \mtx{0} \\ \mtx{0} & \mtx{V}_2 \end{bmatrix}
	= \mtx{V}.
$$
The semidefinite inequality follows from our assumptions on $\mtx{V}_1$ and $\mtx{V}_2$.  Therefore,
the intrinsic dimension quantity in Theorem~\ref{thm:intdim-bernstein-herm} induces the definition in the general case:
$$
d = \intdim(\mtx{V}) = \intdim \begin{bmatrix} \mtx{V}_1 & \mtx{0} \\ \mtx{0} & \mtx{V}_2 \end{bmatrix}
$$
Similarly,
$$
v = \norm{ \mtx{V} } = \max\big\{ \norm{\mtx{V}_1}, \ \norm{\mtx{V}_2} \big\}.
$$
This point completes the argument.
\end{proof}

\subsection{Proof of the Intrinsic Bernstein Expectation Bound}
\label{sec:int-bernstein-expect-pf}

Finally, let us establish the expectation bound, Corollary~\ref{cor:intdim-bernstein-expect-rect},
that accompanies Theorem~\ref{thm:intdim-bernstein-rect}.

\begin{proof}[Proof of Corollary~\ref{cor:intdim-bernstein-expect-rect}]
Fix a number $\mu \geq \sqrt{v}$.
We may rewrite the expectation of $\norm{\mtx{Z}}$ as an integral:
\begin{align*}
\Expect{} \norm{\mtx{Z}} = \int_0^\infty \Prob{ \norm{\mtx{Z}} \geq t } \idiff{t}
	&\leq \mu + 4d \int_{\mu}^\infty \exp\left( \frac{-t^2/2}{v + Lt /3} \right) \idiff{t} \\
	&\leq \mu + 4d \int_{\mu}^\infty \econst^{ -t^2/(2v) } \idiff{t}
	+ 4d \int_{\mu}^\infty \econst^{ -3t/(2L) } \idiff{t} \\
	&\leq \mu + 4d \sqrt{v} \, \econst^{-\mu^2/(2v)} + \frac{8}{3} d L \, \econst^{-3\mu/(2L)}.
\end{align*}
To obtain the first inequality, we split the integral at $\mu$, and we bound the probability
by one on the domain of integration $[0, \mu]$.  The second inequality holds because
$$
\exp\left( \frac{-t^2/2}{v + Lt /3} \right)
	\leq \max\left\{ \econst^{-t^2/(2v)}, \ \econst^{-3t/(2L)} \right\}
	\leq \econst^{-t^2/(2v)} + \econst^{-3t/(2L)}.
$$
We controlled the Gaussian integral by inserting the factor $\sqrt{v} \, (t/v) \geq 1$
into the integrand:
$$
\int_{\mu}^\infty \econst^{ -t^2/(2v) } \idiff{t}
	\leq \sqrt{v} \int_{\mu}^{\infty} (t/v) \, \econst^{-t^2/(2v)} \idiff{t}
	= \sqrt{v}\, \econst^{-\mu^2/(2v)}.
$$
To complete the argument, select $\mu = \sqrt{2v \log(1 + d)} + \frac{2}{3} L \log(1+ d)$
to reach
$$
\begin{aligned}
\Expect{} \norm{\mtx{Z}} &\leq \mu %
	+ 4d \sqrt{v} \, \econst^{-(2v \log(1+d))/(2v)}
	+ \frac{8}{3} d L \, \econst^{-3 ((2/3) L \log(1 + d))/(2L)} \\
	&\leq \sqrt{2v \log(1 + d)} + \frac{2}{3} L \log(1+ d) + 4 \sqrt{v} + \frac{8}{3} L.
\end{aligned}
$$
The stated bound~\eqref{eqn:intdim-bernstein-tail-int} follows after we combine
terms and agglomerate constants.
\end{proof}

\section{Notes}

At present, there are two different ways to improve the dimensional factor that appears in matrix concentration inequalities.

First, there is a sequence of matrix concentration results where the dimensional parameter is bounded by the maximum rank of the random matrix.  The first bound of this type is due to Rudelson~\cite{Rud99:Random-Vectors}.  Oliveira's results in~\cite{Oli10:Sums-Random} also exhibit
this reduced dimensional dependence.  A subsequent paper~\cite{MZ11:Low-Rank-Matrix-Valued}
by Magen \& Zouzias contains a related argument that gives similar results.  We do
not discuss this class of bounds here.

The idea that the dimensional factor should depend on metric properties of the random matrix
appears in a paper of Hsu, Kakade, \&  Zhang~\cite{HKZ12:Tail-Inequalities}.  They obtain a
bound that is similar with Theorem~\ref{thm:intdim-bernstein-herm}.  Unfortunately,
their argument is complicated, and the results it delivers are suboptimal.

Theorem~\ref{thm:intdim-bernstein-herm} is essentially due to Stanislav
Minsker~\cite{Min11:Some-Extensions}.  His approach leads to somewhat
sharper bounds than the approach in the paper of Hsu, Kakade, \& Zhang,
and his method is easier to understand.

These notes contain another approach to intrinsic dimension bounds.
The intrinsic Chernoff bounds that emerge from our framework
are new.  The proof of the intrinsic Bernstein bound,
Theorem~\ref{thm:intdim-bernstein-herm}, can be interpreted
as a distillation of Minsker's argument.
Indeed, many of the specific calculations
already appear in Minsker's paper.
We have obtained constants that are marginally better.

\makeatletter{}%

\chapter{A Proof of Lieb's Theorem}
\label{chap:lieb}

Our approach to random matrices depends on some sophisticated ideas that are not usually presented in linear algebra courses.  This chapter contains a complete derivation of the results that undergird our matrix concentration inequalities.  We begin with a short argument that explains how Lieb's Theorem follows from deep facts about a function called the matrix relative entropy.  The balance of the chapter is devoted to an analysis of the matrix relative entropy.  Along the way, we establish the core properties of the trace exponential function and the matrix logarithm.  This discussion may serve as an introduction to the advanced techniques of matrix analysis.

\section{Lieb's Theorem} \label{sec:lieb-chap}

In his 1973 paper on trace functions, Lieb established an important concavity theorem~\cite[Thm.~6]{Lie73:Convex-Trace} for the trace exponential function.
As we saw in Chapter~\ref{chap:matrix-lt}, this result animates all of our matrix concentration inequalities.

\begin{thm}[Lieb] \label{thm:lieb-chap}
Let $\mtx{H}$ be a fixed Hermitian matrix with dimension $d$.  The map
\begin{equation} \label{eqn:trace-fn}
\mtx{A} \longmapsto \trace \exp\left( \mtx{H} + \log \mtx{A} \right)
\end{equation}
is concave on the convex cone of $d \times d$ positive-definite Hermitian matrices.
\end{thm}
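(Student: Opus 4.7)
The plan is to reduce Lieb's theorem to a joint convexity property of the matrix relative entropy
\[ D(\mtx{X}, \mtx{A}) = \trace\bigl[\mtx{X}(\log\mtx{X} - \log\mtx{A})\bigr], \]
a standard object from quantum information theory.  The central ingredient will be a variational formula expressing the trace exponential as a partial supremum involving $D$:
\[ \trace\exp(\mtx{H} + \log\mtx{A}) = \sup_{\mtx{X} \succ \mtx{0}} \bigl\{\trace(\mtx{X}\mtx{H}) + \trace\mtx{X} - D(\mtx{X}, \mtx{A})\bigr\}. \]
Given this identity together with joint convexity of $D$, the theorem will drop out almost immediately from general properties of partial suprema.

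To derive the variational formula, I would first establish Klein's inequality, $D(\mtx{X}, \mtx{Y}) \geq \trace\mtx{X} - \trace\mtx{Y}$ for positive-definite $\mtx{X}, \mtx{Y}$, with equality iff $\mtx{X} = \mtx{Y}$.  This is a fairly routine consequence of the convexity of the scalar function $t \mapsto t\log t$ combined with a diagonalization of $\mtx{Y}$; it amounts to the ``tangent-line inequality'' for convex functions, transferred to matrices via the spectral theorem.  Writing $\mtx{Y} = \exp(\mtx{K})$ for arbitrary Hermitian $\mtx{K}$ and rearranging gives
\[ \trace\exp(\mtx{K}) = \sup_{\mtx{X} \succ \mtx{0}} \bigl\{\trace(\mtx{X}\mtx{K}) - \trace(\mtx{X}\log\mtx{X}) + \trace\mtx{X}\bigr\}, \]
with the supremum attained uniquely at $\mtx{X} = \exp(\mtx{K})$.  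Specializing to $\mtx{K} = \mtx{H} + \log\mtx{A}$ and absorbing two of the trace terms into $-D(\mtx{X}, \mtx{A})$ yields the displayed formula.

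The main obstacle is to prove that $D$ is jointly convex on pairs of positive-definite matrices; this is the technical heart of the chapter.  I would approach it by first developing the operator concavity of the logarithm and the monotonicity of the trace exponential under the semidefinite order (these are the core properties of matrix functions alluded to in the chapter introduction), and then deducing joint convexity of $D$ from Ando's theorem on joint concavity of the map $(\mtx{A}, \mtx{B}) \mapsto \trace(\mtx{K}^{\adj} \mtx{A}^{1-s} \mtx{K} \mtx{B}^s)$ for $s \in [0,1]$.  The passage from power functions to the logarithm can be made by differentiating along a suitable one-parameter family at $s \to 0$, which converts joint concavity of a power-mean expression into joint convexity of a logarithmic one.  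Ando's theorem itself admits an elementary proof by a tensor-product trick combined with operator concavity of the square root.

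With the variational formula and the joint convexity of $D$ both in hand, the rest is bookkeeping.  Joint convexity of $D$ makes the integrand $(\mtx{X}, \mtx{A}) \mapsto \trace(\mtx{X}\mtx{H}) + \trace\mtx{X} - D(\mtx{X}, \mtx{A})$ jointly concave on the product of two copies of the positive-definite cone.  A standard argument---for any $\mtx{A}_1, \mtx{A}_2$ and $\lambda \in [0,1]$, combine joint concavity applied to $(\mtx{X}_1, \mtx{A}_1), (\mtx{X}_2, \mtx{A}_2)$ with the fact that $\lambda\mtx{X}_1 + (1-\lambda)\mtx{X}_2$ is an admissible competitor at $\lambda\mtx{A}_1 + (1-\lambda)\mtx{A}_2$---shows that partial supremum over $\mtx{X}$ of a jointly concave function is concave in the surviving variable.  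Combining this with the variational formula gives concavity of $\mtx{A} \mapsto \trace\exp(\mtx{H} + \log\mtx{A})$ on the positive-definite cone, which is the content of the theorem.
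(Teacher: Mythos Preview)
Your overall architecture is exactly the paper's: establish nonnegativity of the matrix relative entropy (Klein), invert it into a variational formula for $\trace\exp(\cdot)$, invoke joint convexity of $\mathrm{D}$, and finish with the partial-supremum lemma. Where you diverge is in the proof of joint convexity of $\mathrm{D}$. You propose the classical Lindblad route: prove the Lieb--Ando concavity of $(\mtx{A},\mtx{B})\mapsto\trace(\mtx{K}^\adj\mtx{A}^{1-s}\mtx{K}\mtx{B}^s)$ and then differentiate at $s=0$ to recover $-\mathrm{D}$ as a limit of concave functions. The paper instead follows Effros: it proves the operator Jensen inequality, uses it to show that the matrix perspective $\mtx{\Psi}_f$ of an operator convex $f$ is jointly operator convex, and then represents $\mathrm{D}(\mtx{A};\mtx{H})$ as $(\phi\circ\mtx{\Psi}_f)(\mtx{A}\otimes\Id;\Id\otimes\mtx{H})$ with $f(a)=a-1-\log a$, exploiting Kronecker products to make the two arguments commute. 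Your route is shorter if one grants Lieb--Ando concavity, but note that the result you call ``Ando's theorem'' is essentially Lieb's original concavity theorem, so you should be explicit that your ``tensor-product trick plus operator concavity of the square root'' actually delivers the full range $s\in(0,1)$ and not just $s=\tfrac12$; otherwise the differentiation step at $s\to 0$ is not justified. The paper's perspective approach avoids power functions entirely and works directly with the logarithm via its integral representation, which is arguably more self-contained for this particular target.
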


\noindent
Section~\ref{sec:lieb-chap} contains an overview of the proof
of Theorem~\ref{thm:lieb-chap}.
First, we state the background material that we require,
and then we show how the theorem follows.
Some of the supporting results are major theorems in their own
right, and the details of their proofs will consume the rest of the
chapter.

\subsection{Conventions}

The symbol $\R_{++}$ refers to the set of positive real numbers.
We remind the reader of our convention that bold capital letters that are
symmetric about the vertical axis $(\mtx{A}, \mtx{H}, \mtx{M}, \mtx{T}, \mtx{U},
\mtx{\Psi})$ always refer to Hermitian matrices.
We reserve the letter $\Id$ for the identity
matrix, while the letter $\mtx{Q}$ always
refers to a unitary matrix.
Other bold capital letters
$(\mtx{B}, \mtx{K}, \mtx{L})$ denote rectangular matrices.

Unless stated otherwise, the results in this chapter hold for all matrices
whose dimensions are compatible.  For example, any result that involves
a sum $\mtx{A} + \mtx{H}$ includes the implicit constraint
that the two matrices are the same size.

Throughout this chapter, we assume that the parameter $\tau \in [0, 1]$,
and we use the shorthand $\bar{\tau} = 1 - \tau$ to make formulas
involving convex combinations more legible.

\subsection{Matrix Relative Entropy}

The proof of Lieb's Theorem depends on the properties of a bivariate function called the \term{matrix relative entropy}.

\begin{defn}[Matrix Relative Entropy]
Let $\mtx{A}$ and $\mtx{H}$ be positive-definite matrices of the same size.
The entropy of $\mtx{A}$ relative to $\mtx{H}$ is
$$
\mathrm{D}(\mtx{A}; \mtx{H}) = \trace \big[ \mtx{A} \, (\log \mtx{A} - \log \mtx{H}) - (\mtx{A} - \mtx{H}) \big].
$$
\end{defn}

\noindent
The relative entropy can be viewed as a measure of the difference between the matrix $\mtx{A}$ and the matrix $\mtx{H}$,
but it is not a metric.  Related functions arise in quantum statistical mechanics and quantum information theory.

We need two facts about the matrix relative entropy.

\begin{prop}[Matrix Relative Entropy is Nonnegative] \label{prop:mre-nonnegative}
For positive-definite matrices $\mtx{A}$ and $\mtx{H}$ of the same size,
the matrix relative entropy $\mathrm{D}(\mtx{A}; \mtx{H}) \geq 0$.
\end{prop}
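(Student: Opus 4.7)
The plan is to recognize $\mathrm{D}(\mtx{A}; \mtx{H})$ as an instance of Klein's inequality for the convex scalar function $f(x) = x \log x - x$, and to prove Klein's inequality by reducing it, via simultaneous spectral decompositions, to a collection of one-dimensional tangent-line inequalities. The key observation is that $f'(x) = \log x$, so the tangent-line (first-order convexity) inequality
\begin{equation*}
f(\lambda) - f(\mu) - f'(\mu)(\lambda - \mu) \geq 0 \quad\text{for all } \lambda,\mu \in \R_{++}
\end{equation*}
simplifies to $\lambda(\log \lambda - \log \mu) - (\lambda - \mu) \geq 0$, which is \emph{exactly} the integrand of the matrix relative entropy at a single pair of scalar eigenvalues. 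So the proof reduces to showing that $\mathrm{D}(\mtx{A}; \mtx{H})$ can be written as a non-negative weighted sum of such scalar quantities.

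To carry this out, I would first fix eigenvalue decompositions $\mtx{A} = \sum_i \lambda_i \vct{a}_i \vct{a}_i^\adj$ and $\mtx{H} = \sum_j \mu_j \vct{h}_j \vct{h}_j^\adj$ with $\lambda_i, \mu_j > 0$, and introduce the overlap weights $w_{ij} = \abssq{\vct{a}_i^\adj \vct{h}_j}$. These weights form a doubly stochastic array: $\sum_i w_{ij} = 1$ for each $j$ and $\sum_j w_{ij} = 1$ for each $i$, since $\{\vct{a}_i\}$ and $\{\vct{h}_j\}$ are each orthonormal bases. Using Definition~\ref{def:standard-matrix-fn} of a standard matrix function and the definition~\eqref{eqn:trace} of the trace, a direct computation yields
\begin{align*}
\trace(\mtx{A} \log \mtx{A}) &= \sum_{i,j} w_{ij} \, \lambda_i \log \lambda_i,
& \trace(\mtx{A} \log \mtx{H}) &= \sum_{i,j} w_{ij} \, \lambda_i \log \mu_j, \\
\trace \mtx{A} &= \sum_{i,j} w_{ij} \, \lambda_i,
& \trace \mtx{H} &= \sum_{i,j} w_{ij} \, \mu_j,
\end{align*}
where the first two identities use $\sum_j w_{ij} = 1$ and $\sum_i w_{ij} = 1$ in the appropriate slots. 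Combining these expressions gives the crucial identity
\begin{equation*}
\mathrm{D}(\mtx{A}; \mtx{H}) = \sum_{i,j} w_{ij} \bigl[\, \lambda_i(\log \lambda_i - \log \mu_j) - (\lambda_i - \mu_j) \,\bigr].
\end{equation*}

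Finally, I would verify that each bracketed scalar term is non-negative. Since $f(x) = x \log x - x$ is strictly convex on $\R_{++}$ (its second derivative $1/x$ is positive there), the tangent-line inequality at the point $\mu_j$ shows that the bracket equals $f(\lambda_i) - f(\mu_j) - f'(\mu_j)(\lambda_i - \mu_j) \geq 0$. Since the weights $w_{ij}$ are also non-negative, the entire sum is non-negative, establishing $\mathrm{D}(\mtx{A}; \mtx{H}) \geq 0$. The only subtle point in this plan is the bookkeeping that shows how the trace-of-product expressions unfold into the doubly stochastic sum; everything else is a direct application of scalar convexity, so I do not anticipate a serious obstacle. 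An alternative route, useful later in the chapter, would be to derive the inequality from the joint convexity of the map $(\mtx{A}, \mtx{H}) \mapsto \mathrm{D}(\mtx{A}; \mtx{H})$ together with the fact that $\mathrm{D}(\mtx{A}; \mtx{A}) = 0$, but the direct spectral proof above is more self-contained.
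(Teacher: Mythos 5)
Your proof is correct and follows essentially the same route as the paper: the paper packages the eigenvalue-decomposition step into a reusable lemma (the generalized Klein inequality, Proposition~\ref{prop:gen-klein}), whose proof is precisely your overlap-weight computation, and then instantiates it with the tangent-line inequality for $f(a) = a \log a - a$. You have simply inlined that lemma for the specific function, so the core mechanism---spectral decompositions of both matrices, the doubly stochastic weights $\abssqip{\vct{a}_i}{\vct{h}_j}$, and scalar first-order convexity---is identical.
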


\noindent
Proposition~\ref{prop:mre-nonnegative} is easy to prove;
see Section~\ref{sec:mre-nonnegative} for the short argument.

\begin{thm}[The Matrix Relative Entropy is Convex] \label{thm:mre-convex}
The map $(\mtx{A}, \mtx{H}) \mapsto \mathrm{D}(\mtx{A}; \mtx{H})$
is convex.  That is, for positive-definite $\mtx{A}_i$ and $\mtx{H}_i$ of the same size,
$$
{\mathrm D}\big(\tau \mtx{A}_1 + \bar{\tau} \mtx{A}_2; \, \tau \mtx{H}_1 + \bar{\tau} \mtx{H}_2 \big)
	\leq \tau \cdot {\mathrm D}\big(\mtx{A}_1; \mtx{H}_1 \big)
	+ \bar{\tau} \cdot {\mathrm D}\big(\mtx{A}_2; \mtx{H}_2 \big)
\quad\text{for $\tau \in [0,1]$}.
$$
\end{thm}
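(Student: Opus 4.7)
The plan is to express the matrix relative entropy as an integral whose integrand is manifestly jointly convex, and then integrate. The fundamental tool I would reach for is the integral representation of the scalar logarithm,
$$
\log x \ =\ \int_0^\infty \left[ \frac{1}{1+s} - \frac{1}{x+s} \right] \idiff{s}
\quad\text{for $x > 0$,}
$$
which lifts to Hermitian matrices through the functional calculus. Combined with the resolvent identity $(\mtx{H}+s\Id)^{-1} - (\mtx{A}+s\Id)^{-1} = (\mtx{H}+s\Id)^{-1}(\mtx{A}-\mtx{H})(\mtx{A}+s\Id)^{-1}$, this delivers
$$
\log\mtx{A} - \log\mtx{H} \ =\ \int_0^\infty (\mtx{H}+s\Id)^{-1}(\mtx{A}-\mtx{H})(\mtx{A}+s\Id)^{-1} \idiff{s}.
$$
Taking trace against $\mtx{A}$ and combining with the linear piece $\trace(\mtx{H}-\mtx{A})$, the entire relative entropy becomes an integral (or a suitable double integral along the segment $\mtx{H}_t = (1-t)\mtx{H}+t\mtx{A}$) of resolvent-sandwich expressions in $\mtx{A}-\mtx{H}$.

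The second ingredient is the Kiefer--Lieb--Ruskai inequality: the map $(\mtx{A},\mtx{B}) \mapsto \mtx{B}^\adj \mtx{A}^{-1} \mtx{B}$ is jointly operator convex whenever $\mtx{A} \psdgt \mtx{0}$, and consequently $(\mtx{A},\mtx{B}) \mapsto \trace[\mtx{B}^\adj \mtx{A}^{-1}\mtx{B}]$ is jointly convex. I would prove this by the Schur complement characterization, which identifies $\mtx{B}^\adj\mtx{A}^{-1}\mtx{B}$ as the minimum $\mtx{C}$ making the block matrix $\begin{bmatrix}\mtx{A} & \mtx{B} \\ \mtx{B}^\adj & \mtx{C}\end{bmatrix}$ positive semidefinite; joint convexity then follows by summing two such block inequalities and invoking the extremal characterization on the convex combination. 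After a careful rearrangement of the integrand---matching it to the pattern $\trace[\mtx{K}^\adj \mtx{M}^{-1} \mtx{K}]$ where both $\mtx{K}$ and $\mtx{M}$ depend affinely on $(\mtx{A},\mtx{H})$---convexity transfers to each integrand and is preserved by integration.

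The main obstacle is that in the noncommutative setting the derivative of $\log\mtx{H}_t$ along the segment $\mtx{H}_t = (1-t)\mtx{H}+t\mtx{A}$ is not simply $\mtx{H}_t^{-1}(\mtx{A}-\mtx{H})$ but a \emph{double operator integral},
$$
\frac{\diff{}}{\diff{t}}\log\mtx{H}_t \ =\ \int_0^\infty (\mtx{H}_t+s\Id)^{-1}(\mtx{A}-\mtx{H})(\mtx{H}_t+s\Id)^{-1}\idiff{s}.
$$
Consequently, the elegant scalar identity $\mathrm{d}(a;h) = \int_0^1 (1-t)\,(a-h)^2/((1-t)h+ta)\idiff{t}$ acquires an extra inner integral in $s$ in the matrix case, and one must use cyclicity of the trace and the Bregman--Taylor identity for the convex function $F(\mtx{X}) = \trace[\mtx{X}\log\mtx{X}-\mtx{X}]$ in order to collect the integrand into the form required for Kiefer--Lieb--Ruskai. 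Verifying that this arrangement really does reduce to the model function $\trace[\mtx{K}^\adj\mtx{M}^{-1}\mtx{K}]$ with affine dependence on $(\mtx{A},\mtx{H})$---and thereby completing the chain Thm.~\ref{thm:mre-convex} $\Leftarrow$ Kiefer--Lieb--Ruskai $\Leftarrow$ Schur complement---is the crux of the argument.
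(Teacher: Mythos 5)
Your proposed route---integral representation of the logarithm plus the Kiefer joint convexity of $(\mtx{A},\mtx{B}) \mapsto \trace[\mtx{B}^\adj\mtx{A}^{-1}\mtx{B}]$---is genuinely different from the paper's proof, which represents $\mathrm{D}(\mtx{A};\mtx{H})$ via the matrix perspective $\mtx{\Psi}_f$ of the operator-convex function $f(a)=a-1-\log a$ evaluated at the \emph{commuting} Kronecker matrices $\mtx{A}\otimes\Id$ and $\Id\otimes\mtx{H}$, and then derives joint convexity directly from the operator Jensen inequality (Theorem~\ref{thm:operator-jensen}). The paper never touches an integral formula for the relative entropy; by passing to Kronecker products, it makes all the operators commute so that the perspective becomes a clean product, and the entire analytic burden is shifted onto operator concavity of the logarithm (Proposition~\ref{prop:log-concave}).

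However, your outline has a real gap at exactly the point you flag as the crux. The Bregman--Taylor rewriting yields
$$
\mathrm{D}(\mtx{A};\mtx{H}) \ =\ \int_0^1 (1-t)\int_0^\infty \trace\bigl[(\mtx{A}-\mtx{H})(\mtx{H}_t+s\Id)^{-1}(\mtx{A}-\mtx{H})(\mtx{H}_t+s\Id)^{-1}\bigr]\idiff{s}\idiff{t},
$$
and the plan is to argue that the inner integrand is jointly convex and invoke Kiefer. But that integrand is \emph{not} jointly convex, even in the scalar case: the function $(y,m)\mapsto y^2/m^2$ on $\R\times\R_{++}$ has Hessian determinant $-4y^2/m^6<0$. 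The Kiefer pattern $\trace[\mtx{K}^\adj\mtx{M}^{-1}\mtx{K}]$ corresponds to $y^2/m$, not $y^2/m^2$; the second resolvent destroys the Schur-complement structure. It is only after integrating over $s$---i.e.\ after forming the Hessian superoperator $\mathbb{T}_{\mtx{M}}^{-1}$ with $(\mathbb{T}_{\mtx{M}}\mtx{Y})_{ij} = \frac{\lambda_i-\lambda_j}{\log\lambda_i-\log\lambda_j}\,\mtx{Y}_{ij}$ in the eigenbasis of $\mtx{M}$---that the map takes the Kiefer shape $\langle\mtx{Y},\mathbb{T}_{\mtx{M}}^{-1}\mtx{Y}\rangle$, and by then $\mtx{M}\mapsto\mathbb{T}_{\mtx{M}}$ is no longer affine, so Kiefer still does not apply directly.

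The route can be salvaged, but it needs one genuinely new ingredient you do not supply: \emph{operator concavity} of the logarithmic-mean superoperator $\mtx{M}\mapsto\mathbb{T}_{\mtx{M}}$. Given that, you combine Kiefer's joint convexity of $(\mathbb{M},\mtx{Y})\mapsto\langle\mtx{Y},\mathbb{M}^{-1}\mtx{Y}\rangle$ with the fact that this map is decreasing in $\mathbb{M}$: operator concavity gives $\mathbb{T}_{\tau\mtx{M}_1+\bar\tau\mtx{M}_2}\psdge \tau\mathbb{T}_{\mtx{M}_1}+\bar\tau\mathbb{T}_{\mtx{M}_2}$, so $\langle\mtx{Y},\mathbb{T}_{\tau\mtx{M}_1+\bar\tau\mtx{M}_2}^{-1}\mtx{Y}\rangle\leq\langle\mtx{Y},(\tau\mathbb{T}_{\mtx{M}_1}+\bar\tau\mathbb{T}_{\mtx{M}_2})^{-1}\mtx{Y}\rangle$, and then Kiefer closes the loop. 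But operator concavity of the logarithmic mean is not elementary (it is a Kubo--Ando operator-mean result), and the Schur-complement lemma alone will not produce it. Either supply that fact explicitly, or switch to the paper's Kronecker-product argument, which avoids logarithmic means entirely because $\mtx{A}\otimes\Id$ and $\Id\otimes\mtx{H}$ commute.
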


\noindent
Theorem~\ref{thm:mre-convex} is one of the crown jewels of matrix analysis.
The supporting material for this result occupies the bulk of this chapter;
the argument culminates in Section~\ref{sec:mre-convex}.

\subsection{Partial Maximization}

We also require a basic fact from convex analysis which states that partial maximization of a concave function produces a concave function.  We include the simple proof.

\begin{fact}[Partial Maximization] \label{fact:partial-max} %
Let $f$ be a concave function of two variables.  Then the function $y \mapsto \sup\nolimits_{x} f(x; y)$ obtained by partial maximization is concave.
\end{fact}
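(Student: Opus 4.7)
The plan is to unfold the definition of concavity for $g(y) := \sup_x f(x,y)$ and exploit joint concavity of $f$ in one clean step. First I would fix two points $y_1, y_2$ in the domain of $g$ along with a convex parameter $\tau \in [0,1]$, and pick arbitrary feasible $x_1, x_2$. The key observation is that $\tau x_1 + \bar\tau x_2$ is a feasible candidate for the supremum defining $g(\tau y_1 + \bar\tau y_2)$, so by definition of $g$ followed by joint concavity of $f$,
$$
g(\tau y_1 + \bar\tau y_2) \;\geq\; f\bigl(\tau x_1 + \bar\tau x_2,\ \tau y_1 + \bar\tau y_2\bigr) \;\geq\; \tau\, f(x_1, y_1) + \bar\tau\, f(x_2, y_2).
$$

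Next I would take the supremum over the free variables on the right-hand side. Since the inequality holds for every $x_1$ and every $x_2$, and the left-hand side does not depend on either, I can take $\sup_{x_1}$ and then $\sup_{x_2}$ in turn to obtain
$$
g(\tau y_1 + \bar\tau y_2) \;\geq\; \tau \sup_{x_1} f(x_1, y_1) + \bar\tau \sup_{x_2} f(x_2, y_2) \;=\; \tau\, g(y_1) + \bar\tau\, g(y_2).
$$
This is the defining inequality for concavity of $g$, so the proof is complete.

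There is essentially no serious obstacle here; the only point worth a moment's care is that we pass to suprema in $x_1$ and $x_2$ \emph{separately} rather than jointly, which is legitimate because the left-hand side is free of both variables. The argument also makes no finiteness assumptions, so it handles the case where $g(y) = +\infty$ or where the supremum is not attained without modification.
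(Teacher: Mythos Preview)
Your proof is correct and follows essentially the same approach as the paper: both arguments use that $\tau x_1 + \bar\tau x_2$ is a feasible candidate for the supremum at $\tau y_1 + \bar\tau y_2$, then invoke joint concavity of $f$. The only cosmetic difference is that the paper picks $\eps$-near-maximizers $x_1, x_2$ and sends $\eps \to 0$ at the end, whereas you keep $x_1, x_2$ arbitrary and take the two suprema directly; your version is arguably cleaner and, as you note, handles the non-attained and $+\infty$ cases without extra fuss.
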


\begin{proof}
Fix $\eps > 0$.  For each pair of points $y_1$ and $y_2$, there are points $x_1$ and $x_2$ that satisfy
$$
f(x_1; y_1) \geq \sup\nolimits_x f(x; y_1) - \eps
\quad\text{and}\quad
f(x_2; y_2) \geq \sup\nolimits_x f(x; y_2) - \eps.
$$
For each $\tau \in [0,1]$, the concavity of $f$ implies that
\begin{align*}
\sup\nolimits_x f(x; \ \tau y_1 + {\bar \tau} y_2)
	&\geq f(\tau x_1 + \bar{\tau} x_2;\ \tau y_1 + \bar{\tau} y_2) \\
	&\geq \tau \cdot f(x_1; y_1) + \bar{\tau} \cdot f(x_2; y_2) \\
	&\geq \tau \cdot \sup\nolimits_x f(x; y_1) + \bar{\tau} \cdot \sup\nolimits_x f(x; y_2) - \eps.
\end{align*}
Take the limit as $\eps \downarrow 0$ to see that the partial supremum is a concave function of $y$.
\end{proof}

\subsection{A Proof of Lieb's Theorem}

Taking the results about the matrix relative entropy for granted, it is not hard to prove
Lieb's Theorem.  We begin with a variational representation of the trace, which restates
the fact that matrix relative entropy is nonnegative.

\begin{lemma}[Variational Formula for Trace] \label{lem:variation}
Let $\mtx{M}$ be a positive-definite matrix.  Then
$$
\trace \mtx{M} = \sup_{\mtx{T} \psdgt \mtx{0}} \ \trace\big[ \mtx{T} \log \mtx{M} - \mtx{T} \log \mtx{T} + \mtx{T} \big].
$$
\end{lemma}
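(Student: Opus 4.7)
The plan is to read the lemma as a direct consequence of the nonnegativity of matrix relative entropy (Proposition~\ref{prop:mre-nonnegative}), which is the one ingredient we are told this variational identity restates. Specifically, I will expand $\mathrm{D}(\mtx{T}; \mtx{M})$ for an arbitrary positive-definite $\mtx{T}$ and rearrange the resulting inequality into the advertised form, then exhibit a concrete choice of $\mtx{T}$ that achieves equality.

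First, take any positive-definite $\mtx{T}$ of the same size as $\mtx{M}$. By the definition of matrix relative entropy,
\begin{equation*}
\mathrm{D}(\mtx{T}; \mtx{M})
 = \trace\bigl[ \mtx{T}\,(\log \mtx{T} - \log \mtx{M}) - (\mtx{T} - \mtx{M}) \bigr]
 = \trace \mtx{M} - \trace\bigl[ \mtx{T} \log \mtx{M} - \mtx{T} \log \mtx{T} + \mtx{T} \bigr].
\end{equation*}
Proposition~\ref{prop:mre-nonnegative} asserts that the left-hand side is nonnegative, so rearranging yields
\begin{equation*}
\trace \mtx{M} \;\geq\; \trace\bigl[ \mtx{T} \log \mtx{M} - \mtx{T} \log \mtx{T} + \mtx{T} \bigr]
\quad\text{for every $\mtx{T} \psdgt \mtx{0}$.}
\end{equation*}
Taking the supremum over all positive-definite $\mtx{T}$ preserves this bound, so $\trace \mtx{M}$ dominates the supremum on the right-hand side of the lemma.

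For the matching lower bound, note that the hypothesis $\mtx{M} \psdgt \mtx{0}$ makes $\mtx{M}$ itself an admissible feasible point in the supremum. Plugging $\mtx{T} = \mtx{M}$ into the objective gives
\begin{equation*}
\trace\bigl[ \mtx{M}\log\mtx{M} - \mtx{M}\log\mtx{M} + \mtx{M} \bigr] = \trace \mtx{M},
\end{equation*}
so the supremum is attained and equals $\trace \mtx{M}$. This completes the argument.

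There is essentially no obstacle to overcome: the only conceptual move is recognizing the lemma as a dual formulation of $\mathrm{D}(\mtx{T}; \mtx{M}) \geq 0$. The main thing to check carefully is that the optimizer $\mtx{T} = \mtx{M}$ lies in the constraint set, which is immediate from the standing assumption that $\mtx{M}$ is positive definite. The variational formula will later be paired with Theorem~\ref{thm:mre-convex} (convexity of matrix relative entropy) and Fact~\ref{fact:partial-max} (partial maximization preserves concavity) to obtain Lieb's Theorem, so at this stage we only need the identity itself.
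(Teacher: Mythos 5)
Your proof is correct and follows exactly the same approach as the paper: expand $\mathrm{D}(\mtx{T};\mtx{M}) \geq 0$ using the definition of the matrix relative entropy, rearrange to get the upper bound, and plug in $\mtx{T} = \mtx{M}$ for equality. Your version simply spells out the rearrangement and feasibility check more explicitly than the paper does.
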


\begin{proof}
Proposition~\eqref{prop:mre-nonnegative} states that $\mathrm{D}(\mtx{T}; \mtx{M}) \geq 0$.
Introduce the definition of the matrix relative entropy, and rearrange to reach
$$
\trace \mtx{M} \geq \trace\big[ \mtx{T} \log \mtx{M} - \mtx{T} \log \mtx{T} + \mtx{T} \big].
$$
When $\mtx{T} = \mtx{M}$, both sides are equal, which yields the advertised identity.
\end{proof}

To establish Lieb's Theorem, we use the variational formula to represent the 
trace exponential.  Then we use the partial maximization result to condense
the desired concavity property from the convexity of the matrix relative entropy.

\begin{proof}[Proof of Theorem~\ref{thm:lieb-chap}]
In the variational formula, Lemma~\ref{lem:variation}, select $\mtx{M} = \exp(\mtx{H} + \log \mtx{A})$ to obtain
$$
\trace \exp( \mtx{H} + \log \mtx{A} )
	= \sup_{\mtx{T} \psdgt \mtx{0}} \
	\trace\big[ \mtx{T}(\mtx{H} + \log \mtx{A}) - \mtx{T} \log \mtx{T} + \mtx{T} \big]
$$
The latter expression can be written compactly using the matrix relative entropy:
\begin{equation} \label{eqn:tr-exp-div}
\trace \exp ( \mtx{H} + \log \mtx{A} )
	= \sup_{\mtx{T} \psdgt \mtx{0}} \
	\big[ \trace(\mtx{TH}) + \trace \mtx{A} - \mathrm{D}(\mtx{T}; \mtx{A}) \big] 
\end{equation}
For each Hermitian matrix $\mtx{H}$, the bracket is a concave function of the pair $(\mtx{T}, \mtx{A})$ because of Theorem~\ref{thm:mre-convex}.  We see that the right-hand side of~\eqref{eqn:tr-exp-div} is the partial maximum of a concave function, and Fact~\ref{fact:partial-max} ensures that this expression defines a concave function of $\mtx{A}$.  This observation establishes the theorem.
\end{proof}

\section{Analysis of the Relative Entropy for Vectors}
\label{sec:vre}

Many deep theorems about matrices have analogies for vectors.
This observation is valuable because we can usually
adapt an analysis from the vector setting to establish the parallel
result for matrices.  In the matrix setting, however, it may be
necessary to install a significant amount of extra machinery.
If we keep the simpler structure of the vector argument
in mind, we can avoid being crushed in the gears.

\subsection{The Relative Entropy for Vectors}

The goal of \S\ref{sec:vre} is to introduce the relative entropy function for positive vectors
and to derive some key properties of this function.  Later we will analyze the matrix
relative entropy by emulating these arguments.

\begin{defn}[Relative Entropy]
Let $\vct{a}$ and $\vct{h}$ be positive vectors of the same size.
The entropy of $\vct{a}$ relative to $\vct{h}$ is defined as
$$
\mathrm{D}( \vct{a}; \vct{h} ) =
\sum\nolimits_k \big[ a_k \, (\log a_k - \log h_k) - (a_k - h_k) \big].
$$
\end{defn}

\noindent
A variant of the relative entropy arises in information theory and statistics as a
measure of the discrepancy between two probability distributions on a finite set.
We will show that the relative entropy is nonnegative and convex.

It may seem abusive to recycle the notation for the relative entropy
on matrices.  To justify this decision, we observe that
$$
\mathrm{D}(\vct{a}; \vct{h}) =
\mathrm{D}\big(\diag(\vct{a});\ \diag(\vct{h})\big)
$$
where $\diag(\cdot)$ maps a vector to a diagonal matrix in the natural way.
In other words, the vector relative entropy is a special case of the matrix relative entropy.
Ultimately, the vector case is easier to understand because diagonal matrices commute.

\subsection{Relative Entropy is Nonnegative}

As we have noted, the relative entropy measures the difference between two positive vectors.
This interpretation is supported by the fact that the relative entropy is nonnegative.

\begin{prop}[Relative Entropy is Nonnegative] \label{prop:vre-nonnegative}
For positive vectors $\vct{a}$ and $\vct{h}$ of the same size,
the relative entropy $\mathrm{D}(\vct{a}; \vct{h}) \geq 0$.
\end{prop}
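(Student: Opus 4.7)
The plan is to establish the inequality termwise: I will show that for each index $k$, the scalar quantity $a_k(\log a_k - \log h_k) - (a_k - h_k)$ is nonnegative, and summing preserves this. So the real content is a one-variable inequality: for all $a, h > 0$,
$$
a (\log a - \log h) - (a - h) \geq 0.
$$

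The cleanest route is to derive this from the elementary bound $\log x \leq x - 1$, valid for all $x > 0$ (with equality iff $x = 1$). This bound is just the tangent-line inequality for the concave function $\log$ at $x = 1$. Applying it with $x = h/a$ gives $\log(h/a) \leq h/a - 1$, i.e., $\log h - \log a \leq h/a - 1$. Multiplying through by $a > 0$ and rearranging yields exactly $a(\log a - \log h) - (a - h) \geq 0$, as required.

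An alternative presentation, which I would mention briefly, is to fix $a > 0$ and view $f(h) = a(\log a - \log h) - (a - h)$ as a function of $h > 0$. Then $f'(h) = 1 - a/h$ vanishes only at $h = a$, and $f''(h) = a/h^2 > 0$, so $f$ is strictly convex with a unique minimum at $h = a$; since $f(a) = 0$, we conclude $f(h) \geq 0$ for all $h > 0$. This convexity perspective is also the natural lead-in to the next result, the convexity of $\mathrm{D}(\vct{a};\vct{h})$ jointly in both arguments.

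There is no real obstacle here; the statement reduces to a one-variable calculus fact. The only thing worth flagging is that equality holds iff $a_k = h_k$ for every $k$, i.e., $\vct{a} = \vct{h}$, which is a useful by-product for later arguments that appeal to strict positivity of relative entropy.
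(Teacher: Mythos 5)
Your proof is correct, and the reduction to the scalar inequality $a(\log a - \log h) - (a - h) \ge 0$ followed by summation is exactly the paper's structure. The difference is in which tangent-line inequality you invoke for the scalar step. You use $\log x \le x - 1$ at $x = h/a$ (the tangent of the concave function $\log$ at $x=1$) and clear the denominator by multiplying by $a$. The paper instead applies the supporting-tangent inequality $f(a) \ge f(h) + f'(h)(a-h)$ to the \emph{convex} function $f(a) = a\log a - a$, with $f'(a) = \log a$. Both are one-line derivations, and yours is arguably the more familiar. But the paper's choice is deliberate: the inequality ``a convex function lies above its tangent'' lifts verbatim to Hermitian matrices via the generalized Klein inequality (Proposition~\ref{prop:gen-klein}), and the same choice $f(a) = a\log a - a$ then gives the matrix result Proposition~\ref{prop:mre-nonnegative} with no new ideas. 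Your route does not transfer as cleanly — in the matrix setting one cannot substitute $x = \mtx{H}\mtx{A}^{-1}$ and ``multiply by $\mtx{A}$'' because of noncommutativity — so the paper's presentation is tuned to make the vector and matrix proofs parallel. Your calculus sidebar (minimizing $h \mapsto a(\log a - \log h) - (a-h)$) and the equality-case remark are both correct and harmless additions.
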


\begin{proof}
Let $f : \R_{++} \to \R$ be a differentiable convex function on the positive real line.
The function $f$ lies above its tangent lines, so
$$
f(a) \geq f(h) + f'(h) \cdot (a - h)
\quad\text{for positive $a$ and $h$.}
$$
Instantiate this result for the convex function $f(a) = a \log a - a$,
and rearrange to obtain the numerical inequality
$$
a \, (\log a - \log h) - (a - h) \geq 0
\quad\text{for positive $a$ and $h$.}
$$
Sum this expression over the components of the vectors $\vct{a}$ and $\vct{h}$ to complete
the argument.
\end{proof}

Proposition~\ref{prop:mre-nonnegative} states that the matrix relative entropy satisfies
the same nonnegativity property as the vector relative entropy.
The argument for matrices relies on the same ideas as Proposition~\ref{prop:vre-nonnegative},
and it is hardly more difficult.  See \S\ref{sec:mre-nonnegative} for the details.

\subsection{The Perspective Transformation}

Our next goal is to prove that the relative entropy is a convex function.
To establish this claim, we use an elegant technique from convex analysis.
The approach depends on the \term{perspective transformation}, a method
for constructing a bivariate convex function from a univariate convex
function.

\begin{defn}[Perspective Transformation]
Let $f : \R_{++} \to \R$ be a convex function on the positive real line.
The perspective $\psi_f$ of the function $f$ is defined as
$$
\psi_f : \R_{++} \times \R_{++} \to \R
\quad\text{where}\quad
\psi_f(a; h) = a \cdot f( h / a ).
$$
\end{defn}

\noindent
The perspective transformation has an interesting geometric interpretation.
If we trace the ray from the origin $(0,0,0)$ in $\R^3$ through the point $(a, h, \psi_f(a;h))$,
it pierces the plane $(1, \cdot, \cdot)$ at the point $(1, h/a, f(h/a))$.
Equivalently, for each positive $a$, the epigraph of $f$ is the ``shadow''
of the epigraph of $\psi_f(a, \cdot)$ on the plane $(1, \cdot, \cdot)$.

The key fact is that the perspective of a convex function is convex.
This point follows from the geometric reasoning in the last paragraph;
we also include an analytic proof.

\begin{fact}[Perspectives are Convex] \label{fact:perspective-convex}
Let $f : \R_{++} \to \R$ be a convex function.  Then the perspective $\psi_f$
is convex.  That is, for positive numbers $a_i$ and $h_i$,
$$
\psi_f(\tau a_1 + \bar{\tau} a_2; \ \tau h_1 + \bar{\tau} h_2)
	\leq \tau \cdot \psi_f(a_1; h_1)
	+ \bar{\tau} \cdot \psi_f(a_2; h_2)
\quad\text{for $\tau \in [0,1]$}.
$$
\end{fact}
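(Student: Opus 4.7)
The plan is to reduce joint convexity of $\psi_f$ to the univariate convexity of $f$ by a clever rewriting of the ratio $h/a$. Set $a = \tau a_1 + \bar\tau a_2$ and $h = \tau h_1 + \bar\tau h_2$, all of which are positive by hypothesis. The key algebraic identity is
$$
\frac{h}{a} \;=\; \frac{\tau a_1}{a} \cdot \frac{h_1}{a_1} \;+\; \frac{\bar\tau a_2}{a} \cdot \frac{h_2}{a_2},
$$
which expresses $h/a$ as a convex combination of $h_1/a_1$ and $h_2/a_2$, since the weights $\tau a_1/a$ and $\bar\tau a_2/a$ are nonnegative and sum to one (directly from the definition of $a$).

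Next I would apply the univariate convexity of $f$ on $\mathbb{R}_{++}$ at this convex combination to obtain
$$
f\!\left(\frac{h}{a}\right)
\;\leq\; \frac{\tau a_1}{a}\, f\!\left(\frac{h_1}{a_1}\right)
\;+\; \frac{\bar\tau a_2}{a}\, f\!\left(\frac{h_2}{a_2}\right).
$$
Multiplying both sides by $a > 0$ clears the denominators and gives
$$
a \cdot f(h/a) \;\leq\; \tau \, a_1 f(h_1/a_1) + \bar\tau\, a_2 f(h_2/a_2).
$$
Recognizing each side via the definition $\psi_f(a;h) = a \cdot f(h/a)$ yields exactly the desired inequality
$$
\psi_f(\tau a_1 + \bar\tau a_2;\, \tau h_1 + \bar\tau h_2) \;\leq\; \tau \cdot \psi_f(a_1;h_1) + \bar\tau \cdot \psi_f(a_2;h_2).
$$

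This argument is short and there is no real obstacle; the only non-obvious step is writing $h/a$ as a convex combination of $h_1/a_1$ and $h_2/a_2$ with the specific weights $\tau a_1/a$ and $\bar\tau a_2/a$. Once that identity is in hand, applying convexity of $f$ and multiplying by $a$ finishes the proof. The geometric interpretation noted in the excerpt, namely that the epigraph of $\psi_f$ is the cone over the epigraph of $f$, explains why this manipulation works: convex combinations in the $(a,h)$-plane pull back along rays through the origin to convex combinations of the ratio $h/a$, weighted by the $a$-coordinates.
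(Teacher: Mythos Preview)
Your proof is correct and follows essentially the same approach as the paper: the paper introduces the auxiliary weights $s = \tau a_1/a$ and $\bar{s} = \bar{\tau} a_2/a$, observes that $h/a = s \cdot h_1/a_1 + \bar{s} \cdot h_2/a_2$, applies the convexity of $f$, and then multiplies through by $a$ to recover the perspective. Your argument is identical in substance, only differing in that you do not name the weights separately.
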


\begin{proof}
Fix two pairs $(a_1, h_1)$ and $(a_2, h_2)$ of positive numbers
and an interpolation parameter $\tau \in [0,1]$.
Form the convex combinations
$$
a = \tau a_1 + \bar{\tau} a_2
\quad\text{and}\quad
h = \tau h_1 + \bar{\tau} h_2.
$$
We need to bound the perspective $\psi_f(a; h)$ as the convex combination
of its values at $\psi_f(a_1; h_1)$ and $\psi_f(a_2; h_2)$.
The trick is to introduce another pair of interpolation parameters:
$$
s = \frac{\tau a_1}{a}
\quad\text{and}\quad
\bar{s} = \frac{\bar{\tau} a_2}{a}.
$$
By construction, $s \in [0, 1]$ and $\bar{s} = 1 - s$.  We quickly determine that
$$
\begin{aligned}
\psi_f(a; h)
	&= a \cdot f( h/a ) \\
	&= a \cdot f( \tau h_1 / a + \bar{\tau} h_2 / a )  \\
	&= a \cdot f( s \cdot h_1 / a_1 + \bar{s} \cdot h_2 / a_2 ) \\
	&\leq a \left[ s \cdot f( h_1 / a_1 ) + \bar{s} \cdot f( h_2 / a_2 ) \right]\\ 
	&= \tau \cdot a_1 \cdot f(h_1 / a_1) + \bar{\tau} \cdot a_2 \cdot f(h_2 / a_2) \\
	&= \tau \cdot \psi_f(a_1; h_1) + \bar{\tau} \cdot \psi_f(a_2; h_2).
\end{aligned}
$$
To obtain the second identity, we write $h$ as a convex combination.  The third identity
follows from the definitions of $s$ and $\bar{s}$.  The inequality depends on the
fact that $f$ is convex.  Afterward, we invoke the definitions of $s$ and $\bar{s}$
again.  We conclude that $\psi_f$ is convex.
\end{proof}

When we study standard matrix functions, it is sometimes necessary
to replace a convexity assumption by a stricter property called operator convexity.
There is a remarkable extension of the perspective transform that constructs a bivariate
matrix function from an operator convex function.  The matrix perspective has a powerful
convexity property analogous with the result in Fact~\ref{fact:perspective-convex}.
The analysis of the matrix perspective depends on a far-reaching generalization of the
Jensen inequality for operator convex functions.
We develop these ideas in \S\S\ref{sec:operator-convex},~\ref{sec:operator-jensen},
and~\ref{sec:matrix-perspective}.

\subsection{The Relative Entropy is Convex}

To establish that the relative entropy is convex, we simply need to 
represent it as the perspective of a convex function.

\begin{prop}[Relative Entropy is Convex] \label{prop:vre-convex}
The map $(\vct{a}, \vct{h}) \mapsto \mathrm{D}(\vct{a}; \vct{h})$
is convex.  That is, for positive vectors $\vct{a}_i$ and $\vct{h}_i$ of the same size,
$$
{\mathrm D}(\tau \vct{a}_1 + \bar{\tau} \vct{a}_2; \, \tau \vct{h}_1 + \bar{\tau} \vct{h}_2)
	\leq \tau \cdot {\mathrm D}(\vct{a}_1; \vct{h}_1)
	+ \bar{\tau} \cdot {\mathrm D}(\vct{a}_2; \vct{h}_2)
\quad\text{for $\tau \in [0,1]$}.
$$
\end{prop}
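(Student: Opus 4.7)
The plan is to realize the vector relative entropy as a sum of perspectives of a single scalar convex function, and then invoke Fact~\ref{fact:perspective-convex} componentwise. The natural candidate for the base function is $f: \R_{++} \to \R$ defined by $f(t) = -\log t + t - 1$. A quick derivative check ($f''(t) = 1/t^2 > 0$) confirms that $f$ is strictly convex on $\R_{++}$.

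Next I would compute the perspective of $f$ to verify that it matches each summand of the relative entropy. By definition,
$$
\psi_f(a; h) = a \cdot f(h/a) = a \bigl[ -\log(h/a) + h/a - 1 \bigr] = a \log a - a \log h + h - a
	= a\,(\log a - \log h) - (a - h).
$$
Summing over the components of the vectors $\vct{a}$ and $\vct{h}$ yields exactly
$$
\mathrm{D}(\vct{a}; \vct{h}) = \sum\nolimits_k \psi_f(a_k; h_k).
$$

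From here the proof is essentially automatic. Fact~\ref{fact:perspective-convex} tells us that each map $(a_k, h_k) \mapsto \psi_f(a_k; h_k)$ is jointly convex on $\R_{++} \times \R_{++}$. Since joint convexity is preserved under taking nonnegative linear combinations, and since a coordinate projection of a convex combination of vectors gives the corresponding convex combination of components, it follows that
$$
(\vct{a}, \vct{h}) \longmapsto \sum\nolimits_k \psi_f(a_k; h_k)
$$
is jointly convex in $(\vct{a}, \vct{h})$, which is precisely the claim.

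There is no real obstacle here; the entire content of the proposition is packaged inside the perspective construction and the univariate convexity of $f$. The payoff of spelling the argument out in this form is that it signposts the route we will need to traverse in the matrix setting: to prove Theorem~\ref{thm:mre-convex} one must replace $f$ by an \emph{operator} convex function, define a matrix analogue of the perspective transform, and establish its joint convexity via an operator Jensen inequality — exactly the program announced after Fact~\ref{fact:perspective-convex}.
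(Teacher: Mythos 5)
Your proof is correct and follows essentially the same approach as the paper: you choose the same base function $f(t) = t - 1 - \log t$, compute the same perspective $\psi_f(a;h) = a(\log a - \log h) - (a - h)$, and invoke Fact~\ref{fact:perspective-convex} componentwise exactly as the paper does. The only additions are the quick second-derivative check of convexity and the closing remark about the matrix analogue, both of which are harmless.
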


\begin{proof}
Consider the convex function $f(a) = a - 1 - \log a$, defined on the positive real line.
By direct calculation, the perspective transformation satisfies
$$
\psi_f(a; h) = a \, (\log a - \log h) - (a - h)
\quad\text{for positive $a$ and $h$.}
$$
Fact~\ref{fact:perspective-convex} states that $\psi_f$ is a convex function.
For positive vectors $\vct{a}$ and $\vct{h}$,
we can express the relative entropy as
$$
\mathrm{D}(\vct{a}; \vct{h})
	= \sum\nolimits_k \psi_f(a_k; h_k),
$$
It follows that the relative entropy is convex.
\end{proof}

Similarly, we can express the matrix relative entropy using the
matrix perspective transformation.
The analysis for matrices is substantially more involved.
But, as we will see in \S\ref{sec:mre-convex}, the argument ultimately
follows the same pattern as the proof of Proposition~\ref{prop:vre-convex}.

\section{Elementary Trace Inequalities}
\label{sec:trace-ineqs}

It is time to begin our investigation into the properties of matrix functions.
This section contains some simple inequalities for the trace of a matrix
function that we can establish by manipulating eigenvalues
and eigenvalue decompositions.
These techniques are adequate to explain why the matrix relative entropy
is nonnegative.  In contrast, we will need more subtle arguments
to study the convexity properties of the matrix relative entropy.

\subsection{Trace Functions}

We can construct a real-valued function on Hermitian matrices by composing
the trace with a standard matrix function.  This type of map is called a \term{trace function}.

\begin{defn}[Trace function]
Let $f : I \to \R$ be a function on an interval $I$ of the real line, and let $\mtx{A}$ be an Hermitian matrix whose eigenvalues are contained in $I$.  We define the \term{trace function} $\trace f$ by the rule
$$
\trace f(\mtx{A}) = \sum\nolimits_i f(\lambda_i(\mtx{A})),
$$
where $\lambda_i(\mtx{A})$ denotes the $i$th largest eigenvalue of $\mtx{A}$.  This formula gives the same result as composing the trace with the standard matrix function $f$.
\end{defn}

\noindent
Our first goal is to demonstrate that a trace function $\trace f$ inherits
a monotonicity property from the underlying scalar function $f$.

\subsection{Monotone Trace Functions}
\label{sec:monotone-trace}

Let us demonstrate that the trace of a weakly increasing scalar function induces a trace function that preserves the semidefinite order.  To that end, recall that the relation $\mtx{A} \psdle \mtx{H}$ implies that each eigenvalue of $\mtx{A}$ is dominated by the corresponding eigenvalue of $\mtx{H}$.

\begin{fact}[Semidefinite Order implies Eigenvalue Order] \label{fact:sdp-eig-order}
For Hermitian matrices $\mtx{A}$ and $\mtx{H}$,
$$
\mtx{A} \psdle \mtx{H}
\quad\text{implies}\quad
\lambda_i(\mtx{A}) \leq \lambda_i(\mtx{H})
\quad\text{for each index $i$}.
$$
\end{fact}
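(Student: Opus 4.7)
The plan is to derive this eigenvalue comparison directly from the Courant--Fischer variational characterization of the eigenvalues of an Hermitian matrix. Recall that if $\mtx{M} \in \mathbb{H}_d$ has eigenvalues arranged in weakly decreasing order $\lambda_1(\mtx{M}) \geq \dots \geq \lambda_d(\mtx{M})$, then
\begin{equation*}
\lambda_i(\mtx{M}) \;=\; \max_{\substack{V \subseteq \C^d \\ \dim V = i}} \ \min_{\substack{\vct{u} \in V \\ \norm{\vct{u}} = 1}} \ \vct{u}^\adj \mtx{M} \vct{u}.
\end{equation*}
This Rayleigh-quotient representation can be read off the eigenvalue decomposition~\eqref{eqn:eigenvalue-decomposition}: one attains the outer maximum by choosing $V$ to be the span of the top $i$ eigenvectors of $\mtx{M}$, and a short dimension-counting argument rules out any larger value.

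With this tool in hand, the proof is essentially one line. By the definition~\eqref{eqn:psd-rayleigh} of the semidefinite order, the hypothesis $\mtx{A} \psdle \mtx{H}$ is equivalent to the pointwise Rayleigh-quotient inequality
\begin{equation*}
\vct{u}^\adj \mtx{A} \vct{u} \;\leq\; \vct{u}^\adj \mtx{H} \vct{u} \quad\text{for every } \vct{u} \in \C^d.
\end{equation*}
Since this inequality holds uniformly in $\vct{u}$, we may take the minimum over any unit vector in a subspace $V$ and then the maximum over any $i$-dimensional subspace $V$, and the inequality is preserved on both passes. Applying the Courant--Fischer formula to each side then yields $\lambda_i(\mtx{A}) \leq \lambda_i(\mtx{H})$ for every index $i$.

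There is no real obstacle: the only non-trivial ingredient is the Courant--Fischer theorem itself, which the paper effectively invokes without proof elsewhere (for example, in the argument supporting~\eqref{eqn:herm-dilation-norm} and in the compression step of~\S\ref{sec:erdos-renyi}). If one wished to avoid quoting Courant--Fischer, an alternative would be to use the interlacing theorem or a homotopy argument along the segment $\mtx{A} + t(\mtx{H} - \mtx{A})$ for $t \in [0,1]$, tracking eigenvalues via Hadamard's first variation formula, but this is heavier machinery for a statement whose variational proof is immediate.
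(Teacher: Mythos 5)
Your proof is correct and follows exactly the same route as the paper: both invoke the Courant--Fischer min-max characterization and observe that the pointwise Rayleigh-quotient inequality implied by $\mtx{A} \psdle \mtx{H}$ is preserved under the min and the max. The only cosmetic difference is that the paper writes the Rayleigh quotient as $\vct{u}^\adj \mtx{M} \vct{u} / \vct{u}^\adj \vct{u}$ rather than normalizing $\vct{u}$ to be a unit vector.
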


\begin{proof}
This result follows instantly from the Courant--Fischer Theorem:
$$
\lambda_{i}(\mtx{A}) = \max_{\dim L = i} \ \min_{\vct{u} \in L}\ \frac{\vct{u}^\adj \mtx{A} \vct{u}}{ \vct{u}^\adj \vct{u} }
	\leq \max_{\dim L = i} \ \min_{\vct{u} \in L}\ \frac{\vct{u}^\adj \mtx{H} \vct{u}}{ \vct{u}^\adj \vct{u} }
	= \lambda_i(\mtx{H}).
$$
The maximum ranges over all $i$-dimensional linear subspaces $L$ in the domain of $\mtx{A}$,
and we use the convention that $0/0 = 0$.  The inequality follows from the
definition~\eqref{eqn:semidefinite-order} of the semidefinite order $\psdle$.
\end{proof}

With this fact at hand, the claim follows quickly.

\begin{prop}[Monotone Trace Functions] \label{prop:trace-monotone}
Let $f : I \to \R$ be a weakly increasing function on an interval $I$ of the real line, and let $\mtx{A}$ and $\mtx{H}$ be Hermitian matrices whose eigenvalues are contained in $I$.  Then
$$
\mtx{A} \psdle \mtx{H}
\quad\text{implies}\quad
\trace f(\mtx{A}) \leq \trace f(\mtx{H}).
$$
\end{prop}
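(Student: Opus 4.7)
The plan is to reduce the inequality at the level of trace functions to a pointwise inequality at the level of eigenvalues, exploiting the fact that the trace function is defined as the sum of $f$ applied to the eigenvalues. The two main ingredients are already on the table: Fact~\ref{fact:sdp-eig-order}, which transfers the semidefinite order $\mtx{A} \psdle \mtx{H}$ into the coordinatewise order $\lambda_i(\mtx{A}) \leq \lambda_i(\mtx{H})$ for every index $i$; and the hypothesis that $f$ is weakly increasing on the interval $I$.

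First I would invoke Fact~\ref{fact:sdp-eig-order} to conclude that $\lambda_i(\mtx{A}) \leq \lambda_i(\mtx{H})$ for each index $i$. Since the eigenvalues of both matrices lie in $I$ and $f$ is weakly increasing on $I$, this gives the scalar inequality $f(\lambda_i(\mtx{A})) \leq f(\lambda_i(\mtx{H}))$ for every $i$. Summing these inequalities over $i$ and appealing to the definition of the trace function yields
\[
\trace f(\mtx{A}) = \sum\nolimits_i f(\lambda_i(\mtx{A})) \leq \sum\nolimits_i f(\lambda_i(\mtx{H})) = \trace f(\mtx{H}),
\]
which is the desired conclusion.

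There is essentially no obstacle here: the work has already been done in establishing Fact~\ref{fact:sdp-eig-order} (the Courant--Fischer based comparison of eigenvalues), and the definition of the trace function via the eigenvalues allows us to pass from a pointwise scalar inequality to the trace inequality without any additional machinery. In particular, we do not need to worry about unitary conjugations or the non-commutativity of $\mtx{A}$ and $\mtx{H}$, because after passing to eigenvalues we are only comparing real numbers.
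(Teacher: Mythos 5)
Your argument is correct and follows exactly the same route as the paper: invoke Fact~\ref{fact:sdp-eig-order} to pass from the semidefinite order to a coordinatewise ordering of eigenvalues, apply the weakly increasing $f$ to each eigenvalue, and sum. No differences worth noting.
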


\begin{proof}
In view of Fact~\ref{fact:sdp-eig-order},
$$
\trace f(\mtx{A}) = \sum\nolimits_i f(\lambda_i(\mtx{A})) \leq \sum\nolimits_i f(\lambda_i(\mtx{H})) = \trace f(\mtx{H}).
$$
The inequality depends on the assumption that $f$ is weakly increasing.
\end{proof}

Our approach to matrix concentration relies on a special case of Proposition~\ref{prop:trace-monotone}.

\begin{example}[Trace Exponential is Monotone]
The trace exponential map is monotone:
$$
\mtx{A} \psdle \mtx{H}
\quad\text{implies}\quad
\trace \econst^{\mtx{A}} \leq \trace \econst^{ \mtx{H} }
$$
for all Hermitian matrices $\mtx{A}$ and $\mtx{H}$.
\end{example}

\subsection{Eigenvalue Decompositions, Redux}

Before we continue, let us introduce a style for writing
eigenvalue decompositions that will make the next argument more transparent.
Each $d \times d$ Hermitian matrix $\mtx{A}$ can be expressed as
$$
\mtx{A} = \sum_{i=1}^d \lambda_i \, \vct{u}_i \vct{u}_i^\adj.
$$
The eigenvalues $\lambda_1 \geq \dots \geq \lambda_d$ of $\mtx{A}$ are real numbers,
listed in weakly decreasing order.
The family $\{ \vct{u}_1, \dots, \vct{u}_d \}$
of eigenvectors of $\mtx{A}$ forms an orthonormal basis for $\C^d$
with respect to the standard inner product.

\subsection{A Trace Inequality for Bivariate Functions}

In general, it is challenging to study functions of two or more matrices because the eigenvectors can interact in complicated ways.  Nevertheless, there is one type of relation that always transfers from the scalar setting to the matrix setting.

\begin{prop}[Generalized Klein Inequality] \label{prop:gen-klein}
Let $f_i : I \to \R$ and $g_i : I \to \R$ be functions on an interval $I$ of the real line, and suppose that
$$
\sum\nolimits_i f_i(a) \, g_i(h) \geq 0
\quad\text{for all $a, h \in I$.}
$$
If $\mtx{A}$ and $\mtx{H}$ are Hermitian matrices whose eigenvalues are contained in $I$, then
$$
\sum\nolimits_i \trace\big[  f_i(\mtx{A}) \, g_i(\mtx{H}) \big] \geq 0.
$$
\end{prop}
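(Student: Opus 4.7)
The plan is to reduce the matrix inequality to the scalar hypothesis by diagonalizing both matrices and exposing the trace as a nonnegatively weighted sum of the scalar quantities $\sum_i f_i(\lambda)g_i(\mu)$, evaluated at pairs of eigenvalues $(\lambda,\mu)$ of $\mtx{A}$ and $\mtx{H}$. Since each weight is a squared inner product of eigenvectors, nonnegativity will be automatic.

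First, I would write eigenvalue decompositions
$$
\mtx{A} = \sum\nolimits_{j} \lambda_j \, \vct{u}_j \vct{u}_j^\adj
\quad\text{and}\quad
\mtx{H} = \sum\nolimits_{k} \mu_k \, \vct{v}_k \vct{v}_k^\adj,
$$
where $\{\vct{u}_j\}$ and $\{\vct{v}_k\}$ are orthonormal bases and all eigenvalues $\lambda_j,\mu_k$ lie in $I$ by hypothesis. By Definition~\ref{def:standard-matrix-fn} of a standard matrix function,
$$
f_i(\mtx{A}) = \sum\nolimits_{j} f_i(\lambda_j) \, \vct{u}_j \vct{u}_j^\adj
\quad\text{and}\quad
g_i(\mtx{H}) = \sum\nolimits_{k} g_i(\mu_k) \, \vct{v}_k \vct{v}_k^\adj.
$$

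Next, I would multiply these expansions and take the trace. Using linearity, cyclicity of the trace, and the identity $\trace(\vct{u}_j \vct{u}_j^\adj \vct{v}_k \vct{v}_k^\adj) = \smabsip{\vct{u}_j}{\vct{v}_k}^2$, one obtains
$$
\trace\bigl[ f_i(\mtx{A}) \, g_i(\mtx{H}) \bigr]
= \sum\nolimits_{j,k} f_i(\lambda_j) \, g_i(\mu_k) \, \smabsip{\vct{u}_j}{\vct{v}_k}^2.
$$
Summing over $i$ and swapping the order of summation gives
$$
\sum\nolimits_i \trace\bigl[ f_i(\mtx{A}) \, g_i(\mtx{H}) \bigr]
= \sum\nolimits_{j,k} \smabsip{\vct{u}_j}{\vct{v}_k}^2 \cdot \Bigl( \sum\nolimits_i f_i(\lambda_j) \, g_i(\mu_k) \Bigr).
$$

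The conclusion is then immediate: each weight $\smabsip{\vct{u}_j}{\vct{v}_k}^2$ is nonnegative, and each inner sum $\sum_i f_i(\lambda_j) g_i(\mu_k)$ is nonnegative by the scalar hypothesis, since $\lambda_j, \mu_k \in I$. There is no real obstacle here; the only subtlety is keeping the two sets of eigenvectors distinct and remembering that the trace of the product of two rank-one orthogonal projectors is exactly the squared magnitude of the inner product of the underlying unit vectors. This identity is what allows the eigenvector bases, which need not be related in any way, to interact cleanly.
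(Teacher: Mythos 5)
Your proof is correct and follows essentially the same route as the paper: diagonalize both matrices, expand $f_i(\mtx{A})$ and $g_i(\mtx{H})$ as sums over eigenpairs, reorder the sums under the trace, and identify $\trace(\vct{u}_j\vct{u}_j^\adj\vct{v}_k\vct{v}_k^\adj)$ as the nonnegative weight $\abssqip{\vct{u}_j}{\vct{v}_k}$ multiplying the scalar quantity $\sum_i f_i(\lambda_j)g_i(\mu_k)$. The paper's argument is the same computation, so there is nothing to add.
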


\begin{proof}
Consider eigenvalue decompositions $\mtx{A} = \sum\nolimits_j \lambda_j \vct{u}_j \vct{u}_j^\adj$
and $\mtx{H} = \sum\nolimits_k \mu_k \vct{v}_k \vct{v}_k^\adj$.  Then
$$
\begin{aligned}
\sum\nolimits_i \trace \big[ f_i(\mtx{A}) \, g_i(\mtx{H}) \big]
	&= \sum\nolimits_i \trace \left[ \big( \sum\nolimits_{j} f_i(\lambda_j)  \vct{u}_j \vct{u}_j^\adj \big)
		\big( \sum\nolimits_k g_i( \mu_k ) \, \vct{v}_k \vct{v}_k^\adj \big) \right] \\
	&= \sum\nolimits_{j,k}  \left[\sum\nolimits_i f_i(\lambda_j) \, g_i(\mu_k) \right] \cdot \abssqip{ \smash{\vct{u}_j} }{ \vct{v}_k }
	\geq 0.
\end{aligned}
$$
We use the definition of a standard matrix function, we apply linearity of the trace to reorder the sums, and we identify the trace as a squared inner product.  The inequality follows from our assumption on the scalar functions.
\end{proof}

\subsection{The Matrix Relative Entropy is Nonnegative}
\label{sec:mre-nonnegative}

Using the generalized Klein inequality, it is easy to prove Proposition~\ref{prop:mre-nonnegative},
which states that the matrix relative entropy is nonnegative.  The argument echoes the
analysis in Proposition~\ref{prop:vre-nonnegative} for the vector case.

\begin{proof}[Proof of Proposition~\ref{prop:mre-nonnegative}]
Suppose that $f : \R_{++} \to \R$ is a differentiable, convex function on the positive real line.  Since $f$ is convex,
the graph of $f$ lies above its tangents:
$$
f(a) \geq f(h) + f'(h) (a-h)
\quad\text{for positive $a$ and $h$.}
$$
Using the generalized Klein inequality, Proposition~\ref{prop:gen-klein},
we can lift this relation to matrices:
$$
\trace f(\mtx{A}) \geq \trace\big[ f(\mtx{H}) + f'(\mtx{H})(\mtx{A} - \mtx{H}) \big]
\quad\text{for all positive-definite $\mtx{A}$ and $\mtx{H}$.}
$$
This formula is sometimes called the (ungeneralized) Klein inequality.

Instantiate the latter result for the function $f(a) = a \log a - a$,
and rearrange to see that
$$
\mathrm{D}(\mtx{A}; \mtx{H})
	= \trace \big[ \mtx{A} \, (\log \mtx{A} - \log \mtx{H}) - (\mtx{A} - \mtx{H}) \big]
	\geq 0
	\quad\text{for all positive-definite $\mtx{A}$ and $\mtx{H}$.}
$$
In other words, the matrix relative entropy is nonnegative.
\end{proof}

\section{The Logarithm of a Matrix}

In this section, we commence our journey toward the proof that the matrix relative entropy is convex.
The proof of Proposition~\ref{prop:vre-convex} indicates that the convexity of the logarithm plays
an important role in the convexity of the vector relative entropy.  As a first step, we will demonstrate that the
matrix logarithm has a striking convexity property with respect to the semidefinite order.
Along the way, we will also develop a monotonicity property of the matrix logarithm.

\subsection{An Integral Representation of the Logarithm}

Initially, we defined the logarithm of a $d \times d$ positive-definite
matrix $\mtx{A}$ using an eigenvalue decomposition:
$$
\log \mtx{A} = \mtx{Q} \begin{bmatrix} \log \lambda_1  \\ &\ddots \\ && \log \lambda_d \end{bmatrix}  \mtx{Q}^\adj
\quad\text{where}\quad
\mtx{A} = \mtx{Q} \begin{bmatrix} \lambda_1 \\ &\ddots \\ && \lambda_d \end{bmatrix} \mtx{Q}^\adj.
$$
To study how the matrix logarithm interacts with the semidefinite order,
we will work with an alternative presentation based on an integral formula.

\begin{prop}[Integral Representation of the Logarithm] \label{prop:log-integral}
The logarithm of a positive number $a$ is given by the integral
$$
\log a = \int_0^\infty \left[ \frac{1}{1+u} - \frac{1}{a+u} \right] \idiff{u}.
$$
Similarly, the logarithm of a positive-definite matrix $\mtx{A}$ is given by the integral
$$
\log \mtx{A} = \int_0^\infty \left[ (1+u)^{-1} \Id - (\mtx{A} + u\Id)^{-1} \right] \idiff{u}.
$$
\end{prop}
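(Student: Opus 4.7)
The plan is to establish the scalar identity first by direct antidifferentiation, and then lift it to matrices via an eigenvalue decomposition.

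For the scalar case, I would simply exhibit an antiderivative of the integrand. Writing
\[
\frac{1}{1+u} - \frac{1}{a+u} = \frac{d}{du}\bigl[ \log(1+u) - \log(a+u) \bigr] = \frac{d}{du}\log\frac{1+u}{a+u},
\]
the improper integral evaluates to $\lim_{U\to\infty}\log\frac{1+U}{a+U} - \log\frac{1}{a} = 0 - (-\log a) = \log a$. The integrand behaves like $(a-1)/u^2$ as $u\to\infty$ and is bounded near $u=0$, so the integral converges absolutely for every $a>0$.

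For the matrix case, I would invoke Definition~\ref{def:standard-matrix-fn} of a standard matrix function. Fix an eigenvalue decomposition $\mtx{A} = \mtx{Q}\mtx{\Lambda}\mtx{Q}^{\adj}$ with $\mtx{\Lambda} = \diag(\lambda_1,\dots,\lambda_d)$ and each $\lambda_i > 0$. Since $\mtx{Q}$ is unitary,
\[
(\mtx{A} + u\Id)^{-1} = \mtx{Q}(\mtx{\Lambda} + u\Id)^{-1}\mtx{Q}^{\adj}
\quad\text{and}\quad
(1+u)^{-1}\Id = \mtx{Q}\bigl((1+u)^{-1}\Id\bigr)\mtx{Q}^{\adj}
\]
for every $u \geq 0$. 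Therefore the integrand equals $\mtx{Q}\,\mtx{D}(u)\,\mtx{Q}^{\adj}$, where $\mtx{D}(u)$ is the diagonal matrix with entries $(1+u)^{-1} - (\lambda_i+u)^{-1}$. Pulling the fixed unitary factors out of the integral by linearity (and noting that the integrand is entrywise absolutely integrable by the scalar analysis applied to each $\lambda_i$), I obtain
\[
\int_0^\infty \bigl[(1+u)^{-1}\Id - (\mtx{A}+u\Id)^{-1}\bigr]\idiff{u}
	= \mtx{Q}\left(\int_0^\infty \mtx{D}(u)\idiff{u}\right)\mtx{Q}^{\adj}.
\]
The inner integral is diagonal and its $i$th diagonal entry is exactly the scalar integral computed above, namely $\log \lambda_i$. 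Hence the right-hand side equals $\mtx{Q}\diag(\log\lambda_1,\dots,\log\lambda_d)\mtx{Q}^{\adj}$, which is precisely $\log \mtx{A}$ by Definition~\ref{def:standard-matrix-fn}.

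There is no serious obstacle here: the only points worth care are the convergence of the improper integral (handled by the asymptotics of the integrand) and the interchange of the integral with conjugation by $\mtx{Q}$ (justified because $\mtx{Q}$ does not depend on $u$ and the integrand is componentwise integrable). The independence of the result from the particular choice of eigenvalue decomposition is automatic, since the final answer coincides with the definition of $\log\mtx{A}$ as a standard matrix function.
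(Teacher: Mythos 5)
Your argument is essentially the same as the paper's: both evaluate the scalar integral by antidifferentiating to $\log\frac{1+u}{a+u}$ and taking the limit, and both obtain the matrix formula by applying the scalar identity to each eigenvalue via a fixed eigenvalue decomposition. You have simply spelled out the diagonalization and integrability details that the paper leaves implicit, and the proof is correct.
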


\begin{proof}
To verify the scalar formula, we simply use the definition of the improper integral:
$$
\begin{aligned}
\int_0^\infty \left[ \frac{1}{1+u} - \frac{1}{a+u} \right] \idiff{u}
	&= \lim_{L\to \infty} \ \int_0^L \left[ \frac{1}{1+u} - \frac{1}{a+u} \right] \idiff{u} \\
	&= \lim_{L\to \infty} \left[ \log(1+u) - \log(a+u) \right]_{u=0}^L \\
	&= \log a + \lim_{L\to\infty} \log\left( \frac{1 + L}{a + L} \right)
	= \log a.
\end{aligned}
$$
We obtain the matrix formula by applying the scalar formula to each eigenvalue
of $\mtx{A}$ and then expressing the result in terms of the original matrix. 
\end{proof}

The integral formula from Proposition~\ref{prop:log-integral}
is powerful because it expresses the logarithm in terms of the matrix inverse,
which is much easier to analyze.  Although it
may seem that we have pulled this representation from thin air,
the approach is motivated by a wonderful theory of matrix functions
initiated by L{\"o}wner in the 1930s.

\subsection{Operator Monotone Functions}
\label{sec:operator-monotone}

Our next goal is to study the monotonicity properties of the matrix logarithm.
To frame this discussion properly, we need to introduce an abstract definition.

\begin{defn}[Operator Monotone Function]
Let $f : I \to \R$ be a function on an interval $I$ of the real line.  The function $f$ is \term{operator monotone} on $I$ when
$$
\mtx{A} \psdle \mtx{H}
\quad\text{implies}\quad
f(\mtx{A}) \psdle f(\mtx{H})
$$
for all Hermitian matrices $\mtx{A}$ and $\mtx{H}$ whose eigenvalues are contained in $I$.
\end{defn}

Let us state some basic facts about operator monotone functions.
Many of these points follow easily from the definition.

\begin{itemize}
\item	When $\beta \geq 0$, the weakly increasing affine function $t \mapsto \alpha + \beta t$ is operator monotone
on each interval $I$ of the real line.

\item	The quadratic function $t \mapsto t^2$ is \textbf{not} operator monotone
on the positive real line.

\item	The exponential map $t \mapsto \econst^t$ is \textbf{not} operator monotone
on the real line.

\item	When $\alpha \geq 0$ and $f$ is operator monotone on $I$,
the function $\alpha f$ is operator monotone on $I$.

\item	If $f$ and $g$ are operator monotone on an interval $I$, then $f + g$ is operator monotone
on $I$.
\end{itemize}

\noindent
These properties imply that the operator monotone functions form a convex cone.
It also warns us that the class of operator monotone functions
is somewhat smaller than the class of weakly increasing functions.

\subsection{The Negative Inverse is Operator Monotone}

Fortunately, interesting operator monotone functions do exist.
Let us present an important example related to the matrix inverse.

\begin{prop}[Negative Inverse is Operator Monotone] \label{prop:inv-monotone}
For each number $u \geq 0$, the function $a \mapsto - (a+ u)^{-1}$ is operator
monotone on the positive real line.
That is, for positive-definite matrices $\mtx{A}$ and $\mtx{H}$,
$$
\mtx{A} \psdle \mtx{H}
\quad\text{implies}\quad
-(\mtx{A} + u \Id)^{-1} \psdle -(\mtx{H} + u \Id)^{-1}.
$$
\end{prop}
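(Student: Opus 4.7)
My plan is to reduce the claim to the unshifted case $u=0$ and then invoke the Conjugation Rule together with the Transfer Rule applied to the scalar map $t \mapsto 1/t$. The upshot is that operator monotonicity of $a \mapsto -(a+u)^{-1}$ will follow from the single scalar fact that $t \geq 1$ implies $1/t \leq 1$, promoted to the matrix setting by Proposition~\ref{prop:transfer-rule} and Proposition~\ref{prop:conjugation-rule}.

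First, I would reduce to the case $u=0$. If $\mtx{A} \psdle \mtx{H}$ are positive definite and $u \geq 0$, then $\mtx{A}+u\Id \psdle \mtx{H}+u\Id$ and both shifted matrices are still positive definite. So it suffices to prove the claim for $u=0$, i.e., that $\mtx{A} \psdle \mtx{H}$ implies $\mtx{H}^{-1} \psdle \mtx{A}^{-1}$ for positive-definite $\mtx{A}, \mtx{H}$. Applying this intermediate result to $\mtx{A}+u\Id$ and $\mtx{H}+u\Id$ then yields the desired inequality $-(\mtx{A}+u\Id)^{-1} \psdle -(\mtx{H}+u\Id)^{-1}$.

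Next, I would establish the unshifted statement by a two-step conjugation sandwich. Since $\mtx{A}$ is positive definite, its square root $\mtx{A}^{1/2}$ is invertible, so the Conjugation Rule~\eqref{eqn:conjugation-rule} applied to $\mtx{A} \psdle \mtx{H}$ gives
\[
\Id \;=\; \mtx{A}^{-1/2} \mtx{A}\, \mtx{A}^{-1/2}
  \;\psdle\; \mtx{A}^{-1/2} \mtx{H}\, \mtx{A}^{-1/2}.
\]
Call the right-hand side $\mtx{M}$; it is positive definite with all eigenvalues at least one. The scalar inequality $t \geq 1 \Rightarrow 1/t \leq 1$ on the interval $[1,\infty)$ is exactly the kind of pointwise bound to which the Transfer Rule, Proposition~\ref{prop:transfer-rule}, applies, giving $\mtx{M}^{-1} \psdle \Id$. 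In other words,
\[
\mtx{A}^{1/2}\mtx{H}^{-1}\mtx{A}^{1/2}
  \;=\; \big(\mtx{A}^{-1/2}\mtx{H}\,\mtx{A}^{-1/2}\big)^{-1}
  \;\psdle\; \Id.
\]
Conjugating this inequality by $\mtx{A}^{-1/2}$ one more time via the Conjugation Rule yields
\[
\mtx{H}^{-1} \;=\; \mtx{A}^{-1/2}\big(\mtx{A}^{1/2}\mtx{H}^{-1}\mtx{A}^{1/2}\big)\mtx{A}^{-1/2}
  \;\psdle\; \mtx{A}^{-1/2}\,\Id\,\mtx{A}^{-1/2} \;=\; \mtx{A}^{-1},
\]
which is the required relation. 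Negating and combining with the reduction above completes the proof.

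There is no real obstacle here: the only subtlety is remembering that the Conjugation Rule requires $\mtx{A}^{-1/2}$ to make sense, which is guaranteed by positive definiteness, and that the Transfer Rule requires the eigenvalues of $\mtx{M}$ to lie in the interval where the scalar inequality holds, which is ensured by the first conjugation step. Everything else is bookkeeping.
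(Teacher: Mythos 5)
Your proof is correct and follows essentially the same strategy as the paper's: shift, then perform a conjugation sandwich (conjugate by a square root, invert a matrix with eigenvalues on one side of $1$, conjugate back). The paper's proof conjugates $\mtx{A}_u$ by $\mtx{H}_u^{-1/2}$ and works with eigenvalues bounded above by one, while you conjugate $\mtx{H}$ by $\mtx{A}^{-1/2}$ and work with eigenvalues bounded below by one; this is a mirror image of the same argument, and your explicit appeal to the Transfer Rule for the inversion step is equivalent to the paper's direct eigenvalue reasoning.
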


\begin{proof}
Define the matrices $\mtx{A}_u = \mtx{A} + u\Id$ and $\mtx{H}_u = \mtx{H} + u \Id$.
The semidefinite relation $\mtx{A} \psdle \mtx{H}$ implies that
$\mtx{A}_u \psdle \mtx{H}_u$.  Apply the Conjugation Rule~\eqref{eqn:conjugation-rule}
to see that
$$
\mtx{0} \psdlt \mtx{H}_u^{-1/2} \mtx{A}_u \mtx{H}_u^{-1/2} \psdle \Id.
$$
When a positive-definite matrix has eigenvalues bounded above by one, its inverse has eigenvalues bounded below by one.  Therefore,
$$
\Id \psdle \left(\mtx{H}_u^{-1/2} \mtx{A}_u \mtx{H}_u^{-1/2}\right)^{-1}
	= \mtx{H}_u^{1/2} \mtx{A}_u^{-1} \mtx{H}_u^{1/2}.
$$
Another application of the Conjugation Rule~\eqref{eqn:conjugation-rule}
delivers the inequality $\mtx{H}_u^{-1} \psdle \mtx{A}_u^{-1}$.
Finally, we negate this semidefinite relation, which reverses its direction.
\end{proof}

\subsection{The Logarithm is Operator Monotone}
\label{sec:log-monotone}

Now, we are prepared to demonstrate that the logarithm is an operator monotone function.
The argument combines the integral representation from Proposition~\ref{prop:log-integral}
with the monotonicity of the inverse map from Proposition~\ref{prop:inv-monotone}.

\begin{prop}[Logarithm is Operator Monotone] \label{prop:log-monotone}
The logarithm is an operator monotone function on the positive real line.
That is, for positive-definite matrices $\mtx{A}$ and $\mtx{H}$,
$$
\mtx{A} \psdle \mtx{H}
\quad\text{implies}\quad
\log \mtx{A} \psdle \log \mtx{H}.
$$
\end{prop}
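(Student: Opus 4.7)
The plan is to combine the two ingredients that have just been developed: the integral representation of the matrix logarithm in Proposition~\ref{prop:log-integral} and the operator monotonicity of the negative inverse in Proposition~\ref{prop:inv-monotone}. The strategy mirrors a common pattern in matrix analysis: reduce a statement about a complicated matrix function to a statement about resolvents $(\mtx{A}+u\Id)^{-1}$, where linear-algebraic manipulations are tractable.

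First, I would fix positive-definite matrices $\mtx{A}$ and $\mtx{H}$ with $\mtx{A} \psdle \mtx{H}$ and invoke Proposition~\ref{prop:log-integral} to write
$$
\log \mtx{H} - \log \mtx{A} = \int_0^\infty \left[ (\mtx{A} + u\Id)^{-1} - (\mtx{H} + u\Id)^{-1} \right] \idiff{u}.
$$
The $(1+u)^{-1}\Id$ terms cancel, which is precisely why the integral representation is useful. For each fixed $u \geq 0$, Proposition~\ref{prop:inv-monotone} guarantees that $-(\mtx{A}+u\Id)^{-1} \psdle -(\mtx{H}+u\Id)^{-1}$, i.e.\ the integrand is positive semidefinite pointwise in $u$.

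Finally, I would argue that the semidefinite order is preserved by the improper integral. Concretely, for every vector $\vct{x}$, the scalar function $u \mapsto \vct{x}^\adj \bigl[(\mtx{A}+u\Id)^{-1} - (\mtx{H}+u\Id)^{-1}\bigr] \vct{x}$ is nonnegative, so its integral is nonnegative. Hence $\vct{x}^\adj(\log \mtx{H} - \log \mtx{A})\vct{x} \geq 0$ for all $\vct{x}$, which is the desired conclusion $\log \mtx{A} \psdle \log \mtx{H}$.

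There is no real obstacle here, since both of the preparatory results have already been proved; the only mild technical point is justifying that the pointwise semidefinite inequality in $u$ passes through the improper integral, and this is handled by testing against an arbitrary vector as above (or, equivalently, by noting that the cone of positive-semidefinite matrices is closed under limits and nonnegative linear combinations).
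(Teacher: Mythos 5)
Your proposal is correct and follows the paper's argument exactly: combine the integral representation from Proposition~\ref{prop:log-integral} with the operator monotonicity of the negative resolvent from Proposition~\ref{prop:inv-monotone}, and pass the semidefinite inequality through the integral. Your added justification for why integration preserves the semidefinite order (testing against an arbitrary vector) is a sensible elaboration of what the paper simply asserts.
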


\begin{proof}
For each $u \geq 0$, Proposition~\ref{prop:inv-monotone} demonstrates that
$$
(1+u)^{-1} \Id - (\mtx{A}+u\Id)^{-1}
\psdle (1+u)^{-1} \Id - (\mtx{H}+u\Id)^{-1}.
$$
The integral representation of the logarithm, Proposition~\ref{prop:log-integral},
allows us to calculate that
$$
\log \mtx{A} 
	= \int_0^{\infty} \left[ (1+u)^{-1} \Id - (\mtx{A}+u\Id)^{-1} \right] \idiff{u}
	\psdle \int_0^{\infty} \left[ (1+u)^{-1} \Id - (\mtx{H}+u\Id)^{-1} \right] \idiff{u}
	= \log \mtx{H}.
$$
We have used the fact that the semidefinite order is preserved by integration
against a positive measure.
\end{proof}

\subsection{Operator Convex Functions}
\label{sec:operator-convex}

Next, let us investigate the convexity properties of the matrix logarithm.  As before,
we start with an abstract definition.

\begin{defn}[Operator Convex Function] \label{def:operator-convex}
Let $f : I \to \R$ be a function on an interval $I$ of the real line.  The function $f$ is \term{operator convex} on $I$ when
$$
f(\tau \mtx{A} + \bar{\tau} \mtx{H}) \psdle \tau \cdot f(\mtx{A}) + \bar{\tau} \cdot f(\mtx{H})
\quad\text{for all $\tau \in [0,1]$}
$$
and for all Hermitian matrices $\mtx{A}$ and $\mtx{H}$ whose eigenvalues are contained in $I$.  A function $g : I \to \R$ is \term{operator concave} when $-g$ is operator convex on $I$. 
\end{defn}

We continue with some important facts about operator convex functions.
Most of these claims can be derived easily.

\begin{itemize}
\item	When $\gamma \geq 0$, the quadratic function $t \mapsto \alpha + \beta t + \gamma t^2$ is operator convex on the real line.

\item	The exponential map $t \mapsto \econst^t$ is \textbf{not} operator convex on
the real line.

\item	When $\alpha \geq 0$ and $f$ is operator convex on $I$, the function $\alpha f$ is
operator convex in $I$.

\item	If $f$ and $g$ are operator convex on $I$, then $f + g$ is operator convex on $I$.
\end{itemize}

\noindent
The operator monotone functions form a convex cone.
We also learn that the family of operator convex functions
is somewhat smaller than the family of convex functions.

\subsection{The Inverse is Operator Convex}

The inverse provides a very important example of an operator convex function.

\begin{prop}[Inverse is Operator Convex] \label{prop:inv-convex}
For each $u \geq 0$, the function $a \mapsto (a + u)^{-1}$ is
operator convex on the positive real line.
That is, for positive-definite matrices $\mtx{A}$ and $\mtx{H}$, 
$$
(\tau \mtx{A} + \bar{\tau} \mtx{H} + u \Id)^{-1} \psdle
\tau \cdot (\mtx{A} + u \Id)^{-1} + \bar{\tau} \cdot (\mtx{H} + u\Id)^{-1}
\quad\text{for $\tau \in [0,1]$.}
$$
\end{prop}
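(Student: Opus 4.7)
The plan is to reduce to showing that $\mtx{T}^{-1} \psdle \tau \mtx{A}_u^{-1} + \bar\tau \mtx{H}_u^{-1}$, where I write $\mtx{A}_u = \mtx{A} + u\Id$, $\mtx{H}_u = \mtx{H} + u\Id$, and $\mtx{T} = \tau \mtx{A}_u + \bar\tau \mtx{H}_u$; all three are positive definite. The stated inequality of the proposition follows immediately, since $\tau \mtx{A} + \bar\tau \mtx{H} + u\Id = \mtx{T}$.

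First I would verify that for any positive-definite matrix $\mtx{M}$ the block matrix
\[
\begin{pmatrix} \mtx{M} & \Id \\ \Id & \mtx{M}^{-1} \end{pmatrix}
\]
is positive semidefinite. This is transparent from the factorization
\[
\begin{pmatrix} \mtx{M} & \Id \\ \Id & \mtx{M}^{-1} \end{pmatrix}
= \begin{pmatrix} \mtx{M}^{1/2} \\ \mtx{M}^{-1/2} \end{pmatrix}
\begin{pmatrix} \mtx{M}^{1/2} & \mtx{M}^{-1/2} \end{pmatrix},
\]
since any matrix of the form $\mtx{BB}^\adj$ is PSD. Next, because the PSD matrices form a convex cone (Section~\ref{sec:psd-order}), I can take the convex combination of the displayed identity with $\mtx{M} = \mtx{A}_u$ and with $\mtx{M} = \mtx{H}_u$, which produces
\[
\begin{pmatrix} \mtx{T} & \Id \\ \Id & \tau \mtx{A}_u^{-1} + \bar\tau \mtx{H}_u^{-1} \end{pmatrix} \psdge \mtx{0}.
\]

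The final step is to extract the desired scalar inequality from this block relation. I would apply the Conjugation Rule (Proposition~\ref{prop:conjugation-rule}) with the non-singular matrix
\[
\mtx{U} = \begin{pmatrix} \Id & -\mtx{T}^{-1} \\ \mtx{0} & \Id \end{pmatrix},
\quad\text{so that}\quad
\mtx{U}^\adj = \begin{pmatrix} \Id & \mtx{0} \\ -\mtx{T}^{-1} & \Id \end{pmatrix}.
\]
A short block multiplication gives
\[
\mtx{U}^\adj \begin{pmatrix} \mtx{T} & \Id \\ \Id & \tau \mtx{A}_u^{-1} + \bar\tau \mtx{H}_u^{-1} \end{pmatrix} \mtx{U}
= \begin{pmatrix} \mtx{T} & \mtx{0} \\ \mtx{0} & \tau \mtx{A}_u^{-1} + \bar\tau \mtx{H}_u^{-1} - \mtx{T}^{-1} \end{pmatrix},
\]
and the Conjugation Rule guarantees that the right-hand side is PSD. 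Since a block-diagonal matrix is PSD if and only if each diagonal block is PSD, we conclude that $\tau \mtx{A}_u^{-1} + \bar\tau \mtx{H}_u^{-1} - \mtx{T}^{-1} \psdge \mtx{0}$, as required.

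The main obstacle is simply that the block-matrix / Schur-complement trick has not yet been introduced in the paper, so the plan has to expose the key factorization and congruence in enough detail that the argument remains self-contained. All of the ingredients it uses --- existence of $\mtx{M}^{1/2}$ and $\mtx{M}^{-1/2}$ for positive-definite $\mtx{M}$, the Conjugation Rule, and closure of the PSD cone under nonnegative combinations --- are already available; there is no need to appeal to operator monotonicity of the negative inverse (Proposition~\ref{prop:inv-monotone}) or to any other deeper fact about the matrix inverse.
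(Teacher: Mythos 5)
Your proof is correct and follows essentially the same path as the paper's: establish positivity of the block matrix $\begin{bmatrix} \mtx{M} & \Id \\ \Id & \mtx{M}^{-1} \end{bmatrix}$, take a convex combination, then extract the inverse inequality by a Schur-complement congruence. The only cosmetic difference is that the paper packages the congruence step as a separate Schur-complement lemma (Fact~\ref{fact:schur-complement}) and cites it twice, whereas you prove the initial block positivity by a direct $\mtx{BB}^\adj$ factorization and carry out the closing congruence inline.
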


To establish Proposition~\ref{prop:inv-convex}, we use an argument based on the Schur complement lemma.
For completeness, let us state and prove this important fact.

\begin{fact}[Schur Complements] \label{fact:schur-complement}
Suppose that $\mtx{T}$ is a positive-definite matrix.  Then
\begin{equation} \label{eqn:schur-complement}
\mtx{0} \psdle \begin{bmatrix} \mtx{T} & \mtx{B} \\ \mtx{B}^\adj & \mtx{M} \end{bmatrix} %
\quad\text{if and only if}\quad
\mtx{B}^\adj \mtx{T}^{-1} \mtx{B} \psdle \mtx{M}.
\end{equation}
\end{fact}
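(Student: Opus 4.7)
The plan is to prove Fact~\ref{fact:schur-complement} via a block $\mtx{LDL}^\adj$ factorization of the $2\times 2$ block matrix, using only the Conjugation Rule (Proposition~\ref{prop:conjugation-rule}) and the characterization of positive semidefiniteness via quadratic forms~\eqref{eqn:psd-rayleigh}. Since $\mtx{T}$ is positive definite, it is invertible, and the Schur complement $\mtx{S} = \mtx{M} - \mtx{B}^\adj \mtx{T}^{-1} \mtx{B}$ is well defined. The key algebraic identity is
\begin{equation*}
\begin{bmatrix} \mtx{T} & \mtx{B} \\ \mtx{B}^\adj & \mtx{M} \end{bmatrix}
= \begin{bmatrix} \Id & \mtx{0} \\ \mtx{B}^\adj \mtx{T}^{-1} & \Id \end{bmatrix}
\begin{bmatrix} \mtx{T} & \mtx{0} \\ \mtx{0} & \mtx{S} \end{bmatrix}
\begin{bmatrix} \Id & \mtx{T}^{-1} \mtx{B} \\ \mtx{0} & \Id \end{bmatrix},
\end{equation*}
which one verifies by direct block multiplication. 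This is a congruence transformation by the lower-triangular factor, which is invertible (its determinant equals one).

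For the forward direction, assume the block matrix is positive semidefinite. I would apply the Conjugation Rule with the inverse of the lower-triangular factor to conclude that the block-diagonal matrix $\diag(\mtx{T}, \mtx{S})$ is positive semidefinite. Testing this against vectors of the form $(\vct{0}, \vct{v})$ using~\eqref{eqn:psd-rayleigh} yields $\vct{v}^\adj \mtx{S} \vct{v} \geq 0$ for every $\vct{v}$, which is exactly $\mtx{B}^\adj \mtx{T}^{-1} \mtx{B} \psdle \mtx{M}$. For the reverse direction, assume $\mtx{S} \psdge \mtx{0}$. Then $\diag(\mtx{T}, \mtx{S})$ is positive semidefinite as a block-diagonal matrix with positive-semidefinite diagonal blocks (again immediate from~\eqref{eqn:psd-rayleigh}). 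Applying the Conjugation Rule to the factorization above then shows that the original block matrix is positive semidefinite.

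There is no real obstacle here; the only thing to double-check is that the factorization is correct and that the outer factors are genuine conjugates of each other, i.e.~that the right-hand factor is the conjugate transpose of the left-hand factor. This is evident because the lower-triangular matrix has the adjoint $\bigl[\begin{smallmatrix} \Id & \mtx{T}^{-1} \mtx{B} \\ \mtx{0} & \Id \end{smallmatrix}\bigr]$, using that $\mtx{T}^{-1}$ is Hermitian (as the inverse of an Hermitian matrix). With this in hand, the Conjugation Rule applies cleanly in both directions since the triangular factor and its inverse both have the required form.
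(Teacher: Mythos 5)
Your proof is correct and follows essentially the same approach as the paper's: a block $\mtx{LDL}^\adj$ factorization (block Gaussian elimination) combined with the Conjugation Rule, reducing the question to positive semidefiniteness of the block-diagonal matrix $\diag(\mtx{T}, \mtx{S})$. The paper presents the congruence in the form $\mtx{L}^{-1} (\text{block matrix}) (\mtx{L}^{-1})^\adj = \diag(\mtx{T}, \mtx{S})$ rather than as a factorization, and is terser about the final reduction, but the underlying argument is identical.
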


\begin{proof}[Proof of Fact~\ref{fact:schur-complement}]
To see why this is true, just calculate that
$$
\begin{bmatrix} \Id & \mtx{0} \\ -\mtx{B}^\adj \mtx{T}^{-1} & \Id \end{bmatrix}
	\begin{bmatrix} \mtx{T} & \mtx{B} \\ \mtx{B}^\adj & \mtx{M} \end{bmatrix}
	\begin{bmatrix} \Id & -\mtx{T}^{-1} \mtx{B} \\ \mtx{0} & \Id \end{bmatrix}
= \begin{bmatrix} \mtx{T} & \mtx{0} \\ \mtx{0} & \mtx{M} - \mtx{B}^\adj \mtx{T}^{-1} \mtx{B} \end{bmatrix}
$$
In essence, we are performing block Gaussian elimination to bring the original
matrix into block-diagonal form.
Now, the Conjugation Rule~\eqref{eqn:conjugation-rule} ensures that the central matrix
on the left is positive semidefinite together with the matrix on the right.
From this equivalence, we extract the result~\eqref{eqn:schur-complement}.   
\end{proof}

We continue with the proof that the inverse is operator convex.

\begin{proof}[Proof of Proposition~\ref{prop:inv-convex}]
The Schur complement lemma, Fact~\ref{fact:schur-complement}, provides that
\begin{equation*} \label{eqn:schur-simple}
\mtx{0} \psdle \begin{bmatrix} \mtx{T} & \Id \\ \Id & \mtx{T}^{-1} \end{bmatrix}
\quad\text{whenever $\mtx{T}$ is positive definite.}
\end{equation*}
Applying this observation to the positive-definite matrices $\mtx{A} + u\Id$
and $\mtx{H} + u\Id$, we see that
$$
\begin{aligned}
\mtx{0} &\psdle
\tau \cdot \begin{bmatrix} \mtx{A} + u\Id & \Id \\ \Id & (\mtx{A} + u\Id)^{-1} \end{bmatrix}
	+ \bar{\tau} \cdot \begin{bmatrix} \mtx{H} + u \Id & \Id \\ \Id & (\mtx{H} + u \Id)^{-1} \end{bmatrix} \\
	&= \begin{bmatrix} \tau \mtx{A} + \bar{\tau} \mtx{H} + u \Id & \Id \\
	\Id & \tau \cdot (\mtx{A}+u\Id)^{-1} + \bar{\tau} \cdot (\mtx{H}+u\Id)^{-1} \end{bmatrix}.
\end{aligned}
$$
Since the top-left block of the latter matrix is positive definite,
another application of Fact~\ref{fact:schur-complement} delivers the relation
$$
(\tau \mtx{A} + \bar{\tau} \mtx{H} + u \Id)^{-1}
\psdle \tau \cdot (\mtx{A}+u\Id)^{-1} + \bar{\tau} \cdot (\mtx{H}+u\Id)^{-1}.
$$
This is the advertised conclusion.
\end{proof}

\subsection{The Logarithm is Operator Concave}

We are finally prepared to verify that the logarithm is operator concave.  The argument is based on the integral representation from Proposition~\ref{prop:log-monotone} and the convexity of the inverse map from Proposition~\ref{prop:inv-convex}.

\begin{prop}[Logarithm is Operator Concave] \label{prop:log-concave}
The logarithm is operator concave on the positive real line.
That is, for positive-definite matrices $\mtx{A}$ and $\mtx{H}$,
$$
\tau \cdot \log \mtx{A} + \bar{\tau} \cdot \log \mtx{H} \psdle \log( \tau \mtx{A} + \bar{\tau} \mtx{H} )
\quad\text{for $\tau \in [0, 1]$.}
$$
\end{prop}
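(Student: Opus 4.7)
The plan is to prove operator concavity of the logarithm by the same pattern used for operator monotonicity (Proposition on the logarithm being operator monotone): start from the integral representation and transfer the relevant property of the inverse through the integral.

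First, I would fix positive-definite matrices $\mtx{A}$ and $\mtx{H}$ of common size and an interpolation parameter $\tau \in [0,1]$. By Proposition on the inverse being operator convex, for each $u \geq 0$ we have the pointwise semidefinite bound
$$
(\tau \mtx{A} + \bar{\tau} \mtx{H} + u \Id)^{-1}
\psdle \tau \cdot (\mtx{A} + u\Id)^{-1} + \bar{\tau} \cdot (\mtx{H} + u\Id)^{-1}.
$$
Negating this relation reverses the semidefinite inequality, so
$$
(1+u)^{-1}\Id - \tau \cdot (\mtx{A} + u\Id)^{-1} - \bar{\tau} \cdot (\mtx{H} + u\Id)^{-1}
\psdle (1+u)^{-1}\Id - (\tau \mtx{A} + \bar{\tau} \mtx{H} + u \Id)^{-1},
$$
where I have added the common term $(1+u)^{-1}\Id$ to both sides purely to set up the integral representation.

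Next, I would integrate this inequality against the positive measure $\diff{u}$ on $[0,\infty)$. Since the semidefinite order is preserved under integration against a positive measure, the inequality survives. Splitting the left-hand side using linearity and recognizing each resulting integrand as the integrand of Proposition on the integral representation of the logarithm yields
$$
\tau \cdot \log \mtx{A} + \bar{\tau} \cdot \log \mtx{H} \psdle \log(\tau \mtx{A} + \bar{\tau} \mtx{H}),
$$
which is the desired operator concavity. A small bookkeeping point: the convex combination $(1+u)^{-1}\Id = \tau(1+u)^{-1}\Id + \bar{\tau}(1+u)^{-1}\Id$ is what makes the splitting work cleanly, so each of the three terms above is an honest matrix logarithm via the integral formula.

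There is no real obstacle here; the entire argument is a one-line reduction once the two previously established tools are in place. The only subtlety worth flagging is that operator concavity of $\log$ is a strictly stronger statement than concavity of $\log$, and it genuinely relies on operator convexity of $a \mapsto (a+u)^{-1}$ (Proposition on the inverse being operator convex), which in turn was proved via a Schur complement argument rather than by any scalar convexity fact. In particular, the naive attempt to deduce the result from the scalar concavity of $\log$ by applying the Transfer Rule would fail, since the Transfer Rule only handles univariate inequalities, not bivariate ones involving $\mtx{A}$ and $\mtx{H}$ simultaneously.
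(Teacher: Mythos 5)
Your argument is identical to the paper's: negate the operator convexity of the inverse (Proposition~\ref{prop:inv-convex}), add the common term $(1+u)^{-1}\Id$, and integrate against $\diff{u}$ using the integral representation of the logarithm (Proposition~\ref{prop:log-integral}) together with the fact that integration preserves the semidefinite order. The closing remark about why the Transfer Rule cannot give a bivariate inequality is correct and a worthwhile caution, though not part of the proof itself.
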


\begin{proof}
For each $u \geq 0$, Proposition~\ref{prop:inv-convex} demonstrates that
$$
- \tau \cdot (\mtx{A} + u \Id)^{-1} - \bar{\tau} \cdot (\mtx{H} + u\Id)^{-1}
	\psdle -(\tau \mtx{A} + \bar{\tau}\mtx{H} + u \Id)^{-1}.
$$
Invoke the integral representation of the logarithm from Proposition~\ref{prop:log-integral}
to see that
$$
\begin{aligned}
\tau \cdot \log \mtx{A} + \bar{\tau} \cdot \log \mtx{H}
	&= \tau \cdot \int_0^\infty \left[ (1+u)^{-1}\Id - (\mtx{A} + u \Id)^{-1} \right] \idiff{u}
	+ \bar{\tau} \cdot \int_0^\infty \left[ (1+u)^{-1}\Id - (\mtx{H} + u \Id)^{-1} \right] \idiff{u} \\
	&= \int_0^\infty \left[ (1+u)^{-1} \Id
	- \big( \tau \cdot (\mtx{A} + u \Id)^{-1} + \bar{\tau} \cdot (\mtx{H} + u\Id)^{-1} \big) \right] \idiff{u} \\
	&\psdle \int_0^\infty \left[ (1+u)^{-1} \Id  - (\tau \mtx{A} + \bar{\tau} \mtx{H} + u\Id)^{-1}
	\right] \idiff{u}
	= \log(\tau \mtx{A} + \bar{\tau} \mtx{H}).
\end{aligned}
$$
Once again, we have used the fact that integration preserves the semidefinite order.
\end{proof}

\section{The Operator Jensen Inequality}
\label{sec:operator-jensen}

Convexity is a statement about how a function interacts with averages.
By definition, a function $f : I \to \R$ is convex when
\begin{equation} \label{eqn:cvx-op-jensen}
f( \tau a + \bar{\tau} h) \leq \tau \cdot f(a) + \bar{\tau} \cdot f(h)
\quad\text{for all $\tau \in [0,1]$ and all $a, h \in I$.}
\end{equation}
The convexity inequality~\eqref{eqn:cvx-op-jensen} automatically extends
from an average involving two terms to an arbitrary average.
This is the content of Jensen's inequality.

Definition~\ref{def:operator-convex}, of an operator convex function $f : I \to \R$, is similar in spirit:
\begin{equation} \label{eqn:op-cvx-op-jensen}
f(\tau \mtx{A} + \bar{\tau} \mtx{H})
	\psdle  \tau \cdot f(\mtx{A}) + \bar{\tau} \cdot f(\mtx{H})
	\quad\text{for all $\tau \in [0,1]$}
\end{equation}
and all Hermitian matrices $\mtx{A}$ and $\mtx{H}$ whose eigenvalues are contained in $I$.
Surprisingly, the semidefinite relation~\eqref{eqn:op-cvx-op-jensen} automatically extends
to a large family of matrix averaging operations.
This remarkable property is called the \term{operator Jensen inequality}.

\subsection{Matrix Convex Combinations}

In a vector space, convex combinations provide a natural method of averaging.
But matrices have a richer structure, so we can consider a more general class
of averages.

\begin{defn}[Matrix Convex Combination] \label{def:matrix-convex-comb}
Let $\mtx{A}_1$ and $\mtx{A}_2$ be Hermitian matrices.
Consider a decomposition of the identity of the form
$$
\mtx{K}_1^\adj \mtx{K}_1 + \mtx{K}_2^\adj \mtx{K}_2 = \Id.
$$
Then the Hermitian matrix
\begin{equation} \label{eqn:matrix-convex-comb}
\mtx{K}_1^\adj \mtx{A}_1 \mtx{K}_1 +
\mtx{K}_2^\adj \mtx{A}_2 \mtx{K}_2
\end{equation}
is called a \term{matrix convex combination} of $\mtx{A}_1$ and $\mtx{A}_2$.
\end{defn}

To see why it is reasonable to call~\eqref{eqn:matrix-convex-comb} an averaging
operation on Hermitian matrices, let us note a few of its properties.

\begin{itemize}
\item	Definition~\ref{def:matrix-convex-comb} encompasses scalar convex combinations
because we can take $\mtx{K}_1 = \tau^{1/2} \Id$
and $\mtx{K}_2 = \bar{\tau}^{1/2} \Id$.

\item	The matrix convex combination preserves the identity matrix:
$\mtx{K}_1^\adj \Id \mtx{K}_1 +
\mtx{K}_2^\adj \Id \mtx{K}_2 = \Id.$

\item	The matrix convex combination preserves positivity:
$$
\mtx{K}_1^\adj \mtx{A}_1 \mtx{K}_1 + \mtx{K}_2^\adj \mtx{A}_2 \mtx{K}_2 \psdge \mtx{0}
\quad\text{for all positive-semidefinite $\mtx{A}_1$ and $\mtx{A}_2$.}
$$

\item	If the eigenvalues of $\mtx{A}_1$ and $\mtx{A}_2$ are contained in an
interval $I$, then the eigenvalues of the matrix convex combination~\eqref{eqn:matrix-convex-comb}
are also contained in $I$.
\end{itemize}

\noindent
We will encounter a concrete example of a matrix convex combination
later when we prove Theorem~\ref{thm:mtx-perspective-convex}.

\subsection{Jensen's Inequality for Matrix Convex Combinations}

Operator convexity is a self-improving property.
Even though the definition of an operator convex function
only involves a scalar convex combination, it actually
contains an inequality for matrix convex combinations.
This is the content of the operator Jensen inequality.

\begin{thm}[Operator Jensen Inequality] \label{thm:operator-jensen}
Let $f$ be an operator convex function on an interval $I$ of the real line,
and let $\mtx{A}_1$ and $\mtx{A}_2$ be Hermitian matrices with eigenvalues in $I$.  
Consider a decomposition of the identity
\begin{equation} \label{eqn:nc-sum-to-one}
\mtx{K}_1^\adj \mtx{K}_1 + \mtx{K}_2^\adj \mtx{K}_2 = \Id.
\end{equation}
Then
$$
f\left( \mtx{K}_1^\adj \mtx{A}_1 \mtx{K}_1 + \mtx{K}_2^\adj \mtx{A}_2 \mtx{K}_2 \right)
	\psdle \mtx{K}_1^\adj f(\mtx{A}_1) \mtx{K}_1 + \mtx{K}_2^\adj f(\mtx{A}_2) \mtx{K}_2.
$$
\end{thm}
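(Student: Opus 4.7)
The plan is to reduce the operator Jensen inequality to the two-term operator convexity relation~\eqref{eqn:op-cvx-op-jensen} via an isometric dilation trick. The key observation is that the normalization~\eqref{eqn:nc-sum-to-one} says exactly that the tall matrix
$$
\mtx{V} = \begin{bmatrix} \mtx{K}_1 \\ \mtx{K}_2 \end{bmatrix}
\quad\text{satisfies}\quad
\mtx{V}^\adj \mtx{V} = \Id.
$$
Setting $\mtx{A} = \begin{bmatrix} \mtx{A}_1 & \mtx{0} \\ \mtx{0} & \mtx{A}_2 \end{bmatrix}$, a direct computation gives $\mtx{V}^\adj \mtx{A} \mtx{V} = \mtx{K}_1^\adj \mtx{A}_1 \mtx{K}_1 + \mtx{K}_2^\adj \mtx{A}_2 \mtx{K}_2$, and since $f$ acts blockwise on a block-diagonal matrix we also have $\mtx{V}^\adj f(\mtx{A}) \mtx{V} = \mtx{K}_1^\adj f(\mtx{A}_1) \mtx{K}_1 + \mtx{K}_2^\adj f(\mtx{A}_2) \mtx{K}_2$. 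So the claim reduces to showing $f(\mtx{V}^\adj \mtx{A} \mtx{V}) \psdle \mtx{V}^\adj f(\mtx{A}) \mtx{V}$ for the single isometry $\mtx{V}$, with eigenvalues of $\mtx{A}$ contained in $I$.

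Next, I would complete $\mtx{V}$ to a square unitary $\mtx{U} = [\mtx{V}\ \mtx{W}]$ by choosing an isometry $\mtx{W}$ whose range is the orthogonal complement of the range of $\mtx{V}$; such a completion exists precisely because $\mtx{V}^\adj \mtx{V} = \Id$. Introduce the block-diagonal unitary $\mtx{D} = \begin{bmatrix} \Id & \mtx{0} \\ \mtx{0} & -\Id \end{bmatrix}$. Conjugating a $2 \times 2$ block matrix by $\mtx{D}$ flips the sign of the off-diagonal blocks, so averaging against $\mtx{D}$ annihilates them:
$$
\mtx{M} := \tfrac{1}{2}\bigl(\mtx{U}^\adj \mtx{A} \mtx{U} + \mtx{D}\, \mtx{U}^\adj \mtx{A} \mtx{U}\, \mtx{D}^\adj\bigr)
= \begin{bmatrix} \mtx{V}^\adj \mtx{A} \mtx{V} & \mtx{0} \\ \mtx{0} & \mtx{W}^\adj \mtx{A} \mtx{W} \end{bmatrix}.
$$
Applying the scalar operator convexity relation~\eqref{eqn:op-cvx-op-jensen} with $\tau = 1/2$ to this scalar convex combination yields $f(\mtx{M}) \psdle \tfrac{1}{2}\bigl(f(\mtx{U}^\adj \mtx{A} \mtx{U}) + f(\mtx{D}\, \mtx{U}^\adj \mtx{A} \mtx{U}\, \mtx{D}^\adj)\bigr)$, which is legitimate because the eigenvalues of each conjugated matrix remain in $I$.

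To finish, I would invoke two standard properties of matrix functions: unitary equivariance, $f(\mtx{Q}^\adj \mtx{B} \mtx{Q}) = \mtx{Q}^\adj f(\mtx{B}) \mtx{Q}$, which is immediate from Definition~\ref{def:standard-matrix-fn}; and the fact that $f$ of a block-diagonal matrix acts blockwise. Equivariance rewrites the right side as $\tfrac{1}{2}\bigl(\mtx{U}^\adj f(\mtx{A}) \mtx{U} + \mtx{D}\, \mtx{U}^\adj f(\mtx{A})\, \mtx{U}\, \mtx{D}^\adj\bigr)$, whose off-diagonal blocks cancel for the same reason as before, leaving $\diag\bigl(\mtx{V}^\adj f(\mtx{A}) \mtx{V},\ \mtx{W}^\adj f(\mtx{A}) \mtx{W}\bigr)$. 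The blockwise property writes the left side as $\diag\bigl(f(\mtx{V}^\adj \mtx{A} \mtx{V}),\ f(\mtx{W}^\adj \mtx{A} \mtx{W})\bigr)$. Reading off the $(1,1)$ block of the resulting semidefinite inequality delivers $f(\mtx{V}^\adj \mtx{A} \mtx{V}) \psdle \mtx{V}^\adj f(\mtx{A}) \mtx{V}$, which is what we wanted.

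The main obstacle is really just the discovery of the dilation: without the $\mtx{D}$-averaging device, the off-diagonal blocks $\mtx{V}^\adj \mtx{A} \mtx{W}$ of $\mtx{U}^\adj \mtx{A} \mtx{U}$ prevent any direct reduction to the two-term convexity hypothesis, because $\mtx{V}^\adj \mtx{A} \mtx{V}$ is not by itself a scalar convex combination of unitary conjugates of $\mtx{A}$. Once one recognizes that cancelling those blocks is all that stands in the way, every subsequent step is routine bookkeeping with block matrices.
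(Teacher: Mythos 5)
Your proof is correct and is essentially the paper's own argument: you build the same block-diagonal matrix $\mtx{A}$, complete the isometry formed by stacking $\mtx{K}_1, \mtx{K}_2$ to a unitary, use the same sign-flip unitary (your $\mtx{D}$, the paper's $\mtx{U}$) to annihilate the off-diagonal blocks via a $\tau = 1/2$ average, and apply two-term operator convexity at the same point. The only difference is bookkeeping — you carry out the averaging on both sides and compare full block-diagonal matrices at the end, whereas the paper threads the $[\cdot]_{11}$ operation through each line — but the mathematical content is identical.
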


\begin{proof}
Let us introduce a block-diagonal matrix:
$$
\mtx{A} = \begin{bmatrix} \mtx{A}_1 & \mtx{0} \\ \mtx{0} & \mtx{A}_2 \end{bmatrix}
\quad\text{for which}\quad
f(\mtx{A}) = \begin{bmatrix} f(\mtx{A}_1) & \mtx{0} \\ \mtx{0} & f(\mtx{A}_2) \end{bmatrix}.
$$
Indeed, the matrix $\mtx{A}$ lies in the domain of $f$ because
its eigenvalues fall in the interval $I$.
We can apply a standard matrix function to a
block-diagonal matrix by applying the function to each block.

There are two main ingredients in the argument.
The first idea is to realize the matrix convex combination
of $\mtx{A}_1$ and $\mtx{A}_2$ by conjugating the block-diagonal
matrix $\mtx{A}$ with an appropriate unitary matrix.  
To that end, let us construct a unitary matrix
$$
\mtx{Q} = \begin{bmatrix} \mtx{K}_1 & \mtx{L}_1 \\ \mtx{K}_2 & \mtx{L}_2 \end{bmatrix}
\quad\text{where $\mtx{Q}^\adj \mtx{Q} = \Id$ and $\mtx{QQ}^\adj = \Id$.}
$$
To see why this is possible, note that the first block of columns is orthonormal:
$$
\begin{bmatrix} \mtx{K}_1 \\ \mtx{K}_2 \end{bmatrix}^\adj \begin{bmatrix} \mtx{K}_1 \\ \mtx{K}_2 \end{bmatrix}
	= \mtx{K}_1^\adj \mtx{K}_1 + \mtx{K}_2^\adj \mtx{K}_2
	= \Id.
$$
As a consequence, we can choose $\mtx{L}_1$ and $\mtx{L}_2$ to complete the unitary matrix $\mtx{Q}$.
By direct computation, we find that
\begin{equation} \label{eqn:operator-jensen-pf-1}
\mtx{Q}^\adj \mtx{AQ}
	= \begin{bmatrix} \mtx{K}_1^\adj \mtx{A}_1 \mtx{K}_1
		+ \mtx{K}_2^\adj \mtx{A}_2 \mtx{K}_2 & * & \\ * & * &
	\end{bmatrix}
\end{equation}
We have omitted the precise values of the entries labeled $*$ because they do not play a role in our argument.

The second idea is to restrict the block matrix in~\eqref{eqn:operator-jensen-pf-1}
to its diagonal.  %
To perform this maneuver, we express the diagonalizing operation
as a \emph{scalar convex combination} of two unitary conjugations, which gives us access to the
operator convexity of $f$.  Let us see how this works.  Define the unitary matrix
$$
\mtx{U} = \begin{bmatrix} \Id & \mtx{0} \\ \mtx{0} & - \Id \end{bmatrix}.
$$
The key observation is that, for any block matrix,
\begin{equation} \label{eqn:operator-jensen-pf-2}
\frac{1}{2} \begin{bmatrix} \mtx{T} & \mtx{B} \\ \mtx{B}^\adj & \mtx{M} \end{bmatrix}
	+ \frac{1}{2} \mtx{U}^\adj
	\begin{bmatrix} \mtx{T} & \mtx{B} \\ \mtx{B}^\adj & \mtx{M} \end{bmatrix} \mtx{U}
	= \begin{bmatrix} \mtx{T} & \mtx{0} \\ \mtx{0} & \mtx{M} \end{bmatrix}.
\end{equation}
Another advantage of this construction is that we can easily apply
a standard matrix function to the block-diagonal matrix.

Together, these two ideas lead to a succinct proof of the operator Jensen inequality.
Write $[\cdot]_{11}$ for the operation that returns the $(1, 1)$ block of a
block matrix.  We may calculate that
$$
\begin{aligned}
f\left( \mtx{K}_1^\adj \mtx{A}_1 \mtx{K}_1 + \mtx{K}_2^\adj \mtx{A}_2 \mtx{K}_2 \right)
	&= f\left( [\mtx{Q}^\adj \mtx{A} \mtx{Q}]_{11} \right) \\
	&= f\left( \left[\frac{1}{2} \mtx{Q}^\adj \mtx{A} \mtx{Q}
		+ \frac{1}{2}\mtx{U}^\adj (\mtx{Q}^\adj \mtx{A} \mtx{Q}) \mtx{U} \right]_{11} \right) \\
	&= \left[ f\left(\frac{1}{2} \mtx{Q}^\adj \mtx{AQ}
		+ \frac{1}{2} (\mtx{QU})^\adj \mtx{A} (\mtx{QU}) \right) \right]_{11} \\
	&\psdle \left[ \frac{1}{2} f(\mtx{Q}^\adj \mtx{AQ})
		+ \frac{1}{2} f( (\mtx{QU})^{\adj} \mtx{A} (\mtx{QU}) ) \right]_{11}.
\end{aligned}
$$
The first identity depends on the representation~\eqref{eqn:operator-jensen-pf-1}
of the matrix convex combination as the $(1, 1)$ block of $\mtx{Q}^\adj \mtx{AQ}$.
The second line follows because the averaging
operation presented in~\eqref{eqn:operator-jensen-pf-2}
does not alter the $(1,1)$ block of the matrix.
In view of~\eqref{eqn:operator-jensen-pf-2},
we are looking at the $(1,1)$ block of the matrix
obtained by applying $f$ to a block-diagonal matrix.
This is equivalent to applying the function $f$
inside the $(1, 1)$ block, which gives the third line.
Last, the semidefinite relation follows from the operator convexity of
$f$ on the interval $I$.

We complete the argument by reversing the steps we have taken so far.
$$
\begin{aligned}
f\left( \mtx{K}_1^\adj \mtx{A}_1 \mtx{K}_1 + \mtx{K}_2^\adj \mtx{A}_2 \mtx{K}_2 \right)
	&\psdle \left[ \frac{1}{2} \mtx{Q}^\adj f(\mtx{A}) \mtx{Q}
		+ \frac{1}{2} \mtx{U}^\adj (\mtx{Q}^\adj f(\mtx{A}) \mtx{Q}) \mtx{U} \right]_{11} \\
	&= \left[ \mtx{Q}^\adj f(\mtx{A}) \mtx{Q} \right]_{11} \\
	&= \mtx{K}_1^\adj f(\mtx{A}_1) \mtx{K}_1 + \mtx{K}_2^\adj f(\mtx{A}_2) \mtx{K}_2.
\end{aligned}
$$
To obtain the first relation, recall that a standard matrix function commutes with unitary
conjugation.  The second identity follows from the formula~\eqref{eqn:operator-jensen-pf-2}
because diagonalization preserves the $(1,1)$ block.
Finally, we identify the $(1, 1)$ block of $\mtx{Q}^\adj f(\mtx{A})\mtx{Q}$
just as we did in~\eqref{eqn:operator-jensen-pf-1}.  This step depends on
the fact that the diagonal blocks of $f(\mtx{A})$ are simply $f(\mtx{A}_1)$ and $f(\mtx{A}_2)$.
\end{proof}

\section{The Matrix Perspective Transformation}
\label{sec:matrix-perspective}

To show that the vector relative entropy is convex,
we represented it as the perspective of a convex function.
To demonstrate that the matrix relative entropy is convex,
we are going to perform a similar maneuver.  This section
develops an extension of the perspective transformation
that applies to operator convex functions.  Then we
demonstrate that this matrix perspective has a strong
convexity property with respect to the semidefinite order.

\subsection{The Matrix Perspective}

In the scalar setting, the perspective transformation converts
a convex function into a bivariate convex function.  There is
a related construction that applies to an operator convex function.

\begin{defn}[Matrix Perspective]
Let $f : \R_{++} \to \R$ be an operator convex function, and let
$\mtx{A}$ and $\mtx{H}$ be positive-definite matrices of the same size.
Define the perspective map
$$
\mtx{\Psi}_f(\mtx{A}; \mtx{H})
	= \mtx{A}^{1/2} \cdot f\big(\mtx{A}^{-1/2} \mtx{H} \mtx{A}^{-1/2} \big) \cdot \mtx{A}^{1/2}.
$$
The notation $\mtx{A}^{1/2}$ refers to the unique positive-definite square root of $\mtx{A}$,
and $\mtx{A}^{-1/2}$ denotes the inverse of this square root.
\end{defn}

\noindent
The Conjugation Rule~\eqref{eqn:conjugation-rule} ensures that all the matrices involved
remain positive definite, so this definition makes sense.  To see why the matrix perspective
extends the scalar perspective, notice that
\begin{equation} \label{eqn:perspective-commute}
\mtx{\Psi}_f(\mtx{A}; \mtx{H}) = \mtx{A} \cdot f(\mtx{H}\mtx{A}^{-1})
\quad\text{when $\mtx{A}$ and $\mtx{H}$ commute.}
\end{equation}
This formula is valid because commuting matrices are simultaneously diagonalizable.
We will use the matrix perspective in a case where the matrices commute,
but it is no harder to analyze the perspective without this assumption.

\subsection{The Matrix Perspective is Operator Convex}

The key result is that the matrix perspective is an operator convex
map on a pair of positive-definite matrices.  This theorem follows
from the operator Jensen inequality in much the same way that
Fact~\ref{fact:perspective-convex} follows from scalar convexity.

\begin{thm}[Matrix Perspective is Operator Convex] \label{thm:mtx-perspective-convex}
Let $f : \R_{++} \to \R$ be an operator convex function.  Let
$\mtx{A}_i$ and $\mtx{H}_i$ be positive-definite matrices of the same size.
Then
$$
\mtx{\Psi}_f\big( \tau \mtx{A}_1 + \bar{\tau} \mtx{A}_2;\ \tau \mtx{H}_1 + \bar{\tau} \mtx{H}_2\big)
	\psdle \tau \cdot \mtx{\Psi}_f\big(\mtx{A}_1;\ \mtx{H}_1\big)
	+ \bar{\tau} \cdot \mtx{\Psi}_f\big(\mtx{A}_2;\ \mtx{H}_2\big)
	\quad\text{for $\tau \in [0,1]$.}
$$ 
\end{thm}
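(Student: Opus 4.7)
The plan is to mimic the scalar proof of Fact 5.5 (convexity of the perspective), replacing the scalar convexity inequality with the operator Jensen inequality from Theorem 8.12. In the scalar case, the trick was to rewrite the ratio $h/a$ (where $a$ and $h$ are convex combinations of $a_i$ and $h_i$) as a convex combination of the ratios $h_i/a_i$ with new weights $s = \tau a_1/a$ and $\bar{s} = \bar{\tau} a_2/a$. Here I will need a \emph{matrix} analog of this reweighting, which must take the form of a decomposition of the identity so that it feeds into the operator Jensen inequality.

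First I would abbreviate $\mtx{A} = \tau \mtx{A}_1 + \bar{\tau} \mtx{A}_2$ and $\mtx{H} = \tau \mtx{H}_1 + \bar{\tau} \mtx{H}_2$, both positive definite. The key construction is
$$
\mtx{K}_1 = \sqrt{\tau} \, \mtx{A}_1^{1/2} \mtx{A}^{-1/2}, \qquad
\mtx{K}_2 = \sqrt{\bar{\tau}} \, \mtx{A}_2^{1/2} \mtx{A}^{-1/2}.
$$
A one-line calculation, using the definition of $\mtx{A}$, shows that
$
\mtx{K}_1^\adj \mtx{K}_1 + \mtx{K}_2^\adj \mtx{K}_2 = \mtx{A}^{-1/2}(\tau \mtx{A}_1 + \bar{\tau} \mtx{A}_2)\mtx{A}^{-1/2} = \Id,
$
so this is a legitimate decomposition of the identity. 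Writing $\mtx{T}_i = \mtx{A}_i^{-1/2} \mtx{H}_i \mtx{A}_i^{-1/2}$ for the positive-definite ``ratios'', the factors $\mtx{A}_i^{\pm 1/2}$ in $\mtx{K}_i$ are engineered so that
$$
\mtx{K}_1^\adj \mtx{T}_1 \mtx{K}_1 + \mtx{K}_2^\adj \mtx{T}_2 \mtx{K}_2
= \tau \mtx{A}^{-1/2} \mtx{H}_1 \mtx{A}^{-1/2} + \bar{\tau} \mtx{A}^{-1/2} \mtx{H}_2 \mtx{A}^{-1/2}
= \mtx{A}^{-1/2} \mtx{H} \mtx{A}^{-1/2}.
$$
This is exactly the matrix analog of the scalar identity $h/a = s (h_1/a_1) + \bar{s}(h_2/a_2)$.

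Next I would invoke the operator Jensen inequality (Theorem 8.12) for the operator convex function $f$ at the matrix convex combination above, obtaining
$$
f\!\left(\mtx{A}^{-1/2} \mtx{H} \mtx{A}^{-1/2}\right)
\psdle \mtx{K}_1^\adj f(\mtx{T}_1) \mtx{K}_1 + \mtx{K}_2^\adj f(\mtx{T}_2) \mtx{K}_2.
$$
Conjugating both sides by $\mtx{A}^{1/2}$ preserves the semidefinite order (the Conjugation Rule). The left-hand side becomes $\mtx{\Psi}_f(\mtx{A};\mtx{H})$ by definition. On the right, the identity $\mtx{A}^{1/2}\mtx{K}_i^\adj = \sqrt{\tau_i}\, \mtx{A}_i^{1/2}$ (with $\tau_1 = \tau$, $\tau_2 = \bar{\tau}$) collapses the $i$th summand to $\tau_i \, \mtx{A}_i^{1/2} f(\mtx{T}_i) \mtx{A}_i^{1/2} = \tau_i \, \mtx{\Psi}_f(\mtx{A}_i; \mtx{H}_i)$. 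Combining these observations yields the advertised inequality.

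The main obstacle is really just the design of $\mtx{K}_1$ and $\mtx{K}_2$: one has to guess the asymmetric form $\sqrt{\tau_i}\, \mtx{A}_i^{1/2} \mtx{A}^{-1/2}$, in which the ``outer'' factor $\mtx{A}^{-1/2}$ produces the decomposition of the identity while the ``inner'' factor $\mtx{A}_i^{1/2}$ cancels the $\mtx{A}_i^{-1/2}$ hidden inside $\mtx{T}_i$. Once that decomposition is in hand, the rest is bookkeeping, and the proof closely parallels the scalar argument with Theorem 8.12 playing the role of the defining convexity inequality for $f$.
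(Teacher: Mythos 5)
Your proof is correct and takes essentially the same route as the paper: you use the identical decomposition of the identity $\mtx{K}_i = \sqrt{\tau_i}\,\mtx{A}_i^{1/2}\mtx{A}^{-1/2}$, invoke the operator Jensen inequality at the matrix convex combination $\mtx{K}_1^\adj \mtx{T}_1 \mtx{K}_1 + \mtx{K}_2^\adj \mtx{T}_2 \mtx{K}_2 = \mtx{A}^{-1/2}\mtx{H}\mtx{A}^{-1/2}$, and conjugate by $\mtx{A}^{1/2}$ to land on the perspectives. The only cosmetic difference is that you name the ``ratio'' matrices $\mtx{T}_i$, which makes the bookkeeping a touch cleaner than the paper's inline version.
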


\begin{proof}
Let $f$ be an operator convex function, and let $\mtx{\Psi}_f$ be its perspective transform.
Fix pairs $(\mtx{A}_1, \mtx{H}_1)$ and $(\mtx{A}_2, \mtx{H}_2)$ of positive-definite matrices,
and choose an interpolation parameter $\tau \in [0,1]$.
Form the scalar convex combinations
$$
\mtx{A} = \tau \mtx{A}_1 + \bar{\tau} \mtx{A}_2
\quad\text{and}\quad
\mtx{H} = \tau \mtx{H}_1 + \bar{\tau} \mtx{H}_2.
$$
Our goal is to bound the perspective $\mtx{\Psi}_f(\mtx{A}; \mtx{H})$
as a scalar convex combination of its values
$\mtx{\Psi}_f(\mtx{A}_1; \mtx{H}_1)$ and $\mtx{\Psi}_f(\mtx{A}_2; \mtx{H}_2)$.
The idea is to introduce matrix interpolation parameters:
$$
\mtx{K}_1 = \tau^{1/2} \mtx{A}_1^{1/2} \mtx{A}^{-1/2}
\quad\text{and}\quad
\mtx{K}_2 = \bar{\tau}^{1/2} \mtx{A}_2^{1/2} \mtx{A}^{-1/2}.
$$
Observe that these two matrices decompose the identity:
$$
\mtx{K}_1^\adj \mtx{K}_1 + \mtx{K}_2^\adj \mtx{K}_2
	= \tau \cdot \mtx{A}^{-1/2} \mtx{A}_1 \mtx{A}^{-1/2} + \bar{\tau} \cdot \mtx{A}^{-1/2} \mtx{A}_2 \mtx{A}^{-1/2}
	= \mtx{A}^{-1/2} \mtx{A} \mtx{A}^{-1/2}
	= \Id.
$$
This construction allows us to express the perspective using a matrix convex combination, which
gives us access to the operator Jensen inequality.

We calculate that
$$
\begin{aligned}
\mtx{\Psi}_f(\mtx{A}; \mtx{H})
	&= \mtx{A}^{1/2} \cdot f\big(\mtx{A}^{-1/2} \mtx{H} \mtx{A}^{-1/2} \big) \cdot \mtx{A}^{1/2} \\
	&= \mtx{A}^{1/2} \cdot f\big(\tau \cdot \mtx{A}^{-1/2} \mtx{H}_1 \mtx{A}^{-1/2}
		+ \bar{\tau} \cdot \mtx{A}^{-1/2} \mtx{H}_2 \mtx{A}^{-1/2} \big) \cdot \mtx{A}^{1/2} \\
	&= \mtx{A}^{1/2} \cdot f\big( \mtx{K}_1^\adj \mtx{A}_1^{-1/2} \mtx{H}_1 \mtx{A}_1^{-1/2} \mtx{K}_1
		+ \mtx{K}_2^\adj \mtx{A}_2^{-1/2} \mtx{H}_2 \mtx{A}_2^{-1/2} \mtx{K}_2 \big) \cdot \mtx{A}^{1/2}.
\end{aligned}
$$
The first line is simply the definition of the matrix perspective.
In the second line, we use the definition of $\mtx{H}$ as a scalar convex combination.
Third, we introduce the matrix interpolation parameters through the expressions
$\tau^{1/2} \mtx{A}^{-1/2} = \mtx{A}_1^{1/2} \mtx{K}_1$ and $\bar{\tau}^{1/2} \mtx{A}^{-1/2} = \mtx{A}_2^{1/2} \mtx{K}_2$
and their conjugate transposes.
To continue the calculation, we apply the operator Jensen inequality, Theorem~\ref{thm:operator-jensen}, to reach
$$
\begin{aligned}
\mtx{\Psi}_f(\mtx{A}; \mtx{H})
	&\psdle \mtx{A}^{1/2} \cdot \big[ \mtx{K}_1^\adj \cdot f\big(\mtx{A}_1^{-1/2} \mtx{H}_1 \mtx{A}_1^{-1/2}\big) \cdot \mtx{K}_1
		+ \mtx{K}_2^\adj \cdot f\big(\mtx{A}_2^{-1/2} \mtx{H}_2 \mtx{A}_2^{-1/2}\big) \cdot \mtx{K}_2 \big] \cdot \mtx{A}^{1/2} \\
	&= \tau \cdot \mtx{A}_1^{1/2} \cdot f\big(\mtx{A}_1^{-1/2} \mtx{H}_1 \mtx{A}_1^{-1/2}\big) \cdot \mtx{A}_1^{1/2}
		+ \bar{\tau} \cdot \mtx{A}_2^{1/2} \cdot f\big(\mtx{A}_2^{-1/2} \mtx{H}_2 \mtx{A}_2^{-1/2}\big) \cdot \mtx{A}_2^{1/2} \\
	&= \tau \cdot \mtx{\Psi}_f(\mtx{A}_1; \mtx{H}_1) + \bar{\tau} \cdot \mtx{\Psi}_f(\mtx{A}_2; \mtx{H}_2).
\end{aligned}
$$
We have also used the Conjugation Rule~\eqref{eqn:conjugation-rule} to support the first relation.  Finally, we recall the definitions of $\mtx{K}_1$ and $\mtx{K}_2$, and we identify the two matrix perspectives.
\end{proof}

\section{The Kronecker Product}

The matrix relative entropy is a function of two matrices.
One of the difficulties of analyzing this type of
function is that the two matrix arguments do not generally
commute with each other.  As a consequence, the
behavior of the matrix relative entropy depends
on the interactions between the eigenvectors of
the two matrices.  To avoid this problem, we will
build matrices that do commute with each other,
which simplifies our task considerably.

\subsection{The Kronecker Product}

Our approach is based on an fundamental object
from linear algebra.  We restrict our attention to
the simplest version here.

\begin{defn}[Kronecker Product]
Let $\mtx{A}$ and $\mtx{H}$ be Hermitian matrices with dimension $d \times d$.
The \term{Kronecker product} $\mtx{A} \otimes \mtx{H}$ is the $d^2 \times d^2$ Hermitian matrix
$$
\mtx{A} \otimes \mtx{H} =
\begin{bmatrix} a_{11} \mtx{H} & \dots & a_{1d} \mtx{H} \\
\vdots & \ddots & \vdots \\
a_{d1} \mtx{H} & \dots & a_{dd} \mtx{H}
\end{bmatrix}
$$
\end{defn}

\noindent
At first sight, the definition of the Kronecker product may seem strange,
but it has many delightful properties.  The rest of the section develops
the basic facts about this construction.

\subsection{Linearity Properties}

First of all, a Kronecker product with the zero matrix is always zero:
$$
\mtx{A} \otimes \mtx{0} = \mtx{0} \otimes \mtx{0} = \mtx{0} \otimes \mtx{H}
\quad\text{for all $\mtx{A}$ and $\mtx{H}$.}
$$
Next, the Kronecker product is homogeneous in each factor:
$$
(\alpha \mtx{A}) \otimes \mtx{H} = \alpha \, (\mtx{A} \otimes \mtx{H}) = \mtx{A} \otimes (\alpha \mtx{H})
\quad\text{for $\alpha \in \R$.}
$$
Furthermore, the Kronecker product is additive in each coordinate:
$$
(\mtx{A}_1 + \mtx{A}_2) \otimes \mtx{H} = \mtx{A}_1 \otimes \mtx{H} + \mtx{A}_2 \otimes \mtx{H}
\quad\text{and}\quad
\mtx{A} \otimes (\mtx{H}_1 + \mtx{H}_2) = \mtx{A} \otimes \mtx{H}_1 + \mtx{A} \otimes \mtx{H}_2.
$$
In other words, the Kronecker product is a bilinear operation.

\subsection{Mixed Products}

The Kronecker product interacts beautifully with the usual product of matrices.
By direct calculation, we obtain a simple rule for mixed products:
\begin{equation} \label{eqn:kron-prod-rule}
(\mtx{A}_1 \otimes \mtx{H}_1)(\mtx{A}_2 \otimes \mtx{H}_2)
	= (\mtx{A}_1\mtx{A}_2) \otimes (\mtx{H}_1\mtx{H}_2).
\end{equation}
Since $\Id \otimes \Id$ is the identity matrix, the identity~\eqref{eqn:kron-prod-rule}
leads to a formula for the inverse of a Kronecker product:
\begin{equation} \label{eqn:kron-inv-rule}
(\mtx{A} \otimes \mtx{H})^{-1} = \big(\mtx{A}^{-1}\big) \otimes \big(\mtx{H}^{-1}\big)
\quad\text{when $\mtx{A}$ and $\mtx{H}$ are invertible.}
\end{equation}
Another important consequence of the rule~\eqref{eqn:kron-prod-rule} is the following commutativity relation:
\begin{equation} \label{eqn:kron-comm}
(\mtx{A} \otimes \Id)(\Id \otimes \mtx{H}) = (\Id \otimes \mtx{H})(\mtx{A} \otimes \Id)
\quad\text{for all Hermitian matrices $\mtx{A}$ and $\mtx{H}$.}
\end{equation}
This simple fact has great importance for us.

\subsection{The Kronecker Product of Positive Matrices}

As we have noted, the Kronecker product of two Hermitian matrices is itself an Hermitian matrix.
In fact, the Kronecker product preserves positivity as well.

\begin{fact}[Kronecker Product Preserves Positivity]
Let $\mtx{A}$ and $\mtx{H}$ be positive-definite matrices.
Then $\mtx{A} \otimes \mtx{H}$ is positive definite.
\end{fact}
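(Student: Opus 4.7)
The plan is to reduce the claim to a statement about diagonal matrices by simultaneously diagonalizing the two factors, using the mixed product rule~\eqref{eqn:kron-prod-rule} as the engine that turns Kronecker products of products into products of Kronecker products.

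First I would invoke the eigenvalue decomposition~\eqref{eqn:eigenvalue-decomposition} to write $\mtx{A} = \mtx{Q}_1 \mtx{\Lambda}_1 \mtx{Q}_1^\adj$ and $\mtx{H} = \mtx{Q}_2 \mtx{\Lambda}_2 \mtx{Q}_2^\adj$, where the diagonals $\mtx{\Lambda}_1, \mtx{\Lambda}_2$ carry strictly positive entries because $\mtx{A}$ and $\mtx{H}$ are positive definite. A direct inspection of the block definition of $\otimes$ shows that $(\mtx{B} \otimes \mtx{K})^\adj = \mtx{B}^\adj \otimes \mtx{K}^\adj$ for any compatible matrices, so in particular $\mtx{Q} := \mtx{Q}_1 \otimes \mtx{Q}_2$ satisfies $\mtx{Q}^\adj = \mtx{Q}_1^\adj \otimes \mtx{Q}_2^\adj$. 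Applying the mixed product rule~\eqref{eqn:kron-prod-rule} then yields
\begin{equation*}
\mtx{Q}^\adj \mtx{Q} = (\mtx{Q}_1^\adj \mtx{Q}_1) \otimes (\mtx{Q}_2^\adj \mtx{Q}_2) = \Id \otimes \Id = \Id,
\end{equation*}
so $\mtx{Q}$ is unitary.

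Next I would apply the mixed product rule twice more to obtain the joint diagonalization
\begin{equation*}
\mtx{A} \otimes \mtx{H} = (\mtx{Q}_1 \mtx{\Lambda}_1 \mtx{Q}_1^\adj) \otimes (\mtx{Q}_2 \mtx{\Lambda}_2 \mtx{Q}_2^\adj) = \mtx{Q}\, (\mtx{\Lambda}_1 \otimes \mtx{\Lambda}_2)\, \mtx{Q}^\adj.
\end{equation*}
From the block definition, $\mtx{\Lambda}_1 \otimes \mtx{\Lambda}_2$ is a diagonal matrix whose entries are the products $\lambda_i(\mtx{A}) \lambda_j(\mtx{H})$, all strictly positive. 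Thus $\mtx{A} \otimes \mtx{H}$ is unitarily similar to a positive-definite diagonal matrix, which establishes positive definiteness.

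There is no serious obstacle here; the argument is essentially a bookkeeping exercise built on the mixed product rule. The only point that requires momentary care is the identity $(\mtx{B} \otimes \mtx{K})^\adj = \mtx{B}^\adj \otimes \mtx{K}^\adj$, which is not stated explicitly in the excerpt but falls out of the block definition of the Kronecker product after noting that the $(i,j)$ block of $\mtx{B} \otimes \mtx{K}$ is $b_{ij} \mtx{K}$, whose conjugate transpose sits in the $(j,i)$ block position as $\overline{b_{ij}}\, \mtx{K}^\adj$.
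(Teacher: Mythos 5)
Your proof is correct, but it takes a genuinely different route from the paper. The paper's argument is more compact: it factors $\mtx{A} \otimes \mtx{H} = \big(\mtx{A}^{1/2} \otimes \mtx{H}^{1/2}\big)\big(\mtx{A}^{1/2} \otimes \mtx{H}^{1/2}\big)$ using the positive-definite square roots, observes this exhibits $\mtx{A} \otimes \mtx{H}$ as the square of a Hermitian matrix (hence positive semidefinite), and then invokes the inversion formula~\eqref{eqn:kron-inv-rule} to conclude that the matrix is invertible and therefore positive definite. Your argument instead diagonalizes both factors, builds the unitary $\mtx{Q}_1 \otimes \mtx{Q}_2$ via the mixed product rule, and shows that $\mtx{A} \otimes \mtx{H}$ is unitarily similar to a diagonal matrix whose entries are the products $\lambda_i(\mtx{A}) \lambda_j(\mtx{H})$. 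Both proofs lean on~\eqref{eqn:kron-prod-rule} as the essential tool, and both require the same auxiliary identity $(\mtx{B} \otimes \mtx{K})^\adj = \mtx{B}^\adj \otimes \mtx{K}^\adj$ (which the paper also uses implicitly when asserting that $\mtx{A}^{1/2} \otimes \mtx{H}^{1/2}$ is Hermitian). Your approach is slightly longer but delivers more: it identifies the full spectrum of $\mtx{A} \otimes \mtx{H}$, not merely its sign. The paper's square-root factorization is the more frugal path when positivity is all that is needed, and it sidesteps the need to construct a simultaneous diagonalization explicitly.
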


\begin{proof}
To see why, observe that
$$
\mtx{A} \otimes \mtx{H}
	= \big(\mtx{A}^{1/2} \otimes \mtx{H}^{1/2}\big)\big(\mtx{A}^{1/2} \otimes \mtx{H}^{1/2}\big).
$$
As usual, $\mtx{A}^{1/2}$ refers to the unique positive-definite square root of the positive-definite matrix $\mtx{A}$.  We have expressed $\mtx{A} \otimes \mtx{H}$ as the square of an Hermitian matrix, so it must be a positive-semidefinite matrix.  To see that it is actually positive definite, we simply apply the inversion formula~\eqref{eqn:kron-inv-rule} to discover that $\mtx{A} \otimes \mtx{H}$ is invertible.
\end{proof}

\subsection{The Logarithm of a Kronecker Product}

As we have discussed, the matrix logarithm plays a central role in our analysis.
There is an elegant formula for the logarithm of a Kronecker product that will
be valuable to us.

\begin{fact}[Logarithm of a Kronecker Product] \label{fact:kron-log}
Let $\mtx{A}$ and $\mtx{H}$ be positive-definite matrices.  Then
$$
\log(\mtx{A} \otimes \mtx{H}) = (\log \mtx{A}) \otimes \Id + \Id \otimes (\log \mtx{H}).
$$
\end{fact}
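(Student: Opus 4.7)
The plan is to reduce the Kronecker logarithm identity to two simpler ingredients: the multiplicativity formula for commuting factors, and a direct computation on eigenvalue decompositions. First, I would use the mixed-product rule~\eqref{eqn:kron-prod-rule} to factor
$$
\mtx{A} \otimes \mtx{H} = (\mtx{A} \otimes \Id)(\Id \otimes \mtx{H}).
$$
Both factors are positive definite, since $\mtx{A} \otimes \Id$ and $\Id \otimes \mtx{H}$ are Kronecker products of positive-definite matrices. Crucially, the commutation relation~\eqref{eqn:kron-comm} guarantees that these two factors commute.

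Next I would establish the scalar-style identity that, for any two commuting positive-definite matrices $\mtx{X}$ and $\mtx{Y}$, one has $\log(\mtx{XY}) = \log \mtx{X} + \log \mtx{Y}$. This is where simultaneous diagonalization enters: commuting Hermitian matrices admit a common eigenbasis, so we may write $\mtx{X} = \mtx{Q}\mtx{\Lambda}_X \mtx{Q}^\adj$ and $\mtx{Y} = \mtx{Q}\mtx{\Lambda}_Y \mtx{Q}^\adj$ with the same unitary $\mtx{Q}$ and diagonal factors. The product $\mtx{XY}$ is then diagonalized by $\mtx{Q}$ with eigenvalues given entrywise by $\mtx{\Lambda}_X \mtx{\Lambda}_Y$, and the definition of a standard matrix function (Definition~\ref{def:standard-matrix-fn}) combined with the scalar identity $\log(xy) = \log x + \log y$ delivers the claim. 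Applied to $\mtx{X} = \mtx{A} \otimes \Id$ and $\mtx{Y} = \Id \otimes \mtx{H}$, this gives
$$
\log(\mtx{A} \otimes \mtx{H}) = \log(\mtx{A} \otimes \Id) + \log(\Id \otimes \mtx{H}).
$$

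It then remains to identify the two summands. For $\log(\mtx{A} \otimes \Id)$, I would start from an eigenvalue decomposition $\mtx{A} = \mtx{Q}\mtx{\Lambda}\mtx{Q}^\adj$. The mixed-product rule gives
$$
\mtx{A} \otimes \Id = (\mtx{Q} \otimes \Id)(\mtx{\Lambda} \otimes \Id)(\mtx{Q}^\adj \otimes \Id),
$$
and $\mtx{Q} \otimes \Id$ is unitary because $(\mtx{Q} \otimes \Id)(\mtx{Q}^\adj \otimes \Id) = (\mtx{Q}\mtx{Q}^\adj) \otimes \Id = \Id$. Since $\mtx{\Lambda} \otimes \Id$ is block-diagonal with blocks $\lambda_i \Id$, applying $\log$ entrywise on its diagonal yields $(\log \mtx{\Lambda}) \otimes \Id$, and undoing the conjugation gives $(\log \mtx{A}) \otimes \Id$. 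The parallel computation handles $\log(\Id \otimes \mtx{H})$, and summing the two pieces completes the proof.

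I do not anticipate a serious obstacle. The one step requiring a little care is the passage from ``commuting Hermitian matrices admit a common eigenbasis'' to the additive log identity for a product, since the earlier parts of the excerpt treat standard matrix functions only through a single eigenvalue decomposition. Writing out the common diagonalization explicitly—and observing that the diagonal of $\mtx{\Lambda}_X \mtx{\Lambda}_Y$ is the entrywise product of the diagonals, so the scalar logarithm identity transfers—dispels any worry and keeps the argument at the level of the tools already developed in the chapter.
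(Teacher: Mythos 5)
Your proof is correct, but it takes a genuinely different route from the paper's. The paper works \emph{forward through the exponential}: it first shows $\exp(\mtx{M}\otimes\Id+\Id\otimes\mtx{T})=\econst^{\mtx{M}}\otimes\econst^{\mtx{T}}$ by combining the product formula for commuting exponentials with a power-series computation of $\exp(\mtx{M}\otimes\Id)$, then substitutes $\mtx{M}=\log\mtx{A}$, $\mtx{T}=\log\mtx{H}$ and takes a logarithm at the very end. You instead work \emph{directly with the logarithm}: you factor $\mtx{A}\otimes\mtx{H}=(\mtx{A}\otimes\Id)(\Id\otimes\mtx{H})$, establish the additive-log identity for a product of commuting positive-definite matrices via simultaneous diagonalization, and then compute $\log(\mtx{A}\otimes\Id)$ and $\log(\Id\otimes\mtx{H})$ by conjugating an eigenvalue decomposition of the single nontrivial factor by the unitary $\mtx{Q}\otimes\Id$ or $\Id\otimes\mtx{Q}$. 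The two arguments are essentially dual: the paper relies on multiplicativity of the exponential for commuting matrices, yours on additivity of the logarithm for commuting matrices, and the paper's power-series step is replaced by your eigendecomposition step. Yours has the small advantage of never leaving the logarithm, so no inversion at the end is required; the paper's has the small advantage of avoiding any appeal to simultaneous diagonalization of commuting Hermitian matrices, a fact the monograph does not otherwise develop. Both are clean and both are correct.
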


\begin{proof}
The argument is based on the fact that the matrix logarithm is the functional inverse of the matrix exponential.
Since the exponential of a sum of commuting matrices equals the product of the exponentials, we have
$$
\exp\big( \mtx{M} \otimes \Id + \Id \otimes \mtx{T} \big)
	= \exp( \mtx{M} \otimes \Id ) \cdot \exp( \Id \otimes \mtx{T} ).
$$
This formula relies on the commutativity relation~\eqref{eqn:kron-comm}.  Applying the power series representation of the exponential, we determine that
$$
\exp( \mtx{M} \otimes \Id )
	 = \sum_{q=0}^\infty \frac{1}{q!} (\mtx{M} \otimes \Id )^q
	 = \sum_{q=0}^\infty \frac{1}{q!} \big(\mtx{M}^q \big) \otimes \Id
	 = \econst^{\mtx{M}} \otimes \Id.
$$
The second identity depends on the rule~\eqref{eqn:kron-prod-rule} for mixed products, and the last identity follows from the linearity of the Kronecker product.  A similar calculation shows that $\exp(\Id \otimes \mtx{T} ) = \Id \otimes \econst^{\mtx{T}}$.  In summary,
$$
\exp\big( \mtx{M} \otimes \Id + \Id \otimes \mtx{T} \big)
	= \big( \econst^{\mtx{M}} \otimes \Id \big) \big( \Id \otimes \econst^{\mtx{T}} \big)
	= \econst^{\mtx{M}} \otimes \econst^{\mtx{T}}.
$$
We have used the product rule~\eqref{eqn:kron-prod-rule} again.  To complete the argument, simply choose $\mtx{M} = \log \mtx{A}$ and $\mtx{T} = \log \mtx{H}$ and take the logarithm of the last identity.
\end{proof}

\subsection{A Linear Map}

Finally, we claim that there is a linear map $\phi$ that extracts the trace of the matrix product from the Kronecker product.
Let $\mtx{A}$ and $\mtx{H}$ be $d\times d$ Hermitian matrices.  Then we define
\begin{equation} \label{eqn:phi-def}
\phi(\mtx{A} \otimes \mtx{H})
	= \trace(\mtx{AH} ).
\end{equation}
The map $\phi$ is linear because the Kronecker product $\mtx{A} \otimes \mtx{H}$
tabulates all the pairwise products of the entries of $\mtx{A}$ and $\mtx{H}$,
and $\trace(\mtx{AH})$ is a sum of certain of these pairwise products.
For our purposes, the key fact is that the map $\phi$ preserves the semidefinite order:
\begin{equation} \label{eqn:phi-monotone}
\sum\nolimits_i \mtx{A}_i \otimes \mtx{H}_i \psdge \mtx{0}
\quad\text{implies}\quad
\sum\nolimits_i \phi( \mtx{A}_i \otimes \mtx{H}_i ) \geq 0.
\end{equation}
This formula is valid for all Hermitian matrices $\mtx{A}_i$ and $\mtx{H}_i$.
To see why~\eqref{eqn:phi-monotone} holds, simply note that the map can be represented as an inner product:
$$
\phi(\mtx{A} \otimes \mtx{H}) = \vct{\iota}^\adj (\mtx{A} \otimes \mtx{H}) \, \vct{\iota}
\quad\text{where}\quad
\vct{\iota} := \mathrm{vec}(\Id_d).
$$
The $\mathrm{vec}$ operation stacks the columns of a $d \times d$ matrix on top of each other, moving from left to right,
to form a column vector of length $d^2$.

\section{The Matrix Relative Entropy is Convex}
\label{sec:mre-convex}

We are finally prepared to establish Theorem~\ref{thm:mre-convex}, which states that the matrix relative entropy is a convex function.  This argument draws on almost all of the ideas we have developed over the course of this chapter.

Consider the function $f(a) = a - 1 - \log a$, defined on the positive real line.  This function is operator convex because it is the sum of the affine function $a \mapsto a - 1$ and the operator convex function $a \mapsto - \log a$. 
The negative logarithm is operator convex because of Proposition~\ref{prop:log-concave}.

Let $\mtx{A}$ and $\mtx{H}$ be positive-definite matrices.  Consider the matrix perspective $\mtx{\Psi}_f$ evaluated at the commuting positive-definite matrices $\mtx{A} \otimes \Id$ and $\Id \otimes \mtx{H}$:
$$
\mtx{\Psi}_f(\mtx{A} \otimes \Id; \ \Id \otimes \mtx{H})
	= (\mtx{A} \otimes \Id) \cdot f\big( (\Id \otimes \mtx{H})(\mtx{A} \otimes \Id)^{-1} \big)
	= (\mtx{A} \otimes \Id) \cdot f\big( \mtx{A}^{-1} \otimes \mtx{H} \big).
$$
We have used the simplified definition~\eqref{eqn:perspective-commute} of the perspective for commuting matrices, and we have invoked the rules~\eqref{eqn:kron-prod-rule} and~\eqref{eqn:kron-inv-rule} for arithmetic with Kronecker products.  Introducing the definition of the function $f$, we find that
$$
\begin{aligned}
\mtx{\Psi}_f(\mtx{A} \otimes \Id;\ \Id \otimes \mtx{H})
	&= (\mtx{A} \otimes \Id) \cdot \big[ \mtx{A}^{-1} \otimes \mtx{H} - \Id \otimes \Id
	- \log\big(\mtx{A}^{-1} \otimes \mtx{H}\big) \big] \\
	&= \Id \otimes \mtx{H} - \mtx{A} \otimes \Id
	- (\mtx{A} \otimes \Id) \cdot \big[ \big( \log \mtx{A}^{-1} \big) \otimes \Id + \Id \otimes\big( \log \mtx{H} \big) \big]
	\\
	&= ( \mtx{A} \log \mtx{A} ) \otimes \Id - \mtx{A} \otimes ( \log\mtx{H} )
	- (\mtx{A} \otimes \Id - \Id \otimes \mtx{H} ) 
\end{aligned}
$$
To reach the second line, we use more Kronecker product arithmetic, along with Fact~\ref{fact:kron-log}, the law for calculating the logarithm of the Kronecker product.  The last line depends on the property that $\log\big(\mtx{A}^{-1}\big) = - \log\mtx{A}$.  Applying the linear map $\phi$ from~\eqref{eqn:phi-def} to both sides, we reach
\begin{equation} \label{eqn:phi-Psi}
\big(\phi \circ \mtx{\Psi}_f \big)\big(\mtx{A} \otimes \Id;\ \Id \otimes \mtx{H} \big)
	= \trace \big[ \mtx{A} \log \mtx{A} - \mtx{A}\log \mtx{H} - (\mtx{A} - \mtx{H}) \big]
	= \mathrm{D}(\mtx{A}; \mtx{H}).
\end{equation}
We have represented the matrix relative entropy in terms of a matrix perspective.

Let $\mtx{A}_i$ and $\mtx{H}_i$ be positive-definite matrices, and fix a parameter $\tau \in [0,1]$.
Theorem~\ref{thm:mtx-perspective-convex} tells us that the matrix perspective is operator convex:
\begin{multline*}
\mtx{\Psi}_f\big( \tau \cdot (\mtx{A}_1 \otimes \Id) + \bar{\tau} \cdot (\mtx{A}_2 \otimes \Id);\
	\tau \cdot (\Id \otimes \mtx{H}_1) + \bar{\tau} \cdot (\Id \otimes \mtx{H}_2) \big) \\
	\psdle \tau \cdot \mtx{\Psi}_f\big(\mtx{A}_1 \otimes \Id;\ \Id \otimes \mtx{H}_1\big)
	+ \bar{\tau} \cdot \mtx{\Psi}_f\big(\mtx{A}_2 \otimes \Id;\ \Id \otimes \mtx{H}_2\big).
\end{multline*}
The inequality~\eqref{eqn:phi-monotone} states that the linear map $\phi$ preserves the semidefinite order.  
\begin{multline*}
\big(\phi \circ \mtx{\Psi}_f\big)\big( \tau \cdot (\mtx{A}_1 \otimes \Id) + \bar{\tau} \cdot (\mtx{A}_2 \otimes \Id);\
	\tau \cdot (\Id \otimes \mtx{H}_1) + \bar{\tau} \cdot (\Id \otimes \mtx{H}_2) \big) \\
	\leq \tau \cdot \big(\phi \circ \mtx{\Psi}_f\big)\big(\mtx{A}_1 \otimes \Id;\ \Id \otimes \mtx{H}_1\big)
	+ \bar{\tau} \cdot \big(\phi \circ \mtx{\Psi}_f\big)\big(\mtx{A}_2 \otimes \Id;\ \Id \otimes \mtx{H}_2\big).
\end{multline*}
Introducing the formula~\eqref{eqn:phi-Psi}, we conclude that
$$
\mathrm{D}\big(\tau \mtx{A}_1 + \bar{\tau} \mtx{A}_2; \ \tau \mtx{H}_1 + \bar{\tau} \mtx{H}_2 \big)
	\leq \tau \cdot \mathrm{D}\big(\mtx{A}_1; \mtx{H}_1 \big) + \bar{\tau} \cdot \mathrm{D}\big(\mtx{A}_2; \mtx{H}_2\big).
$$
The matrix relative entropy is convex.

\section{Notes}

The material in this chapter is drawn from a variety of sources,
ranging from textbooks to lecture notes to contemporary research articles.
The best general sources include the books on matrix analysis by
Bhatia~\cite{Bha97:Matrix-Analysis,Bha07:Positive-Definite}
and by Hiai \& Petz~\cite{HP14:Introduction-Matrix}.  We also
recommend a set of notes~\cite{Car10:Trace-Inequalities}
by Eric Carlen.  More specific references appear below.

\subsection{Lieb's Theorem}

Theorem~\ref{thm:lieb-chap} is one of the major results in the
important paper~\cite{Lie73:Convex-Trace} of Elliott Lieb on convex trace functions.
Lieb wrote this paper to resolve a conjecture of Wigner, Yanase, \& Dyson about the
concavity properties of a certain measure of information in a quantum system.  He
was also motivated by a conjecture that quantum mechanical entropy satisfies a
strong subadditivity property.  The latter result states that our uncertainty about a
partitioned quantum system is controlled by the uncertainty about smaller
parts of the system.  See Carlen's notes~\cite{Car10:Trace-Inequalities} for
a modern presentation of these ideas.

Lieb derived Theorem~\ref{thm:lieb-chap} as a corollary of another difficult
concavity theorem that he developed~\cite[Thm.~1]{Lie73:Convex-Trace}.
The most direct proof of Lieb's Theorem is probably
Epstein's argument, which is based on methods from complex analysis~\cite{Eps73:Remarks-Two};
see Ruskai's papers~\cite{Rus02:Inequalities-Quantum,
Rus05:Erratum-Inequalities} for a condensed version of Epstein's approach.
The proof that appears in Section~\ref{sec:lieb-chap} is due to
the author of these notes~\cite{Tro12:Joint-Convexity};
this technique depends on ideas developed by Carlen \& Lieb
to prove some other convexity theorems~\cite[\S 5]{CL08:Minkowski-Type}.

In fact, many deep convexity and concavity theorems for trace functions are equivalent with each other,
in the sense that the mutual implications follow from relatively easy arguments.
See~\cite[\S5]{Lie73:Convex-Trace} and \cite[\S 5]{CL08:Minkowski-Type} for discussion of this point.

\subsection{The Matrix Relative Entropy}

Our definition of matrix relative entropy differs slightly from the usual definition
in the literature on quantum statistical mechanics and quantum information theory
because we have included an additional linear term.  This alteration does not lead to
substantive changes in the analysis.

The fact that matrix relative entropy is nonnegative is a classical
result attributed to Klein.  See~\cite[\S2]{Pet94:Survey-Certain}
or~\cite[\S2.3]{Car10:Trace-Inequalities}.

Lindblad~\cite{Lin73:Entropy-Information} is credited with the result
that matrix relative entropy is convex, as stated in Theorem~\ref{thm:mre-convex}.
Lindblad derived this theorem as a corollary of Lieb's results from~\cite{Lie73:Convex-Trace}.
Bhatia~\cite[Chap.~IX]{Bha97:Matrix-Analysis} gives two alternative proofs,
one due to Connes \& St{\o}rmer~\cite{CS75:Entropy-Automorphisms} and another
due to Petz~\cite{Pet86:Quasi-Entropies}.  There is also a remarkable proof
due to Ando~\cite[Thm.~7]{And79:Concavity-Certain}.

Our approach to Theorem~\ref{thm:mre-convex} is adapted directly from a recent
paper of Effros~\cite{Eff09:Matrix-Convexity}.  Nevertheless, many of the ideas
date back to the works cited in the last paragraph.

\subsection{The Relative Entropy for Vectors}

The treatment of the relative entropy for vectors in Section~\ref{sec:vre}
is based on two classical methods for constructing divergences.
To show that the relative entropy is nonnegative, we represent it as a Bregman divergence~\cite{Bre67:Relaxation-Method}.
To show that the relative entropy is convex, we represent it as an $f$-divergence~\cite{AS66:General-Class,Csi67:Information-Type-Measures}.
Let us say a few more words about these constructions.

Suppose that $f$ is a differentiable convex function on $\R^d$.
Bregman considered divergences of the form
$$
\mathrm{B}_f(\vct{a}; \vct{h}) := f(\vct{a}) - \big[ f(\vct{h}) - \ip{ \grad \smash{f}(\vct{h}) }{ \vct{a} - \vct{h} } \big].
$$
Since $f$ is convex, the Bregman divergence $\mathrm{B}_f$ is always nonnegative.
In the vector setting, there are two main examples of Bregman divergences.
The function $f(\vct{a}) = \half \enormsq{\vct{a}}$ leads to the squared Euclidean distance,
and the function $f(\vct{a}) = \sum_i (a_i \log a_i - a_i)$ leads to the vector relative entropy.
Bregman divergences have many geometric properties in common with these two functions.
For an introduction to Bregman divergences for matrices, see~\cite{DT06:Matrix-Nearness}.

Suppose that $f : \R_{++} \to \R$ is a convex function.
Ali \& Silvey~\cite{AS66:General-Class} and Csisz{\'a}r~\cite{Csi67:Information-Type-Measures} considered divergences of the form
$$
\mathrm{C}_f(\vct{a}; \vct{h}) := \sum\nolimits_i a_i \cdot f(h_i/a_i).
$$
We recognize this expression as a perspective transformation,
so the $f$-divergence $\mathrm{C}_f$ is always convex.
The main example is based on the Shannon entropy $f(a) = a \log a$,
which leads to a cousin of the vector relative entropy.
The paper~\cite{RW11:Information-Divergence} contains a recent discussion
of $f$-divergences and their applications in machine learning.
Petz has studied functions related to $f$-divergences in the matrix
setting~\cite{Pet86:Quasi-Entropies,Pet10:From-f-Divergence}.

\subsection{Elementary Trace Inequalities}

The material in Section~\ref{sec:trace-ineqs} on trace functions
is based on classical results in quantum statistical mechanics.
We have drawn the arguments in this section from Petz's survey~\cite[Sec.~2]{Pet94:Survey-Certain}
and Carlen's lecture notes~\cite[Sec.~2.2]{Car10:Trace-Inequalities}.

\subsection{Operator Monotone \& Operator Convex Functions}

The theory of operator monotone functions was initiated by L{\"o}wner~\cite{Loe34:Ueber-Monotone}.
He developed a characterization of an operator monotone function in terms of divided differences.
For a function $f$, the \term{first divided difference} is the
quantity
$$
f[a,h] := \begin{dcases} \frac{f(a) - f(h)}{a - h}, & h \neq a \\
	f'(a), & h = a.
	\end{dcases}
$$
L{\"o}wner proved that $f$ is operator monotone on an interval $I$ if and only
we have the semidefinite relation
$$
\begin{bmatrix} f[a_1, a_1] & \dots & f[a_1, a_d] \\
	\vdots & \ddots & \vdots \\
	f[a_d, a_1] & \dots & f[a_d, a_d] \end{bmatrix}
	\psdge \vct{0}
	\quad\text{for all $\{a_i\} \subset I$ and all $d \in \mathbb{N}$.}
$$
This result is analogous with the fact that a smooth, monotone scalar function has a nonnegative
derivative.  L{\"o}wner also established a connection between operator monotone functions
and Pick functions from the theory of complex variables.
A few years later, Kraus introduced the concept of an operator convex function
in~\cite{Kra36:Ueber-Konvexe},
and he developed some results that parallel L{\"o}wner's theory for operator monotone functions.

Somewhat later, Bendat \& Sherman~\cite{BS55:Monotone-Convex} developed characterizations
of operator monotone and operator convex functions based on integral formulas.  For example,
$f$ is an operator monotone function on $(0, \infty)$ if and only if it can be written in the form
$$
f(t) = \alpha + \beta t + \int_0^\infty \frac{ut}{u+t} \idiff{\rho}(u)
\quad\text{where}\quad
\beta \geq 0
\quad\text{and}\quad
\int_0^\infty \frac{u}{1+u} \idiff{\rho}(u) < \infty.
$$
Similarly, $f$ is an operator convex function on $[0,\infty)$ if and only if it can be written in the form
$$
f(t) = \alpha + \beta t + \gamma t^2 + \int_0^\infty \frac{ut^2}{u+t} \idiff{\rho}(u)
\quad\text{where}\quad
\gamma \geq 0
\quad\text{and}\quad
\int_0^\infty \diff{\rho}(u) < \infty.
$$
In both cases, $\diff{\rho}$ is a nonnegative measure.
The integral representation of the logarithm in Proposition~\ref{prop:log-integral}
is closely related to these formulas.

We have taken the proof that the matrix inverse is monotone from Bhatia's book~\cite[Prop.~V.1.6]{Bha97:Matrix-Analysis}.
The proof that the matrix inverse is convex appears in Ando's paper~\cite{And79:Concavity-Certain}.
Our treatment of the matrix logarithm was motivated by a conversation with Eric Carlen
at an IPAM workshop at Lake Arrowhead in December 2010.

For more information about operator monotonicity and operator convexity,
we recommend Bhatia's books~\cite{Bha97:Matrix-Analysis,Bha07:Positive-Definite},
Carlen's lecture notes~\cite{Car10:Trace-Inequalities},
and the book of Hiai \& Petz~\cite{HP14:Introduction-Matrix}.

\subsection{The Operator Jensen Inequality}

The paper~\cite{HP82:Jensens-Inequality} of Hansen \& Pedersen contains another
treatment of operator monotone and operator convex functions.  The highlight
of this work is a version of the operator Jensen inequality.  Theorem~\ref{thm:operator-jensen}
is a refinement of this result that was established by the same authors two decades
later~\cite{HP03:Jensens-Operator}.
Our proof of the operator Jensen inequality is drawn from
Petz's book~\cite[Thm.~8.4]{Pet11:Matrix-Analysis};
see also Carlen's lecture notes~\cite[Thm.~4.20]{Car10:Trace-Inequalities}.

\subsection{The Matrix Perspective \& the Kronecker Product}

We have been unable to identify the precise source of the idea that
a bivariate matrix function can be represented in terms of a
matrix perspective.  Two important results in this direction appear
in Ando's paper~\cite[Thms.~6 and 7]{And79:Concavity-Certain}.
\begin{multline*}
\text{$f$ positive and operator~concave on $(0,\infty)$}
\quad\text{implies}\quad \\
(\mtx{A}, \mtx{H}) \mapsto (\mtx{A} \otimes \Id) \cdot f\big( \mtx{A}^{-1} \otimes \mtx{H} \big)
\quad \text{is operator concave}
\end{multline*}
on pairs of positive-definite matrices.  Similarly,
\begin{multline*}
\text{$f$ operator~monotone~on $(0,\infty)$}
\quad\text{implies}\quad
(\mtx{A}, \mtx{H}) \mapsto (\mtx{A} \otimes \Id) \cdot f\big( \mtx{A} \otimes \mtx{H}^{-1} \big)
\quad \text{is operator~convex}
\end{multline*}
on pairs of positive-definite matrices.  Ando proves that
the matrix relative entropy is convex by applying the
latter result to the matrix logarithm.
We believe that Ando was the first author to appreciate the value
of framing results of this type in terms of the Kronecker product,
and we have followed his strategy here.
On the other hand, Ando's analysis is different in spirit
because he relies on integral representations of operator monotone
and convex functions.

In a subsequent paper~\cite{KA80:Means-Positive},
Kubo \& Ando constructed operator means using a related approach.
They show that
\begin{multline*}
\text{$f$ positive and operator~monotone~on $(0,\infty)$}
\quad\text{implies}\quad \\
(\mtx{A}, \mtx{H}) \mapsto \mtx{A}^{1/2} \cdot f\big( \mtx{A}^{-1/2} \mtx{H} \mtx{A}^{-1/2} \big) \cdot \mtx{A}^{1/2}
\quad \text{is operator concave}
\end{multline*}
on pairs of positive-definite matrices.
Kubo \& Ando point out that particular cases of this construction appear in
the work of Pusz \& Woronowicz~\cite{PW75:Functional-Calculus}.  This is the earliest citation
where we have seen the matrix perspective black-on-white.

A few years later, Petz introduced a class of quasi-entropies for
matrices~\cite{Pet86:Quasi-Entropies}.  These functions also involve
a perspective-like construction, and Petz was clearly influenced
by Csisz{\'a}r's work on $f$-divergences.
See~\cite{Pet10:From-f-Divergence} for a contemporary treatment.

The presentation in these notes is based on a recent paper~\cite{Eff09:Matrix-Convexity}
of Effros.  He showed that convexity properties of the matrix perspective
follow from the operator Jensen inequality, and he derived
the convexity of the matrix relative entropy as a consequence.
Our analysis of the matrix perspective in Theorem~\ref{thm:mtx-perspective-convex}
is drawn from a subsequent paper~\cite{ENG11:Perspectives-Matrix}, which removes some
commutativity assumptions from Effros's argument.

The proof in \S\ref{sec:mre-convex} that the matrix relative entropy is convex, Theorem~\ref{thm:mre-convex},
recasts Effros's argument~\cite[Cor.~2.2]{Eff09:Matrix-Convexity} in the language
of Kronecker products.  In his paper, Effros works with left- and
right-multiplication operators.  To appreciate the connection, simply
note the identities
$$
(\mtx{A} \otimes \Id) \, \mathrm{vec}(\mtx{M})
	= \mathrm{vec}( \mtx{M} \mtx{A} ).
\quad\text{and}\quad	
(\Id \otimes \mtx{H}) \, \mathrm{vec}(\mtx{M})
	= \mathrm{vec}( \mtx{H}\mtx{M} ).
$$
In other words, the matrix $\mtx{A} \otimes \Id$ can be interpreted as
right-multiplication by $\mtx{A}$, while the matrix $\Id \otimes \mtx{H}$
can be interpreted as left-multiplication by $\mtx{H}$.  (The change in
sense is an unfortunate consequence of the definition of the Kronecker product.)

%
%
%
%
%
%
%
%
%
%

%
%
%
%
%
%
%
%
%
%
%
%
%

%
%
%
%
%
%
%
%

%
%
%
%
%
%
%
%
%
%
%
%
%
%
%
%
%
%
%
%
%
%

\backmatter

\makeatletter{}%

\chapter{Matrix Concentration: Resources}

This annotated bibliography describes some papers that involve matrix concentration inequalities.
Right now, this presentation is heavily skewed toward theoretical results,
rather than applications of matrix concentration.

\subsection{Exponential Matrix Concentration Inequalities}

We begin with papers that contain the most current results on matrix concentration.

\begin{itemize}
\item	\cite{Tro11:User-Friendly-FOCM}.  These lecture notes are based heavily on the research described in this paper.  This work identifies Lieb's Theorem~\cite[Thm.~6]{Lie73:Convex-Trace} as the key result that animates exponential moment bounds for random matrices.  Using this technique, the paper develops the bounds for matrix Gaussian and Rademacher series, the matrix Chernoff inequalities, and several versions of the matrix Bernstein inequality.  In addition, it contains a matrix Hoeffding inequality (for sums of bounded random matrices), a matrix Azuma inequality (for matrix martingales with bounded differences), and a matrix bounded difference inequality (for matrix-valued functions of independent random variables).

\item	\cite{Tro12:Joint-Convexity}.  This note describes a simple proof of Lieb's Theorem that is based on the joint convexity of quantum relative entropy.  This reduction, however, still involves a deep convexity theorem.  Chapter~\ref{chap:lieb} contains an explication of this paper.

\item	\cite{Oli10:Concentration-Adjacency}.  Oliveira's paper uses an ingenious argument, based on the Golden--Thompson inequality~\eqref{eqn:golden-thompson}, to establish a matrix version of Freedman's inequality.  This result is, roughly, a martingale version of Bernstein's inequality.  This approach has the advantage that it extends to the fully noncommutative setting~\cite{JZ12:Noncommutative-Martingale}.  Oliveira applies his results to study some problems in random graph theory.  

\item	\cite{Tro11:Freedmans-Inequality}.  This paper shows that Lieb's Theorem leads to a Freedman-type inequality for matrix-valued martingales.  The associated technical report~\cite{Tro11:User-Friendly-Martingale-TR} describes additional results for matrix-valued martingales.

\item	\cite{GT11:Tail-Bounds}.  This article explains how to use the Lieb--Seiringer Theorem~\cite{LS05:Stronger-Subadditivity} to develop tail bounds for the interior eigenvalues of a sum of independent random matrices.  It contains a Chernoff-type bound for a sum of positive-semidefinite matrices, as well as several Bernstein-type bounds for sums of bounded random matrices.

\item	\cite{MJCFT12:Matrix-Concentration}.  This paper contains a strikingly different method for establishing matrix concentration inequalities.  The argument is based on work of Sourav Chatterjee~\cite{Cha07:Steins-Method} that shows how Stein's method of exchangeable pairs~\cite{Ste72:Bound-Error} leads to probability inequalities.  This technique has two main advantages.  First, it gives results for random matrices that are based on dependent random variables.  As a special case, the results apply to sums of independent random matrices.  Second, it delivers both exponential moment bounds and polynomial moment bounds for random matrices.  Indeed, the paper describes a Bernstein-type exponential inequality and also a Rosenthal-type polynomial moment bound.  Furthermore, this work contains what is arguably the simplest known proof of the noncommutative Khintchine inequality.

\item	\cite{PMT14:Efron-Stein}.  This paper improves on the work in~\cite{MJCFT12:Matrix-Concentration} by extending an argument, based on Markov chains, that was developed in Chatterjee's thesis~\cite{Cha08:Concentration-Inequalities}.  This analysis leads to satisfactory matrix analogs of scalar concentration inequalities based on logarithmic Sobolev inequalities.  In particular, it is possible to develop a matrix version of the exponential Efron--Stein inequality in this fashion.

\item	\cite{CGT12:Masked-Sample, CGT12:Masked-Sample-TR}.  The primary focus of this paper is to analyze a specific type of procedure for covariance estimation.  The appendix contains a new matrix moment inequality that is, roughly, the polynomial moment bound associated with the matrix Bernstein inequality.

\item	\cite{Kol11:Oracle-Inequalities}.  These lecture notes use matrix concentration inequalities as a tool to study some estimation problems in statistics.  They also contain some matrix Bernstein inequalities for unbounded random matrices.

\item	\cite{GN10:Note-Sampling}.  Gross and Nesme show how to extend Hoeffding's method for analyzing sampling without replacement to the matrix setting.  This result can be combined with a variety of matrix concentration inequalities.

\item	\cite{Tro11:Improved-Analysis}.  This paper combines the matrix Chernoff inequality, Theorem~\ref{thm:matrix-chernoff}, with the argument from~\cite{GN10:Note-Sampling} to obtain a matrix Chernoff bound for a sum of random positive-semidefinite matrices sampled without replacement from a fixed collection.  The result is applied to a random matrix that plays a role in numerical linear algebra.

\item	\cite{CT14:Subadditivity-Matrix}.  This paper establishes logarithmic Sobolev inequalities for random matrices, and it derives some matrix concentration inequalities as a consequence.  The methods in the paper have applications in quantum information theory, although the matrix concentration bounds are inferior to related results derived using Stein's method.
\end{itemize}

\subsection{Bounds with Intrinsic Dimension Parameters}

The following works contain matrix concentration bounds that depend on a dimension parameter that may be smaller than the ambient dimension of the matrix.

\begin{itemize}
\item	\cite{Oli10:Sums-Random}.  Oliveira shows how to develop a version of Rudelson's inequality~\cite{Rud99:Random-Vectors} using a variant of the argument of Ahlswede \& Winter from~\cite{AW02:Strong-Converse}.  Oliveira's paper is notable because the dimensional factor is controlled by the maximum rank of the random matrix, rather than the ambient dimension.

\item	\cite{MZ11:Low-Rank-Matrix-Valued}.  This work contains a matrix Chernoff bound for a sum of independent positive-semidefinite random matrices where the dimensional dependence is controlled by the maximum rank of the random matrix.  The approach is, essentially, the same as the argument in Rudelson's paper~\cite{Rud99:Random-Vectors}.  The paper applies these results to study randomized matrix multiplication algorithms.

\item	\cite{HKZ12:Tail-Inequalities}.  This paper describes a method for proving matrix concentration inequalities where the ambient dimension is replaced by the intrinsic dimension of the matrix variance.  The argument is based on an adaptation of the proof in~\cite{Tro11:Freedmans-Inequality}.  The authors give several examples in statistics and machine learning.

\item	\cite{Min11:Some-Extensions}.  This work presents a more refined technique for obtaining matrix concentration inequalities that depend on the intrinsic dimension, rather than the ambient dimension.
This paper motivated the results in Chapter~\ref{chap:intrinsic}.

\end{itemize}

\subsection{The Method of Ahlswede \& Winter}

Next, we list some papers that use the ideas from the work~\cite{AW02:Strong-Converse} of Ahslwede \& Winter to obtain matrix concentration inequalities.  In general, these results have suboptimal parameters, but they played an important role in the development of this field.

\begin{itemize}
\item	\cite{AW02:Strong-Converse}.  The original paper of Ahlswede \& Winter describes the matrix Laplace transform method, along with a number of other foundational results.  They show how to use the Golden--Thompson inequality to bound the trace of the matrix mgf, and they use this technique to prove a matrix Chernoff inequality for sums of independent and identically distributed random variables.  Their main application concerns quantum information theory.

\item	\cite{CM08:Expansion-Properties}.  Christofides and Markstr{\"o}m develop a Hoeffding-type inequality for sums of bounded random matrices using the approach of Ahlswede \& Winter.  They apply this result to study random graphs.

\item	\cite{Gro11:Recovering-Low-Rank}.	Gross presents a matrix Bernstein inequality based on the method of Ahlswede \& Winter, and he uses it to study algorithms for matrix completion.

\item	\cite{Rec11:Simpler-Approach}.  Recht describes a different version of the matrix Bernstein inequality, which also follows from the technique of Ahlswede \& Winter.  His paper also concerns algorithms for matrix completion.
\end{itemize}

\subsection{Noncommutative Moment Inequalities}

We conclude with an overview of some major works on bounds for the polynomial moments of a noncommutative martingale.  Sums of independent random matrices provide one concrete example where these results apply.  The results in this literature are as strong, or stronger, than the exponential moment inequalities that we have described in these notes.  Unfortunately, the proofs are typically quite abstract and difficult, and they do not usually lead to explicit constants.  Recently there has been some cross-fertilization between noncommutative probability and the field of matrix concentration inequalities.

Note that ``noncommutative'' is not synonymous with ``matrix'' in that there are noncommutative von Neumann algebras much stranger than the familiar algebra of finite-dimensional matrices equipped with the operator norm.

\begin{itemize}
\item	\cite{TJ74:Moduli-Smoothness}.  This classic paper gives a bound for the expected trace of an even power of a matrix Rademacher series.  These results are important, but they do not give the optimal bounds.

\item	\cite{LP86:Inegalites-Khintchine}.  This paper gives the first noncommutative Khintchine inequality, a bound for the expected trace of an even power of a matrix Rademacher series that depends on the matrix variance.

\item	\cite{LPP91:Noncommutative-Khintchine}.  This work establishes dual versions of the noncommutative Khintchine inequality.

\item	\cite{Buc01:Operator-Khintchine,Buc05:Optimal-Constants}.  These papers prove optimal noncommutative Khintchine inequalities in more general settings, and they obtain sharp constants.

\item	\cite{JX03:Noncommutative-Burkholder,JX08:Noncommutative-Burkholder-II}.  These papers establish noncommutative versions of the Burkholder--Davis--Gundy inequality for martingales.  They also give an application of these results to random matrix theory.

\item	\cite{JX05:Best-Constants}.  This paper contains an overview of noncommutative moment results, along with information about the optimal rate of growth in the constants.

\item	\cite{JZ13:Noncommutative-Bennett}.  This paper describes a fully noncommutative version of the Bennett inequality.  The proof is based on the method of Ahlswede \& Winter~\cite{AW02:Strong-Converse}.

\item	\cite{JZ12:Noncommutative-Martingale}.  This work shows how to use Oliveira's argument~\cite{Oli10:Concentration-Adjacency} to obtain some results for fully noncommutative martingales.

\item	\cite{MJCFT12:Matrix-Concentration}.  This work, described above, includes a section on matrix moment inequalities.  This paper contains what are probably the simplest available proofs of these results.

\item	\cite{CGT12:Masked-Sample}.  The appendix of this paper contains a polynomial inequality for sums of independent random matrices.
\end{itemize}

\bibliographystyle{myalpha}

{\small
\newcommand{\etalchar}[1]{$^{#1}$}

}

\end{document}